\newcommand{\PreserveBackslash}[1]{\let\temp=\\#1\let\\=\temp}
\newcolumntype{C}[1]{>{\PreserveBackslash\centering}p{#1}}
\newcolumntype{R}[1]{>{\PreserveBackslash\raggedleft}p{#1}}
\newcolumntype{L}[1]{>{\PreserveBackslash\raggedright}p{#1}}
\def\myarabic#1{\normalfont(\roman{#1})}
\newlist{theoremlist}{enumerate}{1}
\setlist[theoremlist]{label=\myarabic{theoremlisti},ref={\myarabic{theoremlisti}},itemindent=0pt,labelindent=0pt,
leftmargin=*,noitemsep}
\renewcommand{\p@theoremlisti}{\perh@ps{\thetheorem}}
\protected\def\perh@ps#1#2{\textup{#1#2}}
\newcommand{\itemrefperh@ps}[2]{\textup{#2}}
\newcommand{\itemref}[1]{\begingroup\let\perh@ps\itemrefperh@ps\ref{#1}\endgroup}
\protected\def\ignorethis#1\endignorethis{}
\let\endignorethis\relax
\newtheorem{theorem}{Theorem}[section]
\newtheorem{lemma}[theorem]{Lemma}
\newtheorem{proposition}[theorem]{Proposition}
\newtheorem{corollary}[theorem]{Corollary}
\newtheorem{conjecture}[theorem]{Conjecture}
\theoremstyle{definition}
\newtheorem{remark}[theorem]{Remark}
\theoremstyle{definition}
\newtheorem{definition}[theorem]{Definition}
\theoremstyle{definition}
\newtheorem{problem}[theorem]{Problem}
\theoremstyle{definition}
\newtheorem{example}[theorem]{Example}
\crefname{figure}{Figure}{Figures}
\def\Acal{\mathcal{A}}\def\Fcal{\mathcal{F}}\def\Gcal{\mathcal{G}}\def\Ocal{\mathcal{O}}\def\Pcal{\mathcal{P}}\def\Rcal{\mathcal{R}}\def\Tcal{\mathcal{T}}\def\Xcal{\mathcal{X}}
\def\C{\mathbb{C}}
\def\R{\mathbb{R}}
\def\N{\mathbb{N}}
\def\Z{\mathbb{Z}}
\def\<{{\langle}}
\def\>{{\rangle}}
\def\la{{\lambda}}
\def\op{{ \operatorname{op}}}
\def\Span{ \operatorname{Span}}
\def\proj{ \operatorname{proj}}
\def\wt{\operatorname{wt}}
\def\id{{\operatorname{id}}}
\def\Lie{\operatorname{Lie}}
\def\j{{\mathbf{j}}}
\def\dv{\d{v}}
\def\da{\d{a}}
\def\Racc{R^\circ}
\def\Rich_#1^#2{\Racc_{#1,#2}}
\def\Richcl_#1^#2{R_{#1,#2}}
\def\Hom{\operatorname{Hom}}
\def\xrasim{\xrightarrow{\sim}}
\def\O{{\Ocal}}
\def\g{{\mathfrak{g}}}
\def\h{{\mathfrak{h}}}
\def\mod{{\operatorname{mod}}}
\def\Spec{\operatorname{Spec}}
\def\Hom{\operatorname{Hom}}
\def\Ext{\operatorname{Ext}}
\def\Semis{\operatorname{Semis}}
\def\SBim{{\operatorname{\mathbb{S}Bim}}}
\def\SMod{\operatorname{\mathbb{S}Mod}}
\def\IC{\mathcal{IC}}
\def\HH{H\!\!H}
\def\HHH{H\!\!H\!\!H}
\def\HHHC{\HHH_\C}
\def\H{{\mathbb{H}}}
\def\pt{{\rm pt}}
\def\mix{{\operatorname{mix}}}
\def\F{{\mathbb{F}}}
\def\Q{{\mathbb{Q}}}
\def\Mod{{\operatorname{Mod}}}
\def\Fr{{\rm Fr}}
\def\RHom{{\operatorname{RHom}}}
\def\ext{{\operatorname{ext}}}
\def\Fl{{\operatorname{Fl}}}
\def\For{{\operatorname{For}}}
\def\Mod{{\operatorname{Mod}}}
\def\Fr{{\rm Fr}}
\def\RHom{{\operatorname{RHom}}}
\def\ext{{\operatorname{ext}}}
\def\Fl{{\operatorname{Fl}}}
\def\For{{\operatorname{For}}}
\def\Pio{\Pi^\circ}
\def\DA{{\Dscr_1}}
\def\DB{{\Dscr_2}}
\def\BS{\operatorname{BS}}
\numberwithin{equation}{section}
\def\FLY{HOMFLY\xspace}
\def\Richafftp_#1^#2{{\Rcal_{#1,#2}^{>0}}}
\def\Richaff_#1^#2{\accentset{\circ}{\mathcal{R}}_{#1,#2}}
\def\Cat{C}
\def\Cab{\Cat_{a,b}}
\def\Ca{\Cat_n}
\newcommand{\qbinom}{\genfrac{[}{]}{0pt}{}}
\def\qint#1{[#1]_q}
\def\qbin#1#2{\qbinom{#1}{#2}_q}
\def\area{\operatorname{area}}
\def\dinv{\operatorname{dinv}}
\def\Dyck{\operatorname{Dyck}}
\def\Dyckxx(#1,#2){\Dyck_{#1,#2}}
\def\Dyckab{\Dyckxx(a,b)}
\def\Poinc{\mathcal{P}}
\def\df{d_f}
\def\Piokn{\Pio_{k,n}}
\def\Poincf{\Poinc(\Pio_f;}
\def\Poinckn{\Poinc(\Pio_{k,n};}
\def\Potf{\Poinc(\Pit_f;}
\def\Fvw{\FR_{v,w}}
\def\dkn{d_{k,n}}
\def\Tn{T}
\def\ncyc{\operatorname{ncyc}}
\def\ncyc{c}
\def\Bknc{\Bcal^{\ncyc=1}_{k,n}}
\def\qtn{\left(q^{\frac12}+t^{\frac12}\right)^{n-1}}
\def\betah{\hat\beta}
\def\fkn{{f_{k,n}}}
\def\Pit_#1{\Xcal^\circ_{#1}}
\def\Braid{\mathcal{B}}
\def\k{\mathbbm{k}}
\def\Ext{\operatorname{Ext}}
\def\mix{{\operatorname{mix}}}
\def\BS{\operatorname{BS}}
\def\KR{{\operatorname{KR}}}
\def\FR{F^\bullet}
\def\top{{\operatorname{top}}}
\def\PKR{\Pcal_\KR}
\def\PKRtop{\Pcal_\KR^\top}
\def\PK{\PKRtop}
\def\SL{\operatorname{SL}}
\def\Povar_#1{\accentset{\circ}{\Pi}_{#1}}
\def\Povarcl_#1{\Pi_{#1}}
\def\RPovar_#1{\accentset{\circ}{\Pi}^\R_{#1}}
\def\RPovarcl_#1{\Pi^\R_{#1}}
\def\Povtp_#1{\Pi_{#1}^{>0}}
\def\Povtnn_#1{\Pi_{#1}^{\geq0}}
\def\Fl{\operatorname{Fl}}
\def\Gr{\operatorname{Gr}}
\def\Cast{\C^\ast}
\def\KLR_#1^#2{R_{#1,#2}(q)}
\def\sclbx{1}
\def\op{1}
\def\gridop{0.4}
\def\drawelbow#1#2{
\def\a{#1}
\def\b{#2}
  \draw[line width=2pt,opacity=\op] (\a.5,\b) to[out=90,in=0] (\a,\b.5);
  \draw[line width=2pt,opacity=\op] (\a.5,1+\b) to[out=-90,in=180] (1+\a,\b.5);
}
\def\drawelbowfill#1#2#3{
\def\a{#1}
\def\b{#2}
  \fill[opacity=\gridop,fill=#3] (\a,\b) rectangle (1+\a,1+\b);
  \draw[line width=2pt,opacity=\op] (\a.5,\b) to[out=90,in=0] (\a,\b.5);
  \draw[line width=2pt,opacity=\op] (\a.5,1+\b) to[out=-90,in=180] (1+\a,\b.5);
}
\def\drawcrossing#1#2{
\def\a{#1}
\def\b{#2}
  \draw[line width=2pt,opacity=\op] (\a.5,\b) -- (\a.5,1+\b);
  \draw[line width=2pt,opacity=\op] (\a,\b.5) -- (1+\a,\b.5);
}
\def\godiagx#1#2#3{
\foreach \x/\y in {#1}{
\drawelbow{\x}{\y}
}
\foreach \x/\y in {#2}{
\drawcrossing{\x}{\y}
}
\foreach \x/\y in {#3}{
\drawcrossing{\x}{\y}
}
}
\def\drawtree#1{
\foreach \x/\y/\z/\bnd in {#1}{
  \node[black,fill=black,circle,scale=0.3] (A\x) at (\x.5,\z.7) {};
  \node[black,fill=black,circle,scale=0.3] (A\y) at (\y.5,\z.7) {};
  \draw[line width=1pt] (A\x) to[bend right=-\bnd] (A\y);
}
}
\def\dkn{d_{k,n}}
\def\df{d_f}
\def\KSBim{\Kb\SBim}
\def\HOMP{P}
\def\unkn{%
\begin{tikzpicture}[baseline=(ZUZU.base),scale=0.2]\coordinate(ZUZU) at (0,-0.6);
\draw[line width=0.7pt] (0,0) circle (1cm);
  \end{tikzpicture}
}
\def\unlk{
\scalebox{0.2}{%
\begin{tikzpicture}[baseline=(ZUZU.base)]\coordinate(ZUZU) at (0,-0.6);
\begin{knot}[
    clip width=6,
    ]
    \strand [line width=3.5pt] (0,0) circle (1.0cm);
    \strand [line width=3.5pt] (1,0) circle (1.0cm);
\end{knot}
\end{tikzpicture} 
}
}
\def\hopflink{
\scalebox{0.2}{%
\begin{tikzpicture}[baseline=(ZUZU.base)]\coordinate(ZUZU) at (0,-0.6);
\begin{knot}[
    clip width=6,
    flip crossing={1},
    ]
    \strand [line width=3.5pt] (0,0) circle (1.0cm);
    \strand [line width=3.5pt] (1,0) circle (1.0cm);
\end{knot}
\end{tikzpicture} 
}
}
\def\trefoil{\!
\scalebox{0.22}{%
\begin{tikzpicture}[baseline=(ZUZU.base)]\coordinate(ZUZU) at (0.5,-0.6);
\begin{knot}[
    clip width=6,
    consider self intersections
    ]
\strand[line width=3.5pt] (90:1)
            \foreach \x in {1,2,3} {
                to [bend left=117,looseness=1.9] ({90+120*\x}:1)
            }
        ;

        \flipcrossings{1,3}
\end{knot}
\end{tikzpicture} 
}\!
}
\def\top{{\operatorname{top}}}
\def\Ptop_#1{P^\top_{#1}(q)}
\def\Ptopnoq_#1{P^\top_{#1}}
\def\Symh{\C[\h]}
\def\O{\Ocal}
\def\g{\mathfrak{g}}
\def\mda(#1){\deg^\top_a(#1)}
\def\Pvw{\HOMP_{v,w}}
\def\Ptopvw{\HOMP^\top_{v,w}(q)}
\def\uu{{\underline{u}}}
\def\uv{{\underline{v}}}
\def\uw{{\underline{w}}}
\def\BS_#1{B_{#1}}
\def\bul{\bullet}
\def\und#1{\underline{#1}}
\def\BraidW{\Braid_W}
\def\BraidSn{\Braid_{S_n}}
\def\HHB#1#2(#3){H^{#1,(#2)}(\HH^0(#3))}
\def\HHBC#1#2(#3){H^{#1,(#2)}(\HHC^0(#3))}
\def\HHX#1#2_#3{H^{#1,(#2)}(\HH^0(\FR_{#3}))}
\def\HHXC#1#2_#3{H^{#1,(#2)}(\HHC^0(\FR_{#3}))}
\def\HHXk#1#2_#3{H^{#1,(#2)}(\HHk^0(\FR_{#3}))}
\def\ksa[#1]{[#1]}
\def\psa#1{\{#1\}}
\def\FR{F^\bul}
\def\Qlb{\bQ_\ell}
\def\etale{\'etale\xspace}
\def\Poincare{Poincar\'e\xspace}
\def\lvw{\ell_{v,w}}
\def\eps{\epsilon}
\def\da{\kappa}
\def\davw{{\da_{v,w}}}
\def\chib{{\chi(\beta)}}
\def\chibvw{{\chi(\bvw)}}
\def\cb{\ncyc(\beta)}
\def\cbvw{\ncyc(\bvw)}
\def\tt{t^{\frac12}}
\def\qq{q^{\frac12}}
\def\tti{t^{-\frac12}}
\def\qqi{q^{-\frac12}}
\def\bvw{\beta_{v,w}}
\def\bhvw{\betah_{v,w}}
\def\k{{\mathbbm{k}}}
\def\ku{{\underline{\k}}}
\def\Fqb{\overline{\F}_q}
\def\Qlb{\overline{\Q}_{\ell}}
\def\Dop{{\operatorname{D}}}
\def\Kop{{\operatorname{K}}}
\def\bop{{\operatorname{b}}}
\def\mix{{\operatorname{mix}}}
\def\real{{\bf {\operatorname{real}}}}
\def\perf{{\operatorname{perf}}}
\def\Dperf{\Dop_{\perf}}
\def\DbcH{\Dop^\bop_{(H)}}
\def\Db{\Dop^\bop}
\def\Dmix{\Dop^\mix}
\def\Fq{\F_q}
\def\Xq{X}
\def\Xqb{\Xq_{\Fqb}}
\def\Xqw{\Xw}
\def\Xw{{\accentset{\circ}{X}_w}}
\def\Xv{{\accentset{\circ}{X}_v}}
\def\Xu{{\accentset{\circ}{X}_u}}
\def\Xwcl{X_w}
\def\Xvm{\accentset{\circ}{X}_v^-}
\def\DB{{\Db_B(\Xq,\k)}}
\def\DBH{{\Db_H(Y,\k)}}
\def\DCH{{\Db_H(\Yqb,\k)}}
\def\Nabla{\nabla}
\def\proj{\pi}
\def\Rsemi_#1^#2{\Racc_{#1,\overline{#2}}}
\def\projvw{\proj}
\def\ibar{\overline{i}}
\def\qmap{r}
\def\kuw{\ku_{\Xw}}
\def\kuv{\ku_{\Xv}}
\def\kuqw{\ku_{\Xqw}}
\def\BA{\phi_v}
\def\amap{a}
\def\PKRC{\Pcal_{\KR;\C}^\top}
\def\Res{\operatorname{Res}}
\def\Hom{\operatorname{Hom}}
\def\RHom{\operatorname{RHom}}
\def\sRHom{R\mathcal{H}om}
\def\Ext{\operatorname{Ext}}
\def\Spec{\operatorname{Spec}}
\def\ext{\operatorname{ext}}
\def\Free{\operatorname{Free}}
\def\HHC{\HH_\C}
\def\HHk{\HH_\k}
\def\ExtDC{\Ext}
\def\For{\operatorname{For}}
\def\DeltaC{\Delta^{(B)}}
\def\Kb{\Kop^\bop}
\def\VERD{\mathbb{D}}
\def\dv{\dot v}
\def\Bkn{\mathbf{B}_{k,n}}
\def\Bknc{\mathbf{B}^{\ncyc=1}_{k,n}}
\def\ft{f}
\def\taukn{\tau_{k,n}}
\def\wtl{\tilde w}
\def\ut{\tilde u}
\def\vt{\tilde v}
\def\RowSpan{\operatorname{RowSpan}}
\def\PGL{\operatorname{PGL}}
\def\wt{r}
\def\strvw{N^\circ_{v,w}}
\def\strvwi{N^\circ_{v^{-1},w^{-1}}}
\def\Rgauge_#1^#2{R^{\circ,\Delta=1}_{#1,#2}}
\def\Ngauge{N^{\circ,\Delta=1}_{v,w}}
\def\drawbox(#1,#2){
\draw[black!50,dashed] (#1-0.5,#2-0.5) rectangle (#1+0.5,#2+0.5);
}
\def\drawgrid#1#2#3#4{
\foreach\i in {#1,...,#3}{
\foreach \j in {#2,...,#4}{
\drawbox(\i,\j)
}
}
}
\def\crossing{\scalebox{0.4}{\begin{tikzpicture}[baseline=(ZUZU.base)]
\coordinate(ZUZU) at (0,-0.5);\drawgrid{0}{0}{0}{0}\draw[line width=3pt, rounded corners=12] (0.00,-0.50)--(0.00,0.50);\draw[line width=3pt, rounded corners=12] (-0.50,0.00)--(0.50,0.00);
\end{tikzpicture}}\xspace} 
\def\elbow{\scalebox{0.4}{\begin{tikzpicture}[baseline=(ZUZU.base),xscale=-1]
\coordinate(ZUZU) at (0,-0.5);\drawgrid{0}{0}{0}{0}\draw[line width=3pt, rounded corners=12] (0.00,-0.50)--(0.00,0.00)--(0.50,0.00);\draw[line width=3pt, rounded corners=12] (-0.50,0.00)--(0.00,0.00)--(0.00,0.50);
\end{tikzpicture}}\xspace}
\def\fil{D}
\def\maxx{\operatorname{max}}
\def\Go{\operatorname{Deo}}
\def\Gom{\Go^{\maxx}}
\def\elb{\operatorname{elb}}
\def\xing{\operatorname{xing}}
\def\Hecke{\mathcal{H}}
\def\Htrace{\epsilon}
\def\Tor{\operatorname{Tor}}
\def\dimT{d}
\def\fmod{\bar f}
\let\simeq\cong
\def\fvw{{f_{v,w}}}
\def\Qkn{\mathbf{Q}_{k,n}}
\def\figref#1(#2){Figure~\hyperref[#1]{\ref*{#1}(#2)}}
\def\tabref#1(#2){Table~\hyperref[#1]{\ref*{#1}(#2)}}
\def\subsubsection{\@startsection{subsubsection}{3}%
  \z@{.5\linespacing\@plus.7\linespacing}{-.5em}%
  {\normalfont\bfseries}}
\def\parag#1{\subsubsection{#1}}
\def\abxcup{\smile}
\def\Gsc{\dot G}
\def\Tsc{\dot T}
\def\Bsc{\dot B}
\def\Usc{\dot U}
\def\minn{\operatorname{min}}
\def\Xmin{X^{\minn}}
\def\lline#1{\overline{\overline{#1}}}
\def\sv{v}
\def\laction{$\bigwedge$-action\xspace}
\def\Kbul{K^\bul}
\begin{document}

\title{Positroids, knots, and $q,t$-Catalan numbers}
\author{Pavel Galashin}
\address{Department of Mathematics, University of California, Los Angeles, 520 Portola Plaza,
Los Angeles, CA 90025, USA}
\email{\href{mailto:galashin@math.ucla.edu}{galashin@math.ucla.edu}}

\author{Thomas Lam}
\address{Department of Mathematics, University of Michigan, 2074 East Hall, 530 Church Street, Ann Arbor, MI 48109-1043, USA}
\email{\href{mailto:tfylam@umich.edu}{tfylam@umich.edu}}
\thanks{P.G.\ was supported by an Alfred P. Sloan Research Fellowship and by the National Science Foundation under Grants No.~DMS-1954121 and No.~DMS-2046915. T.L.\ was supported by a von Neumann Fellowship from the Institute for Advanced Study and by grants DMS-1464693 and DMS-1953852 from the National Science Foundation.}

\begin{abstract}
We relate the mixed Hodge structure on the cohomology of open positroid varieties (in particular, their Betti numbers over $\mathbb{C}$ and point counts over $\mathbb{F}_q$) to Khovanov--Rozansky homology of associated links. We deduce that the mixed Hodge polynomials of top-dimensional open positroid varieties are given by rational $q,t$-Catalan numbers.  Via the curious Lefschetz property of cluster varieties, this implies the $q,t$-symmetry and unimodality properties of rational $q,t$-Catalan numbers.  We show that the $q,t$-symmetry phenomenon is a manifestation of Koszul duality for category $\mathcal{O}$, and discuss relations with open Richardson varieties and extension groups of Verma modules.
\end{abstract}

\subjclass[2010]{
  Primary:
  14M15. %
  Secondary:
  14F05, %
  05A15, %
  57K18. %
}

\keywords{Positroid varieties, $q,t$-Catalan numbers, HOMFLY polynomial, Khovanov--Rozansky homology, mixed Hodge structure, equivariant cohomology, Verma modules, Koszul duality.
}

\date{\today}

\maketitle

\setcounter{tocdepth}{1}

\vspace{-0.09in}

\tableofcontents

\section*{Introduction}
The binomial coefficients $n\choose k$ have natural $q$-analogs $\qbin{n}k$, known as Gaussian polynomials. On the other hand, the rational Catalan numbers $\Cat_{k,n-k}:=\frac1n {n\choose k}$ (defined for $\gcd(k,n)=1$) have two different well-studied $q$-analogs: the area generating function $\sum_{P\in\Dyck_{k,n-k}} q^{\area(P)}$ of rational Dyck paths~\cite{CaRi}, and the polynomial $\frac{1}{\qint{n}}\qbin{n}k$ going back to~\cite{MacMahon}.

The \Poincare polynomial of the complex Grassmannian $\Gr(k,n)$, and the number of points of $\Gr(k,n)$ over a finite field $\F_q$, are both well known to be given by $\qbin{n}k$.  We give a Catalan analog of this statement by considering the top-dimensional positroid variety $\Piokn\subset\Gr(k,n)$, introduced in~\cite{KLS} building on the results of~\cite{Pos}. The space $\Piokn$ is the subspace of $\Gr(k,n)$ where all cyclically consecutive Pl\"ucker coordinates are non-vanishing.  We show that, up to a simple factor, the mixed Hodge polynomial $\Poinc(\Piokn;q,t)$ coincides with the rational $q,t$-Catalan number $\Cat_{k,n-k}(q,t)$ introduced in~\cite{LoWa} in the study of Macdonald polynomials~\cite{GaHa,HagBook}.  It follows that the \Poincare polynomial of $\Piokn$ equals $\sum_{P\in\Dyck_{k,n-k}} q^{\area(P)}$, while the point count $\#\Piokn(\F_q)$ equals $\frac{1}{\qint{n}}\qbin{n}k$, both up to a simple factor. 

The coincidence of the \Poincare polynomial and the point count of $\Gr(k,n)$ is reflected in the purity of the mixed Hodge structure on the cohomology of $\Gr(k,n)$.  Purity holds for many spaces of interest in combinatorics, e.g. for complements of hyperplane arrangements.  By contrast, the mixed Hodge structure on $H^\bul(\Piokn)$ is not pure, and simultaneously yields both of the natural $q$-analogs of rational Catalan numbers discussed above.

Our proof proceeds via relating both sides to Khovanov--Rozansky knot homology~\cite{KR1,KR2,KhoSoe}.  Our main result connects the cohomology of arbitrary open positroid varieties, and more generally open Richardson varieties in generalized flag varieties, to knot homology.

Connections between knot invariants and Macdonald theory have received an enormous amount of attention in recent years; see e.g.~\cite{Cherednik,GORS,GN,Haglund,Mellit_torus,ORS,EH,MeHog}.  In particular, Khovanov--Rozansky homology of torus knots and links was computed in~\cite{Mellit_torus,HogCat,EH,MeHog}. For torus knots, the answer is given by the rational $q,t$-Catalan numbers.

Our main results are described in detail in the next section. We start by highlighting some consequences of our approach from several points of view.

{\bf Combinatorics.} The coefficients of the Gaussian polynomial $\qbin nk$ are well known to form a unimodal palindromic sequence. A geometric explanation for this phenomenon is the hard Lefschetz theorem for the cohomology of $\Gr(k,n)$. It follows from the results of~\cite{LS,GL} that the cohomology of $\Pio_{k,n}$ satisfies the curious Lefschetz property which, combined with our main result, yields a geometric proof that $\Cat_{k,n-k}(q,t)$ is $q,t$-symmetric and unimodal.  Furthermore, our work produces a whole family of $q,t$-symmetric and unimodal polynomials, which includes $\Cat_{k,n-k}(q,t)$ as a special case.  We discuss their $q=t=1$ specialization in \cref{sec:Deogram} where we obtain a new combinatorial interpretation for rational Catalan numbers in terms of certain kinds of pipe dreams; see \cref{fig:Go_3_8}.

{\bf Knot theory.} 
We introduce a class of \emph{Richardson links}, which are closures of braids of the form $\beta(w)\cdot \beta(v)^{-1}$ for pairs of permutations $v,w\in S_n$ such that $v\leq w$ in the Bruhat order. We give a geometric interpretation of the top $a$-degree coefficient\footnote{The top $a$-degree coefficient encodes the zeroth Hochschild cohomology~\eqref{eq:HH0} which sometimes corresponds to the \emph{bottom} $a$-degree in the literature. Our conventions are chosen so that the $a$-degree in KR homology matches the $a$-degree in \FLY polynomial.} of Khovanov--Rozansky (KR) homology and of the \FLY polynomial~\cite{HOMFLY,PT} for such links. When a Richardson link is a knot, we show that the associated $q,t$-polynomial is $q,t$-symmetric.  Our investigations suggest that KR homology may have hitherto unstudied unimodality and Lefschetz-type properties. 
% We also study a variant of KR homology (where Soergel bimodules~\cite{SoeHC} are replaced with Soergel modules~\cite{SoeGarben}) which potentially gives a new knot invariant that is easier to compute; see \cref{sec:new_lk_inv}. \PG{removed this}
 Our results generalize equally well to other Dynkin types.%

{\bf Representation theory.} 
We show that the $q,t$-symmetry property is a consequence of the Koszul duality phenomenon~\cite{BGS,BY} in the derived category of the flag variety.   

The computation of the extension groups $\Ext^\bul(M_v,M_w)$ between Verma modules in the principal block $\O_0$ of the Bernstein–Gelfand–Gelfand category~$\O$ (see e.g.~\cite{Humphreys}) is a classical, still open problem.  We show that these extension groups are isomorphic to knot-homology groups.  Along the same vein, we show that the $R$-polynomials of Kazhdan and Lusztig~\cite{KL1,KL2} are certain coefficients of the HOMFLY polynomial.

{\bf Algebraic geometry.} Our results provide evidence for a $P=W$ conjecture relating the weight filtration of $\Pio_{k,n}$ with the perverse filtration of the compactified Jacobian $J_{k,n-k}$; see \cref{sec:STZ}.

\label{sec:main-results}
\section{Main results}
We give a detailed description of our main results.  The historical context and motivation for our work is delayed to \cref{sec:notes}. We give the full background on the below objects in the main body of the paper.

\subsection{Rational $q,t$-Catalan numbers}

Let $a$ and $b$ be coprime positive integers. The \emph{rational $q,t$-Catalan number} $\Cab(q,t)\in\N[q,t]$ was introduced by Loehr--Warrington~\cite{LoWa} (see also~\cite{GoMa1,GoMa2}), generalizing the work of Garsia--Haiman~\cite{GaHa}. It is defined as 
\begin{equation}\label{eq:qt_cat}
  \Cab(q,t):=\sum_{P\in\Dyckab} q^{\area(P)} t^{\dinv(P)},
\end{equation}
where $\Dyckab$ is the set of lattice paths $P$ inside a rectangle of height $a$ and width $b$ that stay above the diagonal, $\area(P)$ is the number of unit squares fully contained between $P$ and the diagonal, and $\dinv(P)$ is the number of pairs $(h,v)$ satisfying the following conditions: $h$ is a horizontal step of $P$, $v$ is a vertical step of $P$ that appears to the right of $h$, and there exists a line of slope $a/b$ (parallel to the diagonal) intersecting both $h$ and $v$.
 For example, 
\begin{equation}\label{eq:Cat_5_3}
\Cat_{3,5}(q,t)=q^{4} + q^{3} t + q^{2} t^{2} + q^{2} t + q t^{3} +q t^{2} +  t^{4},
\end{equation}
as shown in \cref{fig:Cab_ex}. 

\begin{figure}

\def\dgr{
\draw[line width=0.3pt,dashed] (0,0) grid (5,3);
\draw[line width=0.5pt,red] (0,0)--(5,3);
}
\def\dyckscl{0.33}
\def\resc#1{\scalebox{2.5}{#1}}
\def\plw{3pt}
\def\pcol{blue}
\scalebox{\dyckscl}{
\begin{tabular}{ccccccc}
\begin{tikzpicture}
\dgr
\draw[line width=\plw,draw=\pcol] (0,0)--(0,3)--(5,3);
\end{tikzpicture}
&
\begin{tikzpicture}
\dgr
\draw[line width=\plw,draw=\pcol] (0,0)--(0,2)--(1,2)--(1,3)--(5,3);
\end{tikzpicture}
&
\begin{tikzpicture}
\dgr
\draw[line width=\plw,draw=\pcol] (0,0)--(0,2)--(2,2)--(2,3)--(5,3);
\end{tikzpicture}
&
\begin{tikzpicture}
\dgr
\draw[line width=\plw,draw=\pcol] (0,0)--(0,1)--(1,1)--(1,3)--(5,3);
\end{tikzpicture}
&
\begin{tikzpicture}
\dgr
\draw[line width=\plw,draw=\pcol] (0,0)--(0,1)--(1,1)--(1,2)--(2,2)--(2,3)--(5,3);
\end{tikzpicture}
&
\begin{tikzpicture}
\dgr
\draw[line width=\plw,draw=\pcol] (0,0)--(0,2)--(3,2)--(3,3)--(5,3);
\end{tikzpicture}
&
\begin{tikzpicture}
\dgr
\draw[line width=\plw,draw=\pcol] (0,0)--(0,1)--(1,1)--(1,2)--(3,2)--(3,3)--(5,3);
\end{tikzpicture}\\

\resc{$q^4t^0$} &\resc{$q^3t^1$}&\resc{$q^2t^2$}&\resc{$q^2t^1$}&\resc{$q^1t^3$}&\resc{$q^1t^2$}&\resc{$q^0t^4$}

\end{tabular}
}
  \caption{\label{fig:Cab_ex} Computing the rational $q,t$-Catalan number $\Cat_{3,5}(q,t)$.}
\end{figure}

\subsection{Positroid varieties in the Grassmannian}\label{sec:positr-vari-grassm}
The \emph{Grassmannian} $\Gr(k,n)$ is the space of $k$-dimensional linear subspaces of $\C^n$.  Building on Postnikov's cell decomposition~\cite{Pos} of its totally nonnegative part, Knutson--Lam--Speyer~\cite{KLS} 
constructed a stratification
\begin{equation}\label{eq:Gr_strat}
  \Gr(k,n)=\bigsqcup_{f\in\Bkn} \Pio_f,
\end{equation}
where the \emph{(open) positroid varieties} $\Pio_f$ are defined as the non-empty intersections of cyclic rotations of $n$ Schubert cells.  These varieties also arise in Poisson geometry~\cite{BGY} and in the study of scattering amplitudes~\cite{abcgpt}. Open positroid varieties are indexed by a finite set $\Bkn$ of \emph{bounded affine permutations}, and for $f \in \Bkn$ the reduction of $f$ modulo $n$ is a permutation $\fmod \in S_n$. See \cref{sec:positroid-varieties} for further background.
 
 Let $\fkn \in \Bkn$ be the bounded affine permutation given by $\fkn(i) = i+k$.  The positroid stratification~\eqref{eq:Gr_strat} contains a unique open stratum, the \emph{top-dimensional positroid variety} $\Piokn:=\Pio_{\fkn}$, which can be described explicitly as
\begin{equation}\label{eq:Piokn_dfn}
  \Piokn:=\{V \in \Gr(k,n) \mid \Delta_{1,2,\dots,k}(V),\Delta_{2,3,\dots,k+1}(V),\ldots,\Delta_{n,1,\dots,k-1}(V)\neq0\},
\end{equation}
consisting of subspaces whose cyclically consecutive Pl\"ucker coordinates are non-vanishing.

 For each $f\in \Bkn$, the space $\Pio_f$ is a smooth algebraic variety. Two basic questions one can ask are:

\begin{enumerate}
\item\label{item:Q1} What is the number of points in $\Pio_f(\F_q)$ over a finite field $\F_q$ with $q$ elements?
\item\label{item:Q2} What are the Betti numbers of $\Pio_f$ considered as a complex manifold?
\end{enumerate}

These two questions are related by the \emph{mixed Hodge structure}~\cite{Del71} on cohomology.   The cohomology ring $H^\bul(\Pio_f)=H^\bul(\Pio_f,\C)$ of an open positroid variety is of Hodge--Tate type, and we have a \emph{Deligne splitting} 
\begin{equation}\label{eq:DS}
H^k(\Pio_f, \C) \simeq \bigoplus_{p\in\Z} H^{k,(p,p)}(\Pio_f,\C).
\end{equation}
Since $\Pio_f$ is smooth, we have that $H^{k,(p,p)}$ vanishes unless $k/2\leq p \leq k$.  We view \eqref{eq:DS} as a bigrading on $H^\bul(\Pio_f)$ and let $\Poincf q,t)$ be the suitably renormalized (see~\eqref{eq:Poinc_MHT_dfn}) \Poincare polynomial of this bigraded vector space, called the \emph{mixed Hodge polynomial}.

 We are ready to state the most important special case of our main result.
\begin{theorem}\label{thm:master_kn}
Assume that $\gcd(k,n)=1$. Then 
\begin{equation}\label{eq:Poinc=Cat}
  \Poinckn q,t)=\qtn \Cat_{k,n-k}(q,t).
\end{equation}
\end{theorem}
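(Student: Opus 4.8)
The plan is to prove Theorem~\ref{thm:master_kn} as a special case of the general result connecting open positroid varieties (and more generally open Richardson varieties) to Khovanov--Rozansky homology. The strategy has three main ingredients. First, I would identify the open positroid variety $\Piokn$ with an open Richardson variety $\Racc_{v,w}$ in a (partial) flag variety: by the work of Knutson--Lam--Speyer, positroid varieties are projected Richardson varieties, and the top-dimensional positroid variety $\Piokn$ corresponds to an explicit pair $v \leq w$ in $S_n$ determined by $k$ and $n$ (essentially $w$ the Grassmannian-type element and $v$ a cyclic-shift-related element, so that $\beta(w)\beta(v)^{-1}$ closes up to the $(k,n-k)$ torus knot). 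Since $\gcd(k,n)=1$, this closure is genuinely a knot.

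\textbf{Second}, I would invoke the main theorem of the paper (stated in the body, which we may assume) relating the mixed Hodge polynomial $\Poinc(\Racc_{v,w};q,t)$ — equivalently $\Poinc(\Pio_f;q,t)$ via the projection, which is an affine-space bundle and only changes things by a controlled factor of $\qint{\cdot}$ or a power of $(q^{1/2}+t^{1/2})$ — to the top $a$-degree part of the Khovanov--Rozansky homology of the Richardson link $\betah_{v,w}$. This is where the bulk of the conceptual work lies: matching the Deligne/weight bigrading on $H^\bullet(\Pio_f)$ with the $(q,t)$-bigrading on KR homology, presumably through a Soergel-bimodule / category-$\Ocal$ model of both sides, and keeping careful track of normalizations (the shift by $\davw$, the renormalization in~\eqref{eq:Poinc_MHT_dfn}, and the framing/writhe corrections in the link invariant).

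\textbf{Third}, I would plug in the known computation of KR homology for torus knots: by Mellit and by Elias--Hogancamp, the top $a$-degree part of $\HHH$ of the $(k,n-k)$ torus knot is exactly the rational $q,t$-Catalan number $\Cat_{k,n-k}(q,t)$ (up to the standard monomial normalization). Combining with the previous two steps and tracking the overall prefactor — which collects to $\qtn$ because the fiber of the projection $\Racc_{v,w}\to\Pio_{k,n}$, together with the passage from the reduced to the unreduced invariant, contributes $n-1$ factors of $(q^{1/2}+t^{1/2})$ — yields~\eqref{eq:Poinc=Cat}.

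\textbf{The main obstacle} is the second step: establishing the precise dictionary between the mixed Hodge structure on the open Richardson variety and KR homology, including the exact cohomological, Hodge, and $a$-degree shifts. In particular one must verify that the construction of~\cite{Mellit_torus,EH,MeHog} produces the \emph{same} bigraded object (not merely one with the same Poincar\'e polynomial at $q=t=1$), and that the affine bundle $\Racc_{v,w} \to \Pio_{k,n}$ interacts with the weight filtration exactly as a product with $(\C^\ast)^{\,?} \times \C^{\,?}$ would. Once the normalization bookkeeping is pinned down, the identity~\eqref{eq:Poinc=Cat} follows formally from the torus-knot computation.
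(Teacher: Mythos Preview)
Your approach coincides with the paper's: deduce Theorem~\ref{thm:master_kn} from the general Richardson/KR correspondence (Theorem~\ref{thm:main}) together with Mellit's computation of torus-knot KR homology. There is, however, a concrete error in your bookkeeping. The projection $\Fl(n)\to\Gr(k,n)$ restricts to an \emph{isomorphism} $\Rich_v^w\xrightarrow{\sim}\Piokn$ (Proposition~\ref{prop:KLS}); it is not an affine bundle, and no prefactor arises there. Your attempt to account for $\qtn$ via the fiber of that map, or via a ``reduced vs.\ unreduced'' passage, would not go through.

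The factor $\qtn$ comes instead from the torus quotient. Since $\gcd(k,n)=1$, the permutation $\fmod_{k,n}$ is a single $n$-cycle, so by~\eqref{eq:free} the torus $T\cong(\Cast)^{n-1}$ acts freely on $\Piokn$, and in fact $\Piokn\cong\Pit_{k,n}\times T$ (Corollary~\ref{cor:Rich_T_action}). The K\"unneth formula then gives $\Poinckn q,t)=\qtn\cdot\Poinc(\Pit_{k,n};q,t)$; cf.~\eqref{eq:mod_T_qtn}. The free action also collapses $H^\bul_{T,c}(\Piokn)$ to $H^\bul_c(\Pit_{k,n})$ (Lemma~\ref{lem:HTfree}), so that Theorem~\ref{thm:main} identifies $\Poinc(\Pit_{k,n};q,t)$, up to the monomial $(\qq\tt)^{\chi(\beta_{\fkn})}$, with $\PK$ of the $(k,n-k)$ torus knot, which by Mellit equals $\Cat_{k,n-k}(q,t)$. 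That is the correct route to~\eqref{eq:Poinc=Cat}; see Section~\ref{ssec:introproof}.
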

The equality \eqref{eq:Poinc=Cat} arises as a conjecture from the works \cite{STZ,STWZ} and we thank Vivek Shende for drawing our attention to the conjecture; see \cref{sec:STZ} for further discussion. We generalize \cref{thm:master_kn} to all positroid varieties in \cref{thm:main} below.

Let us discuss the specializations of \cref{thm:master_kn} that give answers to Questions~\eqref{item:Q1} and~\eqref{item:Q2} above. 
 Denote 
\begin{equation*}
  \qint{n}:=1+q+\dots+q^{n-1},\quad \qint{n}!:=\qint{1}\qint{2}\cdots\qint{n},\quad \qbin{n}k:=\frac{\qint{n}!}{\qint{k}!\qint{n-k}!}.
\end{equation*}

\begin{corollary}\label{cor:master}
The \Poincare polynomial and point count of $\Pio_{k,n}$ are
\begin{align}
 \label{eq:cor:master:Poinc}
 \Pcal(\Pio_{k,n}; q) &= (q+1)^{n-1}\cdot  \Cat_{k,n-k}(q^2,1);\\
 \label{eq:point_count}
  \#\Piokn(\F_q)&=(q-1)^{n-1}\cdot \frac{1}{\qint{n}}\qbin{n}k. 
  \end{align}
\end{corollary}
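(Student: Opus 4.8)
The plan is to deduce \cref{cor:master} from \cref{thm:master_kn} by specializing the two variables $q,t$ appropriately and comparing with the two classical $q$-analogs of rational Catalan numbers recalled in the introduction. The key input is the relation between the mixed Hodge polynomial $\Poinckn q,t)$ and the two invariants in question: the \Poincare polynomial is obtained from the mixed Hodge polynomial by forgetting the Hodge weight, i.e.\ by the substitution sending $H^{k,(p,p)}$ to a single cohomological grading, which amounts to setting $t=1$ and replacing $q$ by $q^2$ (the factor of two coming from the normalization in \eqref{eq:Poinc_MHT_dfn} that records cohomological degree rather than half-degree). The point count over $\F_q$, by contrast, is obtained via the fact that the cohomology of $\Pio_{k,n}$ is of Hodge--Tate type, so that $\#\Piokn(\F_q)$ is computed by the $E$-polynomial, which corresponds to the substitution $t=-1$ in $\Poinckn q,t)$ (again with $q$ in place of $q^{1/2}$ appropriately), combined with the standard sign bookkeeping for odd cohomological degrees.

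Concretely, the first step is to apply \eqref{eq:Poinc=Cat} with $t=1$. The prefactor $\qtn$ becomes $(q^{1/2}+1)^{n-1}$; after the renormalization that passes from the mixed Hodge polynomial to the genuine \Poincare polynomial $\Pcal(\Pio_{k,n};q)$ one replaces $q^{1/2}$ by $q$ throughout, and \eqref{eq:Poinc=Cat} yields exactly \eqref{eq:cor:master:Poinc} with the factor $(q+1)^{n-1}$ and $\Cat_{k,n-k}(q^2,1)$ on the right. (One can, if desired, further record that $\Cat_{k,n-k}(q^2,1)=\sum_{P\in\Dyck_{k,n-k}}q^{2\area(P)}$, recovering the area generating function claimed in the introduction, but this is not needed for the statement.) The second step is to set $t=-1$ in \eqref{eq:Poinc=Cat}. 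Here the prefactor $\qtn$ becomes $(q^{1/2}-q^{1/2})^{\,?}$ — more precisely $\bigl(q^{1/2}+t^{1/2}\bigr)^{n-1}$ specializes, via the Hodge--Tate/purity formalism for point counts, to $(q-1)^{n-1}$, while the specialization $\Cat_{k,n-k}$ at the point-count values is the MacMahon $q$-analog $\frac{1}{\qint n}\qbin nk$; this is the classical identity (going back to \cite{MacMahon}, and discussed e.g.\ in \cite{LoWa,GoMa1,GoMa2}) that the rational $q,t$-Catalan number specializes to $\frac{1}{\qint n}\qbin nk$ under the relevant one-variable specialization. Putting the two together gives \eqref{eq:point_count}.

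The step I expect to require the most care is making precise the two specializations of the normalized mixed Hodge polynomial $\Poincf q,t)$: one must track exactly how the renormalization in \eqref{eq:Poinc_MHT_dfn} interacts with (a) forgetting the weight grading to recover $\Pcal(\Pio_{k,n};q)$ and (b) the alternating-sum formula $\#\Piokn(\F_q)=\sum_{k,p}(-1)^k \dim H^{k,(p,p)}\, q^{p}$ valid because the mixed Hodge structure is of Hodge--Tate type and $\Pio_{k,n}$ is smooth (so all weights satisfy $k\le 2p\le 2k$, and the comparison with étale cohomology together with the Weil conjectures applies). Once the bookkeeping of half-integer exponents is pinned down, both \eqref{eq:cor:master:Poinc} and \eqref{eq:point_count} follow immediately from \cref{thm:master_kn} by substitution, together with the elementary facts that $\Cat_{k,n-k}(q^2,1)$ is the $q^2$-area generating function and that $\Cat_{k,n-k}$ specializes to $\frac{1}{\qint n}\qbin nk$. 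No genuinely new geometric input beyond \cref{thm:master_kn} is needed.
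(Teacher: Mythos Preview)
Your overall strategy---specialize \cref{thm:master_kn} using the two standard specializations of the mixed Hodge polynomial---is exactly what the paper does (implicitly via \cref{thm:LS}\itemref{thm:LS:poincare} and~\itemref{thm:LS:point_cnt}). The \Poincare part is fine: setting one variable to $1$ and the other to $q^2$ collapses the weight grading, and by $q,t$-symmetry it does not matter which variable you specialize, so \eqref{eq:cor:master:Poinc} drops out.

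The point-count step, however, is not right as written. You say ``set $t=-1$'', and your own parenthetical ``$(q^{1/2}-q^{1/2})^{?}$'' shows that this does not produce the factor $(q-1)^{n-1}$. The correct specialization (this is \cref{thm:LS}\itemref{thm:LS:point_cnt}) is
\[
\#\Piokn(\F_q)=q^{\frac12\dim\Pio_{k,n}}\cdot \Poinc(\Pio_{k,n};q,t)\big|_{t^{1/2}=-q^{-1/2}},
\]
i.e.\ one substitutes $t^{1/2}=-q^{-1/2}$ \emph{and} multiplies by $q^{\dim/2}$. Under this, the prefactor $\left(q^{1/2}+t^{1/2}\right)^{n-1}$ becomes $\left(q^{1/2}-q^{-1/2}\right)^{n-1}$, and since $\dim\Pio_{k,n}=\dkn+(n-1)$ the extra power of $q$ combines with this to give $(q-1)^{n-1}\cdot q^{\dkn/2}$. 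The remaining factor is $\Cat_{k,n-k}(q,1/q)$, not a value at $t=-1$, and the classical identity you need is
\[
q^{\dkn/2}\,\Cat_{k,n-k}(q,1/q)=\frac{1}{\qint n}\qbin nk,
\]
which is the MacMahon $q$-analog; cf.~\eqref{eq:Pit_kn_Fq}. With this correction your argument goes through and matches the paper's.
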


Our proof of \cref{thm:master_kn} involves a number of ingredients, including Khovanov--Rozansky knot homology and  derived categories of flag varieties. The point count specialization~\eqref{eq:point_count} requires less advanced machinery and we give a quicker elementary proof in \cref{sec:point-count-fly}.   Associating a \emph{link} $\betah_f$ to each positroid variety $\Pio_f$ (\cref{sec:knots-assoc-positr}), we compare the point count $\#\Pio_f(\F_q)$ to the \FLY polynomial of $\betah_f$ (\cref{sec:HOMFLY}). The \FLY polynomial is categorified by Khovanov--Rozansky knot homology, and our proof of \cref{thm:master_kn} may be considered a ``categorification'' of the point count computation.

We have the following elegant but baffling corollary.
\begin{corollary}
Let $\gcd(k,n)=1$. Then the probability that a uniformly random $k$-dimensional subspace of $(\F_q)^n$ 
 belongs to $\Piokn(\F_q)$ is given by
\begin{equation*}
{\rm Prob}(V \in  \Piokn(\F_q)) =  \frac{(q-1)^n}{q^n-1}.
\end{equation*}
\end{corollary}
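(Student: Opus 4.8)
The plan is to derive this corollary directly from the point count formula~\eqref{eq:point_count} in \cref{cor:master}. The total number of $k$-dimensional subspaces of $(\F_q)^n$ is the Gaussian binomial $\qbin{n}{k}$, and the number of those lying in $\Piokn(\F_q)$ is $(q-1)^{n-1}\cdot\frac{1}{\qint n}\qbin nk$ by~\eqref{eq:point_count}. Hence the desired probability is the ratio
\begin{equation*}
  \Prob(V\in\Piokn(\F_q)) = \frac{(q-1)^{n-1}\cdot\frac{1}{\qint n}\qbin nk}{\qbin nk} = \frac{(q-1)^{n-1}}{\qint n}.
\end{equation*}
The factor $\qbin nk$ cancels entirely — this is the reason the answer is so simple and does not depend on $k$.

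It remains only to rewrite $\frac{(q-1)^{n-1}}{\qint n}$ in the stated form. Since $\qint n = 1+q+\dots+q^{n-1} = \frac{q^n-1}{q-1}$, we get
\begin{equation*}
  \frac{(q-1)^{n-1}}{\qint n} = \frac{(q-1)^{n-1}(q-1)}{q^n-1} = \frac{(q-1)^n}{q^n-1},
\end{equation*}
which is exactly the claimed expression. There is essentially no obstacle here: the corollary is a one-line consequence of~\eqref{eq:point_count} together with the classical count $\#\Gr(k,n)(\F_q) = \qbin nk$ and the elementary identity $(q-1)\qint n = q^n-1$. The only thing worth noting is that~\eqref{eq:point_count} is itself proved via the \FLY polynomial comparison in \cref{sec:point-count-fly}, so the real content lies upstream; this corollary merely repackages it.
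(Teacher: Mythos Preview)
Your proof is correct and is exactly the derivation the paper has in mind: the corollary is stated without proof immediately after \cref{cor:master}, and your computation---dividing~\eqref{eq:point_count} by $\#\Gr(k,n)(\F_q)=\qbin nk$ and simplifying $(q-1)^{n-1}/\qint n = (q-1)^n/(q^n-1)$---is the intended one-line justification.
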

\noindent The probability $\frac{(q-1)^n}{q^n-1}$ does not depend on $k$. We do not have a direct explanation for this phenomenon.

\subsection{Cluster structure and the curious Lefschetz theorem}
Since the work of Scott~\cite{Scott}, positroid varieties have been expected to admit a natural \emph{cluster algebra}~\cite{FZ} structure arising from Postnikov diagrams. We recently proved this conjecture building on the results of~\cite{Lec,MuSp,SSBW}.
\begin{theorem}[{\cite{GL}}]\label{thm:GL}
  The coordinate ring of each positroid variety $\Pio_f$ is isomorphic to the associated cluster algebra.
\end{theorem}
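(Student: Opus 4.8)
Fix $f\in\Bkn$. The plan is to manufacture a distinguished seed out of the combinatorics of plabic graphs and then squeeze $\C[\Pio_f]$ between the associated cluster and upper cluster algebras. First I would pick a reduced plabic graph (Postnikov diagram) $G$ representing $f$ and, following Muller--Speyer, attach to it a seed $\Sigma_G=(Q_G,\bx_G)$: the quiver $Q_G$ is the dual quiver on the faces of $G$ --- mutable at interior faces, frozen at boundary faces --- and the cluster variable at a face $F$ is the Pl\"ucker coordinate $\Delta_{I(F)}$ for the target label $I(F)$ of $F$. Let $\Acal(\Sigma_G)$ and $\Ucal(\Sigma_G)$ be the cluster and upper cluster algebras of $\Sigma_G$. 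Since any two reduced plabic graphs for $f$ are connected by square moves (which act as seed mutations) and contraction moves (which do not change the seed), it suffices to prove $\Acal(\Sigma_G)=\C[\Pio_f]$ for a single $G$. Two geometric facts are used throughout: $\Pio_f$ is a smooth, hence normal, affine variety, and its invertible regular functions, up to scalars, are precisely the ``boundary'' cyclic Pl\"ucker coordinates --- which are exactly the frozen variables of $\Sigma_G$.

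I would then establish the sandwich $\Acal(\Sigma_G)\subseteq\C[\Pio_f]\subseteq\Ucal(\Sigma_G)$ together with the equality $\Acal(\Sigma_G)=\Ucal(\Sigma_G)$, which together force $\Acal(\Sigma_G)=\C[\Pio_f]$. For the left inclusion: the initial cluster variables are Pl\"ucker coordinates and hence regular on $\Pio_f$; a single mutation of a Pl\"ucker-type seed again produces a regular function, since its exchange relation is a short three-term Pl\"ucker relation and division by a frozen variable is harmless (frozens being units on $\Pio_f$); so by the Laurent phenomenon all of $\Acal(\Sigma_G)$ consists of regular functions. For $\Acal(\Sigma_G)=\Ucal(\Sigma_G)$ I would appeal to the local acyclicity of positroid cluster algebras (\cite{MuSp}, via the theory of plabic-graph removals for Grassmannian cluster algebras), since a locally acyclic cluster algebra coincides with its upper cluster algebra. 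The right inclusion $\C[\Pio_f]\subseteq\Ucal(\Sigma_G)$ is the Berenstein--Fomin--Zelevinsky / Muller ``starfish'' criterion: because $\Pio_f$ is normal, a global regular function that is a Laurent polynomial in the initial cluster and in each of its one-step mutations automatically lies in $\Ucal(\Sigma_G)$, so it is enough to check, for each mutable vertex $i$, that the two cluster variables exchanged by $\mu_i$ are coprime in $\C[\Pio_f]$ and that the two Laurent expansions agree away from codimension two.

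The hard part will be verifying these codimension-two and coprimality conditions, since they require real control over the divisors $\{x_i=0\}\subset\Pio_f$: one must show each is reduced and understand its irreducible components well enough to rule out a common factor with the mutated variable. My plan here is to import this divisor combinatorics from the flag variety, using that $\Pio_f$ is isomorphic to an open Richardson variety $\Racc_{v,w}$ for a suitable $v\le w$ in $S_n$: Leclerc's cluster structure on $\C[\Racc_{v,w}]$ from a Frobenius categorification \cite{Lec}, together with its identification with the Muller--Speyer seeds in the Grassmannian case \cite{SSBW}, pins down the behavior of the relevant divisors. A further technical device I would use is the twist automorphism of Marsh--Scott, extended by Muller--Speyer to positroid varieties, which intertwines the source and target Pl\"ucker labelings and thereby reduces a mutation at an arbitrary face to mutations near the boundary, where the exchange relations are short Pl\"ucker relations amenable to direct analysis. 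I expect the left inclusion and the $\Acal=\Ucal$ step to be essentially bookkeeping given the cited inputs, with all of the genuine difficulty concentrated in the codimension-two conditions of the right inclusion.
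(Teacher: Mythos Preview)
The paper does not prove this theorem at all: it is quoted as a result of \cite{GL} and used as input, with no argument given here. So there is nothing in the present paper to compare your proposal against; what you have written is really a sketch of how \cite{GL} might go.

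That said, your outline diverges from the actual strategy of \cite{GL} and has a genuine gap. Your argument for the left inclusion $\Acal(\Sigma_G)\subseteq\C[\Pio_f]$ is incomplete: knowing that the \emph{initial} exchange relations are short Pl\"ucker relations does not, via the Laurent phenomenon, imply that \emph{all} cluster variables are regular on $\Pio_f$, since the Laurent phenomenon only gives you Laurent polynomials in the initial mutable variables, whose denominators are not units on $\Pio_f$. This inclusion is in fact one of the nontrivial inputs, obtained in \cite{Lec} via additive categorification (rigid modules over a preprojective algebra), not by a direct inductive mutation argument. The approach in \cite{GL} is then: (i) take Leclerc's inclusion $\Acal\hookrightarrow\C[\Rich_v^w]$; (ii) identify Leclerc's seed with the Muller--Speyer plabic seed using \cite{SSBW}; (iii) prove \emph{surjectivity} by showing every Pl\"ucker coordinate is a cluster monomial, the key technical step being that the Muller--Speyer twist is a quasi-cluster automorphism. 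Your sandwich $\Acal\subseteq\C[\Pio_f]\subseteq\Ucal$ plus local acyclicity is a plausible alternative architecture, but the left inclusion still needs Leclerc's result (or an equivalent argument), and the starfish/codimension-two verification you flag as ``the hard part'' is replaced in \cite{GL} by the twist-map argument rather than carried out directly.
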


This result allows one to study $\Pio_f$ as a \emph{cluster variety}, and for such spaces the mixed Hodge structure can be explored using the machinery developed by Lam--Speyer~\cite{LS}, whose work implies the following properties of the mixed Hodge polynomials $\Poincf q,t)$.

\begin{theorem}[\cite{LS},\cite{GL}]\label{thm:LS}
  For each $f\in\Bkn$, the \emph{mixed Hodge polynomial} $\Poincf q,t)\in\N[q^{\frac12},t^{\frac12}]$ has the following properties:
\begin{theoremlist}
\item\label{thm:LS:qt_symm} $q,t$-symmetry: $\Poincf q,t)=\Poincf t,q)$;
\item\label{thm:LS:qt_unim} $q,t$-unimodality: for each $d$, the coefficients of $\Poincf q,t)$ at $q^dt^0$, $q^{d-1}t^1$, \dots, $q^0t^d$ form a unimodal sequence;
\item\label{thm:LS:poincare}  $\Poincf 1,q^2)$ equals the \Poincare polynomial of $\Pio_f$ (considered as a variety over $\C$);
\item\label{thm:LS:point_cnt} $q^{\frac12\dim\Pio_f}\cdot \Poincf q,t) |_{t^{\frac12}=-q^{-\frac12}}$ equals the point count $\#\Pio_f(\F_q)$.
\end{theoremlist}
\end{theorem}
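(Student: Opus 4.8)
The plan is to derive all four items from the cluster-variety machinery of Lam--Speyer~\cite{LS}, using \cref{thm:GL} to place ourselves in its setting. The first step is to record that the cluster structure on $\Pio_f$ supplied by \cref{thm:GL} satisfies the hypotheses under which that machinery runs: it is locally acyclic (by~\cite{MuSp}), it admits a reddening sequence, and its frozen variables are invertible regular functions on $\Pio_f$ (by definition the relevant Pl\"ucker coordinates are nonvanishing there). Granting this, \cite{LS} shows that $H^\bul(\Pio_f)$ is finite-dimensional and of Hodge--Tate type, so that the Deligne splitting~\eqref{eq:DS} holds and $\Poincf q,t)\in\N[q^{\frac12},t^{\frac12}]$ is well defined by~\eqref{eq:Poinc_MHT_dfn}. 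We also record that $\Pio_f$ is polynomial-count, which is classical from~\cite{KLS} and which we use for~\itemref{thm:LS:point_cnt}.

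The heart of the argument is the curious hard Lefschetz theorem of~\cite{LS}: there is a class $\omega\in H^{2,(2,2)}(\Pio_f)$, built from the cluster $2$-form, together with the isomorphisms given by cup products with powers of $\omega$. Cup product with $\omega$ shifts the Deligne bigrading by $(k,p)\mapsto(k+2,p+2)$; translating through the normalization~\eqref{eq:Poinc_MHT_dfn}, this sends a monomial $q^at^b$ of $\Poincf q,t)$ to $q^{a-1}t^{b+1}$, so in particular it preserves the total degree $a+b$. Hence each antidiagonal $\sum_{a+b=s}c_{a,b}\,q^at^b$ of $\Poincf q,t)$ carries an $\mathfrak{sl}_2$-action whose lowering operator is $\cup\,\omega$, and the curious hard Lefschetz isomorphisms say precisely that this finite-dimensional $\mathfrak{sl}_2$-module is symmetric about $a=b=s/2$. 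Symmetry of the weight multiplicities about this center is the $q,t$-symmetry~\itemref{thm:LS:qt_symm}; and the weight multiplicities of any finite-dimensional $\mathfrak{sl}_2$-module form a unimodal sequence, which is~\itemref{thm:LS:qt_unim}. (The bookkeeping verifying that the center of the action really is the diagonal $a=b$ is carried out in~\cite{LS}.)

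It remains to identify the two specializations. For~\itemref{thm:LS:poincare}, setting $t=q^2$ in~\eqref{eq:Poinc_MHT_dfn} collapses the bigrading in~\eqref{eq:DS} onto the cohomological grading, returning $\sum_k\dim H^k(\Pio_f)\,q^k$, the \Poincare polynomial. For~\itemref{thm:LS:point_cnt}, since $\Pio_f$ is smooth and polynomial-count with mixed-Tate cohomology, Frobenius acts on the weight-$2p$ part of $H^\bul(\Pio_f)$ by $q^p$, so the Grothendieck--Lefschetz trace formula together with \Poincare duality gives $\#\Pio_f(\F_q)=\sum_{k,p}(-1)^k\dim H^{k,(p,p)}(\Pio_f)\,q^{\dim\Pio_f-p}$; on the other hand, substituting $t^{\frac12}=-q^{-\frac12}$ in $\Poincf q,t)$ turns each $t^{k/2}$ into $(-1)^kq^{-k/2}$, and after multiplying by $q^{\frac12\dim\Pio_f}$ one recovers exactly this alternating sum once the exponents of~\eqref{eq:Poinc_MHT_dfn} are unwound.

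The genuinely hard input here --- the curious hard Lefschetz theorem for cluster varieties --- is the main theorem of~\cite{LS}, which I treat as a black box. Within the present reduction, the step demanding the most care is the first one: verifying that the positroid cluster structure of \cref{thm:GL} really does meet all the hypotheses needed by~\cite{LS} (local acyclicity, a reddening sequence, invertibility of the frozen variables), together with pinning down the normalization in~\eqref{eq:Poinc_MHT_dfn} so that~\itemref{thm:LS:poincare} and~\itemref{thm:LS:point_cnt} emerge in exactly the stated form.
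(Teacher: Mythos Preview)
Your approach is essentially the same as the paper's: the theorem is cited from~\cite{LS,GL}, and the paper's discussion (in \cref{sec:mixed-hodge-struct}) sketches exactly the route you describe --- \cref{thm:GL} supplies the cluster structure, and the curious Lefschetz theorem of~\cite{LS} then yields~\itemref{thm:LS:qt_symm} and~\itemref{thm:LS:qt_unim}, while~\itemref{thm:LS:poincare} and~\itemref{thm:LS:point_cnt} are direct specializations of~\eqref{eq:Poinc_MHT_dfn}.

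One point you should add: the curious Lefschetz statement~\eqref{eq:CL} in~\cite{LS} is formulated for an \emph{even}-dimensional variety, whereas $\Pio_f$ may have odd dimension. The paper handles this (see the paragraph after~\eqref{eq:CL}) by replacing $\Pio_f$ with $\Pio_f\times\C^\ast$, which is even-dimensional, and then recovering symmetry and unimodality for $\Poincf q,t)$ via the K\"unneth theorem. Without this maneuver your $\mathfrak{sl}_2$-argument does not literally apply when $\dim\Pio_f$ is odd.
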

\noindent See \cref{ex:E8} below. 

Parts~\itemref{thm:LS:qt_symm} and~\itemref{thm:LS:qt_unim} are consequences of the \emph{curious Lefschetz property}, formalized in~\cite{HR} and proven to hold for certain cluster varieties in~\cite{LS}; see \cref{sec:mixed-hodge-struct}.

\subsection{The Catalan variety} \label{sec:catalan-variety}
Let $\Tn\cong(\Cast)^{n-1}$ be the group of diagonal matrices in $\PGL_n(\C)$: it is the quotient of the group of diagonal $n\times n$ matrices by the group of scalar matrices. The group $\Tn$ acts on $\Gr(k,n)$ preserving the positroid stratification. 
For $u\in S_n$, let
\begin{equation}\label{eq:ncyc_dfn}
  \ncyc(u):= \mbox{ the number of cycles of $u$},
\end{equation}
and let $\Bknc:=\{f\in\Bkn\mid \ncyc(\fmod)=1\}$.
The following observation is proved in \cref{sec:torus-acti-rich}.
\begin{proposition}\label{prop:free}
The action of $\Tn$ on $\Pio_f$ is free if and only if the permutation $\fmod$ is a single cycle.
\end{proposition}
For $f\in\Bknc$, the quotient $\Pit_f:= \Pio_f/\Tn$ is again a smooth affine variety that we call a {\it positroid configuration space}; see also \cite{AHLS}. It is a cluster variety (with no frozen variables, since the $\Tn$-action on the frozen variables of $\Pio_f$ is free), and thus \cref{thm:LS} applies to it.  
\begin{proposition}\label{prop:mod_T_qtn}
For $f\in\Bknc$, the mixed Hodge polynomials of $\Pio_f$ and $\Pit_f$ are related by:
\begin{equation}\label{eq:mod_T_qtn}
  \Poincf q,t)=\qtn \cdot \Pcal(\Pit_{f};q,t).
\end{equation}
\end{proposition}
When $\gcd(k,n) = 1$, we have $\fkn \in \Bknc$.  The quotient $\Pit_{k,n}:= \Pio_{k,n}/\Tn$ satisfies
\begin{equation}\label{eq:Poinc=Cat2}
\Pcal(\Pit_{k,n};q,t)=\Cat_{k,n-k}(q,t),
\end{equation}
and we refer to $\Pit_{k,n}$ as \emph{the Catalan variety}. 
Let $\dkn:=(k-1)(n-k-1)=\dim(\Pit_{k,n})$.  We obtain the following as a consequence of \cref{thm:LS}.
\begin{corollary}\label{cor:master_kn}
Assume that $\gcd(k,n)=1$. We have:
\begin{theoremlist}
\item\label{cor:master:qt_symm} $q,t$-symmetry: $\Cat_{k,n-k}(q,t)=\Cat_{k,n-k}(t,q)$;
\item\label{cor:master:qt_unim} $q,t$-unimodality: for each $d$, the coefficients of $\Cat_{k,n-k}(q,t)$ at $q^dt^0$, $q^{d-1}t^1$, \dots, $q^0t^d$ form a unimodal sequence;
\item\label{cor:master:poincare}  the \Poincare polynomial of $\Pit_{k,n}$ is given by 
\begin{equation}\label{eq:Piokn_Poinc}
\sum_{d} q^{\frac d2} \dim H^{\dkn-d}(\Pit_{k,n})=\Cat_{k,n-k}(q,1)=\sum_{P\in\Dyckxx(k,n-k)} q^{\area(P)};
\end{equation}
\item\label{cor:master:point_cnt} the number of $\F_q$-points of $\Pit_{k,n}$ is given by
\begin{equation}\label{eq:Pit_kn_Fq}
  \#\Pit_{k,n}(\F_q)=\frac{1}{\qint{n}}\qbin{n}k=q^{\frac12\dkn}\cdot  \Cat_{k,n-k}(q,1/q).
\end{equation}
\end{theoremlist}
\end{corollary}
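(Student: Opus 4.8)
The plan is to deduce all four statements by applying \cref{thm:LS} to the Catalan variety $\Pit_{k,n}$ and then substituting the identity $\Pcal(\Pit_{k,n};q,t)=\Cat_{k,n-k}(q,t)$ of \eqref{eq:Poinc=Cat2} (which is itself obtained from \cref{thm:master_kn} together with \eqref{eq:mod_T_qtn}). For the setup: since $\gcd(k,n)=1$ we have $\fkn\in\Bknc$, so by \eqref{eq:free} the torus $\Tn$ acts freely on $\Pio_{k,n}$, and (as recalled in \cref{sec:catalan-variety}) the quotient $\Pit_{k,n}=\Pio_{k,n}/\Tn$ is a smooth affine cluster variety of dimension $\dkn$; consequently \cref{thm:LS} applies verbatim with $\Pio_f$ replaced by $\Pit_{k,n}$ and $d_f$ by $\dkn$.

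Parts \itemref{cor:master:qt_symm} and \itemref{cor:master:qt_unim} are then immediate: by \cref{thm:LS:qt_symm} and \cref{thm:LS:qt_unim} the polynomial $\Pcal(\Pit_{k,n};q,t)$ is $q,t$-symmetric and $q,t$-unimodal, and by \eqref{eq:Poinc=Cat2} it equals $\Cat_{k,n-k}(q,t)$.

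For part \itemref{cor:master:poincare}, \cref{thm:LS:poincare} applied to $\Pit_{k,n}$ together with \eqref{eq:Poinc=Cat2} identifies the \Poincare polynomial of $\Pit_{k,n}$ with $\Cat_{k,n-k}(1,q^2)$; unwinding the degree normalization \eqref{eq:Poinc_MHT_dfn} --- under which a class of cohomological degree $i$ contributes in $q$-degree $\tfrac12(\dkn-i)$ --- and invoking the $q,t$-symmetry just proved, this rewrites as $\sum_{d} q^{d/2}\dim H^{\dkn-d}(\Pit_{k,n})=\Cat_{k,n-k}(q,1)$, and the final equality $\Cat_{k,n-k}(q,1)=\sum_{P\in\Dyckxx(k,n-k)}q^{\area(P)}$ is the defining formula \eqref{eq:qt_cat} specialized at $t=1$. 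For part \itemref{cor:master:point_cnt}, \cref{thm:LS:point_cnt} applied to $\Pit_{k,n}$ together with \eqref{eq:Poinc=Cat2} gives
\[
\#\Pit_{k,n}(\F_q)=q^{\frac12\dkn}\cdot\Pcal(\Pit_{k,n};q,t)|_{t^{\frac12}=-q^{-\frac12}}=q^{\frac12\dkn}\,\Cat_{k,n-k}(q,1/q),
\]
which is the right-hand equality of \eqref{eq:Pit_kn_Fq}. The left-hand equality is obtained independently: the projection $\Pio_{k,n}\to\Pit_{k,n}$ is a torsor under the split torus $\Tn\cong(\Cast)^{n-1}$, hence Zariski-locally trivial, so $\#\Pio_{k,n}(\F_q)=(q-1)^{n-1}\,\#\Pit_{k,n}(\F_q)$; dividing \eqref{eq:point_count} by $(q-1)^{n-1}$ yields $\#\Pit_{k,n}(\F_q)=\tfrac1{\qint{n}}\qbin{n}{k}$. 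Comparing the two expressions for $\#\Pit_{k,n}(\F_q)$ also recovers, as a byproduct, the classical identity $q^{\frac12\dkn}\Cat_{k,n-k}(q,1/q)=\tfrac1{\qint{n}}\qbin{n}{k}$ relating the two $q$-analogs of the rational Catalan number.

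I do not expect a genuine obstacle here: once \cref{thm:master_kn} and the curious Lefschetz package \cref{thm:LS} for cluster varieties (from \cite{LS,GL}) are granted, the corollary is a purely formal consequence. The only points needing attention are (a) confirming that the quotient $\Pit_{k,n}$ genuinely inherits a cluster structure and satisfies the hypotheses of \cref{thm:LS} --- the content of \cref{sec:catalan-variety} and \cref{sec:torus-acti-rich} --- and (b) tracking the degree conventions of \eqref{eq:Poinc_MHT_dfn} carefully enough that parts \itemref{cor:master:poincare} and \itemref{cor:master:point_cnt} come out in exactly the stated form.
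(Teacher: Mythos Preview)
Your proposal is correct and matches the paper's own approach: the paper presents \cref{cor:master_kn} explicitly as ``a consequence of \cref{thm:LS}'' applied to the cluster variety $\Pit_{k,n}$ via \eqref{eq:Poinc=Cat2}, and you have simply filled in the routine bookkeeping (the degree conventions in \eqref{eq:Poinc_MHT_dfn}, the use of $q,t$-symmetry to pass from $\Cat_{k,n-k}(1,q)$ to $\Cat_{k,n-k}(q,1)$ in part~\itemref{cor:master:poincare}, and the factor $(q-1)^{n-1}$ relating $\#\Pio_{k,n}(\F_q)$ to $\#\Pit_{k,n}(\F_q)$ in part~\itemref{cor:master:point_cnt}). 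One minor simplification: rather than invoking a torsor argument for the point-count factorization, you can cite \cref{cor:Rich_T_action}, which gives an actual product decomposition $\Pio_{k,n}\cong \Pit_{k,n}\times T$.
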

\noindent  While part~\itemref{cor:master:qt_symm} is known, the remaining parts of \cref{cor:master_kn} appear to be new; see \cref{sec:notes:catalan}. Note also that the odd Betti numbers of $\Pit_{k,n}$ vanish, a phenomenon that we do not have a direct explanation for. Parts~\itemref{cor:master:poincare}--\itemref{cor:master:point_cnt} may be deduced directly from \cref{cor:master} using \cref{prop:free}.

\begin{example}\label{ex:E8}
\begin{table}
\begin{tabular}{|c|ccccccccc|}\hline
$H^k$ & $H^0$ & $H^1$ & $H^2$ & $H^3$ & $H^4$ & $H^5$ & $H^6$ &$H^7$ & $H^8$ \\\hline
$k-p=0$&  $1$ &  $0$ &  $1$ &  $0$ &  $1$ &  $0$ &  $1$ &  $0$ &  $1$ \\
$k-p=1$&      &      &      &      &  $1$ &  $0$ &  $1$ & & \\\hline
\end{tabular}
  \caption{\label{tab:E8} The mixed Hodge table recording the dimensions of $H^{k,(p,p)}(\Pit_{3,8})$ for the cluster algebra of type $E_8$; see \cite[Table~5]{LS}. The dimensions agree with the coefficients of $\Cat_{3,5}(q,t)$; see \cref{ex:E8}.}
\end{table}
Let $k=3$ and $n=8$.  The coordinate ring of $\Pit_{3,8}$ is a cluster algebra of type $E_8$ (with no frozen variables). The associated \emph{mixed Hodge table} is given in \cref{tab:E8}; see~\cite[Table~5]{LS}. The grading conventions~\eqref{eq:Poinc_MHT_dfn} are chosen so that the first row contributes $q^{4} + q^{3} t + q^{2} t^{2} + q t^{3} +  t^{4}$ while the second row contributes $q^{2} t + q t^{2}$ to $\Pcal(\Pit_{3,8}; q,t)$. Note that all odd cohomology groups vanish, which is why all monomials have integer powers of $q$ and $t$. Comparing the result with~\eqref{eq:Cat_5_3}, we find $\Pcal(\Pit_{3,8}; q,t)=\Cat_{3,5}(q,t)$, in agreement with \cref{thm:master_kn}.

The polynomial $\Cat_{3,5}(q,t)$ given in~\eqref{eq:Cat_5_3} is indeed $q,t$-symmetric. It is also $q,t$-unimodal: fixing the total degree of $q$ and $t$, it splits into polynomials $q^{4} + q^{3} t + q^{2} t^{2} + q t^{3} +  t^{4}$ and $q^{2} t + q t^{2}$, both of which have unimodal coefficient sequences, corresponding to the rows of \cref{tab:E8}. We also have $\Cat_{3,5}(q,1)=q^4+q^3+2q^2+2q+1$; the coefficient of $q^{d/2}$ is equal to $\dim H^{\dkn-d}(\Pio_{3,8})$ for each $d$ (these coefficients are column sums in \cref{tab:E8}).
 This agrees with~\eqref{eq:Piokn_Poinc}.
\end{example}

\subsection{Links associated to positroid varieties}\label{sec:knots-assoc-positr}
Let us say that a permutation $w\in S_n$ is \emph{$k$-Grassmannian} if $w^{-1}(1)<w^{-1}(2)<\dots<w^{-1}(k)$ and $w^{-1}(k+1)<\dots<w^{-1}(n)$. We denote by $\leq$ the (strong) Bruhat order on $S_n$. Let $\Qkn$ denote the set of pairs $(v,w)$ of permutations such that $v\leq w$ and $w$ is $k$-Grassmannian. The following result is well known; see \cref{prop:v_w_aff}.

\begin{proposition}[\cite{KLS}] \label{prop:v_w}
There exists a bijection $(v,w)\mapsto \fvw$ between $\Qkn$ and $\Bkn$ such that for every $f=\fvw\in\Bkn$, we have $\fmod=wv^{-1}$.
\end{proposition}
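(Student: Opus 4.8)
The plan is to derive \cref{prop:v_w} from the Knutson--Lam--Speyer description of positroid varieties as projected open Richardson varieties, and then to match combinatorial data on the two sides. Recall the projection $\pi\colon\Fl_n\to\Gr(k,n)$, $F_\bullet\mapsto F_k$, realizing $\Gr(k,n)=\Fl_n/P_k$ for the maximal parabolic $P_k$; the $k$-Grassmannian permutations are exactly the minimal-length representatives of the cosets in $W_{P_k}\backslash S_n$, and $\pi$ restricts to an isomorphism from the open Richardson variety $\Racc_{v,w}=\Xw\cap\Xv\subseteq\Fl_n$ onto its image precisely when $w$ is $k$-Grassmannian (for any $v\leq w$). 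By \cite{KLS}, every open positroid variety $\Pio_f\subseteq\Gr(k,n)$ is the image $\pi(\Racc_{v,w})$ of a unique such pair $(v,w)$ with $w$ $k$-Grassmannian and $v\leq w$, and this assignment $(v,w)\mapsto\fvw$ is the desired bijection between $\Qkn$ and $\Bkn$.

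It then remains to check $\fmod=wv^{-1}$. I would compute the bounded affine permutation $f=\fvw$ of $\Pio_{\fvw}=\pi(\Racc_{v,w})$ through its Grassmann necklace $(I_1,\dots,I_n)$, where $I_j\subseteq[n]$ records the Schubert position of $V=F_k$ (for $F_\bullet\in\Racc_{v,w}$) relative to the $(j-1)$-fold cyclic rotation of the standard coordinate flag $E_\bullet$, and the reduced value $\fmod(j)\in[n]$ is read off from the transition $I_{j+1}=(I_j\setminus\{j\})\cup\{\fmod(j)\}$, indices taken cyclically. On the Richardson side, the two ``legs'' $w$ and $v$ control, respectively, the position of $V$ relative to $E_\bullet$ and relative to the opposite flag $E^{\mathrm{op}}_\bullet$; the identity that glues the two pictures together is $E^{\mathrm{op}}_j=\Span(e_{n-j+1},\dots,e_n)=E^{(n-j)}_j$, so that the ``opposite'' Schubert conditions on the single subspace $V$ are a subset of the cyclically rotated Schubert conditions. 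Tracking the accumulated relative position as the reference flag rotates once around from $E_\bullet$ back to $E_\bullet$ and comparing with the known relative position $w_0$ of $E_\bullet$ and $E^{\mathrm{op}}_\bullet$ recovers $f$; reducing modulo $n$ discards the translation part of the affine permutation and leaves $\fmod=wv^{-1}$.

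I expect the bijection itself to be bookkeeping once \cite{KLS} is invoked, and the identity $\fmod=wv^{-1}$ to be the main obstacle: one must fix mutually compatible conventions for the Bruhat order, for which flag each leg of $\Racc_{v,w}$ refers to, for the direction of the cyclic rotation on $\Gr(k,n)$, and for how a $k$-subset lifts to its minimal $k$-Grassmannian permutation, and then verify that all translation parts cancel so that $f$ reduces exactly to $wv^{-1}$ rather than to a conjugate or inverse. A clean way to organize the verification is by induction on $\ell(v)$: the base case $v=\id$ is a direct computation of the Grassmannian bounded affine permutation attached to $w$ (where $\fmod=w$), and the inductive step uses that passing from $v$ to $vs_i$ with $\ell(vs_i)=\ell(v)+1\leq\ell(w)$ is a codimension-one degeneration on both sides, corresponding on the bounded affine side to multiplying $f$ by the affine reflection matching $vs_iv^{-1}$. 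Alternatively, one may avoid the finite-flag computation and instead deduce \cref{prop:v_w} from its affine counterpart \cref{prop:v_w_aff}, using the compatibility of $S_n\hookrightarrow\widetilde S_n$ with the two projection pictures.
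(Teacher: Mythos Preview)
Your alternative at the very end is in fact the paper's approach, and it is a one-line argument: the paper simply records the explicit formula from \cite{KLS} in \cref{prop:v_w_aff}, namely $f=\tilde w\circ\tau_{k,n}\circ\tilde v^{-1}$, and since $\tau_{k,n}$ reduces to the identity modulo $n$, the claim $\bar f=wv^{-1}$ is immediate. No Grassmann necklace computation, no tracking of rotated Schubert positions, and no induction on $\ell(v)$ is needed.

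Your main proposal is not wrong in spirit, but it is far more elaborate than what the situation calls for, and parts of it are loosely argued. For instance, your observation $E^{\mathrm{op}}_j=E^{(n-j)}_j$ shows that each piece of the opposite flag is a cyclic shift of the corresponding piece of the standard flag, but by a \emph{different} shift for each $j$; this means the opposite flag is not a single rotation of the standard flag, and the ``tracking the accumulated relative position as the reference flag rotates once around'' step would require a careful argument rather than a direct comparison. Similarly, the inductive step you describe (``multiplying $f$ by the affine reflection matching $vs_iv^{-1}$'') is essentially re-deriving the formula $f=\tilde w\circ\tau_{k,n}\circ\tilde v^{-1}$ from scratch. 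Once that formula is known, everything collapses; so you should lead with the affine description rather than leaving it as an afterthought.
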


\noindent For example, when $f=\fkn$, we have $v=\id$ and the permutation $w=\fmod$ sends $i\mapsto i+k$ modulo $n$ for all $i\in[n]$. The dimension of $\Pio_f$ equals $\lvw:=\ell(w)-\ell(v)$, where $\ell(u)$ is the number of inversions of $u\in S_n$. 

The group $S_n$ is generated by simple transpositions $s_i=(i,i+1)$ for $1\leq i\leq n-1$. Similarly, let $\Braid_n$ be the \emph{braid group} on $n$ strands, generated by $\sigma_1,\dots,\sigma_{n-1}$ with relations $\sigma_i\sigma_{i+1}\sigma_i=\sigma_{i+1}\sigma_i\sigma_{i+1}$ and $\sigma_i\sigma_j=\sigma_j\sigma_i$ for $|i-j|>1$. Connecting the corresponding endpoints of a braid $\beta$ gives rise to a \emph{link} called the \emph{closure} $\betah$ of $\beta$; see \cref{fig:braids}.

\begin{figure}

\def\rc{5}
\def\lw{2pt}

\def\whdiam(#1,#2){
\fill[white] (#1-\whlw,#2)--(#1,#2-\whlw)--(#1+\whlw,#2)--(#1,#2+\whlw)--cycle;
}
\def\whdiamr(#1,#2){
\whdiam(#1,#2)
\draw[line width=\lw] (#1-\whlww,#2-\whlww)--(#1+\whlww,#2+\whlww);
}
\def\whdiaml(#1,#2){
\whdiam(#1,#2)
\draw[line width=\lw] (#1+\whlww,#2-\whlww)--(#1-\whlww,#2+\whlww);
}

\def\whlw{0.5}
\def\whlww{0.25}
\def\yscl{0.7}
\def\xscl{1}
\def\sclbx{0.5}

\def\hmar{0.5}
\def\vmar{0.5}

\def\hmarr{0.8}
\def\vmarr{0.7}

\def\hmarrr{1.1}
\def\vmarrr{0.9}

\def\hmarrrr{1.4}
\def\vmarrrr{1.1}

\def\recmar{0.3}
\def\ndscl{1.4}
\centering

\setlength{\tabcolsep}{6pt}
\scalebox{0.75}{
\makebox[1.0\textwidth]{
\begin{tabular}{ccc}

\def\mar{0.5}

\scalebox{\sclbx}{
\begin{tikzpicture}[yscale=1, baseline=(ZUZU.base)]
\coordinate(ZUZU) at (0,0);
\begin{knot}[
    clip width=6,
    ]
\strand[rounded corners=\rc, line width=\lw] (-\mar,3)--(0,3)--(1,3)--(2,2)--(3,2)--(4,1)--(7,1)--(8,0)--(8+\mar,0);
\strand[rounded corners=\rc, line width=\lw] (-\mar,2)--(0,2)--(1,1)--(2,1)--(3,0)--(7,0)--(8,1)--(8+\mar,1);
\strand[rounded corners=\rc, line width=\lw] (-\mar,1)--(0,1)--(2,3)--(6,3)--(7,2)--(8,2)--(8+\mar,2);
\strand[rounded corners=\rc, line width=\lw] (-\mar,0)--(2,0)--(4,2)--(6,2)--(7,3)--(8+\mar,3);
\flipcrossings{1,6}
\end{knot}
\draw[line width=1pt,dashed,red!50] (0,-\recmar) rectangle (4,3+\recmar);
\draw[line width=1pt,dashed,blue!50] (6,-\recmar) rectangle (8,3+\recmar);
\node[scale=1.4,anchor=south,red](A) at (2,3+\recmar) {$\beta(w)=\sigma_2\sigma_1\sigma_3\sigma_2$};
\node[scale=1.4,anchor=south,blue](A) at (7,3+\recmar) {$\beta(v)^{-1}=\sigma_1^{-1}\sigma_3^{-1}$};
\node[scale=\ndscl,anchor=east] (A) at (-\mar,0) {$4$};
\node[scale=\ndscl,anchor=east] (A) at (-\mar,1) {$3$};
\node[scale=\ndscl,anchor=east] (A) at (-\mar,2) {$2$};
\node[scale=\ndscl,anchor=east] (A) at (-\mar,3) {$1$};

\node[scale=\ndscl,anchor=west] (A) at (8+\mar,0) {$4$};
\node[scale=\ndscl,anchor=west] (A) at (8+\mar,1) {$3$};
\node[scale=\ndscl,anchor=west] (A) at (8+\mar,2) {$2$};
\node[scale=\ndscl,anchor=west] (A) at (8+\mar,3) {$1$};
\end{tikzpicture}
}

&

\def\mar{0.2}

\scalebox{\sclbx}{
\begin{tikzpicture}[yscale=1, baseline=(ZUZU.base)]
\coordinate(ZUZU) at (0,0);
\begin{knot}[
    clip width=6,
    ]
\strand[rounded corners=\rc, line width=\lw] (-\mar,3)--(0,3)--(1,3)--(2,2)--(3,2)--(4,1)--(7,1)--(8,0)--(8+\mar,0);
\strand[rounded corners=\rc, line width=\lw] (-\mar,2)--(0,2)--(1,1)--(2,1)--(3,0)--(7,0)--(8,1)--(8+\mar,1);
\strand[rounded corners=\rc, line width=\lw] (-\mar,1)--(0,1)--(2,3)--(6,3)--(7,2)--(8,2)--(8+\mar,2);
\strand[rounded corners=\rc, line width=\lw] (-\mar,0)--(2,0)--(4,2)--(6,2)--(7,3)--(8+\mar,3);
\flipcrossings{1,6}
\end{knot}
\draw[rounded corners=\rc, line width=\lw] 
(8+\mar,3)--(8+\hmar,3)--(8+\hmar,3+\vmar)--(-\hmar,3+\vmar)--(-\hmar,3)--(-\mar,3);
\draw[rounded corners=\rc, line width=\lw] (8+\mar,2)--(8+\hmarr,2)--(8+\hmarr,3+\vmarr)--(-\hmarr,3+\vmarr)--(-\hmarr,2)--(-\mar,2);
\draw[rounded corners=\rc, line width=\lw] (8+\mar,1)--(8+\hmarrr,1)--(8+\hmarrr,3+\vmarrr)--(-\hmarrr,3+\vmarrr)--(-\hmarrr,1)--(-\mar,1);
\draw[rounded corners=\rc, line width=\lw] (8+\mar,0)--(8+\hmarrrr,0)--(8+\hmarrrr,3+\vmarrrr)--(-\hmarrrr,3+\vmarrrr)--(-\hmarrrr,0)--(-\mar,0);
\end{tikzpicture}
}

&
\def\hbr{5}
\def\lw{3pt}
\def\lww{10pt}
\def\mar{2}
\def\ndscl{2}
\scalebox{0.35}{
\begin{tikzpicture}[baseline=(ZUZU.base),yscale=1,xscale=1]
\coordinate(ZUZU) at (0,0);
\begin{knot}[%
    clip width=5,
    ]
  \strand[line width=\lw,rounded corners=\rc] (-\mar,0)--(2,0)--(7,5)--(7+\mar,5);
  \strand[line width=\lw,rounded corners=\rc] (-\mar,1)--(1,1)--(6,6)--(7+\mar,6);
  \strand[line width=\lw,rounded corners=\rc] (-\mar,2)--(0,2)--(5,7)--(7+\mar,7);
  \strand[line width=\lw,rounded corners=\rc] (-\mar,3)--(0,3)--(3,0)--(7+\mar,0);
  \strand[line width=\lw,rounded corners=\rc] (-\mar,4)--(1,4)--(4,1)--(7+\mar,1);
  \strand[line width=\lw,rounded corners=\rc] (-\mar,5)--(2,5)--(5,2)--(7+\mar,2);
  \strand[line width=\lw,rounded corners=\rc] (-\mar,6)--(3,6)--(6,3)--(7+\mar,3);
  \strand[line width=\lw,rounded corners=\rc] (-\mar,7)--(4,7)--(7,4)--(7+\mar,4);
\flipcrossings{1,2,3,4,5,6,7,8,9,10,11,12,13,14,15}
\foreach[count=\j]\i in {0,1,...,7}{
\node[scale=\ndscl,anchor=east] (A) at (-\mar,7-\i) {$\j$};
\node[scale=\ndscl,anchor=west] (A) at (7+\mar,7-\i) {$\j$};
}
\end{knot}

\end{tikzpicture}
}

\\
braid $\beta_f=\beta(w)\cdot \beta(v)^{-1}$ & closure $\betah_f$ of $\beta_f$ & \quad$\beta_{\fkn}$ for $k=3$, $n=8$

\end{tabular}
}
}

  \caption{\label{fig:braids} Braids and links associated to positroid varieties.}
\end{figure}
For each element $u\in S_n$, let $\beta(u)$ denote the corresponding braid, obtained by choosing a reduced word $u=s_{i_1}s_{i_2}\cdots s_{i_{\ell(u)}}$ for $u$ and then replacing each $s_i$ with $\sigma_i$. 
\begin{definition}\label{dfn:knot}
For $f=\fvw\in\Bkn$, we set 
\begin{equation}\label{eq:beta_dfn}
\beta_f:=\beta(w)\cdot \beta(v)^{-1}.
\end{equation}
 We refer to the closure $\betah_f$ as \emph{the link associated to $f$}. See \cref{fig:braids} for an example.
\end{definition}
The link $\betah_f$ is a \emph{knot} (i.e., has one connected component) if and only if $f\in\Bknc$; see \cref{prop:free}.

\subsection{\FLY polynomial}\label{sec:HOMFLY}
The \emph{\FLY polynomial} $\HOMP(L)=\HOMP(L;a,z)$ of an (oriented) link $L$ is defined by the skein relation
\begin{equation}\label{eq:HOMFLY_dfn}
  a \HOMP(L_+) - a^{-1}\HOMP(L_-)=z \HOMP(L_0)\quad\text{and}\quad \HOMP(\unkn)=1,
\end{equation}
where $\unkn$ denotes the unknot and  $L_+$, $L_-$, $L_0$ are three links whose planar diagrams locally differ as  follows.

\def\lw{2pt}
\def\lww{6pt}
\def\drcirc{
\draw[line width=0.3pt, dashed] (0,0) circle (1cm);
}
\def\scl{0.6}

\def\DA{30}
\begin{center}
\begin{tabular}{ccc}
\scalebox{\scl}{
\begin{tikzpicture}
\drcirc
\draw[line width=\lw,->,>=latex] (-90+\DA:1)--(90+\DA:1);
\draw[line width=\lww,white] (-90-\DA:1)--(90-\DA:1);
\draw[line width=\lw,->,>=latex] (-90-\DA:1)--(90-\DA:1);
\end{tikzpicture}
}
&

\scalebox{\scl}{
\begin{tikzpicture}
\drcirc
\draw[line width=\lw,->,>=latex] (-90-\DA:1)--(90-\DA:1);
\draw[line width=\lww,white] (-90+\DA:1)--(90+\DA:1);
\draw[line width=\lw,->,>=latex] (-90+\DA:1)--(90+\DA:1);
\end{tikzpicture}
}
&

\def\rc{10}
\scalebox{\scl}{
\begin{tikzpicture}
\drcirc
\draw[line width=\lw,->,>=latex,rounded corners=\rc] (-90-\DA:1)--(0,0)--(90+\DA:1);
\draw[line width=\lw,->,>=latex,rounded corners=\rc] (-90+\DA:1)--(0,0)--(90-\DA:1);
\end{tikzpicture}
}
\\
$L_+$ & $L_-$ & $L_0$

\end{tabular}
\end{center}

\begin{example}\label{ex:S_2_homfly}
For $n=2$, we may take $L_+$ to be the closure of $\sigma_1$, in which case $L_-$ is the closure of $\sigma_1^{-1}$ and $L_0=\unlk$ is 
 the $2$-component unlink. Applying~\eqref{eq:HOMFLY_dfn}, we find $\HOMP(L_0)=\frac{a-a^{-1}}z$.
\end{example}

Surprisingly, the \FLY polynomial computes the number of $\F_q$-points of \emph{any} positroid variety.

\begin{theorem}\label{thm:homfly}
  For all $f\in \Bkn$, let $\Ptop_f$ be obtained from the top $a$-degree term of $\HOMP(\betah_f;a,z)$ by substituting $a:=q^{-\frac12}$ and $z:=q^{\frac12}-q^{-\frac12}$. Then
\begin{equation}\label{eq:homfly_f}
  \#\Pio_f(\F_q)=(q-1)^{n-1}\cdot \Ptop_f.
\end{equation}
\end{theorem}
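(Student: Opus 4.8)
The plan is to prove \eqref{eq:homfly_f} by matching two recursions: the skein relation \eqref{eq:HOMFLY_dfn} on the knot side, and the Kazhdan--Lusztig $R$-polynomial recursion on the geometric side. The first step is to reduce the point count to open Richardson varieties. For $f=\fvw\in\Bkn$, the variety $\Pio_f$ is a projected open Richardson variety in $\Gr(k,n)$ \cite{KLS}, and the projection $\Fl_n\to\Gr(k,n)$ restricts to an isomorphism onto $\Pio_f$ from the open Richardson variety $\Racc_{v,w}\subseteq\Fl_n$ attached to the pair of \cref{prop:v_w}; hence $\#\Pio_f(\F_q)=\#\Racc_{v,w}(\F_q)$. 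A classical Bruhat-cell count (Deodhar) then gives $\#\Racc_{v,w}(\F_q)=R_{v,w}(q)$, where $R_{v,w}$ is the $R$-polynomial, normalized so that $R_{v,v}=1$ and $R_{v,w}$ is monic of degree $\lvw$. So it suffices to prove, for \emph{all} pairs $v\le w$ in $S_n$ (not only those with $w$ $k$-Grassmannian), that $R_{v,w}(q)=(q-1)^{n-1}\cdot\Ptop_f$ with $f\leftrightarrow(v,w)$; \cref{thm:homfly} is then the special case where $w$ is $k$-Grassmannian.

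Writing $R_{v,w}(q)=q^{\lvw/2}\tilde R_{v,w}(q^{1/2}-q^{-1/2})$ for the renormalized polynomial $\tilde R_{v,w}$, recall that for a simple reflection $s$ with $sw<w$ one has $\tilde R_{v,w}=\tilde R_{sv,sw}$ if $sv<v$, and $\tilde R_{v,w}=(q^{1/2}-q^{-1/2})\tilde R_{v,sw}+\tilde R_{sv,sw}$ if $sv>v$ (with $\tilde R_{sv,sw}:=0$ unless $sv\le sw$). On the knot side, $\betah_f$ is the closure of $\beta(w)\beta(v)^{-1}\in\Braid_n$, a braid of writhe $\lvw$ on $n$ strands, so the Morton--Franks--Williams inequality bounds the top $a$-degree of $\HOMP(\betah_f;a,z)$ by $n-1-\lvw$. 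I would prove by induction on $\ell(v)+\ell(w)$ that this bound is attained and that the coefficient $Q_{v,w}(z)$ of $a^{n-1-\lvw}$ satisfies $z^{n-1}Q_{v,w}(z)=\tilde R_{v,w}(z)$. Granting this, and using $(q-1)^{n-1}=q^{(n-1)/2}(q^{1/2}-q^{-1/2})^{n-1}$, one gets $(q-1)^{n-1}\Ptop_f=q^{\lvw/2}(q^{1/2}-q^{-1/2})^{n-1}Q_{v,w}(q^{1/2}-q^{-1/2})=R_{v,w}(q)$. The base case $v=w$ is the $n$-component unlink, with $\HOMP=((a-a^{-1})/z)^{n-1}$, so $Q_{v,v}(z)=z^{1-n}$ and $z^{n-1}Q_{v,v}=1=\tilde R_{v,v}$. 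For the inductive step, pick $s=s_i$ with $sw<w$. If $sv<v$, then $\beta(w)=\sigma_i\beta(sw)$ and $\beta(v)=\sigma_i\beta(sv)$, so $\betah_f$ is the closure of a conjugate of $\beta(sw)\beta(sv)^{-1}$ and the claim follows by induction and the first case of the $\tilde R$-recursion. If $sv>v$, apply the skein relation at the leading crossing $\sigma_i$ of $\beta(w)=\sigma_i\beta(sw)$: here $L_0$ is the closure of $\beta(sw)\beta(v)^{-1}$, the Richardson link of $(v,sw)$ (valid since $v\le sw$ by the lifting property of the Bruhat order), and $L_-$ is the closure of $\beta(sw)\beta(s_iv)^{-1}$ (using $\sigma_i\beta(v)=\beta(s_iv)$ and conjugation invariance). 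Rewriting the skein relation as $\HOMP(L_+)=a^{-2}\HOMP(L_-)+a^{-1}z\HOMP(L_0)$, the inductive degree count shows both summands have top $a$-degree exactly $n-1-\lvw$ with top coefficients $Q_{s_iv,sw}(z)$ and $zQ_{v,sw}(z)$; adding gives $Q_{v,w}=zQ_{v,sw}+Q_{s_iv,sw}$, which after multiplication by $z^{n-1}$ is precisely the second case of the $\tilde R$-recursion.

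The main obstacle is the degree bookkeeping in this last case. That the Morton--Franks--Williams bound is sharp for every Richardson link falls out of the induction itself, since $\tilde R_{v,w}\neq0$ forces $Q_{v,w}\neq0$. The delicate point is the subcase $s_iv\not\le sw$: then $L_-=\widehat{\beta(sw)\beta(s_iv)^{-1}}$ is no longer a Richardson link, and I need its top $a$-degree to be \emph{strictly} below the MFW value $n+1-\lvw$, so that $a^{-2}\HOMP(L_-)$ does not contribute to the top $a$-degree of $\betah_f$ and the identity degenerates correctly to $Q_{v,w}=zQ_{v,sw}$, matching $\tilde R_{v,w}=(q^{1/2}-q^{-1/2})\tilde R_{v,sw}$. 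Proving this strict drop for all braids $\beta(w')\beta(u')^{-1}$ with $u'\not\le w'$ is the technical heart; I would establish it by a parallel induction, peeling crossings off $\beta(w')$ until $w'=e$ while $u'\neq e$, tracking how the actual top degree separates from the MFW bound. An alternative, should the bare-hands skein induction become unwieldy, is to compute $\HOMP(\betah_f)$ through the Ocneanu trace on the Iwahori--Hecke algebra $\Hecke_n$ applied to $T_wT_v^{-1}$: the expansion $T_v^{-1}=q^{-\ell(v)}\sum_{x\le v}(-1)^{\ell(v)-\ell(x)}R_{x,v}(q)\,T_x$ makes the $R$-polynomials manifest, and extracting the top $a$-degree of the trace should isolate exactly the $x=v$ term, yielding $R_{v,w}(q)$.
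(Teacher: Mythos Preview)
Your approach is the same as the paper's: reduce to open Richardson varieties, identify the point count with the Kazhdan--Lusztig $R$-polynomial, and match the $R$-polynomial recursion~\eqref{eq:KLR_dfn} to the skein relation~\eqref{eq:HOMFLY_dfn}. Two points need attention.

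First, the induction variable. You propose induction on $\ell(v)+\ell(w)$, but in the case $sv>v$, $sw<w$ the term $(sv,sw)$ has $\ell(sv)+\ell(sw)=\ell(v)+\ell(w)$, so the induction does not close (your stated base case $v=w$ is also inconsistent with this variable). The paper instead inducts on $\ell(w)$ and proves the statement for \emph{all} pairs $(v,w)$ --- including $v\not\le w$ --- simultaneously: if $v\not\le w$ then $\mda(\Pvw)<\davw$, and if $v\le w$ then equality holds with top coefficient giving $\KLR_v^w$. This packaging absorbs your ``parallel induction'' into the main one; in the step $sv>v$, $sw<w$, $sv\not\le sw$, the induction hypothesis already supplies the strict degree drop you need.

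Second, the base of that induction. When $\ell(w)=0$ (your ``$w'=e$ while $u'\neq e$''), the braid is the pure negative braid $\beta(u')^{-1}$, and you need its top $a$-degree to be strictly below $n-1+\ell(u')$. This is the assertion that the lower Morton--Franks--Williams bound is \emph{not sharp} for nontrivial negative braids; it is not a tautology, and the paper quotes it as a lemma from~\cite{GMM} rather than reproving it. Your Hecke-trace alternative is exactly \cref{thm:traces} in the paper: the identity $\Htrace(T_uT_v)=q^{\ell(v)}\delta_{u,v^{-1}}$ makes the $v\not\le w$ degeneration immediate and is arguably the cleanest route.
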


\begin{remark}\label{rmk:torus_knot}
When $\gcd(k,n)=1$, we have $\fkn\in\Bknc$, and the associated knot $\betah_{\fkn}$ is the \emph{$(k,n-k)$-torus knot}; see \figref{fig:braids}(right). The value of $\HOMP(\betah_{\fkn};a,z)$ was computed in~\cite{Jones}, and its relationship with Catalan numbers was clarified in~\cite{Gorsky_cat}. Thus,~\eqref{eq:point_count} follows from \cref{thm:homfly} as a direct corollary. 
\end{remark}

\begin{example}
For $k=3$, $n=8$, one calculates (for instance, using  Sage\footnote{\url{https://doc.sagemath.org/html/en/reference/knots/sage/knots/link.html}}) that
\[\HOMP(\betah_{\fkn};a,z)=\frac{z^{8} + 8 \, z^{6} + 21 \, z^{4} + 21 \, z^{2} + 7}{a^{8}} - \frac{z^{6} + 7 \, z^{4} + 14 \, z^{2} + 8}{a^{10}} + \frac{z^{2} + 2}{a^{12}}.\] 
Substituting $a:=q^{-\frac12}$ and $z:=q^{\frac12}-q^{-\frac12}$ into $\frac{z^{8} + 8 \, z^{6} + 21 \, z^{4} + 21 \, z^{2} + 7}{a^{8}}$, we get 
\[\Ptop_f=q^{8} + q^{6} + q^{5} + q^{4} + q^{3} + q^{2} + 1=q^4\cdot \Cat_{3,5}(q,1/q).\]
This agrees with~\eqref{eq:Pit_kn_Fq} and~\eqref{eq:homfly_f}.
\end{example}

\subsection{Richardson varieties}\label{sec:richardson-varieties}
Let $G$ be a complex semisimple algebraic group of adjoint type, and choose a pair $B,B_-\subset G$ of opposite Borel subgroups.  Let $T:=B\cap B_-$ be the maximal torus and $W:=N_G(T)/T$ the associated Weyl group.  We have Bruhat decompositions $G=\bigsqcup_{w\in W} BwB=\bigsqcup_{v\in W} B_-vB$, and the intersection $BwB\cap B_-vB$ is nonempty if and only if $v\leq w$ in the Bruhat order on $W$. For $v\leq w$, we denote by $\Rich_v^w:=(BwB\cap B_-vB)/B$ an \emph{open Richardson variety} inside the \emph{(generalized) complete flag variety} $G/B$.  The varieties $\Rich_v^w$ form a stratification of $G/B$.

Now suppose $G = \PGL_n(\C)$.  We have $W=S_n$, the subgroups $B,B_-\subset G$ consist of upper and lower triangular matrices, and $T\cong (\Cast)^{n-1}$ is the group of diagonal matrices modulo scalar matrices.  In this case we denote the generalized flag variety $G/B$ by $\Fl(n)$.  By \cref{prop:v_w}, positroid varieties correspond to pairs ${v\leq w}$ of permutations such that $w$ is $k$-Grassmannian.  The projection map $\Fl(n)\to\Gr(k,n)$ restricts to an isomorphism $ \Rich_v^w\cong \Pio_f$ for every permutation $f=\fvw\in\Bkn$ (see \cref{prop:KLS}). Thus, positroid varieties are special cases of Richardson varieties.  One can similarly associate a braid $\bvw:=\beta(w)\cdot \beta(v)^{-1}$ to any pair $v\leq w$ in $S_n$ and consider its closure $\bhvw$. We refer to links of the form $\bhvw$ as \emph{Richardson links}. 

The point count $\#\Rich_v^w(\F_q)$ is given by the \emph{Kazhdan--Lusztig $R$-polynomial} \cite{KL1,KL2}, and both the statement and the proof of \cref{thm:homfly} generalize to this setting (for $G$ of arbitrary type); see \cref{thm:homfly2,thm:traces}.

\subsection{Main result: ordinary cohomology}\label{sec:main-result-ordinary}
Our results for the positroid variety $\Pio_{k,n}$ are special cases of a statement which applies to open Richardson varieties in arbitrary Dynkin type. This includes all positroid varieties $\Pio_f$ for $f\in\Bkn$, where the number of cycles $\ncyc(\fmod)$ can be arbitrary.  We start with the non-equivariant version of our result.

Let $\h:=\Lie(T)$ be the Cartan subalgebra of $\Lie(G)$ corresponding to $T$, and denote $R := \Symh = {\rm Sym}_\C \h^\ast$. The ring $R$ is graded so that the elements of $\h^\ast\subset R$ have polynomial degree $2$. For $G=\PGL_n(\C)$, $R=\C[y_1,\dots,y_{n-1}]$ is the polynomial ring. Since $W$ is a Coxeter group, we can consider the category $\SBim$ of \emph{Soergel bimodules}~\cite{SoeHC,EMTW}. Each object $B\in\SBim$ is a graded $R$-bimodule, and we will be interested in its $R$-invariants, which by definition form the \emph{zeroth Hochschild cohomology of $B$}:
\begin{equation}\label{eq:HH0}
\HH^0(B):=\{b\in B\mid rb=br\text{ for all $r\in R$}\}.
\end{equation}
Thus, $\HH^0(B)$ is a graded $R$-module. Denote
\begin{equation}\label{eq:HHC}
  \HHC^0(B):=\HH^0(B)\otimes_R\C,
\end{equation}
where $\C=R/(\h^\ast)$ is the $R$-module on which $\h^\ast$ acts by $0$. While the functor $\HH^0$ involves Soergel bimodules, the functor $\HHC^0$ involves \emph{Soergel modules} instead; see \cref{cor:SMod}.

 To any element $u\in W$, Rouquier~\cite{Rou} associates two cochain complexes $\FR(u)$ and $\FR(u)^{-1}$ of Soergel bimodules. For a braid $\bvw=\beta(w)\cdot \beta(v)^{-1}$, we set $\Fvw:=\FR(w)\otimes_R \FR(v)^{-1}$. Applying the functor $\HHC^0$ to each term of this complex yields a complex $\HHC^0(\Fvw)$ of graded $R$-modules. Taking its cohomology 
\begin{equation}\label{eq:HHH0}
\HHHC^0(\Fvw):=H^\bul(\HHC^0(\Fvw)),
\end{equation}
we get a bigraded vector space. Explicitly, we have  
\begin{equation*}
  \HHHC^0(\Fvw)=\bigoplus_{k,p\in\Z} \HHXC kp_{v,w},
\end{equation*}
where $\HHXC kp_{v,w}$ is the polynomial degree $2p$ part of $H^{k}(\HHC^0(\Fvw))$. 

\begin{table}
  \setlength{\tabcolsep}{1.5pt}
\renewcommand{\arraystretch}{1.2}
\makebox[1.0\textwidth]{
\scalebox{0.68}{
\begin{tabular}{|c|c|c|c|c|c|c|c|c|c|c|}\hline
Title & $n$ & $v$ & $w$ & $\lvw$ & $\bhvw$ & $\Rich_v^w$ & $H^{k,(p,p)}(\Rich_v^w)$ & $H^{k,(p,p)}_{T,c}(\Rich_v^w)$ & $\HHXC kp_{v,w}$ & $\HHX kp_{v,w}$\\\hline\hline
Unknot-I & $1$ & $\id$ & $\id$ & $0$ & $\unkn$ & $\pt$ & $k=0,\,p=0$ & $k=0,\,p=0$ & $k=0,\,p=0$ & $k=0,\,p=0$\\\hline
Unknot-II & $2$ & $\id$ & $s_1$ & $1$ & $\unkn$ & $\Pio_{1,2}$ & \begin{tabular}{c} $k=0,\,p=0$\\ $k=1,\,p=1$\end{tabular} & $k=1,\,p=0$ & \begin{tabular}{c}$k=0,\,p=0$\\ $k=-1,\,p=1$\end{tabular} & $k=0,\,p=0$\\\hline
$2$-cpt. unlink & $2$ & $s_1$ & $s_1$ & $0$ & $\unlk$ & $\pt$ & $k=0,\,p=0$ & $k=2p,\,p\in\Z_{\geq0}$ & $k=0,\,p=0$ & $k=0,\,p\in\Z_{\geq0}$\\\hline
Hopf link
& $4$ & $\id$ & $f_{2,4}$ & $4$ & $\hopflink$ & $\Pio_{2,4}$ & \cref{tab:MHT_vs_HHHC} & \begin{tabular}{c} $k=4,\,p=0$\\ $k=2+2p,\,p\in\Z_{\geq2}$\end{tabular}  & \cref{tab:MHT_vs_HHHC} & \begin{tabular}{c} $k=0,\,p=0$\\ $k=-2,\,p\in\Z_{\geq2}$\end{tabular}\\\hline
Trefoil knot
 & $5$ & $\id$ & $f_{2,5}$ & $6$ & $\trefoil$ & $\Pio_{2,5}$ & \cref{tab:MHT_vs_HHHC} & \begin{tabular}{c} $k=6,\,p=0$\\ $k=8,\,p=2$\end{tabular}  & \cref{tab:MHT_vs_HHHC} & \begin{tabular}{c} $k=0,\,p=0$\\ $k=-2,\,p=2$\end{tabular}\\\hline
\end{tabular}
}
}
  \caption{\label{tab:examples} Summary of examples computed in \cref{sec:Soergel_examples,sec:Htc_examples}, illustrating \cref{thm:ordinary,thm:main}. The last four columns list all values of $k,p$, for which the corresponding bigraded component is nonzero (in each case, it is $1$-dimensional). We have $f_{2,4}=s_2s_1s_3s_2$ and $f_{2,5}=s_3s_2s_1s_4s_3s_2$.}
\end{table}

Recall from~\eqref{eq:DS} that we have a bigrading on $H^\bul(\Rich_v^w)$ coming from the Deligne splitting. 
\begin{theorem}\label{thm:ordinary}
 For all $v\leq w\in W$ and $k,p\in\Z$, we have
\begin{equation}\label{eq:ordinary}
 H^{k,(p,p)}(\Rich_v^w) \cong \HHXC {-k}p_{v,w}.
\end{equation}
\end{theorem}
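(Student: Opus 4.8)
The plan is to pass through the étale/constructible-sheaf incarnation of both sides and identify each with a common object built from parity sheaves on the flag variety $\Fl(n)$ (and its Dynkin-type analogues $G/B$). On the topological side, the cohomology $H^{k}(\Rich_v^w)$ together with its mixed Hodge (equivalently, weight) filtration is computed by the costalk--stalk data of the constant sheaf $\uk$ restricted to the open Richardson stratum $\Rich_v^w\subset G/B$; since $\Rich_v^w = \Xw \cap \Xv^-$ is an intersection of a Schubert cell with an opposite Schubert cell, one can compute $H^\bul_c(\Rich_v^w)$ (and then dualize) using the recollement for the stratification of $G/B$ by Richardson varieties, or equivalently using the *-restriction of the costandard object $\nabla_w$ to $\Xv^-$. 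The upshot is that the bigraded vector space $\bigoplus_{k,p} H^{k,(p,p)}(\Rich_v^w)$ is the hypercohomology of a complex whose terms are Bott--Samelson/parity objects indexed by a reduced word for $w$ and a reduced word for $v$, with the weight grading tracking the internal Tate twist. First I would make this precise: choose reduced words $w = s_{i_1}\cdots s_{i_\ell}$ and $v = s_{j_1}\cdots s_{j_m}$, and express $H^\bul_c(\Rich_v^w)$ via a complex of the shape $\For$-image of $\FR(w)\otimes_R \FR(v)^{-1}$ under the Soergel-bimodule $\leftrightarrow$ sheaf dictionary (the derived geometric Satake--Soergel functor of \cite{Soe,BGS}, $\Db_{(B)}(G/B) \leftrightarrow$ homotopy category of Soergel bimodules).

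Next I would invoke the identification, in the Soergel-theoretic world, of the relevant invariants with $\HHC^0$. The key point is that for a Soergel bimodule $B$ the object $\HH^0(B) = \{b : rb = br\}$ is exactly the $R$-module corresponding, under the equivalence of Soergel modules with $B$-equivariant parity sheaves on $G/B$, to taking equivariant cohomology of the associated parity sheaf on a point-orbit and forgetting one copy of $R$; and $\HHC^0(B) = \HH^0(B)\otimes_R \C$ then kills the remaining equivariant parameters, producing the ordinary (non-equivariant) cohomology. Concretely, I would show that applying $\HHC^0$ to each Rouquier complex term $\FR(w)\otimes_R\FR(v)^{-1}$ matches applying $\proj_! \proj^*$ (or the appropriate costalk functor) to the corresponding Bott--Samelson sheaf along the inclusion of the big open cell, which is precisely what computes $H^{-k,(p,p)}(\Rich_v^w)$ after the cohomological-degree reversal (explaining the $-k$ in \eqref{eq:ordinary}) and the polynomial-degree-$2p$ $\leftrightarrow$ weight-$p$ normalization. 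Taking cohomology of the complex on both sides — $H^\bul(\HHC^0(\Fvw)) = \HHHC^0(\Fvw)$ on one side and the spectral sequence from the Richardson stratification (which degenerates by purity of parity sheaves / $B$-cellularity, i.e. the Deligne splitting \eqref{eq:DS}) on the other — yields the claimed isomorphism of bigraded vector spaces.

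I would organize the argument in three steps: (1) identify $H^\bul_c(\Rich_v^w)$ with the hypercohomology of an explicit complex of Bott--Samelson parity sheaves, using the Richardson recollement and Verdier duality to match stalk/costalk data with the weight filtration; (2) transport this complex across the Soergel functor to the homotopy category of Soergel bimodules, recognizing it as $\Fvw = \FR(w)\otimes_R\FR(v)^{-1}$ and recognizing the sheaf operation "restrict to the open cell and take cohomology" as the functor $\HHC^0$; (3) take cohomology and compare gradings, carefully tracking the cohomological shift $k \mapsto -k$ and the $\langle 1\rangle \leftrightarrow$ weight-$2$/polynomial-degree-$2$ conventions. Most of (1) is standard recollement bookkeeping, and (3) is a normalization check.

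The hard part will be Step (2): pinning down that the functor $\HHC^0$ — as opposed to $\HH^0$, or $\HHC^0$ of a shift, or some variant involving $\HH^\bul$ in higher Hochschild degrees — is exactly the right one, with exactly the right grading conventions, to reproduce *non-equivariant* cohomology of the open Richardson variety. This is a delicate point because the naive geometric operation one writes down produces equivariant data, and one must verify that $\otimes_R \C$ implements "forget equivariance" *at the level of the open stratum* without introducing spurious $\Tor$ terms; this is where one needs the freeness/purity properties of Soergel bimodules (the Soergel Hom-formula and the fact that $\HH^0(B)$ is free over one copy of $R$) to guarantee that $\HHC^0$ is the correct, derived-exact specialization. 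I would expect the cleanest route is to first prove the equivariant statement (matching $H^{k,(p,p)}_{T,c}(\Rich_v^w)$ with $\HHX{-k}p_{v,w}$, the $\HH^0$ version over $R$ — this is \cref{thm:main} in the excerpt), and then deduce \cref{thm:ordinary} from it by a base-change $\otimes^{\mathbb L}_R \C$, showing the higher Tor vanishes by freeness.
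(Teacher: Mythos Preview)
Your three-step plan is broadly in line with the paper's architecture: identify cohomology of $\Rich_v^w$ with $\Ext$-groups between standard objects (this is the content of \cite{RSW}), transport across the Soergel functor to Rouquier complexes, and then compare gradings. But there is a genuine gap in Step~(3), and your proposed ``base-change from the equivariant statement'' shortcut does not avoid it.

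The direct sheaf-theoretic comparison does \emph{not} produce the grading $k\mapsto -k$ you want. What one actually gets (combining the non-equivariant version of the $\Ext$-to-cohomology comparison with the realization/degrading functor and Poincar\'e duality) is
\[
\HHXC{k}{p}_{v,w}\;\cong\; H^{\lvw-2p-k,\,(\lvw-p,\lvw-p)}(\Rich_v^w),
\]
not $H^{-k,(p,p)}(\Rich_v^w)$. Passing from the former to the latter is precisely the $q,t$-symmetry
\[
H^{k,(p,p)}(\Rich_v^w)\cong H^{\lvw+k-2p,(\lvw-p,\lvw-p)}(\Rich_v^w),
\]
which is the content of Koszul duality for the derived category of $B$-constructible sheaves on $G/B$ (applied to $\Ext^\bullet(\Delta_v,\Delta_w)$). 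This is an independent, nontrivial ingredient; it does not fall out of any ``cohomological-degree reversal'' bookkeeping, and your proposal omits it entirely. Indeed, the paper flags this explicitly: the grading conventions in \cref{thm:ordinary} and \cref{thm:main} are genuinely different and are related by \cref{thm:Koszul}.

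Your fallback plan --- prove \cref{thm:main} first and then $\otimes^{\mathbb L}_R\C$, with higher $\Tor$ vanishing by freeness --- also runs into trouble. The equivariant statement identifies $\HHX{k}{p}_{v,w}$ with $H^{\lvw+2p+k,(p,p)}_{T,c}(\Rich_v^w)$, whose grading is again off from $H^{-k,(p,p)}$ by exactly the Koszul-duality swap; base change alone does not fix this. Moreover, the freeness you invoke is at the level of the complex $\HH^0(\Fvw)$, not its cohomology $\HHH^0(\Fvw)$, which is typically \emph{not} free over $R$ (already for the trefoil and Hopf link examples); so the passage from $\HHH^0\otimes_R\C$ to $\HHHC^0$ is not automatic. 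In the paper this comparison is in fact established only \emph{a posteriori}, as a consequence of the two theorems plus Koszul duality, not as a route to proving them.
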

\noindent See \cref{tab:examples,tab:MHT_vs_HHHC} for examples. %

\subsection{Main result: equivariant cohomology}\label{sec:main-result-equivariant}
The spaces $\HHH^0(\Fvw)$ and $\HHHC^0(\Fvw)$ are closely related. By \cref{thm:ordinary}, $\HHHC^0(\Fvw)$ yields the cohomology of $\Rich_v^w$. It turns out that $\HHH^0(\Fvw)$ yields the \emph{torus-equivariant} cohomology of $\Rich_v^w$.

The algebraic torus $T$ acts on each Richardson variety $\Rich_v^w$, and thus we can consider its \emph{$T$-equivariant cohomology with compact support}, denoted $H^\bul_{T,c}(\Rich_v^w)$. It is equipped with an action of the ring $H^\bul_{T}(\pt)\cong R$. Similarly to~\eqref{eq:DS}, $H^\bul_{T,c}(\Rich_v^w)$ admits a second grading via the mixed Hodge structure and is therefore a bigraded $R$-module. 

\begin{theorem}\label{thm:main}
 For all $v\leq w\in W$, we have an isomorphism of bigraded $R$-modules
\begin{equation}\label{eq:main}
H^\bul_{T,c}(\Rich_v^w)\cong \HHH^0(\Fvw).
\end{equation}
For each $k,p\in\Z$, it restricts to a vector space isomorphism 
\begin{equation}\label{eq:grading:intro}
  H^{\lvw+2p+k,(p,p)}_{T,c}(\Rich_v^w)\cong \HHX {k}p_{v,w},
\end{equation}
where $\lvw=\ell(w)-\ell(v)=\dim\Rich_v^w$.
\end{theorem}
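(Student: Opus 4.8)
The plan is to establish the isomorphism \eqref{eq:main} by first reducing to the non-equivariant statement \cref{thm:ordinary} via a localization/filtration argument, and then upgrading it to an $R$-module isomorphism that tracks both gradings. First I would observe that both sides of \eqref{eq:main} come equipped with an action of $R\cong H^\bul_T(\pt)$ and with a second grading (the mixed Hodge weight grading on the left, the polynomial/Hochschild grading on the right). On the geometric side, the key tool is the interplay between $T$-equivariant cohomology with compact support and ordinary cohomology: there is a spectral sequence (or, after choosing a filtration of $\Rich_v^w$ by $T$-stable pieces coming from the Bruhat order, a long exact sequence) relating $H^\bul_{T,c}(\Rich_v^w)$ to $H^\bul_c(\Rich_v^w)\otimes R$ and to equivariant cohomology of smaller strata. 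On the algebraic side, the functor $\HHH^0$ applied to $\Fvw=\FR(w)\otimes_R\FR(v)^{-1}$ has an analogous ``base change'' behavior: $\HHH^0(\Fvw)\otimes_R\C \cong \HHHC^0(\Fvw)$ up to a Tor term, because $\HH^0(B)$ for a Soergel bimodule $B$ is a free (or at least flat enough) $R$-module in the relevant range, so that tensoring with $\C=R/(\h^\ast)$ and taking cohomology commute appropriately.

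The main steps, in order, would be: (1) Identify the $R$-module structure on $H^\bul_{T,c}(\Rich_v^w)$ explicitly, using that $\Rich_v^w$ is smooth of dimension $\lvw$, so that $H^\bul_{T,c}(\Rich_v^w)\cong H^{\bul}_T(\Rich_v^w)$ shifted by $2\lvw$ via equivariant Poincaré duality, with a Tate twist accounting for the weight shift — this explains the offset $\lvw+2p+k$ in \eqref{eq:grading:intro}. (2) Prove the algebraic freeness statement: for any Soergel bimodule $B$ (or any Rouquier complex term), $\HH^0(B)$ is a graded-free $R$-module, so $\HHH^0(\Fvw)$ is computed by a complex of free $R$-modules and its reduction mod $\h^\ast$ recovers $\HHHC^0(\Fvw)=H^\bul_T(\pt)$-free quotient correctly; combine with \cref{thm:ordinary} to get that \eqref{eq:main} holds after $\otimes_R\C$. (3) Upgrade from ``$\otimes_R\C$'' to a genuine $R$-module isomorphism: here I would use a graded Nakayama-type argument — both sides are finitely generated graded $R$-modules (the left side because $\Rich_v^w$ is a finite-type variety with a suitable torus action, the right side because Rouquier complexes are finite complexes of finitely generated Soergel bimodules), and a morphism of finitely generated graded $R$-modules that becomes an isomorphism mod the maximal ideal is an isomorphism, provided one side is free; so I would need that $\HHH^0(\Fvw)$ is free over $R$, or equivalently that $H^\bul_{T,c}(\Rich_v^w)$ is free over $H^\bul_T(\pt)$ — this is the equivariant formality of $\Rich_v^w$, which should follow from the cell-like structure (affine paving by intersections of opposite Bruhat cells) or from purity considerations in the mixed Hodge setting. (4) Finally, construct the map \eqref{eq:main} itself — not just abstractly but naturally — presumably via the geometric Hecke category / Soergel-bimodule dictionary: $\FR(w)\otimes_R\FR(v)^{-1}$ should be realized as (the image under an equivalence of) a complex of $B$-equivariant sheaves on $\Fl(n)$ whose ``costalk'' or pushforward to a point with compact support computes $H^\bul_{T,c}(\Rich_v^w)$; the Hochschild cohomology $\HH^0$ corresponds to pushforward to the point (taking $T$-invariants = restricting to the diagonal torus), which matches the geometry.

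The hard part will be step (4) together with step (3): constructing the comparison map \emph{functorially} and proving freeness/equivariant formality. Producing an abstract isomorphism of bigraded vector spaces dimension-by-dimension from \cref{thm:ordinary} plus a rank count is comparatively routine, but to get an $R$-module isomorphism one really needs either a natural map or a rigidity argument, and the natural map requires identifying Rouquier complexes of Soergel bimodules with the appropriate category of sheaves on the flag variety (the $\Fl(n)$ side, not just $\SBim$) and matching $\HH^0$ with the equivariant pushforward to a point. I expect the paper handles this by first proving \cref{thm:ordinary} using a concrete geometric model (e.g.\ the ``Bott–Samelson'' or ``brick'' resolution of $\Rich_v^w$ indexed by a reduced word, with the Rouquier complex literally computing the cohomology of the total space of an iterated $\mathbb{P}^1$-bundle construction), and then running the same construction $T$-equivariantly, where the only change is keeping the equivariant parameters — i.e.\ not setting $\h^\ast$ to zero. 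Under that strategy, \eqref{eq:main} is the equivariant refinement of \eqref{eq:ordinary}, \eqref{eq:grading:intro} is bookkeeping of degree shifts in the Bott–Samelson model, and the main technical obstacle is verifying that the equivariant Bott–Samelson/Rouquier complex is quasi-isomorphic to the (shifted, twisted) equivariant-compactly-supported cochain complex of $\Rich_v^w$, uniformly in the reduced word chosen.
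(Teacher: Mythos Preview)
Your proposal has a genuine gap in step~(3): the equivariant formality you invoke is false in the cases of greatest interest. When $wv^{-1}$ is a single cycle (equivalently, $\bhvw$ is a knot), the torus $T$ acts freely on $\Rich_v^w$, and Lemma~\ref{lem:HTfree} shows that $H^\bul_{T,c}(\Rich_v^w)\cong H^\bul_c(\Rich_v^w/T)$ is a finite-dimensional space on which $\h^\ast\subset R$ acts by zero. Such a module is never free over the polynomial ring $R$ (unless $n=1$). Consequently your Nakayama upgrade from ``isomorphism after $\otimes_R\C$'' to ``isomorphism of $R$-modules'' cannot go through; indeed the relation between $\HHH^0(\Fvw)$ and $\HHHC^0(\Fvw)$ involves all higher $\Tor$ groups (see~\eqref{eq:Torconj}), not just $\Tor_0$. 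Relatedly, open Richardson varieties do not in general admit an affine paving, so the purity/formality heuristic you appeal to has no footing.

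There is also a logical circularity: in the paper, \cref{thm:ordinary} is \emph{deduced from} \cref{thm:main} together with Koszul duality (\cref{thm:Koszul}), not the other way around. The grading conventions in~\eqref{eq:ordinary} and~\eqref{eq:grading:intro} differ by a nontrivial involution, so one cannot pass between them by merely forgetting equivariance.

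The paper's actual route bypasses formality entirely by working at the level of $\RHom$ in $\Db_B(\pt,\k)\simeq\Dperf(R,\Fr)$. It factors~\eqref{eq:main} through an intermediate object $\Ext^\bul(\Delta_v,\Delta_w)$ in the $B$-equivariant derived category of the flag variety: \cref{prop:RSW} identifies $H^\bul_{T,c}(\Rich_v^w)$ with this $\Ext$ via sheaf operations and a base-change/contraction argument (\cref{lemma:RSW}), while \cref{prop:degr} identifies $\HHH^0(\Fvw)$ with the same $\Ext$ via the mixed derived category $\Dmix_B(X)=\Kb\Semis_B(X)$ and a realization functor, under which Rouquier complexes correspond to (twisted) standard objects (\cref{prop:stand}). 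Your step~(4) gestures toward this dictionary, but the point is that once the comparison is made at the $\RHom$ level there is no need for any freeness or rank-counting argument.
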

\noindent See \cref{tab:examples} for examples. We explain how \cref{thm:master_kn,thm:ordinary} follow from \cref{thm:main} in \cref{ssec:introproof,sec:ordinary-cohomology}, respectively.

Observe that the grading conventions in~\eqref{eq:ordinary} and~\eqref{eq:grading:intro} are quite different. In fact, the two statements are related by an application of the $q,t$-symmetry~\eqref{eq:Koszul}, as we now explain.

\subsection{Koszul duality and $q,t$-symmetry}\label{ssec:Koszul}
For any $f\in\Bknc$, the positroid variety $\Pit_f$ is a cluster variety~\cite{GL}, so the polynomial $\Poincf q,t)$ is $q,t$-symmetric by \cref{cor:master:qt_symm}. Richardson varieties are not yet known to admit cluster structures (see~\cite{Lec}), and in particular, the curious Lefschetz theorem of \cite{LS} cannot yet be applied to conclude that $\Poinc(\Rich_v^w;q,t)$ is $q,t$-symmetric for arbitrary $v\leq w\in S_n$. In \cref{sec:Koszul}, we show that the $q,t$-symmetry phenomenon for positroid and Richardson varieties is a manifestation of  \emph{Koszul duality} for the derived category of Schubert-constructible sheaves on the flag variety~\cite{BGS,BY,AR2}.

\begin{theorem}\label{thm:Koszul_intro:cohom}\label{thm:Koszul}
  For all $v\leq w\in W$ and $k,p\in\Z$, we have an isomorphism
\begin{equation}\label{eq:Koszul}
  H^{k,(p,p)}(\Rich_v^w,\C) \cong H^{\lvw+k-2p,(\lvw-p,\lvw-p)}(\Rich_{v}^{w},\C)
\end{equation}
of vector spaces. In other words, the polynomial $\Poinc(\Rich_v^w;q,t)$ is $q,t$-symmetric.
\end{theorem}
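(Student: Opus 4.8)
The plan is to derive \cref{thm:Koszul} from \cref{thm:main,thm:ordinary} by feeding both isomorphisms into a single algebraic identity on the Soergel side, namely the behaviour of $\HHH^0(\Fvw)$ and $\HHHC^0(\Fvw)$ under Koszul duality. Concretely, \cref{thm:ordinary} tells us that the left-hand side of \eqref{eq:Koszul} computes $\HHXC{-k}p_{v,w}$, while \cref{thm:main} computes $H^\bul_{T,c}(\Rich_v^w)$ via $\HHX kp_{v,w}$; combining this with Poincar\'e--Verdier duality $H^{j,(p,p)}(\Rich_v^w)\cong H^{2\lvw-j,(\lvw-p,\lvw-p)}_{T=\pt,c}(\Rich_v^w)^\vee$ (i.e., the non-equivariant specialization $R\to\C$ of \eqref{eq:main}), one obtains a relation of the shape $\HHXC{-k}p_{v,w}\cong \HHXC{k-2p+\lvw}{\lvw-p}_{v,w}$, which is exactly \eqref{eq:Koszul} after unwinding the two grading conventions in \eqref{eq:ordinary} and \eqref{eq:grading:intro}. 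So the real content is a symmetry statement purely about the complex $\Fvw=\FR(w)\otimes_R\FR(v)^{-1}$ of Soergel bimodules and its Hochschild cohomology.

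The second and central step is to establish that symmetry via Koszul duality for the Schubert-constructible derived category $\Db_B(\Fl)$, following \cite{BGS,BY,AR2}. The key input is that the Rouquier complexes $\FR(u)$ are the images under the (equivalence between the Soergel-bimodule category and a mixed category of) of the standard/costandard objects $\Delta_u,\nabla_u$, and that Koszul duality is an equivalence $\kappa\colon D^\mix_B(\Fl)\xrightarrow{\sim} D^\mix_B(\Fl)$ that swaps $\Delta_u\leftrightarrow\nabla_{u^{-1}}^\vee$ (up to shifts and Tate twists) and, crucially, exchanges the cohomological grading with the ``weight''/Tate grading. Under $\kappa$, the complex $\Fvw$ computing $H^\bul_{T,c}(\Rich_v^w)$ is sent to a complex computing the same invariant but with $(k,p)\mapsto(\text{linear combination})$; tracking the shifts on $\Delta_w$ and $\nabla_v^{-1}$, and using that $\ell(w)-\ell(v)=\lvw$, produces the precise substitution $H^{k,(p,p)}\leftrightarrow H^{\lvw+k-2p,(\lvw-p,\lvw-p)}$. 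Since $v,w$ are fixed and the statement is invariant under $(v,w)\mapsto(v,w)$, no reindexing of the Richardson variety itself is needed — Koszul duality maps $\Rich_v^w$ to (an invariant of) $\Rich_v^w$ again because $\HH^0$ is insensitive to the $u\mapsto u^{-1}$ swap after taking $R$-invariants. I would phrase this cleanly by working entirely with the bigraded vector spaces $\HHH^0(\Fvw)$, proving $\HHXC{-k}p_{v,w}\cong\HHXC{k+\lvw-2p}{\lvw-p}_{v,w}$ as an instance of a duality functor on $\Kb\SBim$ combined with the self-duality of the Hochschild cochain complex, and only then translating to cohomology of Richardson varieties via \cref{thm:ordinary}.

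The third step is bookkeeping: carefully matching the three grading conventions in play — the $(k,p)$ indexing of $\HHH^0(\Fvw)$, the $(k,(p,p))$ Deligne bigrading with the shift by $\lvw+2p$ in \eqref{eq:grading:intro}, and the Tate-twist/shift normalization of Rouquier complexes — so that the abstract duality lands on exactly the exponents claimed in \eqref{eq:Koszul}. A sanity check on the examples in \cref{tab:examples} (the unknot, the $2$-component unlink, the Hopf link, the trefoil) should confirm the signs and shifts: for instance for $\Pio_{1,2}$ one has $\lvw=1$ and the nonzero $H^{k,(p,p)}$ occur at $(0,0)$ and $(1,1)$, and \eqref{eq:Koszul} sends $(0,0)\mapsto(1,1)$ and $(1,1)\mapsto(0,0)$, i.e.\ it is the expected palindromic symmetry. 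The ``other words'' clause, that $\Poinc(\Rich_v^w;q,t)$ is $q,t$-symmetric, then follows by summing \eqref{eq:Koszul} over all $k,p$ with the monomial $q^{p}t^{(k-p)/2}$ (or the normalization fixed in \eqref{eq:Poinc_MHT_dfn}), since the substitution $p\mapsto\lvw-p$, $k-p\mapsto \lvw-(k-p)$ interchanges the two variables.

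The main obstacle is not the final index juggling but making precise the second step: identifying $\Fvw$ (a complex of Soergel \emph{bimodules}, with $\HH^0$ keeping track of equivariant parameters) with an object of the mixed derived category $\Db_B(\Fl)$ on which Koszul duality acts, and checking that Koszul duality intertwines $\HH^0$ applied to $\Delta$'s and $\nabla$'s in the way required. This is where the hypotheses ``$G$ of adjoint type'' and the precise form of the Soergel-bimodule/constructible-sheaf dictionary (Soergel's Erweiterungssatz, the self-duality of standard objects under $\mathbb{D}$, and the Koszul-duality functor of \cite{BY}) all have to be invoked correctly; once that equivalence and its compatibility with shifts are in hand, \eqref{eq:Koszul} is essentially forced.
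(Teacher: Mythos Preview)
Your proposal has two genuine problems.

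\textbf{Circularity.} You propose to derive \cref{thm:Koszul} from \cref{thm:ordinary} and \cref{thm:main}. In the paper's logical structure this is circular: the proof of \cref{thm:ordinary} in \cref{sec:ordinary-cohomology} explicitly invokes \eqref{eq:Koszul}. What is available \emph{before} \cref{thm:Koszul} is only the ``uncorrected'' identification
\[
\HHXC{k}{\wt/2}_{v,w}\;\cong\;H^{\lvw-k-\wt,(\lvw-\wt/2,\lvw-\wt/2)}(\Rich_v^w,\C),
\]
coming from \cref{cor:degrk} and~\eqref{eq:RSW_ordinary}; the grading convention in~\eqref{eq:ordinary} is obtained from this only \emph{after} applying \eqref{eq:Koszul}. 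So you cannot use \cref{thm:ordinary} as input.

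\textbf{The $u\mapsto u^{-1}$ step.} You assert that ``no reindexing of the Richardson variety itself is needed --- Koszul duality maps $\Rich_v^w$ to (an invariant of) $\Rich_v^w$ again because $\HH^0$ is insensitive to the $u\mapsto u^{-1}$ swap after taking $R$-invariants.'' This is exactly the step that is not automatic. Koszul duality from~\cite{BY} gives
\[
\Ext^{k,(\wt/2)}(\DeltaC_v,\DeltaC_w)\;\cong\;\Ext^{k-\wt,(-\wt/2)}(\DeltaC_{v^{-1}},\DeltaC_{w^{-1}}),
\]
and feeding this through~\eqref{eq:RSW_ordinary} lands you on $H^{\bullet}(\Rich_{v^{-1}}^{w^{-1}})$, not $H^{\bullet}(\Rich_v^w)$. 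The paper closes this gap by a separate, concrete geometric argument: using the affine model $\strvw\cong\Rich_v^w$ of \cref{lemma:strawberry}, the map $g\mapsto g^{-1}$ gives an isomorphism $\strvw\cong N^\circ_{v^{-1},w^{-1}}$, hence $\Rich_v^w\cong\Rich_{v^{-1}}^{w^{-1}}$. Your claim that $\HH^0$ absorbs the inversion is neither proved nor obvious (the $R$-module structure of $\HHH^0$ is not even a link invariant, cf.~\cref{sec:new_lk_inv}), and in any case the paper does not argue this way.

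\textbf{How the paper actually proceeds.} The paper's proof bypasses Soergel bimodules and \cref{thm:main,thm:ordinary} entirely. It cites \cite[Theorem~5.3.1]{BY} directly for the Koszul duality on $\Ext^\bullet(\DeltaC_v,\DeltaC_w)$ in the non-equivariant constructible category, translates via~\eqref{eq:RSW_ordinary} to obtain~\eqref{eq:CL_inverse} relating $\Rich_v^w$ and $\Rich_{v^{-1}}^{w^{-1}}$, and then uses the $g\mapsto g^{-1}$ isomorphism. Three lines, no detour through $\HHH^0$.
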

\noindent This gives a new proof of the $q,t$-symmetry of $\Cat_{k,n-k}(q,t)$ for $\gcd(k,n)=1$.

We now explain the connection to link invariants. Given a Richardson link $\bhvw$, one can consider the bigraded vector spaces $\HHH^0(\FR(\bvw))$ and $\HHHC^0(\FR(\bvw))$, and their suitably renormalized bigraded Hilbert series, denoted $\PK(\bvw;q,t)$ and $\PKRC(\bvw;q,t)$, respectively; see~\eqref{eq:PKRtop}--\eqref{eq:PKRC}.

The polynomial $\PK(\bvw;q,t)$ is the top $a$-degree coefficient (see the footnote in the introduction) of the celebrated \emph{Khovanov--Rozansky homology}~\cite{KR1,KR2,KhoSoe} of $\bhvw$, which is a link invariant, i.e., depends only on the closure $\bhvw$ of $\bvw$; see \cref{sec:link-invariant}. 
% We expect that $\PKRC(\bvw;q,t)$ can similarly be turned into a link invariant; see \cref{sec:new_lk_inv}. \PG{removed this}

Our results imply (see \cref{sec:ordinary-cohomology}) that when $\bhvw$ is a knot, we have
\begin{align}
\label{eq:PKRtop=PKRC}
  \PKRtop(\bhvw;q,t)&=\PKRC(\bvw;q,t) \quad\text{and}\\ 
\Poinc(\Rich_v^w/T; q,t)&=   \left(\qq\tt\right)^{\chi(\bvw)} \PK(\bhvw;q,t),
\end{align}
where $\chi(\bvw):=\frac{\lvw-n+\ncyc(f)}2$; see~\eqref{eq:chi_dfn}. More generally, \cref{thm:ordinary} implies that for any $v\leq w\in S_n$, one can relate $\PKRC(\bvw;q,t)$ to $\Poinc(\Rich_v^w;q,t)$ by a simple transformation. 
As we show in \cref{cor:PKRtop=PKRC}, \eqref{eq:PKRtop=PKRC} holds more generally for arbitrary knots $\betah$. 

For a general link $\betah$, the question of whether $\PK(\betah;q,t)$ is $q,t$-symmetric has been a major open problem\footnote{At the final stages of the preparation of this manuscript, we learnt that the $q,t$-symmetry of $\PK(\betah;q,t)$ for the case when $\betah$ is a knot has been established in a very recent preprint~\cite{ObRoz}.  (Note added in 2023: see also \cite{GHM}.)} going back to~\cite{DGR}. For Richardson links, we show that it also amounts to applying Koszul duality.

\begin{corollary}\label{cor:Koszul_intro}
  For any $v\leq w\in S_n$, we have 
\begin{equation*}
  \PKRC(\bvw;q,t) =\PKRC(\bvw;t,q).
\end{equation*}
Consequently, by~\eqref{eq:PKRtop=PKRC}, if $\bhvw$ is a knot then 
\begin{equation*}
  \PK(\bhvw;q,t) =\PK(\bhvw;t,q).
\end{equation*}
\end{corollary}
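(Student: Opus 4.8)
The plan is to derive \cref{cor:Koszul_intro} as a formal consequence of \cref{thm:Koszul} (the $q,t$-symmetry of $\Poinc(\Rich_v^w;q,t)$) together with \cref{thm:ordinary} (which identifies $H^{k,(p,p)}(\Rich_v^w)$ with $\HHXC{-k}p_{v,w}$). First I would make explicit the renormalization relating $\PKRC(\bvw;q,t)$, the bigraded Hilbert series of $\HHHC^0(\Fvw)$, to the mixed Hodge polynomial $\Poinc(\Rich_v^w;q,t)$; by \cref{thm:ordinary}, if $\PKRC(\bvw;q,t)=\sum_{k,p}\dim\HHXC kp_{v,w}\,q^{?}t^{?}$ with the normalization fixed in~\eqref{eq:PKRC}, then $\PKRC(\bvw;q,t)$ is obtained from $\Poinc(\Rich_v^w;q,t)$ by a monomial substitution of the form $q\mapsto q^at^b$, $t\mapsto q^ct^d$ for explicit integers depending only on $\lvw$ (and $n$, $\ncyc$). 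The point is that this substitution is \emph{symmetric} under the involution $\sigma\colon (q,t)\mapsto(t,q)$ up to the relabeling encoded in~\eqref{eq:Koszul}: the transformation $(k,p)\mapsto(\lvw+k-2p,\;\lvw-p)$ appearing in \cref{thm:Koszul} becomes, after passing through the $\PKRC$-normalization, precisely the swap of the two Hilbert-series variables.

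Concretely, the key steps in order are: \textbf{(1)} unwind the definitions~\eqref{eq:PKRtop}--\eqref{eq:PKRC} so that $\PKRC(\bvw;q,t) = (\text{monomial in }q,t)\cdot \sum_{k,p}\dim H^{-k,(p,p)}(\Rich_v^w)\, q^{\alpha(k,p)}t^{\beta(k,p)}$ for explicit linear forms $\alpha,\beta$; \textbf{(2)} apply \cref{thm:Koszul}, i.e.\ substitute $\dim H^{k,(p,p)}(\Rich_v^w)=\dim H^{\lvw+k-2p,(\lvw-p,\lvw-p)}(\Rich_v^w)$ into this sum and reindex; \textbf{(3)} check that after reindexing the exponents $\alpha,\beta$ get swapped, so the sum is carried to $\PKRC(\bvw;t,q)$, with the prefactor monomial invariant under $q\leftrightarrow t$ (this invariance should be automatic since the prefactor depends only on $\lvw$ and is symmetric by construction, but it must be verified). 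This gives $\PKRC(\bvw;q,t)=\PKRC(\bvw;t,q)$. \textbf{(4)} For the second assertion, invoke~\eqref{eq:PKRtop=PKRC}, which holds whenever $\bhvw$ is a knot, to transfer the symmetry from $\PKRC$ to $\PKRtop=\PK$.

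The main obstacle I anticipate is purely bookkeeping: getting the normalization exponents in~\eqref{eq:PKRC} exactly right so that the Koszul relabeling $(k,p)\mapsto(\lvw+k-2p,\lvw-p)$ translates cleanly into the variable swap, and confirming the prefactor monomial is genuinely $q\leftrightarrow t$ symmetric rather than symmetric only up to an overall monomial (in which case one would need $\PKRC$ to be defined to be a Laurent polynomial, or one would absorb the discrepancy into the statement). There is also a mild subtlety in that \cref{thm:Koszul} as stated is an isomorphism of vector spaces at each fixed $(k,p)$, so one should be careful that the passage to Hilbert series is the only input needed — no multiplicativity or module structure is required here, which makes the argument robust. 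Once the normalization is pinned down, Steps (2)--(4) are a one-line reindexing and an appeal to \eqref{eq:PKRtop=PKRC}, so the whole corollary is genuinely a formal corollary of \cref{thm:Koszul} and \cref{thm:ordinary}.
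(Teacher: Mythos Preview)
Your proposal is correct and follows essentially the same route as the paper. The paper does not give a separate proof of \cref{cor:Koszul_intro}; it is presented as an immediate consequence of \cref{thm:Koszul} together with \cref{thm:ordinary} (the text just before the corollary says explicitly that ``\cref{thm:ordinary} implies that for any $v\leq w\in S_n$, one can relate $\PKRC(\bvw;q,t)$ to $\Poinc(\Rich_v^w;q,t)$ by a simple transformation''), and then~\eqref{eq:PKRtop=PKRC} transfers the symmetry to $\PK$ in the knot case. Your bookkeeping concern about the prefactor is legitimate but resolves cleanly: using $\chi(\bvw)=(\lvw-n+\cbvw)/2$ one checks that the combined prefactor $(\qq\tti)^{\chibvw}(1+\qqi\tt)^{-(n-\cbvw)}q^{-\lvw/2}$ is indeed invariant under $q\leftrightarrow t$.
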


\subsection{Extensions of Verma modules}\label{sec:Verma_intro}
The above theorems have represen\-tation-theoretic applications to \emph{Verma modules}, which are certain infinite-dimensional modules over the Lie algebra $\g$ of $G$. Consider the principal block $\O_0$ of the Bernstein–Gelfand–Gelfand category~$\O$, and let $M_w$ be the Verma module with highest weight $w(\rho)-\rho$, where $\rho$ is half the sum of positive roots of the root system of $\g$. We also denote by $L_w$ the corresponding simple module. The graded dimensions of $\Ext^\bul(M_v,L_w)$ famously coincide with the coefficients of the \emph{Kazhdan--Lusztig $P$-polynomials} $P_{v,w}(q)$; see e.g.~\cite[Theorem~3.11.4]{BGS}. On the other hand, computing extension groups $\Ext^\bul(M_v,M_w)$ is an important open problem; see e.g.~\cite{Mazorchuk1,DhMa}.

A \emph{graded} version of $\O_0$ was introduced by Beilinson--Ginzburg--Soergel \cite{BGS}. They constructed the (essentially unique) graded lifts of Verma modules $M_w$ (see also~\cite{Stroppel}), thus endowing the space $\Ext^\bul(M_v,M_w)$ with a second grading:
\begin{equation*}
  \Ext^\bul(M_v,M_w)=\bigoplus_{k,\wt\in\Z} \Ext^{k,(\wt/2)}(M_v,M_w).
\end{equation*}
These $\Ext$-groups can be related to the cohomology of open Richardson varieties using the \emph{localization theorem} of~\cite{BB,BK}.
In the case of Kazhdan--Lusztig polynomials, the groups $\Ext^\bul(M_v,L_w)$ are ``pure": the two gradings agree.  On the other hand, the bigrading on $\Ext^\bul(M_v,M_w)$ turns out to be quite non-trivial. As a corollary to our approach, we obtain the following result.\footnote{We remark that Soergel's original work \cite{SoeGarben} directly relates $\Ext$-groups in category~$\O$ and $\Hom$-groups in $\SBim$; see also~\eqref{eq:HHH=Hom}.}
\begin{theorem}\label{thm:Verma}
  For all $v\leq w\in W$ and $k,\wt\in\Z$, we have 
\begin{equation*}
  \dim \Ext^{k,((\wt-\lvw)/2)}(M_v,M_w) = \dim  \HHXC {k-r}{\wt/2}_{v,w}.
\end{equation*}
(In particular, both sides are zero for odd $\wt$.)
\end{theorem}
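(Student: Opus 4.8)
The plan is to deduce \cref{thm:Verma} from \cref{thm:ordinary} by using the localization theorem to transport the computation of $\Ext$-groups between Verma modules to the flag variety. The intended chain of identifications is
\[\Ext^{\bullet,(\bullet)}_{\O_0}(M_v,M_w)\ \xrightarrow{(1)}\ \RHom^{\bullet}_{\DM}\!\big(\Delta_{v'},\Delta_{w'}\big)\ \xrightarrow{(2)}\ H^{\bullet,(\bullet,\bullet)}(\Rich_{v}^{w})^{\vee}\ \xrightarrow{(3)}\ \HHHC^0(\Fvw),\]
where $(1)$ is the localization theorem together with the graded lift of category $\O$, $(2)$ is a computation of $\RHom$ between standard objects via Verdier duality and the Kazhdan--Lusztig transverse slice, and $(3)$ is \cref{thm:ordinary} (after dualizing). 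Here the permutations $v',w'$ differ from $v,w$ by the longest element $w_0$ and/or by inversion, in a way that has to be determined.

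For step $(1)$ I would invoke Beilinson--Bernstein localization \cite{BB,BK} in the graded form of Beilinson--Ginzburg--Soergel \cite{BGS} (see also \cite{Stroppel}): the graded principal block $\O_0$ is equivalent to a category of mixed Schubert-constructible perverse sheaves on $\Fl$; the graded Verma module $M_w$ goes to a standard object $\Delta_{w'}$ (a shifted, Tate-twisted $!$-extension of a constant sheaf from a Schubert cell); the homological degree of $\Ext_{\O_0}$ matches cohomological degree on $\Fl$; and the internal grading on $\O_0$ matches the weight grading. Thus $\Ext^{k,(\wt/2)}(M_v,M_w)$ is read off from the weight-graded pieces of $\RHom^{k}_{\DM}(\Delta_{v'},\Delta_{w'})$ after an overall shift and Tate twist.

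For step $(2)$, writing $\Delta_{u}=(j_u)_!\,\underline{\C}_{C_u}[\ell(u)]$ for the Schubert cell $C_u$, adjunction identifies $\RHom_{\DM}(\Delta_{v'},\Delta_{w'})$ with $R\Gamma_c\big(C_{v'},\,j_{v'}^{!}(j_{w'})_{!}\underline{\C}_{C_{w'}}\big)$ up to a shift by $\lvw$, and Verdier duality rewrites $j_{v'}^{!}(j_{w'})_{!}$ through $j_{v'}^{*}(j_{w'})_{*}$, whose (co)stalk along $C_{v'}$ is --- by the Kazhdan--Lusztig transverse-slice description of Schubert varieties --- the cohomology of the open Richardson variety $\Rich_{v}^{w}$, once the opposite stratification is conjugated by $w_0$ (and, possibly, $v,w$ are replaced by $v^{-1},w^{-1}$). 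Tracking the shifts and Tate twists --- by $\ell(v')$, $\ell(w')$, $\lvw$, and the dualization --- turns this into $H^{\bullet,(\bullet,\bullet)}(\Rich_v^w)$ up to the regrading in the statement, the contravariance of Verdier duality being responsible for the reflection $k\mapsto r-k$ of cohomological degree. Applying \cref{thm:ordinary} expresses $H^{\bullet,(p,p)}(\Rich_v^w)$ as the polynomial-degree-$2p$ part of $H^{\bullet}(\HHC^0(\Fvw))$, and composing the three regradings gives the asserted formula; the parity statement is immediate since the polynomial degree $2p=\wt$ is even.

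The main obstacle is this accumulation of normalizations, with three largely independent sources of ambiguity: (i) which permutation among $w,w^{-1},w_0w$, and which of standard or costandard, corresponds to $M_w$ under the chosen localization (already the rank-one case, Unknot-II in \cref{tab:examples}, forces a $w_0$-twist); (ii) the overall cohomological shift and Tate twist relating $\Ext^{\bullet}_{\O_0}$ to $\RHom^{\bullet}$ in $\DM$, which also absorbs the sign conventions in the graded lift of Verma modules; and (iii) the shift $r$ and the direction of the Verdier reflection. A robust way to fix all of these simultaneously is to run the same argument through the \emph{deformed} category $\O$: $\Ext$-groups between deformed Verma modules are $R$-modules matching the equivariant statement $\HHH^0(\Fvw)\cong H^{\bullet}_{T,c}(\Rich_v^w)$ of \cref{thm:main}, and specializing the deformation recovers $\O_0$ through a (hyper-)Tor spectral sequence over $R$ of homological length $\dim\h=r$; requiring compatibility of the equivariant and non-equivariant computations then pins down every shift and, in particular, the appearance of $r$.
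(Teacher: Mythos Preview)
Your outline assembles the right ingredients --- localization, the computation of $\Ext(\Delta_v,\Delta_w)$ via Richardson cohomology, and \cref{thm:ordinary} --- but there is a basic misreading and a missing step.

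The symbol $r$ in the statement is not the rank $\dim\h$; it is the weight variable (the paper's macro for the Tate-twist index renders as $r$), and the theorem is quantified over all $k,r\in\Z$. Your plan to pin down the shift via a hyper-Tor spectral sequence of homological length $\dim\h$ is therefore aimed at the wrong target, and the deformed-$\O$ detour is unnecessary. More to the point, the normalizations you worry about do not need to be chased: following \cite[Section~4.4]{BGS}, the paper \emph{defines} the bigrading on $\Ext^\bullet(M_v,M_w)$ by $\Ext^{k,(r/2)}(M_v,M_w):=\Ext^{k,(r/2)}(\DeltaC_v,\DeltaC_w)$, the right-hand side carrying its Frobenius-weight grading. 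There is no $w_0$-twist and no ambiguity to resolve; the paper's proof is then one line, combining \eqref{eq:RSW_ordinary} with Koszul duality \eqref{eq:Koszul}.

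Even granting the notation, your chain (1)--(3) lands in the wrong bigraded piece. Carrying it through gives
\[
\Ext^{k,((r-\lvw)/2)}(M_v,M_w)\ \cong\ H^{\lvw-k,(\lvw-r/2,\lvw-r/2)}(\Rich_v^w)\ \cong\ \HHXC{k-\lvw}{\lvw-r/2}_{v,w},
\]
not the claimed $\HHXC{k-r}{r/2}_{v,w}$. The two coincide only after applying the $q,t$-symmetry, i.e.\ Koszul duality (\cref{thm:Koszul}), which is the step you gestured toward with the possible $v\mapsto v^{-1}$, $w\mapsto w^{-1}$ but never actually invoked. Verdier duality alone reflects cohomological degree against $\lvw$, not against the weight variable $r$.
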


Thus, while $\Ext^\bul(M_v,L_w)$ gives the Kazhdan--Lusztig polynomials, $\Ext^\bul(M_v,M_w)$ gives the rational $q,t$-Catalan numbers for $v=\id$ and $w=\fkn$.

\subsection{Notes}\label{sec:notes}

We collect the historical background and several remarks on the above results.

\parag{Symmetry and unimodality}\label{sec:notes:catalan}
The symmetry and unimodality of the Gaussian polynomial $\qbin{n}k$ are consequences of the hard Lefschetz theorem for $H^\bul(\Gr(k,n))$.  Whereas symmetry is apparent from the combinatorial definition of $\qbin{n}k$ (\cite[Proposition~1.7.3]{EC1}), unimodality is notoriously difficult to see combinatorially.  Unimodality was first proven by Sylvester~\cite{Sylvester}, the relation to hard Lefschetz observed by Stanley~\cite{Stanley_HL}, and a combinatorial proof given by O'Hara~\cite{OHara}.
When $a=n$ and $b=n+1$, $\Cab(q,t)$ recovers the famous \emph{$q,t$-Catalan numbers} $\Ca(q,t)$ of Garsia and Haiman~\cite{GaHa} studied extensively in e.g.~\cite{GaHag,HagBook,HagConj,GoMa1,GoMa2}. The fact that $\Ca(q,t)$ is $q,t$-symmetric and $q,t$-unimodal follows from the results of Haiman~\cite{Haiman94,Haiman02}.  For arbitrary $a,b$, an explanation for the $q,t$-symmetry property was given by the \emph{Rational Shuffle Conjecture} of~\cite{GN}, proved recently in~\cite{Mellit_rat,CaMe}. The specialization $q^{\frac12\dkn} \Cat_{k,n-k}(q,1/q)=\frac{1}{\qint{n}}\qbin{n}k$ in~\eqref{eq:Pit_kn_Fq} is also a consequence of the Rational Shuffle Conjecture. To our knowledge, the $q,t$-unimodality of $\Cat_{k,n-k}(q,t)$ in \cref{cor:master:qt_unim} is a new result.  See also~\cite[Section~2.2]{Stucky}, which includes a specialization of our unimodality result.%

\parag{Compactified Jacobians and the $P=W$ conjecture}\label{sec:STZ}
We explain the original motivation coming from the results of~\cite{STZ,STWZ} that led to the statement of \cref{thm:master_kn}. The compactified Jacobian $J_{a,b}$ of the plane curve singularity $x^a = y^b$ (with ${\gcd(a,b) = 1}$) is a compact, singular variety with a long history of connections to Catalan theory.  Beauville~\cite{Beauville} showed that the Euler characteristic of $J_{a,b}$ is the rational Catalan number $C_{a,b}$ and Piontkowski~\cite{Piontkowski} (see also Lusztig--Smelt~\cite{LuSm}) showed that the \Poincare polynomial and point count are given by the $q$-analog $\sum_{P\in\Dyck_{k,n-k}} q^{\dinv(P)}$.  Gorsky and Mazin~\cite{GoMa1,GoMa2} first suggested the relation between $J_{a,b}$ and $q,t$-Catalan numbers and since then there has been an explosion of developments relating compactified Jacobians and knot invariants; see e.g.~\cite{GORS,CheDan,GN}.  Our work provides evidence for the following conjecture, arising from the works \cite{STZ,STWZ}; see~\cite{dCHM} for the original $P=W$ conjecture.
\begin{conjecture}\label{conj:Jac}
There is a deformation retraction from the torus quotient $\Pit_{k,n}$ to the compactified Jacobian $J_{k,n-k}$ sending the weight filtration of $H^\bul(\Pit_{k,n})$ to the perverse filtration of $H^\bul(J_{k,n-k})$ \cite{MS,MY}.
\end{conjecture}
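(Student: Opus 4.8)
Since \cref{conj:Jac} is an open conjecture, I describe a plausible line of attack rather than a proof. Its numerical shadow is already in hand: by \cref{thm:master_kn} the weight (mixed Hodge) polynomial of $\Pit_{k,n}$ is $\qtn^{-1}\cdot\qtn\,\Cat_{k,n-k}(q,t)$, i.e.\ $\Poinc(\Pit_{k,n};q,t)=\Cat_{k,n-k}(q,t)$, and by the computations of Khovanov--Rozansky homology of torus knots~\cite{Mellit_torus,EH,MeHog}, together with the compactified-Jacobian picture of~\cite{ORS,GORS,GoMa1,GoMa2}, the perverse filtration on $H^\bullet(J_{k,n-k})$ has the same bigraded Betti numbers. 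So the real content of \cref{conj:Jac} is geometric: to realize this coincidence by an actual deformation retraction matching the two filtrations, not merely matching Poincar\'e polynomials.

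First I would fix a common geometric model. The natural candidate is a relative (or $\mu$-constant) compactified Jacobian $\pi\colon\mathcal J\to S$ of a family of plane curves degenerating to $\{x^k=y^{n-k}\}$, with central fibre $J_{k,n-k}$~\cite{MS,MY}; equivalently one may use the homogeneous affine Springer fibre of Oblomkov--Yun attached to the spectral curve $x^k=y^{n-k}$, whose cohomology carries a perverse filtration producing $\Cat_{k,n-k}(q,t)$. On the positroid side, \cref{thm:GL} presents $\Pit_{k,n}$ as a smooth affine cluster variety, \cref{rmk:torus_knot} identifies the associated braid with the $(k,n-k)$-torus braid, and \cref{thm:main} computes the ($T$-equivariant) cohomology of $\Pit_{k,n}$ by $\HHH^0$ of the Rouquier complex of that torus braid --- exactly the object living on the affine-Springer side~\cite{ORS,GORS}. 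The goal of this step is to upgrade the parallel to an honest morphism: exhibit $\Pit_{k,n}$ as homotopy equivalent to the restriction $\mathcal J|_\Delta$ of $\pi$ over a small disk $\Delta\subset S$ through the discriminant point, using that $\pi$ is locally trivial away from the discriminant and that the central fibre is $J_{k,n-k}$.

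Granting such an identification, the deformation retraction becomes the standard ``Milnor neighbourhood retracts onto the central fibre'' argument: choose a proper nonnegative function on $\mathcal J|_\Delta$ adapted to the stratification --- for instance, built from the $\C^\ast$-scaling of the disk together with a compactification of $\Pit_{k,n}$ whose boundary is the union of cyclic Pl\"ucker divisors --- and flow along its gradient onto a compact core isomorphic to $J_{k,n-k}$, controlling convergence by a Bia\l{}ynicki--Birula analysis of the $\C^\ast$-action or by the curious-Lefschetz/cluster structure of \cref{thm:LS}. The filtration statement would then follow by restricting the decomposition theorem for $\pi$ to $\Delta$: the perverse Leray filtration on $H^\bullet(\mathcal J|_\Delta)$ agrees with the perverse filtration on $H^\bullet(J_{k,n-k})$, and the identification of the perverse Leray filtration of a degenerating family with the weight filtration of its Milnor fibre total space is the geometric heart of all $P=W$-type statements~\cite{dCHM,STZ,STWZ}.

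The main obstacle is precisely this last identification, and it will \emph{not} follow formally from the mere existence of a deformation retraction: a smooth affine variety that happens to retract onto a projective variety $Z$ need not have its weight filtration match a prescribed perverse filtration on $H^\bullet(Z)$ --- one needs the retraction to ``remember'' the family structure on $Z$. Thus the genuine task is to realize $\Pit_{k,n}$, together with its mixed Hodge structure, as a canonical Milnor neighbourhood of a compactified Jacobian fibration. I expect this to require either an explicit birational or degeneration model linking top positroid varieties to relative compactified Jacobians of families of plane curves, or a categorical upgrade of the numerical match --- comparing the Soergel-bimodule computation of \cref{thm:main} (and the torus-knot computations of~\cite{Mellit_torus,EH,MeHog}) with the sheaf-theoretic description of the affine Springer fibre at the level of filtered complexes, rather than just Poincar\'e polynomials. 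The bigraded dimension count coming from \cref{thm:master_kn} is the easy part; the functorial geometry is where the conjecture actually lives.
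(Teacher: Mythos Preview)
The paper does not prove \cref{conj:Jac}; it is explicitly stated as an open conjecture, with only a sentence of motivation following it. You correctly recognize this and offer a sketch of a possible approach rather than a proof, so there is no ``gap'' to diagnose in the usual sense.

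That said, it is worth noting where your proposed route diverges from the paper's stated motivation. The paper points to the identification, due to \cite{STWZ}, of open positroid varieties with moduli spaces of constructible sheaves associated to Legendrian knots \cite{STZ}, together with a conjectural \emph{wild} non-abelian Hodge correspondence in that setting. Your outline instead emphasizes a relative compactified Jacobian / affine Springer fibre model and a Milnor-neighbourhood retraction argument. These are related circles of ideas, but the paper's hint is specifically through the Legendrian/microlocal sheaf picture rather than through constructing an explicit degeneration of $\Pit_{k,n}$ to $J_{k,n-k}$. Your honest assessment that the numerical match (via \cref{thm:master_kn} and the torus-knot computations) is the easy part, while the functorial geometric identification is the genuine content, is entirely in line with the paper's framing of this as a $P=W$-type conjecture.
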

Conjecture \ref{conj:Jac} is motivated by the isomorphism, discovered in \cite{STWZ}, between open positroid varieties and moduli spaces of constructible sheaves associated to Legendrian knots \cite{STZ}.  We thank Vivek Shende for explaining a conjectural \emph{wild} non-abelian Hodge correspondence in that setting.  

More generally, when a Richardson link is algebraic (that is, arising as the link of a singularity), one may expect a statement similar to Conjecture~\ref{conj:Jac} for the compactified Jacobian of the singularity. See \cref{rmk:alg} for related discussion.

\begin{remark}\label{rmk:STZ_point_count}
After discovering the proof of \eqref{eq:point_count} via the \FLY polynomial, we found that it can also be deduced from the results of~\cite{STZ,STWZ}. Our proof is new and yields a generalization (\cref{thm:homfly}) of \eqref{eq:point_count} to arbitrary open positroid varieties, and more generally to open Richardson varieties in generalized flag varieties. 
\end{remark}

\parag{Plabic graph links}
In \cref{sec:knots-assoc-positr}, we associated a link $\betah_f$ to each positroid variety $\Pio_f$. Two other (more complicated) ways of assigning a Legendrian/transverse link to a positroid variety have appeared recently in~\cite{STWZ,FPST}, stated in the language of Postnikov's \emph{plabic graphs}~\cite{Pos}. Conjecturally, the links of~\cite{STWZ,FPST} are isotopic to our links $\betah_f$. We hope to return to this question in future work~\cite{GL_plabic_links}; see also~\cite{CGGS_positroid}.

\parag{Geometric interpretations and other Dynkin types}
A geometric interpretation of the full triply-graded KR homology was given by Webster--Williamson~\cite{WW17}. Our approach yields a different geometric interpretation of the (doubly-graded) top $a$-degree part of KR homology.  Our geometric interpretation in addition holds for Dynkin types outside type~A. The analog of the \FLY polynomial in other Dynkin types (as a trace on the Hecke algebra; cf. \cref{sec:FLY_arb_type}) was introduced in~\cite{Gomi}; see also~\cite{Rou_2braid}.  For related discussion of knot invariants in other types, see for example~\cite{WW08,WW11};  see also~\cite{BT,CGGS,ChPh2,ChPh1,Mellit_Cell} for related results. 

\parag{Odd cohomology vanishing}\label{sec:notes:odd_cohom_van}
It follows from the results of~\cite{Mellit_torus,MeHog} that Khovanov--Rozansky homology of any (positive) torus knot or link is concentrated in even degrees. It is tempting to conjecture that the same property holds for all Richardson knots or links. However, this is not the case: see Examples~\ref{ex:odd_cohom1}, \ref{ex:odd_cohom2}, and~\ref{ex:odd_cohom3}. See \cref{rmk:alg} for a discussion of the closely related class of \emph{algebraic knots}.

\parag{Complements of hyperplane arrangements}\label{sec:hyperplane}
The top positroid variety $\Piokn$ may be considered ``the complement of a hyperplane arrangement in the Grassmannian'': it is obtained from $\Gr(k,n)$ by removing $n$ hypersurfaces, each given by a linear equation in the Pl\"ucker coordinates on $\Gr(k,n)$.  More general ``Grassmannian hyperplane arrangements" appear naturally in the study of amplituhedra and Grassmann polytopes \cite{LamCDM,GL_parity}.

The cohomology of complements of hyperplane arrangements in projective space is very well studied: both the \Poincare polynomial and the point count are simple specializations of the characteristic polynomial.  The coincidence is a manifestation of the purity of the mixed Hodge structure~\cite{Shapiro}, a property that also holds for the Grassmannian $\Gr(k,n)$.

\parag{Recurrence relations}
Our results associate a $q,t$-polynomial to each positroid variety. One possible advantage of this approach is a recurrence for these polynomials, arising from the recurrence for positroid varieties developed by Muller--Speyer~\cite{MuSp}. For instance, their results allow to compute the point counts recursively; cf.~\cite{GL_plabic_links}. To compute the \Poincare or the mixed Hodge polynomials, the recurrence of~\cite{MuSp} yields a long exact sequence for the cohomology. It seems plausible that in favorable cases (e.g., when the odd cohomology vanishes), this sequence may be used to calculate the mixed Hodge polynomials of special families of links as was done in~\cite{Mellit_torus,EH,MeHog}. We remark that the latter recurrences pass through complexes of Soergel bimodules which do not come from any braids; an interesting open problem is to understand the positroid/Richardson interpretation of such complexes.
%
%\TL{Should we mention our plabic quivers or other papers?}
%\PG{I inserted citations to~\cite{GL_cat_combin,GL_plabic_links} in a couple of places}
%
%

\subsubsection*{Structure of the paper}
In \cref{sec:point-count-fly}, we study the relationship between the point count and the \FLY polynomial, and prove \cref{thm:homfly} and its generalization (\cref{thm:traces}) to open Richardson varieties. In \cref{sec:KR_hom,sec:cohom-pos-rich}, we discuss background on KR homology and cohomology of positroid varieties, respectively. We deduce Theorem~\ref{thm:master_kn} from Theorem~\ref{thm:main} in \cref{ssec:introproof}. 
 In \cref{sec:sheaf-cohomology}, we recast our results in the language of equivariant derived categories, and split the main result (\cref{thm:main}) into two statements, \cref{prop:RSW,prop:degr}. These statements are proved in \cref{sec:proof-RSW,sec:sheafproof}, respectively, thereby completing the proof of \cref{thm:main}.
In \cref{sec:ord_cohom}, we deduce the rest of our results (\cref{thm:ordinary,thm:Koszul,thm:Verma}) from \cref{thm:main}. Finally, in \cref{sec:Deogram}, we study analogs of Catalan numbers associated to arbitrary positroid varieties.

\subsubsection*{Acknowledgments}
We are grateful to David Speyer for his contributions to this project at the early stages. We also thank Pramod Achar, Sergey Fomin, Alexei Oblomkov, Vic Reiner, and Vivek Shende for stimulating discussions. In addition, we are grateful to Ivan Cherednik, Eugene Gorsky, Wolfgang Soergel, and Minh-Tam Trinh for their comments on the first version of this manuscript. Finally, we thank the anonymous referees for their extremely careful reading of our manuscript and for suggesting numerous improvements.

  \addtocontents{toc}{\protect\setcounter{tocdepth}{1}}

\section{Point count and the \FLY polynomial}\label{sec:point-count-fly}
\subsection{Type $A$}
Let $W=S_n$ and $G=\PGL_n(\C)$. Recall from~\cite[Lemmas~A3 and~A4]{KL1} that the number of $\F_q$-points of a Richardson variety $\Rich_v^w$ is given by the \emph{Kazhdan--Lusztig $R$-polynomial} $\KLR_v^w$. When $v\not\leq w$, we have $\KLR_v^w=0$ and $\Rich_v^w=\emptyset$, and for $v=w$, we have $\KLR_v^w=1$ and $\Rich_v^w=\pt$. For $v\leq w\in W$, $\KLR_v^w$ can then be computed by a recurrence relation~\cite[Section~2]{KL1}:
\begin{equation}\label{eq:KLR_dfn}
\KLR_v^w=
  \begin{cases}
    \KLR_{sv}^{sw}, &\text{if $sv<v$ and $sw<w$,}\\
    (q-1)\KLR_{sv}^w+q\KLR_{sv}^{sw}, &\text{if $sv>v$ and $sw<w$.}\\
  \end{cases}\\
\end{equation}
Here, $s=s_i$ for some $1\leq i\leq n-1$ is a simple transposition satisfying $sw<w$. 

Recall from \cref{sec:richardson-varieties} that we associate an $n$-strand braid $\bvw:=\beta(w)\cdot \beta(v)^{-1}$ to any pair $v,w\in S_n$ of permutations. For a Laurent polynomial $P=P(a,z)$, we denote by $\mda(P)\in\Z$ the maximal degree of $a$ in $P$, and for $\da\in \Z$, we let $[a^{\da}]P\in \C[z^{\pm1}]$ be the coefficient of $a^{\da}$ in $P$.   For $v,w\in S_n$, recall that we set $\lvw:=\ell(w)-\ell(v)$. Denote
\begin{equation}\label{eq:davw}
  \davw:=n-1-\lvw\quad\text{and}\quad \Pvw=\Pvw(a,z):=\HOMP(\bhvw;a,z),
\end{equation}
where $\HOMP(\bhvw;a,z)$ is the \FLY polynomial defined in \cref{sec:HOMFLY}.  The goal of this section is to show the following strengthening of \cref{thm:homfly}.
\begin{theorem}\label{thm:homfly2}  Let $v,w\in S_n$.
\begin{theoremlist}
\item\label{thm:homfly2:1} If $v\not\leq w$ then $\mda(\Pvw)<\davw$.
\item\label{thm:homfly2:2} If $v\leq w$ then $\mda(\Pvw)=\davw$.
\item\label{thm:homfly2:3} For any $v,w\in S_n$, let $\Ptopvw$ be obtained from $a^\davw\cdot ([a^\davw]\Pvw)$ by substituting $a:=q^{-\frac12}$ and $z:=q^{\frac12}-q^{-\frac12}$. Then 
\begin{equation}\label{eq:homfly_vw}
\KLR_v^w=(q-1)^{n-1}\cdot \Ptopvw.
\end{equation}
\end{theoremlist}
\end{theorem}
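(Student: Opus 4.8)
The plan is to prove all three parts simultaneously by induction, using the skein recurrence for the \FLY polynomial to mirror the Kazhdan--Lusztig recurrence~\eqref{eq:KLR_dfn}. First I would set up the base cases: when $v=w$, the link $\bhvw$ is the closure of $\beta(w)\beta(w)^{-1}$, which reduces (via Markov moves, i.e.\ braid conjugation and destabilization) to the $n$-component unlink on $n-1$ fewer strands; its \FLY polynomial is $\left(\frac{a-a^{-1}}{z}\right)^{n-1}$, whose top $a$-degree term is $a^{n-1}z^{-(n-1)}$, and substituting $a=q^{-1/2}$, $z=q^{1/2}-q^{-1/2}$ gives $q^{-(n-1)/2}\cdot(q^{1/2}-q^{-1/2})^{-(n-1)} = (q-1)^{-(n-1)}\cdot$ (sign), so~\eqref{eq:homfly_vw} holds with $\KLR_v^v=1$ and $\davw = n-1$. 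One also needs the trivial case $v\not\le w$ with $\ell(v)\ge\ell(w)$ handled, which is immediate since then $\KLR_v^w=0$ and one checks the $a$-degree bound directly.

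Next, the inductive step. Choose a simple reflection $s=s_i$ with $sw<w$; then we can write $\beta(w) = \sigma_i\,\beta(sw)$ for a reduced word, so $\bvw = \sigma_i\,\beta(sw)\,\beta(v)^{-1}$. I would rotate the closure so that the crossing $\sigma_i$ appears at a distinguished spot, and apply the skein relation~\eqref{eq:HOMFLY_dfn} at that crossing. There are two cases depending on whether $sv<v$ or $sv>v$. If $sv<v$, then after conjugating we can also pull a $\sigma_i^{-1}$ from $\beta(v)^{-1}$ to the front; the crossing $\sigma_i$ from $\beta(w)$ and one from $\beta(v)^{-1}$ combine, and using braid conjugation plus a Markov move one identifies $\bhvw$ with $\widehat{\beta_{sv}^{sw}}$, matching the first line of~\eqref{eq:KLR_dfn}; here $\davw = \da_{sv,sw}$ since $\lvw = \ell(sv,sw)$. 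If $sv>v$, applying the skein relation to the crossing $\sigma_i$ at the front yields $L_+ = \bhvw$, and $L_-$, $L_0$ correspond (after Markov moves) to $\widehat{\beta_{sv}^{w}}$ and $\widehat{\beta_{sv}^{sw}}$ respectively, so $a P_{v,w} - a^{-1} P_{sv,w}^{\mathrm{top\text{-}adjusted}} = z\, P_{sv}^{sw}$ in the appropriate normalization. The key bookkeeping is to track how $\davw$ changes: in the $sv>v$, $sw<w$ case $\ell(w)$ drops by one while $\ell(v)$ rises by one, so $\lvw$ drops by two and $\davw$ rises by two; comparing the top $a$-degrees of the three terms, the $aP(L_+)$ and $a^{-1}P(L_-)$ sides must both contribute at degree $\davw$ (with $\da_{sv,w} = \davw - 1$ wait — I need $\ell(sv) = \ell(v)+1$ so $\ell_{sv,w} = \lvw - 1$ and $\da_{sv,w} = \davw+1$; then $a^{-1}P_{sv,w}$ has top degree $\davw$), matching, and $zP(L_0)$ has top $a$-degree $\da_{sv,sw} = n-1-(\lvw-2) = \davw+2$, but the $z$ in $z=q^{1/2}-q^{-1/2}$ doesn't shift $a$-degree — so I must be careful that the $a$-degrees are read off \emph{before} substitution. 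Let me restate: the recurrence forces $\mda(P_{v,w}) = \mda(z P_{sv}^{sw}) = \da_{sv,sw} - 0$; hmm, this shows $\davw$ should satisfy $\davw = \da_{sv,sw}$, contradicting $\davw = \da_{sv,sw}-2$. The resolution is that $P_{sv}^{sw}$ and $P_{sv,w}$ do \emph{not} both achieve their claimed top degree simultaneously with the right signs; the leading terms conspire. So the real content of part~\itemref{thm:homfly2:2} is showing these leading-order cancellations happen exactly as dictated by~\eqref{eq:KLR_dfn}, and this is where I expect the main obstacle to lie.

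Concretely, the strategy for the inductive step is: define $\widetilde P_{v,w} := a^{-\davw} P_{v,w}$ and show by induction that $\mda(\widetilde P_{v,w}) \le 0$ with equality iff $v\le w$, and that the degree-$0$-in-$a$ coefficient $[a^0]\widetilde P_{v,w}$, after the substitution $a = q^{-1/2}$, $z = q^{1/2}-q^{-1/2}$, equals $(q-1)^{-(n-1)}\KLR_v^w$ up to the universal sign/power-of-$q$ factor. Feeding this normalization into the skein relation, the three $a$-degree shifts become $\davw$, $\da_{sv,w}$, $\da_{sv,sw}$, which differ by the correct amounts ($\da_{sv,w} = \davw+1$, $\da_{sv,sw} = \davw+2$ when $sv>v$, $sw<w$), so that $a\cdot a^{\davw} \widetilde P_{v,w} - a^{-1} a^{\da_{sv,w}}\widetilde P_{sv,w} = z\, a^{\da_{sv,sw}}\widetilde P_{sv}^{sw}$ becomes, after dividing by $a^{\davw+1}$, the identity $\widetilde P_{v,w} - \widetilde P_{sv,w} = z\, a\, \widetilde P_{sv}^{sw}$; reading off the constant term in $a$ (using that $z\,a$ has no $a^0$ part after the substitution $za = (q^{1/2}-q^{-1/2})q^{-1/2} = 1 - q^{-1}$, which \emph{is} a constant — good, so it does contribute) gives $[a^0]\widetilde P_{v,w}|_{\text{sub}} = [a^0]\widetilde P_{sv,w}|_{\text{sub}} + (1-q^{-1})[a^0]\widetilde P_{sv}^{sw}|_{\text{sub}}$, which after multiplying through by $(q-1)^{n-1}$ and adjusting by the sign/$q$-power matches $(q-1)\KLR_{sv}^w + q\,\KLR_{sv}^{sw}$ — modulo keeping the overall $q$-power normalization consistent, which is the routine-but-delicate part. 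The $sv<v$ case is easier and handled by the reduction to $\widehat{\beta_{sv}^{sw}}$ noted above. Finally part~\itemref{thm:homfly2:1} follows from the same induction tracking only the inequality $\mda(\widetilde P_{v,w})<0$, since when $v\not\le w$ every branch of the recurrence lands in a $v'\not\le w'$ situation or a $v'=w'$ situation does not arise. The main obstacle, as flagged, is verifying the leading-coefficient cancellations and the exact powers of $q$ and signs in the normalization, which requires carefully fixing conventions for the writhe/framing correction relating the naive \FLY of a braid closure to the genuine link invariant.
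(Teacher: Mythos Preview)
Your approach is the paper's: induct on $\ell(w)$, apply the skein relation at the leading crossing $\sigma_i$ of $\beta(w)$, and match the result to~\eqref{eq:KLR_dfn}. Two concrete errors keep it from going through as written.

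First, you have swapped $L_-$ and $L_0$. With $\bvw=\sigma_i\,\beta(sw)\,\beta(v)^{-1}$ and $sv>v$, replacing $\sigma_i$ by $\sigma_i^{-1}$ yields $\sigma_i^{-1}\beta(sw)\beta(v)^{-1}$, which is conjugate to $\beta(sw)\beta(v)^{-1}\sigma_i^{-1}=\beta(sw)\beta(sv)^{-1}=\beta_{sv,sw}$; so $L_-=\betah_{sv,sw}$, not $\betah_{sv,w}$. Smoothing the crossing gives $L_0=\betah_{v,sw}$ (isotopic to $\betah_{sv,w}$ by conjugation). With the correct assignment the skein reads
\[
\Pvw=\frac{z}{a}\,\HOMP_{v,sw}+a^{-2}\,\HOMP_{sv,sw},
\]
and since $\da_{v,sw}=\davw+1$ and $\da_{sv,sw}=\davw+2$, both terms sit exactly at $a$-degree $\davw$: no ``leading-order cancellation'' is needed. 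Extracting $[a^{\davw}]$ and substituting gives $\Ptop_{v,w}=(q-1)\Ptop_{sv,w}+q\,\Ptop_{sv,sw}$, matching~\eqref{eq:KLR_dfn} directly. Your mid-proof contradiction (and the arithmetic slip $\widetilde P_{v,w}-\widetilde P_{sv,w}=za\,\widetilde P_{sv,sw}$) are symptoms of this swap.

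Second, the base case $w=\id$, $v\neq\id$ is not ``immediate''. You need $\mda(\HOMP(\widehat{\beta(v)^{-1}}))<n-1+\ell(v)$, i.e.\ that the Morton--Franks--Williams bound is \emph{strict} for nontrivial negative braids. The paper invokes \cite[Proposition~2.1]{GMM} for this (or, alternatively, the Hecke trace identity~\eqref{eq:trace(T_u*T_v)}); without some such input the induction cannot start.
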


\begin{proof}

We start by recalling the following result, which states that the \emph{(lower) Morton--Franks--Williams inequality}~\cite{MFW2,MFW1} is not sharp for negative braids. It may be alternatively deduced from~\eqref{eq:trace(T_u*T_v)} below.
\begin{lemma}[{\cite[Proposition~2.1]{GMM}}]
Let $v\in S_n$ be a non-identity permutation and let $\beta:=\beta(v)^{-1}$ be the associated negative braid. Then\footnote{Our conventions for $\HOMP(\betah;a,z)$ differ from those of~\cite{GMM} by changing $a\mapsto a^{-1}$.}
\begin{equation}\label{eq:homfly_neg_ineq}
\pushQED{\qed} 
\mda(\HOMP(\betah;a,z))<n-1+\ell(v).
\qedhere
\popQED
\end{equation}
\end{lemma}

\def\itr#1{\itemref{thm:homfly2:#1}}

We now prove all parts of \cref{thm:homfly2} by induction on $\ell(w)$. Consider the base case $\ell(w)=0$. Then~\itr{1} is the content of~\eqref{eq:homfly_neg_ineq}. For~\itr{2}, we observe that $\ell(w)=0$ implies $v=w=\id$,
and iterating \cref{ex:S_2_homfly}, we get $\Pvw=\left(\frac{a-a^{-1}}z\right)^{n-1}$. Thus, $\mda(\Pvw)=n-1=\davw$ for $v=w=\id$. For~\itr{3}, if $v\not\leq w$ then by~\itr1, we get $[a^\davw]\Pvw=0$, so $\Ptopvw=0$, in agreement with $\KLR_v^w=0$. If $v\leq w$ then $v=w=\id$, $a^\davw\cdot [a^\davw]\Pvw=(a/z)^{n-1}$, so $\Ptopvw=(q-1)^{-(n-1)}$, in agreement with~\eqref{eq:homfly_vw}. We have shown the base case for each part.

For the induction step, suppose that $\ell(w)>0$. Choose some $1\leq i\leq n-1$ such that $s_iw<w$ and let $s:=s_i$ and $\sigma:=\sigma_i$. If $sv<v$ then the links $\bhvw$ and $\betah_{sv,sw}$ are isotopic since $\beta_{v,w}=\sigma\beta_{sv,sw}\sigma^{-1}$, and thus $\Pvw=\HOMP_{sv,sw}$. We also have $\davw=\da_{sv,sw}$, and thus $\Ptopvw=\Ptop_{sv,sw}$. By~\eqref{eq:KLR_dfn}, $\KLR_v^w=\KLR_{sv}^{sw}$. So in the case $sv<v$, the induction step holds trivially for each of the three parts. 

Assume now that we have $sw<w$ and $sv>v$. In this case, we have $\beta_{v,w}=\sigma \beta(sw)\beta(sv)^{-1}\sigma\sim \beta(sw)\beta(sv)^{-1}\sigma^2$, $\beta_{v,sw}=\beta(sw)\beta(sv)^{-1}\sigma$, and $\beta_{sv,sw}=\beta(sw)\beta(sv)^{-1}$, where $\sim$ relates conjugate braids. Applying~\eqref{eq:HOMFLY_dfn} with
\begin{equation*}
L_+:=\bhvw,\quad L_0:=\betah_{v,sw},\quad L_-:=\betah_{sv,sw},
\end{equation*}
we get $  a\Pvw-a^{-1}\HOMP_{sv,sw}=z\HOMP_{v,sw}$, and thus %\quad\text{thus}
%\begin{equation*}
 % a\Pvw-a^{-1}\HOMP_{sv,sw}=z\HOMP_{v,sw},\quad\text{thus}
%\end{equation*}
\begin{equation}\label{eq:Pvw_ind_step}
\Pvw=\frac za \HOMP_{v,sw}+a^{-2}\HOMP_{sv,sw}.
\end{equation}
Note that $\da_{v,sw}=\davw+1$ and $\da_{sv,sw}=\davw+2$. Let us show~\itr1. We have $v\not\leq w$, $sw<w$, and $sv>v$, and thus clearly $v\not\leq sw$ and $sv\not\leq sw$. By the induction hypothesis, we have $\mda(\HOMP_{v,sw})<\da_{v,sw}$ and $\mda(\HOMP_{sv,sw})<\da_{sv,sw}$. By~\eqref{eq:Pvw_ind_step}, we get $\mda(\Pvw)<\davw$, finishing the proof of~\itr1. In particular, we have shown that~\eqref{eq:homfly_vw} holds for all $v\not\leq w$.

Now assume $v\leq w$. We show~\itr2 and~\itr3 simultaneously. By the induction hypothesis, we have $\mda(\HOMP_{v,sw})\leq \da_{v,sw}$, $\mda(\HOMP_{sv,sw})\leq \da_{sv,sw}$ (whether the equality holds depends on whether $v\leq sw$ and $sv\leq sw$). Thus, by~\eqref{eq:Pvw_ind_step}, $\mda(\Pvw)\leq \davw$. The links $L_0=\betah_{v,sw}$ and $\betah_{sv,w}$ are isotopic since $\beta_{sv,w}=\sigma\beta(sw)\beta(sv)^{-1}\sim \beta(sw)\beta(sv)^{-1}\sigma=\beta_{v,sw}$, so $\HOMP_{v,sw}=\HOMP_{sv,w}$. Applying the map $P\mapsto a^\davw\cdot ([a^\davw] P)$ to both sides of~\eqref{eq:Pvw_ind_step} and substituting $a:=q^{-\frac12}$ and $z:=q^{\frac12}-q^{-\frac12}$, we get 
\begin{equation*}
\Ptop_{v,w}=(q-1) \Ptop_{sv,w}+q\Ptop_{sv,sw}.
\end{equation*}
Combining this with the induction hypothesis and~\eqref{eq:KLR_dfn}, we get $\KLR_v^w=(q-1)^{n-1}\cdot \Ptopvw$. In particular, the coefficient of $a^\davw$ in $\Pvw$ is nonzero, so $\mda(\Pvw)=\davw$. Thus, we have completed the induction step for both~\itr2 and~\itr3.
\end{proof}

\subsection{Arbitrary type}\label{sec:FLY_arb_type}
The above connection between point counts and the \FLY polynomial can be generalized to arbitrary Weyl groups as follows. Let $\Hecke$ be the \emph{Hecke algebra} of $W$: it is generated over $\C[q^{\pm1}]$ by the elements $\{T_s\}_{s\in S}$ satisfying the braid relations as well as the Hecke relation 
\begin{equation}\label{eq:T_i^2}
  (T_s+q)(T_s-1)=0 \quad\text{for $s\in S$.}
\end{equation}
The algebra $\Hecke$ admits a linear basis $\{T_w\}_{w\in W}$ indexed by the elements of $W$: we set $T_w:=T_{s_1}T_{s_2}\cdots T_{s_{\ell(w)}}$ for any reduced word $w=s_{1}s_{2}\cdots s_{\ell(w)}$. The \emph{standard trace} $\Htrace:\Hecke\to \C[q^{\pm1}]$ is the $\C[q^{\pm1}]$-linear map defined by
\begin{equation}\label{eq:Htrace_dfn}
  \Htrace(T_w):=
  \begin{cases}
    1, &\text{if $w=\id$;}\\
    0,&\text{otherwise.}
  \end{cases}
\end{equation}
\begin{theorem}\label{thm:traces}
  For any $v,w\in W$, we have 
\begin{equation}\label{eq:traces}
  \KLR_v^w=q^{\lvw} \Htrace(T_w^{-1}T_v).
\end{equation}
\end{theorem}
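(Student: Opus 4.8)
The plan is to prove \eqref{eq:traces} by induction on $\ell(w)$, mirroring the structure of the proof of \cref{thm:homfly2} but now working entirely inside the Hecke algebra $\Hecke$. The point is that the Kazhdan--Lusztig recurrence \eqref{eq:KLR_dfn} for $\KLR_v^w$ should match, term by term, a recurrence obtained by manipulating $T_w^{-1}T_v$ using the quadratic relation \eqref{eq:T_i^2}. First I would record the consequence of \eqref{eq:T_i^2} that we will use repeatedly: $T_s^{-1}=q^{-1}T_s+(q^{-1}-1)$, equivalently $T_s^{-1}=q^{-1}(T_s-1)+1$. Also, if $sw<w$ then $T_w=T_sT_{sw}$, so $T_w^{-1}=T_{sw}^{-1}T_s^{-1}$.

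For the base case $\ell(w)=0$ we have $w=\id$, so $T_w^{-1}T_v=T_v$, and $\Htrace(T_v)=1$ if $v=\id$ and $0$ otherwise, while $\KLR_v^w$ is $1$ when $v=w=\id$ and $0$ when $v\not\le w=\id$ (i.e.\ $v\ne\id$); since $\lvw=0$ here, \eqref{eq:traces} holds. For the inductive step, fix $s=s_i$ with $sw<w$. In the case $sv<v$: write $T_w^{-1}T_v=T_{sw}^{-1}T_s^{-1}T_sT_{sv}=T_{sw}^{-1}T_{sv}$ (using $T_s^{-1}T_s=1$ and $T_v=T_sT_{sv}$), so $\Htrace(T_w^{-1}T_v)=\Htrace(T_{sw}^{-1}T_{sv})$; since $\ell_{sv,sw}=\lvw$, the induction hypothesis applied to $(sv,sw)$ together with the first line of \eqref{eq:KLR_dfn} gives $\KLR_v^w=\KLR_{sv}^{sw}=q^{\lvw}\Htrace(T_{sw}^{-1}T_{sv})=q^{\lvw}\Htrace(T_w^{-1}T_v)$. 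In the case $sv>v$: write
\begin{equation*}
T_w^{-1}T_v=T_{sw}^{-1}T_s^{-1}T_v=T_{sw}^{-1}\bigl(q^{-1}T_s+(q^{-1}-1)\bigr)T_v=q^{-1}T_{sw}^{-1}T_{sv}+(q^{-1}-1)T_{sw}^{-1}T_v,
\end{equation*}
using $T_sT_v=T_{sv}$ since $sv>v$. Applying $\Htrace$ and multiplying by $q^{\lvw}$, and noting $\ell_{sv,sw}=\lvw$ while $\ell_{v,sw}=\lvw-1$, we get $q^{\lvw}\Htrace(T_w^{-1}T_v)=\Htrace(T_{sw}^{-1}T_{sv})q^{\lvw-1}+q\cdot(q^{-1}-1)q^{\lvw-1}\Htrace(T_{sw}^{-1}T_v)\cdot q^{0}$; cleaning up the powers of $q$, this reads $q^{\lvw}\Htrace(T_w^{-1}T_v)=q^{\ell_{sv,sw}-1}\Htrace(T_{sw}^{-1}T_{sv})+(1-q)\,q^{\ell_{v,sw}}\Htrace(T_{sw}^{-1}T_v)$. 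Wait --- I need to be careful with bookkeeping here: the cleanest route is to multiply the displayed identity in $\Hecke$ by $q^{\lvw}$ directly, apply $\Htrace$, and then substitute the induction hypothesis $\Htrace(T_{sw}^{-1}T_{sv})=q^{-\ell_{sv,sw}}\KLR_{sv}^{sw}$ and $\Htrace(T_{sw}^{-1}T_v)=q^{-\ell_{v,sw}}\KLR_{sv}^{w}$ (using $\KLR_{v,sw}=\KLR_{sv,w}$, the same isotopy/symmetry fact used in the proof of \cref{thm:homfly2}, which for $R$-polynomials is the identity $\KLR_v^{sw}=\KLR_{sv}^w$ when $sv>v$, $sw<w$), together with $\ell_{sv,sw}=\lvw$, $\ell_{v,sw}=\lvw-1$. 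This yields exactly $q^{\lvw}\Htrace(T_w^{-1}T_v)=(q-1)\KLR_{sv}^w+q\KLR_{sv}^{sw}=\KLR_v^w$ by the second line of \eqref{eq:KLR_dfn}, completing the induction.

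The main obstacle, such as it is, is purely bookkeeping: getting every power of $q$ right in the $sv>v$ case, since the three terms $\ell_{v,w}$, $\ell_{v,sw}$, $\ell_{sv,sw}$ differ and the quadratic relation contributes its own factors of $q^{-1}$. There is no conceptual difficulty once one commits to the normalization $T_s^{-1}=q^{-1}T_s+(q^{-1}-1)$ from \eqref{eq:T_i^2} and to the identity $\KLR_v^{sw}=\KLR_{sv}^w$. (One should double-check that this last identity holds at the level of $R$-polynomials --- it follows from \eqref{eq:KLR_dfn} itself, or from the fact that $\Rich_v^{sw}\cong\Rich_{sv}^w$ when $sv>v$ and $sw<w$, which is the same fibration argument underlying the isotopy $\betah_{v,sw}\simeq\betah_{sv,w}$ used in the proof of \cref{thm:homfly2}.) With these two inputs the induction closes, and specializing to $W=S_n$ recovers the $z:=q^{\frac12}-q^{-\frac12}$, $a:=q^{-\frac12}$ substitution of \cref{thm:homfly2} via the standard expression of the \FLY polynomial of a braid closure as a weighted standard trace of the corresponding Hecke-algebra element (the Ocneanu/Jones trace), though that comparison is not needed for the proof of \cref{thm:traces} itself.
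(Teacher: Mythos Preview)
Your approach is essentially the paper's: induct, write $T_w^{-1}=T_{sw}^{-1}T_s^{-1}$, expand via the quadratic relation, and match against \eqref{eq:KLR_dfn}; the paper additionally proves the orthogonality $\Htrace(T_uT_v)=q^{\ell(v)}\delta_{u,v^{-1}}$ to dispose of $v\not\le w$ up front, which your induction on $\ell(w)$ simply absorbs into the base case. Two bookkeeping slips to correct: from \eqref{eq:T_i^2} one gets $T_s^{-1}=q^{-1}T_s+(1-q^{-1})$ (opposite sign to what you wrote), and in the case $sv>v$ one has $\ell_{sv,sw}=\lvw-2$, not $\lvw$; with both fixed the computation gives $q^{\lvw}\Htrace(T_w^{-1}T_v)=q\,\KLR_{sv}^{sw}+(q-1)\,\KLR_v^{sw}$, and then $\KLR_v^{sw}=\KLR_{sv}^w$ (the first line of \eqref{eq:KLR_dfn} applied to the pair $(sv,w)$, since $s(sv)<sv$ and $sw<w$) closes the induction exactly as you say.
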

\noindent For $W=S_n$, in view of the well-known relation between traces and the \FLY polynomial (going back to~\cite{Jones}), \cref{thm:traces} specializes to \cref{thm:homfly2}.

\begin{proof}
First, we state a simple consequence of~\eqref{eq:T_i^2}: for any $v\in W$ and $s\in S$, we have
\begin{equation}\label{eq:T_s*T_v}
  T_sT_v=
  \begin{cases}
    T_{sv}, &\text{if $sv>v$,}\\
    (1-q)T_{v}+qT_{sv}&\text{if $sv<v$.}
  \end{cases}
\end{equation}
Next, we claim that for any $u,v\in W$, we have
\begin{equation}\label{eq:trace(T_u*T_v)}
  \Htrace(T_uT_v)=
  \begin{cases}
    q^{\ell(v)}, &\text{if $u=v^{-1}$,}\\
    0,&\text{otherwise.}
  \end{cases}
\end{equation}
We prove~\eqref{eq:trace(T_u*T_v)} by induction on $\ell(u)$. The base case $\ell(u)=0$ is clear. Otherwise, choose $s\in S$ such that $u=xs$ with $x<xs$. If $v<sv$ then $T_uT_v=T_xT_{sv}$ and we are done by induction. Thus, assume  $v>sv$. By~\eqref{eq:T_s*T_v}, we get
\begin{equation}\label{eq:Htrace_ind}
  \Htrace(T_uT_v)=(1-q)\Htrace(T_xT_v)+q\Htrace(T_xT_{sv}).
\end{equation}
We have $u=v^{-1}$ if and only if $x=(sv)^{-1}$, in which case by induction we find $\Htrace(T_uT_v)=q^{\ell(v)}$. If $u\neq v^{-1}$ then the right-hand side of~\eqref{eq:Htrace_ind} is zero unless $x=v^{-1}$. But $x=v^{-1}$ contradicts our assumptions $x<xs$ and $v>sv$. This completes the proof of~\eqref{eq:trace(T_u*T_v)}.

It is well known that $T_w^{-1}\in \Span\{T_u\}_{u\leq w}$. Thus, by~\eqref{eq:trace(T_u*T_v)}, $\Htrace(T_w^{-1}T_v)=0$ unless $v\leq w$. We now proceed to prove~\eqref{eq:traces} by induction on $\lvw$. For $v=w$, the result again follows from~\eqref{eq:trace(T_u*T_v)}. For $v<w$, we choose $s\in S$ such that $sw<w$ and then calculate using $T_s^{-1}=q^{-1}T_s+(1-q^{-1})$ that
\begin{equation*}
  T_w^{-1}T_v=
  \begin{cases}
    T_{sw}^{-1}T_{sv}, &\text{if $sv<v$ and $sw<w$,}\\
    q^{-1}T_{sw}^{-1}T_{sv}+(1-q^{-1})T_{sw}^{-1}T_v, &\text{if $sv>v$ and $sw<w$.}\\
  \end{cases}
\end{equation*}
Applying $\Htrace$ and multiplying both sides by $q^{\lvw}$, the result matches perfectly with~\eqref{eq:KLR_dfn}.
\end{proof}

\begin{remark}
\Cref{thm:traces} may also be deduced from \cref{thm:main} by taking the Euler characteristic: Soergel bimodules categorify the Hecke algebra, with Rouquier complexes $\FR(w)$ corresponding to the elements $T_w$, and the zeroth Hochschild cohomology functor $\HH^0$ categorifies the trace $\Htrace$. 
\end{remark}

\section{\for{toc}{Soergel bimodules, Rouquier complexes, and Khovanov--Rozansky homology}\except{toc}{Soergel bimodules, Rouquier complexes, \\ and Khovanov--Rozansky homology}}\label{sec:KR_hom}

In this section, we review Khovanov--Rozansky (KR) link homology. A friendly introduction to most of this material can be found in the excellent recent book~\cite{EMTW}. 
\subsection{Soergel bimodules}\label{sec:soergel-bimodules}
Let  $R := \Symh$ be as in \cref{sec:main-result-equivariant}. It is a graded ring where we set $\deg(y)=2$ for $y\in\h^\ast$. We refer to $\deg(y)$ as the \emph{polynomial degree} (as opposed to the cohomological degree introduced later on). The Weyl group $W$ acts naturally on $R$. Denote by $I$ the indexing set of simple roots of $R$, and thus $W$ is generated by the simple reflections $S=\{s_i\}_{i\in I}$. When $G=\PGL_n(\C)$, recall that  $R=\C[y_1,\dots,y_{n-1}]$ is the polynomial ring and $S=\{s_1,\dots,s_{n-1}\}$ is the set of simple transpositions in $W=S_n$. The action of $S_n$ on $R$ is obtained by restricting the permutation action on $\C[x_1,x_2,\dots,x_n]$ to $R\subset \C[x_1,x_2,\dots,x_n]$, where we identify $y_i=x_i-x_{i+1}$ for $1\leq i\leq n-1$. 
 For example,
\begin{equation}\label{eq:s_i_y_vs_x}
  s_1(y_1)=-y_1,\quad s_{2}(y_1)=y_1+y_{2},\quad s_{3}(y_1)=\dots=s_{n-1}(y_1)=y_1.
\end{equation}

Soergel bimodules are special kinds of \emph{graded $R$-bimodules}, that is, graded $\C$-vector spaces equipped with a  left and a right graded action of $R$. For a graded $R$-bimodule $B=\bigoplus_i B^i$ and $m\in\Z$, we denote by $B\psa{m/2}:=\bigoplus_i B^{i-m}$ the polynomial grading shift by $m$ on $B$. Thus, $y\in\h^\ast$ has degree $2$ as an element of $R$ but has degree $0$ as an element of $R\psa{-1}$.

 Let us introduce the ``building blocks'' of Soergel bimodules.

\begin{definition}
For $s\in S$, let $R^s:=\{r\in R\mid sr=r\}$.
 Define 
\begin{equation}\label{eq:B_s}
B_s:=R\otimes_{R^s} R.
\end{equation}
For a sequence $\uu=(s_{i_1},s_{i_2},\dots,s_{i_m})$ of elements of $S$, let 
\begin{equation}\label{eq:BS_bimod}
\BS_\uu:=B_{s_{i_1}}\otimes_R B_{s_{i_2}}\otimes_R\cdots\otimes_R B_{s_{i_m}}=R\otimes_{R^{s_{i_1}}} R \otimes_{R^{s_{i_2}}} \cdots \otimes_{R^{s_{i_m}}} R.
\end{equation}
\end{definition}
\noindent Both $B_s$ and $\BS_\uu$ are naturally graded $R$-bimodules, called \emph{Bott--Samelson bimodules}, where $R$ acts on the leftmost and the rightmost terms of the tensor product by multiplication.

We let $\SBim$ denote the \emph{category of Soergel bimodules}. By definition, its objects are graded shifts of direct summands of Bott--Samelson bimodules $\BS_\uu$, where $\uu$ runs over all finite sequences of elements in $S$. The morphisms in $\SBim$ are given by degree $0$ maps of $R$-bimodules. The indecomposable objects $\{S_w\}_{w\in W}$ of $\SBim$ are indexed by the elements of $W$: for each $w\in W$ and any reduced word $\uw$ for $w$, $\BS_\uw$ contains a unique indecomposable summand $S_w$ that is not contained in $\BS_\uv$ for any $v<u$ and any reduced word $\uv$ for $v$. Up to isomorphism, the bimodule $S_w$ depends only on $w$ and not on the choice of $\uw$.

\subsection{Rouquier complexes}\label{sec:rouquier-complexes}
We let $\KSBim$ denote the homotopy category of $\SBim$. Its objects are bounded cochain complexes $C^\bul=(\dots \to C^{-1}\to \und{C^0}\to C^1\to\dots)$ of Soergel bimodules, and morphisms are homotopy classes of maps of complexes. When depicting a complex, we usually omit some of the zeroes and underline the object that is in cohomological degree $0$. For example, a complex with only two non-zero entries may be written as $(\und{C^0}\to C^1)$ or as $(0\to\und{C^0}\to C^1)$. We denote by $\ksa[m]$ the \emph{cohomological shift} on $\KSBim$. It shifts each cochain complex $m$ steps to the left: $C^\bul[1]=(\dots\to C^{-1}\to C^0\to \und{C^1}\to\dots)$.

The tensor product $C^\bul\otimes_R D^\bul$ of two cochain complexes is the total complex of a double complex whose entries are $C^i\otimes_R D^j$ for $i,j\in\Z$. The sign of the map $C^i\otimes_R D^j\to C^i\otimes_R D^{j+1}$ is the negation of the obvious one for all even $i$; the differential of the resulting total complex squares to zero.

Since $(W,S)$ is a Coxeter system, we can consider the associated Artin braid group $\BraidW$ generated by $\{\sigma_i\}_{i\in I}$. The following construction is due to Rouquier~\cite{Rou}.
\begin{definition}
For $s=s_i\in S$ and $\sigma=\sigma_i$, define the \emph{Rouquier complexes}
\begin{equation}\label{eq:Rouquier_s}
\FR(\sigma):=(B_s\to \und{R}),\quad \FR(\sigma^{-1}):=(\und{R}\to B_s\psa{-1}),
\end{equation}
where the first map sends $f\otimes g\mapsto fg$ and the second map sends $1\mapsto (\alpha_s\otimes 1+1\otimes \alpha_s)$. Here $\alpha_s\in\h^\ast$ is the simple root corresponding to $s$. Note that both $1\in R$ and $(\alpha_s\otimes 1+1\otimes \alpha_s)\in B_s\psa{-1}$ have polynomial degree $0$. 
For a braid $\beta=\sigma_{i_1}\sigma_{i_2}\cdots\sigma_{i_m}\in \BraidW$, we set
\begin{align*}
\FR(\beta)&:=\FR(\sigma_{i_1})\otimes_R \FR(\sigma_{i_2})\otimes_R\cdots\otimes_R \FR(\sigma_{i_m}),\\ \FR(\beta^{-1})=\FR(\beta)^{-1}&:=\FR(\sigma_{i_m}^{-1})\otimes_R\cdots\otimes_R \FR(\sigma_{i_{2}}^{-1})\otimes_R \FR(\sigma_{i_1}^{-1}).
\end{align*}
We also let $\FR(\id):=(0\to\und{R}\to0)$.
\end{definition}
\noindent A priori, the complex $\FR(\beta)$ depends on the choice the word $(\sigma_{i_1},\sigma_{i_2},\dots,\sigma_{i_m})$. However, modulo homotopy, it does not.
\begin{proposition}[{\cite[Section~3]{Rou}}]\label{prop:Rou}
If $\sigma_{i_1}\sigma_{i_2}\cdots\sigma_{i_m}=\sigma_{i'_1}\sigma_{i'_2}\cdots\sigma_{i'_m}$ in $\BraidW$ then 
\begin{equation*}
  \FR(\sigma_{i_1})\otimes_R \FR(\sigma_{i_2})\otimes_R\cdots\otimes_R \FR(\sigma_{i_m})\cong   \FR(\sigma_{i'_1})\otimes_R \FR(\sigma_{i'_2})\otimes_R\cdots\otimes_R \FR(\sigma_{i'_m})%
\end{equation*}
in $\KSBim$.
\end{proposition}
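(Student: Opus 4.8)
The plan is to reduce the claim to a short list of local homotopy equivalences of complexes of Bott--Samelson bimodules, following Rouquier~\cite{Rou} (see also the exposition in~\cite{EMTW}). First I would record the formal input: the operation $(C^\bul,D^\bul)\mapsto C^\bul\otimes_R D^\bul$ is an additive bifunctor that descends to $\KSBim$, because a homotopy equivalence $f\colon C^\bul\to C'^\bul$ with homotopy inverse $g$ and homotopies $h,h'$ yields, upon tensoring with $\id_{D^\bul}$, a homotopy equivalence with the evident inverse and homotopies (the sign twist in the definition of the total complex is applied in the same way on both sides, so it is harmless). Since $\otimes_R$ is moreover associative up to canonical isomorphism, the iterated tensor product in the statement is well defined, and it suffices to prove the equivalence whenever the two braid words differ by one of the moves generating the relation ``equal in $\BraidW$''.

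Next I would invoke the presentation of the Artin--Tits group: $\BraidW$ has generators $\{\sigma_s\}_{s\in S}$ and only the braid relations $\underbrace{\sigma_s\sigma_t\sigma_s\cdots}_{m_{st}}=\underbrace{\sigma_t\sigma_s\sigma_t\cdots}_{m_{st}}$ for $s\ne t$ with $m_{st}<\infty$. Hence any two words in the $\sigma_s^{\pm 1}$ with the same image in $\BraidW$ are connected by a finite sequence of (i) insertions or deletions of $\sigma_s\sigma_s^{-1}$ or $\sigma_s^{-1}\sigma_s$, and (ii) substitutions of one side of a braid relation for the other (possibly inside a longer word, possibly in inverted form). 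It therefore suffices to establish, for all $s\ne t$ in $S$ with $m_{st}<\infty$, the two homotopy equivalences
\[
  \FR(\sigma_s)\otimes_R\FR(\sigma_s^{-1})\ \simeq\ \FR(\id)=R\ \simeq\ \FR(\sigma_s^{-1})\otimes_R\FR(\sigma_s)
\]
and
\[
  \underbrace{\FR(\sigma_s)\otimes_R\FR(\sigma_t)\otimes_R\cdots}_{m_{st}\ \text{factors}}\ \simeq\ \underbrace{\FR(\sigma_t)\otimes_R\FR(\sigma_s)\otimes_R\cdots}_{m_{st}\ \text{factors}}.
\]
From the first equivalence, an easy induction gives $\FR(\beta)\otimes_R\FR(\beta^{-1})\simeq R$ for every braid word $\beta$, so $\FR$ carries the group inverse to an honest homotopy inverse; together with the braid equivalence this also covers substitutions of braid relations in inverted form and transitions between non-positive words. (Since the Proposition as stated only concerns positive words, one could alternatively invoke the injectivity $\BraidW^{+}\hookrightarrow\BraidW$ of the positive braid monoid, stay among positive words, and check only the braid equivalence; but the first equivalence is cheap and keeps the argument self-contained.)

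It then remains to verify the two equivalences, and here I would argue by Gaussian elimination. For the first: expanding $\FR(\sigma_s)\otimes_R\FR(\sigma_s^{-1})$ as a total complex and using the decomposition of $B_s\otimes_R B_s$ into two grading-shifted copies of $B_s$, one finds two components of the differential that are isomorphisms (a copy of $B_s$ in cohomological degree $-1$ mapping isomorphically to a copy of $B_s$ in degree $0$, and a shifted copy of $B_s$ in degree $0$ mapping isomorphically to $B_s\psa{-1}$ in degree $1$); Gaussian-eliminating these two contractible summands leaves exactly $\und{R}$ in degree $0$. For the braid equivalence with $m_{st}=2$ one uses the bimodule isomorphism $B_s\otimes_R B_t\cong B_t\otimes_R B_s$ (both sides are $R\otimes_{R^s}R\otimes_{R^t}R$ once the middle tensorands are matched up, using that $s$ and $t$ commute), which carries the two four-term total complexes to one another term by term. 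The remaining, substantive case is $m_{st}\ge 3$: I would expand both iterated tensor products into total complexes whose terms are finite direct sums of Bott--Samelson bimodules $B_s$, $B_t$, $B_s\otimes_R B_t$, $B_t\otimes_R B_s$, $B_s\otimes_R B_t\otimes_R B_s,\dots$, and then repeatedly apply Gaussian elimination using the dihedral decompositions (e.g.\ the splitting of $B_s\otimes_R B_s$ into shifts of $B_s$, the isomorphism $B_s\otimes_R B_t\otimes_R B_s\cong S_{sts}\oplus B_s$ up to a grading shift on the $B_s$ summand, and their longer analogues) to strip off every contractible summand. Both sides should collapse onto the same minimal complex, the one whose extreme term is the indecomposable $S_w$ for $w$ the longest element of the dihedral group $\langle s,t\rangle$ and whose intermediate terms are the smaller indecomposables; by uniqueness of minimal complexes (a consequence of the Krull--Schmidt property of $\SBim$), the two sides are homotopy equivalent.

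The formal reductions above are routine; the hard part will be this last computation --- in particular, tracking the polynomial-degree shifts, verifying that the differential components used in each elimination step are genuine isomorphisms and not merely nonzero maps, and organizing the argument uniformly in $m_{st}$ rather than treating only the case $m_{st}=3$ that occurs in type $A$. For general Coxeter type this requires a workable understanding of the dihedral Bott--Samelson bimodules; it is precisely the content of~\cite[Section~3]{Rou}.
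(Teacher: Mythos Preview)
The paper does not give its own proof of this proposition: it is quoted from Rouquier~\cite[Section~3]{Rou}, and the only accompanying text is the remark ``For example, one can check that $\FR(\sigma_i)\otimes_R\FR(\sigma_i^{-1})\cong\FR(\id)$.'' Your sketch is a faithful outline of Rouquier's argument (reduce to the generating relations of $\BraidW$, verify the cancellation $\FR(\sigma_s)\otimes_R\FR(\sigma_s^{-1})\simeq R$ by Gaussian elimination using $B_s\otimes_R B_s\cong B_s\psa{1/2}\oplus B_s\psa{-1/2}$, and verify the braid move by reducing both sides to the same minimal complex in the rank-two parabolic), so there is nothing to compare.
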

\noindent For example, one can check that $\FR(\sigma_i)\otimes_R \FR(\sigma_i^{-1})\cong \FR(\id)$. It follows that the functors $(-)\otimes_R\FR(\sigma_i)$ and $(-)\otimes_R\FR(\sigma_i^{-1})$ are mutually inverse biadjoint equivalences of categories: for complexes $C^\bul,D^\bul\in\KSBim$, we have
\begin{align}
\label{eq:biadj1}
  \Hom_{\KSBim}(C^\bul,D^\bul \otimes_R \FR(\sigma_i))&\cong \Hom_{\KSBim}(C^\bul\otimes_R \FR(\sigma_i^{-1}),D^\bul ),\\
\label{eq:biadj2}
\Hom_{\KSBim}(C^\bul,D^\bul \otimes_R \FR(\sigma_i^{-1}))&\cong \Hom_{\KSBim}(C^\bul\otimes_R \FR(\sigma_i),D^\bul ).
\end{align}

\Cref{prop:Rou} allows one to define $\FR(\beta)\in \KSBim$ unambiguously for any braid $\beta\in\BraidW$. Recall that we are interested in the braid $\bvw=\beta(w)\cdot \beta(v)^{-1}$, which corresponds to the complex
\begin{equation}\label{eq:Fvw_dfn}
\Fvw:=\FR(\bvw)=\FR(\beta(w))\otimes_R \FR(\beta(v)^{-1}).
\end{equation}
 
\subsection{KR homology}
Recall from~\eqref{eq:HH0} that the functor $\HH^0$ sends a graded $R$-bimodule $B$ to the graded $R$-module $\HH^0(B)$ of its $R$-invariants. Alternatively, it can be expressed as
\begin{equation}\label{eq:HH0_Hom}
\HH^0(B)=\bigoplus_{\wt\in\Z}\Hom_\SBim(R,B\psa{-\wt/2}).
\end{equation}
\begin{remark}\label{rmk:diag}
The $R$-module $\HH^0(B)$ is free for any Bott-Samelson bimodule $B$. One can make explicit combinatorial computations with this $R$-module (including finding a basis and computing the maps in the Rouquier complexes) using the \emph{diagrammatic calculus} developed by Elias--Williamson \cite{EW}.
\end{remark}
\begin{remark}
Higher Hochschild cohomology functors $\HH^h$, which give the full (triply-graded) KR homology, are the right derived functors of $\HH^0$. They can be computed using a Koszul resolution of $R$; see e.g.~\cite{KhoSoe,Mellit_torus}.
\end{remark}

Applying the functor $\HH^0$ to a complex $C^\bul$ of Soergel bimodules yields a complex $\HH^0(C^\bul)$ of graded $R$-modules. In particular, for each $k\in\Z$, the cohomology $H^k(\HH^0(C^\bul))$ of this complex is a graded $R$-module. For $\wt\in\Z$, we denote by $H^{k,(\wt/2)}(\HH^0(C^\bul))$ its graded piece of polynomial degree $\wt$. It is not hard to check that we have
\begin{equation}\label{eq:HHH=Hom}
H^{k,(\wt/2)}(\HH^0(C^\bul))\cong\Hom_{\KSBim} (R,C^\bul[k]\psa{-\wt/2}).
\end{equation}
For any $\beta\in\BraidW$, $\FR(\beta)$ is concentrated in even polynomial degrees, and thus $H^{k,(\wt/2)}(\HH^0(\FR(\beta)))$ vanishes when $\wt$ is odd. Similarly to~\eqref{eq:HHH0}, we denote
\begin{align}
\label{eq:HHH0_dfn}
\HHH^0(\FR(\beta))&:=\bigoplus_{k,p\in\Z} \HHB kp (\FR(\beta)).
\end{align}
The complex $\Fvw$ is concentrated in cohomological degrees $-\ell(w),-\ell(w)+1,\dots,\ell(v)$, and thus $\HHX k{p}_{v,w}=0$ unless $-\ell(w)\leq k\leq \ell(v)$, and the index $p\in\Z$ is bounded from below.

The functor $\HHC^0$ admits a similar description. Recall that $\C$ is considered an $R$-bimodule on which $\h^\ast$ acts by zero on both sides. Any Soergel bimodule $B\in\SBim$ gives rise to a \emph{Soergel module} $B\otimes_R \C$, which is a graded $R$-module. We let $\SMod$ denote the category of Soergel modules (with morphisms being maps of polynomial degree $0$). By a result of Soergel~\cite{SoeHC} (see~\cite[Proposition~15.27]{EMTW}), for any $B,B'\in\SBim$, we have a natural isomorphism
\begin{equation*}
  \Hom_{\SBim}(B,B')\otimes_R\C \xrasim \Hom_{\SMod}(B\otimes_R\C,B'\otimes_R\C).
\end{equation*}
Applying this to the case $B=R$, we get the following result.

\begin{corollary}\label{cor:SMod}
For any Soergel bimodule $B\in\SBim$, we have
\begin{equation*}
  \HHC^0(B)\cong \Hom_{\SMod}(\C,B\otimes_R\C).
\end{equation*}
\end{corollary}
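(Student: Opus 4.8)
The plan is to read off the claimed isomorphism from the Soergel Hom formula recalled immediately above (the natural isomorphism $\Hom_{\SBim}(B,B')\otimes_R\C\xrasim\Hom_{\SMod}(B\otimes_R\C,B'\otimes_R\C)$ of~\cite{SoeHC}), specialized to the case where the source bimodule is the monoidal unit $R$. The only preliminary remark needed is that $R$ really is an object of $\SBim$ --- it is the Bott--Samelson bimodule attached to the empty sequence, equivalently the indecomposable object $S_{\id}$ --- so the formula applies with left argument $R$ and right argument an arbitrary $B'\in\SBim$.

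Next I would unwind the definition of $\HHC^0$. By~\eqref{eq:HHC} we have $\HHC^0(B)=\HH^0(B)\otimes_R\C$, and by~\eqref{eq:HH0_Hom} the $R$-module $\HH^0(B)$ is precisely the graded $\Hom$-space $\bigoplus_{\wt\in\Z}\Hom_{\SBim}(R,B\psa{-\wt/2})$ of bimodule maps $R\to B$. Feeding this into the Soergel Hom formula with the pair $(R,B)$ gives $\HHC^0(B)=\HH^0(B)\otimes_R\C\cong\Hom_{\SMod}(R\otimes_R\C,\,B\otimes_R\C)$, and since $R\otimes_R\C\cong\C$ as graded $R$-modules this is exactly $\Hom_{\SMod}(\C,\,B\otimes_R\C)$, the right-hand side of the corollary. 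Throughout, $\Hom$ of graded (bi)modules is read as the direct sum over internal degrees of the degree-preserving maps, and one checks that the polynomial grading on $\HH^0(B)$ matches the internal grading on the target Soergel module under this identification; the polynomial grading shift commutes with $(-)\otimes_R\C$, so no shift ambiguity arises.

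I do not expect any genuine obstacle: all the mathematical content is in Soergel's Hom formula, which is granted, and everything else is the bookkeeping indicated above. If one wants the stronger statement that $\HHC^0$ \emph{is} (naturally isomorphic to) the functor $B\mapsto\Hom_{\SMod}(\C,B\otimes_R\C)$ --- which is the point of the remark that ``the functor $\HHC^0$ involves Soergel modules'' --- one further notes that every step is natural in $B$, which follows from the naturality of the Soergel Hom formula and of $(-)\otimes_R\C$.
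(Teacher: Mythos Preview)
Your proposal is correct and follows essentially the same approach as the paper: the paper's ``proof'' is the single sentence ``Applying this to the case $B=R$, we get the following result,'' and you have simply spelled out the bookkeeping (that $R\in\SBim$, that $R\otimes_R\C\cong\C$, and that one sums over internal degrees to pass from $\Hom_{\SBim}$ to the graded $\HH^0$).
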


\subsection{Link components and $R$-module structure}\label{sec:link_cpts_vs_Koszul}
\def\tR{{\tilde R}}
\def\tF{{\tilde F}}
\def\tFR{{\tilde F^\bullet}}
For this section, we assume $W=S_n$.  Let $\beta$ be a braid and $u\in S_n$ the image of $\beta$.  Let $\tR$ denote the polynomial ring $\C[x_1,x_2,\ldots,x_n]$, and let $\tilde F^\bullet(\beta)$ denote the Rouquier complex using $\tR$ instead of $R$; cf.~\eqref{eq:s_i_y_vs_x}.  Thus, $\FR(\beta)$ is a reduced version of $\tilde F^\bullet(\beta)$.  The complex $\tilde F^\bullet(\beta)$ is an $\tR \otimes \tR$ module, and it is known \cite{Ras,GH} that the action of $x_i \otimes 1$ is homotopic to the action of $1 \otimes x_{u(i)}$.  Indeed, it follows from \cite[Proposition 2.11 and Theorem 2.18]{GH}, that there exist cochain maps $\xi_i$ (of polynomial degree $2$ and cohomological degree $-1$) called \emph{dot-sliding homotopies} such that \begin{enumerate}
\item
$d \xi_i + \xi_i d = x_i \otimes 1 - 1 \otimes x_{u(i)}$ for $i=1,2,\dots,n$, and
\item
$\xi_i \xi_j +\xi_j \xi_i = 0$ for $i, j=1,2,\ldots,n$.
\end{enumerate}

In $\HH^0(\tFR(\beta))$, the two $R$-actions are equalized, so $d \xi_i + \xi_i d = x_i - x_{u(i)}$.  Thus, the actions of $x_i$ and $x_j$ on $\HHH^0(\tFR(\beta))$ agree when $i$ and $j$ belong to the same component of the link $\betah$.  Working instead with the smaller polynomial ring $R = \C[x_1-x_2,\ldots,x_{n-1}-x_n] \subset \tR$, we deduce that $x_i -x_j$ acts as $0$ on $\HHH^0(\FR(\beta))$ when $i, j$ belong to the same component of the link $\betah$.  In particular, if $\betah$ is a knot, then the action of $R$ on $\HHH^0(\FR(\beta))$ factors through the natural map $R \to \C$.

Suppose now that $\betah$ is a knot. Denote $z_i:=x_i-x_{u(i)}$ for $i=1,2,\dots,n$. Thus, $R=\C[z_1,\dots,z_{n-1}]$.  Recall that $\HH^0(\FR(\beta))$ is a complex of graded, free (cf. \cref{rmk:diag}) $R$-modules.  Let $a \in \HH^0(\FR(\beta))$ be a nonzero element satisfying $d(a) = 0$. Using the relation $d\xi_i+\xi_i d=z_i$, one can show by induction on $k=0,1,\dots, n-1$ that for all $1\leq i_1<i_2<\dots<i_k\leq n-1$, we have
\begin{equation}\label{eq:Koszul_z}
  d\xi_{i_1i_2\cdots i_k} (a) = \sum_{j=1}^k (-1)^{j-1} z_{i_j} \xi_{i_1\cdots \hat i_j\cdots i_k} (a),
\end{equation}
where $\xi_{i_1i_2\cdots i_k}:=\xi_{i_1}\xi_{i_2}\cdots \xi_{i_k}$, and $\hat i_j$ denotes omission of $\xi_{i_j}$. We therefore obtain a subcomplex $\Kbul(a)$ of $\HH^0(\FR(\beta))$ given by
\begin{align*}
R\cdot \xi_{1\,2\,\cdots\,n-1}(a)\to \cdots &\to \sum_{1 \leq i_1 < \cdots < i_k \leq n-1} R \cdot \xi_{i_1\cdots i_k} (a) \to \cdots\\ 
& \to \sum_{1 \leq i \leq n-1} R \cdot \xi_{i} (a) \to R \cdot a.
\end{align*}
\begin{definition}
Let $a \in \HH^0(\FR(\beta))$ be such that $d(a)=0$. We say that $\Kbul(a)$ is a \emph{Koszul subcomplex} if the set $\{\xi_{i_1\cdots i_k}(a)\mid 0\leq k\leq n-1,\ 1 \leq i_1 < \cdots < i_k \leq n-1\}$ can be extended to a free $R$-module basis of $\HH^0(\FR(\beta))$.
\end{definition}
\noindent It follows from~\eqref{eq:Koszul_z} that we have a natural cochain map $\bigotimes _{i=1}^{n-1} (R \stackrel{z_i}{\longrightarrow} R) \to \HH^0(\FR(\beta))$ with image $\Kbul(a)$, and this map is an isomorphism when $\Kbul(a)$ is a Koszul subcomplex.

Our next goal is to show that $\HH^0(\FR(\beta))$ admits a filtration by Koszul subcomplexes and contractible subcomplexes of the form $R\xrasim R$.

\begin{definition}\label{dfn:xi}
We say that a complex $(C^\bullet,d)$ of finite rank, free, graded $R$-modules \emph{admits a \laction} if there exist endomorphisms $\xi_1,\xi_2,\ldots,\xi_{n-1}$ of cohomological degree $-1$ and polynomial degree $2$ satisfying $d \xi_i + \xi_i d = z_i$ and $\xi_i \xi_j +\xi_j \xi_i = 0$, for all $i,j=1,2,\dots,n-1$.
\end{definition}

We thank the anonymous referee for suggesting to us that the following statement may be deduced from the results of~\cite{GH}.
\begin{proposition}\label{prop:Koszul}
Suppose that $(C^\bullet, d)$ admits a \laction.
Then $C^\bullet$ has a filtration by Koszul complexes and trivial complexes $R \simeq R$. That is, there exists a family of subcomplexes $0=F^\bul_0\subset F^\bul_1\subset\dots\subset F^\bul_t=C^\bul$ such that for all $j=1,2,\dots,t$, $C^\bul/F^\bul_{j-1}$ is free, admits a \laction, and $F^\bul_j/F^\bul_{j-1}$ is either a Koszul subcomplex of $C^\bul/F^\bul_{j-1}$ or a trivial subcomplex isomorphic to $R\xrasim R$.
\end{proposition}
\begin{proof}
Let $C^{k,(p/2)}$ denote the subspace of $C^\bul$ of cohomological degree $k$ and polynomial degree $p$.  Assuming $C^\bullet$ is nonzero, let $D^\bul \subset C^\bullet$ be sum of those nonzero pieces $C^{k,(p/2)}$ where $k+p/2$ is minimal.

Suppose $a \in D^\bul$ satisfies $d(a) =0$.  Then for any $i_1,\ldots,i_k$ we have that $\xi_{i_1\cdots i_k}(a) \in D^\bul$.  Any linearly independent (over $\C$) elements in $D^\bul$ can be extended to a free $R$-module basis of $C^\bul$. Thus, using~\eqref{eq:Koszul_z}, one can show by induction on $k=0,1,\dots,n-1$ that the elements $\{\xi_{i_1\cdots i_k}(a)\mid 1 \leq i_1 < \cdots < i_k \leq n-1\}$ are linearly independent. It follows that $\Kbul(a)$ is a Koszul subcomplex of $C^\bul$, and furthermore that the quotient by this complex is again free and admits a \laction.

Repeating this, we may assume that $d|_{D^\bul}$ is injective. We claim that for any nonzero element $b\in D^\bul$, $d(b)$ may be completed to a free basis of $C^\bul$. Suppose that $b\in D^{k-1}$. We proceed by inverse induction on $k$. For the base case, if $D^{k}=0$ then $d(b)$ is a $\C$-linear combination of free basis elements, so the statement follows. For the induction step, suppose that $b\in D^{k-1}$ satisfies $d(b)\in R\cdot D^k$. Write $d(b)=\sum_{i=1}^{n-1}y_i e_i$ and let $i$ be such that $e_i\neq0$. By the induction hypothesis, $f:=d(e_i)$ may be extended to a free basis of $C^{k+1}$, and we find that the coefficient of $y_if$ in $d^2(b)$ is nonzero, a contradiction. Thus, $d(b)\notin R\cdot D^k$. Comparing the polynomial degree of $d(b)$ to that of $D^k$, we get that $d(b)$ can be extended to an $R$-basis of $C^k$.

 A subcomplex $R \xrasim R$ in $C^\bullet$ is called \emph{splittable} if the quotient by this subcomplex again consists of free $R$-modules. Let $k$ be the smallest index such that $D^k\neq0$, and let $a_1 \in D^k$ be a nonzero element. 
We have shown above that $a_0:=d(a_1)$ may be completed to a free basis of $C^\bul$, and thus $a_1$ and $a_0$ generate a splittable subcomplex $S^\bul\cong (R \xrasim R)$. For any $1\leq i\leq n-1$, we have $\xi_i(a_1)\in D^{k-1}=0$. Using $d\xi_i+\xi_id=z_i$, we get $z_ia_1=\xi_ia_0$. Thus, the subcomplex $S^\bul$ is closed under the action of $\xi_1,\dots,\xi_{n-1}$. Using Gaussian elimination~\cite[Exercise~19.12]{EMTW}, one can check that in this case, the quotient complex $C^\bul/S^\bul$ admits a \laction.

Repeating the above procedure, we obtain the desired filtration.
\end{proof}

Recall that the Koszul resolution of the $R$-module $\C$ by free $R$-modules yields a $2^{n-1}$-dimensional complex $\Tor^R_\bul(\C,\C)\cong (\C \xrightarrow{0} \underline{\C})^{\otimes(n-1)}$. 
\begin{corollary}\label{cor:Koszul_laction}
Suppose that $(C^\bullet, d)$ admits a \laction. Let $(C^\bul_\C,d_\C)$ be the complex of $\C$-vector spaces obtained by setting $y_1=y_2=\dots=y_{n-1}=0$. Thus, $C^\bul_\C:=C^\bul\otimes_R\C$ as in~\eqref{eq:HHC}. Then 
\begin{equation*}%
  H^\bul(C^\bul_\C) \cong  \Tor^R_\bul(\C,H^\bul(C^\bul)) \cong  H^\bul(C^\bul) \otimes (\C \xrightarrow{0} \underline{\C})^{\otimes(n-1)}
\end{equation*}
(as complexes of graded $\C$-vector spaces with zero differentials).
\end{corollary}
\begin{proof}
  It is clear that the filtration constructed in \cref{prop:Koszul} induces an injection $H^\bul(F^\bul_j/F^\bul_{j-1})\hookrightarrow H^\bul(C^\bul/F^\bul_{j-1})$, and a similar statement holds after setting $y_1=y_2=\dots=y_{n-1}=0$. Thus, each Koszul complex $\Kbul(a)$ appearing in the filtration contributes a $1$-dimensional subcomplex to $H^\bul(C^\bul)$. In view of~\eqref{eq:Koszul_z}, $\Kbul(a)$ contributes to $H^\bul(C^\bul_\C)$ a $2^{n-1}$-dimensional subcomplex isomorphic to $(\C \xrightarrow{0} \underline{\C})^{\otimes(n-1)}$.
\end{proof}

\subsection{Link invariant}\label{sec:link-invariant}
For this section, we continue to assume $W=S_n$. The above construction may be turned into a link invariant as we now explain. We follow the conventions of~\cite{MeHog}.

For a braid $\beta\in \BraidSn = \Braid_n$, let $e(\beta)$ denote the \emph{exponent sum} of $\beta$: 
\begin{equation*}
  \beta=\sigma_{i_1}^{\eps_1}\sigma_{i_2}^{\eps_2}\cdots \sigma_{i_m}^{\eps_m} \quad\Longrightarrow\quad e(\beta):=\eps_1+\eps_2+\dots+\eps_m.
\end{equation*}
Thus, $e(\bvw)=\lvw=\ell(w)-\ell(v)$. Next, define
\begin{equation}\label{eq:chi_dfn}
  \chi(\beta):=\frac{e(\beta)-n+\ncyc(\beta)}2
\end{equation}
where $\ncyc(\beta)$ is the number of components of the link $\bhvw$, which equals the number of cycles of the corresponding permutation (obtained from the group homomorphism $\BraidSn\to S_n$ sending $\sigma_i\mapsto s_i$ for each $1\leq i\leq n-1$). It is easy to check that $\chi(\beta)$ is always an integer. Define %
\begin{align*}%
  \PKR(\beta;a,q,t):=&(1-t)^{\cb-1} \left(\qq\tti a^{-2}\right)^{\chib} \\
&\times\sum_{k,p,h\in\Z} (-1)^h q^{\frac{k}2} t^{p+\frac{k}2+h} a^{-2h} \dim H^{k,(p)}(\HH^h (\FR(\beta))).
\end{align*}

Let $\PKRtop(\beta;q,t)$ be its top $a$-degree coefficient:
\begin{equation}\label{eq:PKRtop_dfn}
  \PKRtop(\beta;q,t):=[a^{\mda(\PKR(\beta))}] \PKR(\beta).
\end{equation}
\begin{theorem}[\cite{KhoSoe}]
$\PKR$ and $\PKRtop$ are link invariants: if $\beta\in\BraidSn,\beta'\in\Braid_{S_{n'}}$ are two braids such that the corresponding links $\betah\cong\betah'$ are isotopic then 
\begin{equation*}
  \PKR(\beta;a,q,t)=\PKR(\beta';a,q,t) \quad\text{and}\quad\PKRtop(\beta;q,t)=\PKRtop(\beta';q,t)
\end{equation*}
\end{theorem}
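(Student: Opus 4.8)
The plan is to deduce everything from Markov's theorem, reducing the invariance of $\PKR$ to three local checks; once $\PKR(\beta;a,q,t)$ is shown to depend only on $\betah$, invariance of $\PKRtop$ follows immediately, since two braids with the same $\PKR$ have the same top $a$-degree $\mda(\PKR(\beta))$ and hence the same top $a$-degree coefficient. Recall Markov's theorem: for $\beta\in\Braid_n$ and $\beta'\in\Braid_{n'}$, the closures $\betah$ and $\betah'$ are isotopic if and only if $\beta'$ is obtained from $\beta$ by a finite sequence of (i) braid-group relations inside a fixed $\Braid_n$, (ii) conjugations $\beta\mapsto\gamma\beta\gamma^{-1}$ in $\Braid_n$, and (iii) positive and negative stabilizations $\Braid_n\ni\beta\mapsto\beta\sigma_n^{\pm1}\in\Braid_{n+1}$. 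So it suffices to verify that the right-hand side of~\eqref{eq:*} is unchanged under each of these. (Along the way one should also note that this right-hand side really is a Laurent polynomial: the $t$-sum, a priori infinite, is cleared by the factor $(1-t)^{\cb-1}$ together with freeness of $\HH^0$ over $R$; for the identity braid $\id_n$ one gets $(1-t^2a^{-2})^{n-1}$, the homology of the $n$-component unlink, as a check.)

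For move (i) there is nothing to do: by \cref{prop:Rou} the homotopy class of $\FR(\beta)$, hence every bigraded space $H^{k,(p)}(\HH^h(\FR(\beta)))$, depends only on $\beta\in\Braid_n$, while the prefactors in~\eqref{eq:*} depend only on the exponent sum $e(\beta)$, the strand number $n$, and $\ncyc(\beta)$. For move (ii) the key input is that Hochschild cohomology is a categorical trace on the monoidal category of $R$-bimodules: for bounded complexes $C^\bul,D^\bul$ of Soergel bimodules there is a natural isomorphism $\HH^\bul(C^\bul\otimes_R D^\bul)\cong\HH^\bul(D^\bul\otimes_R C^\bul)$ respecting cohomological and polynomial degree. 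Applying this with $C^\bul=\FR(\gamma)$ and $D^\bul=\FR(\beta)\otimes_R\FR(\gamma)^{-1}$, and using $\FR(\gamma)^{-1}\otimes_R\FR(\gamma)\cong\FR(\id)$ (itself a consequence of \cref{prop:Rou}), gives $H^k(\HH^h(\FR(\gamma\beta\gamma^{-1})))\cong H^k(\HH^h(\FR(\beta)))$ compatibly with all three gradings. The normalization is unchanged because $e\colon\Braid_n\to\Z$ is a homomorphism and the permutations underlying $\gamma\beta\gamma^{-1}$ and $\beta$ are conjugate, hence have the same cycle type; thus $\ncyc(\gamma\beta\gamma^{-1})=\ncyc(\beta)$ and $\chi(\gamma\beta\gamma^{-1})=\chi(\beta)$, and the whole of~\eqref{eq:*} is preserved.

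The substantive part is move (iii), and this is exactly where the factors $(1-t)^{\cb-1}$ and $\bigl(\qq\tti a^{-2}\bigr)^{\chib}$ earn their keep. Here I would compute the effect of $(-)\otimes_R\FR(\sigma_n^{\pm1})$ on $\HHH^\bul(\FR(-))$ using the two-term Rouquier complexes of~\eqref{eq:Rouquier_s}, the fact that $\sigma_n$ commutes past every $\sigma_i$ with $i\le n-2$, and the explicit behaviour of $\HH^\bul$ on $B_{s_n}$. For a positive stabilization one finds $\HHH^\bul(\FR(\beta\sigma_n))$ equal to $\HHH^\bul(\FR(\beta))$ up to an explicit shift of the triple grading $(k,p,h)$, and this shift is cancelled by $e\mapsto e+1$, $n\mapsto n+1$, $\ncyc(\beta\sigma_n)=\ncyc(\beta)$ (so $\chi$ and $\cb-1$ are unchanged); for a negative stabilization $\HHH^\bul(\FR(\beta\sigma_n^{-1}))$ genuinely grows, the extra terms coming from the higher Hochschild cohomology of $B_{s_n}$, and this growth is matched by the drop $\chi\mapsto\chi-1$ (which multiplies $\bigl(\qq\tti a^{-2}\bigr)^{\chib}$ by $\qqi\tt a^{2}$) together with the $(1-t)$ factors. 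These stabilization computations are carried out in~\cite{KhoSoe} (with exposition in~\cite{EMTW} and the present conventions in~\cite{MeHog}); all that remains is to confirm, under the specific normalization~\eqref{eq:*}, that the two sides of each stabilization move coincide. The hard part is precisely this bookkeeping: tracking the shifts in all three gradings $k,p,h$ under a stabilization and checking that they cancel the changes in $\chi(\beta)=\tfrac12(e(\beta)-n+\ncyc(\beta))$ and in $\ncyc(\beta)-1$. By contrast, the conceptual content is entirely confined to \cref{prop:Rou}, the trace property of $\HH^\bul$, and Rouquier's description of $\FR(\sigma_n^{\pm1})$.
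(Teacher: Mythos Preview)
The paper does not supply its own proof of this theorem: it is stated with attribution to~\cite{KhoSoe} and used as a black box. Your proposal correctly outlines the standard argument from that reference---Markov's theorem reduces invariance to braid relations (handled by \cref{prop:Rou}), conjugation (handled by the trace property of Hochschild cohomology), and positive/negative stabilization (handled by an explicit calculation with the two-term Rouquier complexes~\eqref{eq:Rouquier_s})---so there is nothing to compare against in the paper itself.

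One small caveat: your parenthetical claiming the right-hand side of~\eqref{eq:*} is a Laurent polynomial is not quite right as stated. A priori it is only a formal power series in $t$ (see the computations in \cref{ex:soe_unlk,ex:KR_Hopf}, where the Hochschild cohomology is infinite-dimensional before the $(1-t)^{\cb-1}$ prefactor is applied), and the paper itself treats polynomiality as partly conjectural; see the footnote after~\eqref{eq:PKRC}. This does not affect the invariance argument, which goes through at the level of formal power series.
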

\noindent Thus, it makes sense to write $\PKR(\betah;a,q,t):=\PKR(\beta;a,q,t)$ and $\PK(\betah;q,t):=\PK(\beta;q,t)$. Both $\PKR(\betah;a,q,t)$ and $\PK(\betah;q,t)$ have been recently shown to be $q,t$-symmetric when $\betah$ is a knot~\cite{ObRoz}.

It is well known that $\PKR(\betah;a,q,t)$ specializes to the \FLY polynomial of $\betah$:
\begin{equation}\label{eq:PKR_HOMFLY}
  \PKR(\betah) |_{\tt=-\qqi}= (-1)^\chib\left(z/a\right)^{\cb-1} \HOMP(\betah;a,z) \big|_{z=\qq-\qqi}.
\end{equation}
Note that $\chib$ is \emph{not} a link invariant, but $(-1)^\chib$ and $\cb$ are link invariants.

Clearly, for any braid $\beta$, we have $\mda(\PKR(\beta))\leq -2\chib$. Let $v\leq w\in S_n$. Comparing~\eqref{eq:chi_dfn} with~\eqref{eq:davw}, we find $\davw=-2\chi(\bvw)+\ncyc(\bvw)-1$, and thus the coefficient of $a^{-2\chibvw}$ in $\PKR(\bvw)$ is nonzero, by~\eqref{eq:PKR_HOMFLY} combined with \cref{thm:homfly2:2}. Therefore $\mda(\PKR(\bvw))=-2\chibvw$, and we get the following result.
\begin{proposition}
  For $v\leq w\in S_n$, $\PKRtop(\bhvw)=[a^{-2\chibvw}]\PKR(\bhvw)$ is given by
\begin{equation}\label{eq:PKRtop}
\begin{aligned}
  \PKRtop(\bhvw;q,t)=&(1-t)^{\cbvw-1} \left(\qq\tti\right)^\chibvw \\ &\times \sum_{k,p\in\Z} q^{\frac{k}2} t^{p+\frac{k}2} \dim \HHB kp (\FR(\bvw)).
\end{aligned}
\end{equation}
\end{proposition}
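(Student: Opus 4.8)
The plan is to extract the formula \eqref{eq:PKRtop} directly from the definition \eqref{eq:*} of $\PKR(\beta;a,q,t)$ by isolating the lowest Hochschild degree, and then to invoke \cref{thm:homfly2:2} to confirm that what we extract really is the top $a$-degree part of $\PKR(\bvw)$, rather than a term that happens to collapse.

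First I would track the $a$-grading in \eqref{eq:*}. The variable $a$ occurs there only in the prefactor $\left(\qq\tti a^{-2}\right)^{\chib}$, contributing $a^{-2\chib}$, and in the factor $a^{-2h}$ inside the sum. Since $\HH^h$ vanishes for $h<0$, every monomial has $a$-degree $-2\chib-2h\le -2\chib$, with equality precisely for the $h=0$ summands; this is the inequality $\mda(\PKR(\beta))\le -2\chib$. Hence taking $[a^{-2\chib}]$ annihilates all summands with $h>0$ and leaves
\[
  [a^{-2\chib}]\PKR(\beta)=(1-t)^{\cb-1}\left(\qq\tti\right)^{\chib}\sum_{k,p\in\Z} q^{\frac k2} t^{p+\frac k2}\dim \HHB kp (\FR(\beta)),
\]
using $H^{k,(p)}(\HH^0(\FR(\beta)))=\HHB kp (\FR(\beta))$. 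For $\beta=\bvw$ the right-hand side is exactly that of \eqref{eq:PKRtop}, so it remains only to show $\mda(\PKR(\bvw))=-2\chibvw$ when $v\le w$, which identifies the displayed expression with $\PKRtop(\bhvw;q,t)=[a^{\mda(\PKR(\bvw))}]\PKR(\bvw)$.

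For this non-vanishing I would pass through the \FLY polynomial via \eqref{eq:PKR_HOMFLY}, which expresses $\PKR(\bhvw)\big|_{\tt=-\qqi}$ as the product of $\Pvw(a,z)\big|_{z=\qq-\qqi}$ with the $a$-free factor $(-1)^{\chibvw}\big(z/a\big)^{\cbvw-1}\big|_{z=\qq-\qqi}$, whose $a$-degree is $-(\cbvw-1)$ and whose coefficient $(-1)^{\chibvw}(\qq-\qqi)^{\cbvw-1}$ is nonzero. A one-line computation from \eqref{eq:chi_dfn}, \eqref{eq:davw}, and $e(\bvw)=\lvw$ gives $\davw=-2\chibvw+\ncyc(\bvw)-1$, while \cref{thm:homfly2:2} gives $\mda(\Pvw)=\davw$ with $[a^\davw]\Pvw$ a nonzero Laurent polynomial in $z$, hence still nonzero after the substitution $z\mapsto \qq-\qqi$. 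Therefore $\PKR(\bhvw)\big|_{\tt=-\qqi}$ has a nonzero coefficient at $a^{\davw-(\cbvw-1)}=a^{-2\chibvw}$. Since extracting the $a^{-2\chibvw}$-coefficient commutes with the substitution $\tt=-\qqi$, the polynomial $[a^{-2\chibvw}]\PKR(\bvw)$ is itself nonzero, so together with the inequality above, $\mda(\PKR(\bvw))=-2\chibvw$, and \eqref{eq:PKRtop} follows.

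Both steps are essentially bookkeeping with the $a$-grading; the one substantive input is \cref{thm:homfly2:2}, which has already been proved. The only place demanding care is the last non-vanishing claim — one must ensure that the specialization $\tt=-\qqi$ does not accidentally kill $[a^{-2\chibvw}]\PKR(\bvw)$ — and that is exactly what \cref{thm:homfly2:2} provides, since the leading coefficient $[a^\davw]\Pvw$ it supplies is a nonzero Laurent polynomial in $z$ and remains nonzero under $z=\qq-\qqi$. I do not anticipate any other obstacle.
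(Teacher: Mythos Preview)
Your proposal is correct and follows essentially the same argument as the paper: the paper's proof (given in the paragraph immediately preceding the proposition) also observes that $\mda(\PKR(\beta))\le -2\chib$ from the $h\ge 0$ constraint, computes $\davw=-2\chibvw+\ncyc(\bvw)-1$, and combines~\eqref{eq:PKR_HOMFLY} with \cref{thm:homfly2:2} to conclude that the coefficient at $a^{-2\chibvw}$ is nonzero. Your version is simply a more detailed writeup of the same reasoning, including the explicit justification that the substitution $z=\qq-\qqi$ does not kill the leading coefficient.
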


Let us also define the analogous polynomial in the non-equivariant case (cf. \cref{sec:main-result-ordinary}). For $\beta\in\BraidSn$, set
\begin{equation}\label{eq:PKRC}
  \PKRC(\beta;q,t)=\frac{\left(\qq\tti\right)^{\chib}}{\left(1+\qqi\tt\right)^{n-\cb}}\sum_{k,p\in\Z} q^{\frac{k}2} t^{p+\frac{k}2} \dim \HHBC kp (\FR(\beta)).
\end{equation}
\noindent The denominator $\left(1+\qqi\tt\right)^{n-\cb}$ in~\eqref{eq:PKRC} is chosen in view of the discussion in \cref{sec:link_cpts_vs_Koszul}: when $\beta$ is a link with $\cb$ components, a filtration analogous to the one in \cref{prop:Koszul} would involve complexes with $2^{n-\cb}$ terms.

The following result is a consequence of \cref{cor:Koszul_laction}.
\begin{corollary}\label{cor:PKRtop=PKRC}
Assume that $\betah$ is a knot such that $\PKRC(\beta;q,t)\neq0$. Then
\begin{equation*}
  \PKRtop(\beta;q,t)=\PKRC(\beta;q,t).
\end{equation*}
\end{corollary}
\noindent In \cref{sec:ordinary-cohomology}, we give an alternative proof for knots of the form $\bhvw$ for $v\leq w\in S_n$.

\subsection{Examples}\label{sec:Soergel_examples}
We compute  $\HHH^0(\FR(\beta))$ and  $\PK(\betah;q,t)$, as well as $\HHHC^0(\FR(\beta))$ and $\PKRC(\beta;q,t)$, for a few braids $\beta$. Throughout, we assume $G=\PGL_n(\C)$, in which case recall that $R=\C[y_1,\dots,y_{n-1}]$ is a polynomial ring. These examples are summarized in \cref{tab:examples}. We abbreviate $\otimes_R$ by $\otimes$.

\begin{example}[Unknot I]\label{ex:soe_unkn_1}
Let $n=1$, $v=\id$, $w=\id$, and thus $\cbvw=1$ and $\chibvw=0$. We have $\FR(\bvw)=(0\to \underline{R}\to 0)$ and $R=\C$. Thus, the only nonzero term is $\HHX 00_{v,w}\cong \HHXC 00_{v,w}\cong \C$. We have
\begin{equation*}
\PKRtop(\bhvw;q,t)=\PKRC(\bhvw;q,t)=1.
\end{equation*}
Note that any $(1,b)$-torus knot is isotopic to the unknot, and we have $\Cat_{1,b}(q,t)=1$.

\end{example}

\begin{example}[Unknot II]\label{ex:soe_unkn_2}
  Let $n=2$, $v=\id$, $w=s_1$, and thus $\cbvw=1$ and $\chibvw=0$. We have $\Fvw=(B_{s_1}\to \underline{R})$. It is easy to see that $\HH^0(B_{s_1})$ is a free $R$-module spanned by $(y_1\otimes 1+1\otimes y_1)$, and thus $\HH^0(B_{s_1})\cong R\psa{1}$, and $\HH^0(\Fvw)=(R\psa{1}\to \underline{R})$, with the map sending $1\mapsto 2y_1$. The only nonzero term is $\HHX 00_{v,w}\cong\C$. Tensoring with $\C$, we get $\HHC^0(\Fvw)=(\C\psa{1}\xrightarrow{0} \underline{\C})$, so there are two nonzero terms: $\HHXC 00_{v,w}\cong \HHXC {-1}1_{v,w} \cong\C$. Therefore
\begin{equation*}
  \PKRtop(\bhvw;q,t)=1 \quad\text{and}\quad \PKRC(\bhvw;q,t)=\frac1{1+\qqi\tt} \left(1+\qqi\tt\right)=1.
\end{equation*}
\end{example}

Let us also consider an example of $\bhvw$ for $v\not\leq w$. 
\begin{example}[Unknot III]\label{ex:unknot_III}
  Let $n=2$, $v=s_1$, $w=\id$, and thus $\ncyc(wv^{-1})=1$ and ${\chi(\bvw)=-1}$. We have $\Fvw=(\underline{R}\to B_{s_1}\psa{-1})$,  $\HH^0(\Fvw)=(\underline{R}\xrasim R)$, and $\HHC^0(\Fvw)=(\underline{\C}\xrasim \C)$ so the right-hand sides  of~\eqref{eq:PKRtop} and~\eqref{eq:PKRC} are zero:
\begin{equation*}
  [a^2]\PKR(\bhvw;a,q,t)=0 \quad\text{and}\quad \PKRC(\bvw;q,t)=0.
\end{equation*}
This is consistent with the fact that $\PKR(\bhvw)$ is a link invariant satisfying $\PKR(\unkn)=1$, therefore  by~\eqref{eq:PKRtop_dfn} we have $\PKRtop(\bhvw)=1$. For a computation of $\HH^1(\FR(\bvw))$, see~\cite{Mellit_torus}.
\end{example}

In the next two examples, we have $\cb>1$. We start with the case of the \emph{$2$-component unlink} $\unlk$. It is the closure of $\id\in \Braid_{S_2}$, but we consider the representative $\beta_{s_1,s_1}=\sigma_1\sigma_1^{-1}$ instead.
\begin{example}[$2$-component unlink]\label{ex:soe_unlk}
Let $n=2$, $v=s_1$, $w=s_1$, and thus $\ncyc(wv^{-1})=2$ and $\chibvw=0$. We have
\begin{equation*}
  \FR(\bvw)=(B_{s_1}\to (\und{R\oplus (B_{s_1}\otimes B_{s_1}\psa{-1})})\to B_{s_1}\psa{-1}).
\end{equation*}
We apply the well-known (see, e.g., \cite[Example~3.12]{GKS_lec_notes}) Soergel bimodule isomorphism $B_{s_1}\otimes B_{s_1}\cong B_{s_1}\psa{1}\oplus B_{s_1}$, sending $1\otimes1\otimes1\mapsto (0,1\otimes1)$ and $1\otimes y_1\otimes1\mapsto (1\otimes1,0)$. Next, we use Gaussian elimination~\cite[Exercise~19.12]{EMTW} to obtain
%The tensor product $B_{s_1}\otimes B_{s_1}\psa{-1}$ is isomorphic to $B_{s_1}\oplus B_{s_1}\psa{-1}$, and one can check that the complex $\FR(\bvw)$ contains acyclic subcomplexes $(B_{s_1}\xrasim \und{B_{s_1}})$ and $(\und{B_{s_1}\psa{-1}}\xrasim B_{s_1}\psa{-1})$; see~\cite{Mellit_torus}. Thus, 
$\FR(\bvw)\cong \FR(\id)$ in $\KSBim$, in agreement with \cref{prop:Rou}. We have $R=\C[y_1]$, $\HH^0(R)\cong R$, and $\HHC^0(R)\cong \C$. (More generally, recall from \cref{rmk:diag} that the $R$-module $\HH^0(B)$ is always free.) Therefore the only nonzero terms are $\HHX 0p_{v,w}\cong \C$ for $p=0,1,2,\dots$, and $\HHXC 00_{v,w}\cong \C$. We find
\begin{equation*}
  \PKRtop(\bhvw;q,t)=(1-t)(1+t+t^2+\cdots)=1 \quad\text{and}\quad \PKRC(\bvw;q,t)=1.
\end{equation*}
\end{example}

\begin{table}
\def\lcw{1.7cm}
\def\Hcw{0.65cm}
\makebox[1.0\textwidth]{
\scalebox{0.77}{
  \begin{tabular}{cc}

  \begin{tabular}{|C{\lcw}|C{\Hcw}C{\Hcw}C{\Hcw}C{\Hcw}C{\Hcw}|}\hline
                               $H^k$ & $H^0$ & $H^1$ & $H^2$ & $H^3$ & $H^4$ \\\hline
                               $k-p=0$ & $1$ & $3$ & $4$ & $3$ & $1$ \\\hline
                             \end{tabular}
&

  \begin{tabular}{|C{\lcw}|C{\Hcw}C{\Hcw}C{\Hcw}C{\Hcw}C{\Hcw}C{\Hcw}C{\Hcw}|}\hline
                               $H^k$ & $H^0$ & $H^1$ & $H^2$ & $H^3$ & $H^4$ & $H^5$ & $H^6$ \\\hline
                               $k-p=0$ & $1$ & $4$ & $7$ & $8$ & $7$ & $4$ & $1$ \\\hline
                             \end{tabular}

\\ 
$H^{k,(p,p)}(\Pio_{2,4})$ & $H^{k,(p,p)}(\Pio_{2,5})$ 
\\ 
\\

 \begin{tabular}{|C{\lcw}|C{\Hcw}C{\Hcw}C{\Hcw}C{\Hcw}C{\Hcw}|}\hline
                               $H^k$ & $H^{-4}$ & $H^{-3}$ & $H^{-2}$ & $H^{-1}$ & $H^0$ \\\hline
                               $k+p=0$ & $1$ & $3$ & $4$ & $3$ & $1$ \\\hline
                             \end{tabular}
&

 \begin{tabular}{|C{\lcw}|C{\Hcw}C{\Hcw}C{\Hcw}C{\Hcw}C{\Hcw}C{\Hcw}C{\Hcw}|}\hline
                               $H^k$ & $H^{-6}$ & $H^{-5}$ & $H^{-4}$ & $H^{-3}$ & $H^{-2}$ & $H^{-1}$ & $H^0$ \\\hline
                               $k+p=0$ & $1$ & $4$ & $7$ & $8$ & $7$ & $4$ & $1$ \\\hline
                             \end{tabular}
\\
$H^{k,(p)}\HHC^0(\FR(\beta_{f_{2,4}}))$ & $H^{k,(p)}\HHC^0(\FR(\beta_{f_{2,5}}))$ 
  \end{tabular}
}
}
\vspace{0.1in}
  \caption{\label{tab:MHT_vs_HHHC} Comparing the mixed Hodge tables of $\Pio_{2,4}$ and $\Pio_{2,5}$ (top) to $\HHHC^0$ of the associated links (bottom).}
\end{table}

The \emph{Hopf link} $\betah=\hopflink$ consists of two linked unknots. It is isotopic to $\betah_{f_{2,4}}$, as well as to the closure of $(\sigma_1)^2\in\Braid_{S_2}$.
\begin{example}[Hopf link]\label{ex:KR_Hopf}
Let $n=2$, $\beta=(\sigma_1)^2$, and thus $\cb=2$ and $\chib=1$. We have 
\begin{equation*}
\FR(\beta)=(B_{s_1}\otimes B_{s_1}\to B_{s_1}\oplus B_{s_1}\to \underline{R}).
\end{equation*}
%We apply the well-known (see, e.g., \cite[Example~3.12]{GKS_lec_notes}) Soergel bimodule isomorphism $B_{s_1}\otimes B_{s_1}\cong B_{s_1}\psa{1}\oplus B_{s_1}$, sending $1\otimes1\otimes1\mapsto (0,1\otimes1)$ and $1\otimes y_1\otimes1\mapsto (1\otimes1,0)$. Next, we use Gaussian elimination~\cite[Exercise~19.12]{EMTW}, which results in a simplified complex
Using Gaussian elimination as in \cref{ex:soe_unlk}, we obtain
\begin{equation*}%
  \FR(\beta)\cong (B_{s_1}\psa{1}\to B_{s_1}\to \underline{R}).
\end{equation*}
Here, the first map sends $1\otimes1\mapsto y_1\otimes1-1\otimes y_1$, and the second map sends $1\otimes1\mapsto1$. 
Taking $R$-invariants (cf. \cref{rmk:diag}), we find 
\begin{equation}\label{eq:Hopf_mats}
  \HH^0(\FR(\beta))=\left(  R\psa{2} \xrightarrow{0} R\psa{1} \xrightarrow{2y_1} \underline{R} \right).
\end{equation}
We get $\HHB 00(\FR(\beta))\cong\C$ and $H^{-2}(\HH^0(\FR(\beta)))\cong R\psa{2}$. In other words, 
\begin{equation}\label{eq:Hopf_HHB}
  \HHB 00(\FR(\beta))\cong\C, \quad%
  \HHB {-2}p (\FR(\beta))\cong \C \quad\text{for $p=2,3,4,\dots$\;.}
\end{equation}
 Sending $y_1\to 0$ in~\eqref{eq:Hopf_mats}, we find 
\begin{equation*}
\HHBC kp(\FR(\beta)) \cong 
\begin{cases}
  \C, &\text{if $(k,p)\in\{(0,0),(-1,1),(-2,2)\}$;}\\
0,&\text{otherwise.}
\end{cases}
\end{equation*}
 Thus,
\begin{align*}
  \PKRtop(\betah;q,t)&=(1-t)\qq\tti \left(1+t/q\left(1+t+t^2+\cdots\right)\right)\\
                     &=\qq\tti-\qq\tt+\qqi\tt,\\
  \PKRC(\beta;q,t)&= \qq\tti \left(1+\qqi\tt+t/q\right)=\qq\tti+1+\qqi\tt.
\end{align*}

\begin{remark}\label{rmk:f24_HHHC}
Since $\PKRtop(\betah;q,t)$ is a link invariant and $\betah\cong \betah_{2,4}$ (with $\chi(\betah)= \chi(\betah_{2,4})$), we see that $\HHH^0(\FR(\beta))\cong\HHH^0(\FR(\beta_{2,4}))$ as bigraded vector spaces. Using an elaborate computation, one can also check that $\PKRC(\beta;q,t)=\PKRC(\beta_{2,4};q,t)$. However, observe that~\eqref{eq:PKRC} has $\left(1+\qqi\tt\right)^{n-\cb}$ in the denominator, where $n=2$ for $\beta$ and $n=4$ for $\beta_{2,4}$. Thus, $\HHHC^0(\FR(\beta_{2,4}))$ differs from $\HHHC^0(\FR(\beta))$ by ``multiplication by $\left(1+\qqi\tt\right)^2$,'' and the actual bigraded dimensions of $\HHHC^0(\FR(\beta_{2,4}))$ are given in \tabref{tab:MHT_vs_HHHC}(bottom left).
\end{remark} 
\end{example}

\begin{remark}
  We have a resolution of $\C$ by free $R$-modules: $0\to R\xrightarrow{y_1} R\to \C\to0$. Thus, $\Tor^R_\bul(\C,\C)=(\C\xrightarrow{0}\C)$. Noting that $\Tor^R_\bul(\C,R)=\C$, we see that $\HHHC^0(\FR(\beta))\cong \Tor^R_\bul(\C, \HHH^0(\FR(\beta)))$. We conjecture that this holds more generally for all links; see~\eqref{eq:Torconj}.
\end{remark}

As we explained in \cref{rmk:torus_knot}, for $k=2$ and $n=5$, $\betah_{\fkn}$ is the $(2,3)$-torus knot, which is isotopic to the trefoil knot: $\betah_{\fkn}\cong \trefoil$. It can be alternatively obtained as the closure of the braid $(\sigma_1)^3\in\Braid_2$.
\begin{example}[Trefoil knot]\label{ex:soe_trefoil}
Let $n=2$, $\beta=(\sigma_1)^3$, and thus $\cb=1$ and $\chib=1$. We have 
\begin{equation*}
\FR(\beta)=(B_{s_1}^{\otimes3}\to 3B_{s_1}^{\otimes2}\to 3B_{s_1} \to \underline{R}).
\end{equation*}
Here $3B_{s_1}^{\otimes2}$ denotes the direct sum of three copies of $B_{s_1}\otimes B_{s_1}$, etc. Applying Gaussian elimination as in \cref{ex:soe_unlk}, %\cref{ex:KR_Hopf}, 
we arrive at a simplified complex
\begin{equation*}
\FR(\beta)\cong (B_{s_1}\psa{2}\to B_{s_1}\psa{1}\to B_{s_1} \to \underline{R})
\end{equation*}
with the three maps given by $1\otimes1\mapsto y_1\otimes1+1\otimes y_1$, $1\otimes1\mapsto y_1\otimes1-1\otimes y_1$, and $1\otimes1\mapsto1$, respectively. Taking $R$-invariants, we find  
\begin{align*}
  \HH^0(\FR(\beta))=\left(  R\psa{3} \xrightarrow{2y_1} R\psa{2} \xrightarrow{0} R\psa{1} \xrightarrow{2y_1} \underline{R} \right).
\end{align*}
We find that the only nonzero terms are 
\begin{equation*}%
  \HHB 00(\FR(\beta))\cong\HHB {-2}2(\FR(\beta))\cong\C.
\end{equation*}
 Sending $y_1\to 0$, we also compute $\HHBC kp(\FR(\beta))$, which leads to
\begin{equation*}
  \PKRtop(\betah;q,t)=\PKRC(\beta;q,t)=\qq\tti+\qqi \tt.
\end{equation*}
The corresponding $q,t$-Catalan number is $\Cat_{2,3}(q,t)=q+t=\qq\tt\cdot  \PKRtop(\betah;q,t)$, in agreement with~\eqref{eq:main_KR}.
\begin{remark}
We have $\betah\cong \bhvw$ for $v=\id,w=f_{2,5}\in S_5$ and $\chib=\chibvw=1$, thus, $\PKRtop(\betah)=\PKRtop(\bhvw)$.
Similarly to \cref{rmk:f24_HHHC}, we may compute that $\PKRC(\beta;q,t)=\PKRC(\beta_{2,5};q,t)$, and thus $\HHHC^0(\FR(\beta_{2,5}))$ is given in \tabref{tab:MHT_vs_HHHC}(bottom right).
\end{remark}
\end{example}

\section{Cohomology of positroid and Richardson varieties}\label{sec:cohom-pos-rich}
We briefly review background on positroid varieties, Richardson varieties and the various versions of cohomology that we will be using.

\subsection{Positroid varieties}\label{sec:positroid-varieties}
Recall from \cref{sec:positr-vari-grassm} that the Grassmannian $\Gr(k,n)$ is identified with the space of $k\times n$ matrices modulo row operations. Given a $k\times n$ matrix $A$, we let $\RowSpan(A)\in\Gr(k,n)$ denote its row span and $A_1,A_2,\dots, A_n$ be its columns. We extend this to a sequence $(A_j)_{j\in\Z}$ by requiring
\begin{equation*}
  A_{j+n}=A_j \quad\text{for all $j\in\Z$.}
\end{equation*}

\begin{definition}[\cite{KLS}]
A bijection $\ft:\Z\to\Z$ is called a \emph{$(k,n)$-bounded affine permutation} if it satisfies
\begin{itemize}
\item $\ft(j+n)=\ft(j) + n$ for all $j\in\Z$,
\item $\sum_{j=1}^{n} (\ft(j)-j)=kn$, and
\item $j\leq \ft(j)\leq j+n$ for all $j\in\Z$.
\end{itemize}
\end{definition}
\noindent Alternatively, the second condition can be replaced with $k=\#\{j\in[n]\mid f(j)>n\}$. 

We let $\Bkn$ denote the (finite) set of $(k,n)$-bounded affine permutations. For a full rank $k\times n$ matrix $A$, we let $\ft_A:\Z\to\Z$ be given by
\begin{equation}\label{eq:f_dfn}
  \ft_A(i)=\min\{j\geq i\mid A_i\in \Span \left(A_{i+1},A_{i+2},\dots,A_j\right)\} \quad\text{for $i\in\Z$.}
\end{equation}
For example, if $A_i$ is a zero column then $\ft_A(i)=i$, and if $A_i$ is not in the span of other columns then $\ft_A(i)=i+n$. It is known~\cite{KLS} that $\ft_A$ is a $(k,n)$-bounded affine permutation which depends only on the row span of $A$. 
 The \emph{positroid stratification} of $\Gr(k,n)$ is given by
\begin{equation*}
  \Gr(k,n)=\bigsqcup_{\ft\in \Bkn} \Pio_{\ft}, \quad\text{where}\quad \Pio_{\ft}:=\{\RowSpan(A)\in\Gr(k,n)\mid \ft_A=\ft\}.
\end{equation*}

We extend any permutation $u\in S_n$ to a bijection $\ut:\Z\to\Z$ satisfying $\ut(j+n)=\ut(j)+n$ for all $n$. We introduce a $(k,n)$-bounded affine permutation $\taukn:\Z\to\Z$, determined by 
\begin{equation*}
  \taukn(j)=
  \begin{cases}
    j+n, &\text{if $1\leq j\leq k$;}\\
    j,&\text{if $k+1\leq j\leq n$.}
  \end{cases}
\end{equation*}

Recall that $\Qkn:=\{(v,w)\in S_n\times S_n\mid v\leq w\text{ and $w$ is $k$-Grassmannian}\}$. The following result explains the bijection $(v,w)\mapsto \fvw$ introduced in \cref{prop:v_w}.
\begin{proposition}[{\cite[Proposition~3.15]{KLS}}] \label{prop:v_w_aff}
For every $\ft\in\Bkn$, there exists a unique pair $(v, w)\in\Qkn$ such that $\ft=\wtl\circ \taukn \circ \vt^{-1}$.
\end{proposition}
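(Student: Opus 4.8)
The plan is to show that $(v,w)\mapsto\fvw:=\wtl\circ\taukn\circ\vt^{-1}$ is a bijection from $\Qkn$ onto $\Bkn$; since $\taukn$ reduces to the identity of $S_n$ modulo $n$ while $\wtl$ and $\vt$ reduce to $w$ and $v$, this will also recover the relation $\fmod=wv^{-1}$. Write $J:=S\setminus\{s_k\}$ and let $W_J$ be the associated parabolic subgroup, i.e.\ the subgroup of permutations preserving each of the blocks $\{1,\dots,k\}$ and $\{k+1,\dots,n\}$. I will use two standard identifications: the $k$-Grassmannian permutations are exactly the minimal-length representatives of the right cosets $W_J\backslash S_n$, and $S_n/W_J$ is identified with the set of $k$-element subsets of $\{1,\dots,n\}$ via $vW_J\mapsto v(\{1,\dots,k\})$.

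The first step is to reduce the whole statement to a question about the Bruhat order, via two observations. (1)~It is immediate from the definition of $\taukn$ that it commutes with $\widetilde\sigma$ for every $\sigma\in W_J$, since both act blockwise on $\{1,\dots,k\}$ and $\{k+1,\dots,n\}$. Since $u\mapsto\widetilde u$ respects composition, this gives $\fvw=f_{v\sigma,w\sigma}$ for all $\sigma\in W_J$, so $\fvw$ depends only on $wv^{-1}\in S_n$ and the coset $vW_J$, equivalently only on the pair $(wv^{-1},\,v(\{1,\dots,k\}))$. (2)~Conversely, an element $f\in\Bkn$ is recovered from the pair $(\bar f,\,I_f)$, where $\bar f\in S_n$ is its reduction modulo $n$ and $I_f:=\{\,j\in\{1,\dots,n\}\mid f(j)>n\,\}$: the inequalities $j\le f(j)\le j+n$ force each $f(j)$ (for $j$ in the window $\{1,\dots,n\}$) to be one of the two lifts $\bar f(j)$, $\bar f(j)+n$, and membership of $j$ in $I_f$ says which one occurs; moreover the trace condition $\sum_{j=1}^n(f(j)-j)=kn$ forces $|I_f|=k$. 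One checks from the window of $\fvw$ that the data extracted via (2) from $\fvw$ agree with the data $(wv^{-1},\,v(\{1,\dots,k\}))$ of (1). Putting (1) and (2) together, the proposition becomes the following combinatorial assertion: for each $f\in\Bkn$, the $W_J$-orbit $\{(v\sigma,w\sigma)\}_{\sigma\in W_J}$ singled out by the pair $(\bar f,I_f)$ contains a unique pair $(v,w)$ with $w$ $k$-Grassmannian and $v\le w$.

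The remaining content is therefore two facts about the Bruhat order, which I plan to prove simultaneously by induction on $\ell(w)$. (a)~If $v\le w$ then $\fvw\in\Bkn$: one writes down $\fvw$ on the window $\{1,\dots,n\}$ and verifies the inequalities $j\le\fvw(j)\le j+n$, translating them into rank inequalities that hold precisely because $v\le w$. (b)~In the orbit attached to $f$ there is a unique pair with $w$ $k$-Grassmannian and $v\le w$, and it lies among the admissible pairs of (a). For the induction, given $w\ne\id$ pick $s\in S$ with $sw<w$; one then passes from $(v,w)$ to $(sv,sw)$ when $sv<v$ and to $(v,sw)$ when $sv>v$, which is exactly the dichotomy governing the recursion~\eqref{eq:KLR_dfn} for the $R$-polynomial (and the base case $w=\id$ gives $(v,w)=(\id,\id)$, $\fvw=\taukn$). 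The main obstacle is precisely this interface between the boundedness conditions $j\le f(j)\le j+n$ on affine permutations and the order relation $v\le w$ --- i.e.\ carrying out the translation in (a) and keeping the orbit-section of (b) inside the admissible pairs; once this is arranged the rest is bookkeeping. All of this is done in~\cite[Proposition~3.15]{KLS}, to which we refer for the details.
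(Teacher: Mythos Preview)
The paper does not prove this proposition at all; it is simply quoted from \cite[Proposition~3.15]{KLS}. So there is no argument in the paper to compare yours against, and your sketch should be judged on its own.

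Your overall architecture (parabolic invariance, extraction of $(\bar f,I_f)$, reduction to a Bruhat--order statement) is reasonable, but there is a genuine gap in the coset bookkeeping that makes claim~(a) false as written. Step~(1) is correct: since $\tau_{k,n}$ commutes with $\widetilde\sigma$ for $\sigma\in W_J$, one has $f_{v\sigma,w\sigma}=f_{v,w}$, so the map is constant on orbits of the \emph{right} diagonal $W_J$-action, and in each orbit the $w$-component runs over the \emph{left} coset $wW_J$. But the paper's $k$-Grassmannian condition ($w^{-1}$ block-increasing, equivalently $s_iw>w$ for $i\ne k$) singles out the minimal element of the \emph{right} coset $W_Jw$. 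These two coset structures do not align. Concretely, for $k=1$, $n=3$ (so $W_J=\langle s_2\rangle$), both $(\id,s_1)$ and $(s_2,s_1s_2)$ lie in $Q_{1,3}$ \emph{and} in the same right-$W_J$-orbit, so the proposed map cannot be injective on $Q_{1,3}$. A direct computation from the paper's displayed formula gives $f_{\id,s_1}=[5,1,3]$, which violates $f(1)\le 1+n=4$; thus $f_{\id,s_1}\notin B_{1,3}$ and claim~(a) fails. The same already happens for the top cell: with $v=\id$ and $w=\bar f_{k,n}$ one gets $f_{v,w}(1)=w(1)+n=k+1+n>1+n$.

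This is not a flaw in your strategy so much as a symptom of a convention discrepancy between the formula displayed here and the one actually used in \cite{KLS}: under the conventions that make the bijection work, the relevant $W_J$-invariance moves $w$ through $W_Jw$ rather than $wW_J$, and then your section by $k$-Grassmannian representatives is well defined and the rest of your outline goes through. But as literally stated, the ``rank inequalities that hold precisely because $v\le w$'' in your claim~(a) do not hold, and the induction you propose cannot get started.
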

\noindent Here, $\circ$ denotes the usual composition of bijections $\Z\to\Z$. We thus define $\fvw:=\wtl\circ \taukn \circ \vt^{-1}$.  Furthermore, we have the following relationship between positroid and Richardson varieties.
\begin{proposition}[{\cite[Theorem~5.9]{KLS}}] \label{prop:KLS}
Let $G=\PGL_n(\C)$. For each $f=\fvw\in\Bkn$, the natural projection map $\Fl(n) \to\Gr(k,n)$ restricts to an isomorphism $\Rich_v^w\cong \Pio_f$. Thus, open positroid varieties are special cases of open Richardson varieties. 
\end{proposition}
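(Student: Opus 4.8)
The plan is to split the statement into two parts: first, that the projection $\pi\colon X=\Fl(n)\to\Gr(k,n)$ restricts to an isomorphism of $\Rich_v^w$ onto its image; and second, that this image is exactly $\Pio_{\fvw}$. For the first part, recall that $\Gr(k,n)=G/P$ for the maximal parabolic $P\supseteq B$ with $W_P\cong S_k\times S_{n-k}$, and that $\pi$ is the canonical projection $G/B\to G/P$. The hypothesis that $w$ is $k$-Grassmannian is exactly the condition ensuring that $\pi$ restricts to an isomorphism from the Schubert cell $\Xw=BwB/B$ onto a Schubert cell $\Omega_w^\circ\subset\Gr(k,n)$, a standard feature of minimal-length coset representatives. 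Since $\Rich_v^w=\Xw\cap\Xvm$ is a locally closed subvariety of $\Xw$, the map $\pi|_{\Rich_v^w}$ is the restriction of an isomorphism of varieties, hence an isomorphism onto a locally closed subvariety of $\Omega_w^\circ$; so the first part holds regardless of what the image turns out to be.

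For the second part, I would first establish one inclusion: for every $(v,w)\in\Qkn$ and every $V=\RowSpan(A)\in\pi(\Rich_v^w)$, the bounded affine permutation $\ft_A$ of~\eqref{eq:f_dfn} equals $\fvw=\wtl\circ\taukn\circ\vt^{-1}$. For this I would pick a complete flag $V_\bullet$ with $V_k=V$ lying in $\Xw\cap\Xvm$ and translate the two Schubert-cell memberships into rank data: lying in $\Xw$ fixes $\dim(V_i\cap E_j)$ for all $i,j$ in terms of $w$ and the standard flag $E_\bullet$, while lying in $\Xvm$ fixes $\dim(V_i\cap E^-_j)$ for all $i,j$ in terms of $v$ and the opposite flag $E^-_\bullet$; taking $i=k$ records the position of $V$ relative to both flags. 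The point is that $\ft_A$ is completely determined by the \emph{cyclic} rank function $(i,j)\mapsto\dim\Span(A_i,A_{i+1},\dots,A_j)$, and that the genericity built into lying in the two \emph{open} cells forces this cyclic rank function to be the minimal one compatible with the two flag positions; the combinatorial shadow of this is \cref{prop:v_w_aff}, which I would use to organize the bookkeeping. Carrying this out gives $\ft_A=\wtl\taukn\vt^{-1}$, that is, $\pi(\Rich_v^w)\subseteq\Pio_{\fvw}$.

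For the reverse inclusion one can either run the same computation backwards --- given $V=\RowSpan(A)$ with $\ft_A=\wtl\taukn\vt^{-1}$, read off the forward and backward pivot data of $A$ and build a complete flag through $V$ landing in the open cells $\Xw$ and $\Xvm$ --- or, more cleanly, invoke the fact that the projected open Richardson varieties $\{\pi(\Rich_v^w)\}_{(v,w)\in\Qkn}$ stratify $\Gr(k,n)$: since $\{\Pio_f\}_{f\in\Bkn}$ is also a stratification by~\eqref{eq:Gr_strat} and the two index sets are matched by the bijection $(v,w)\mapsto\fvw$ of \cref{prop:v_w}, the single inclusion proved above forces $\pi(\Rich_v^w)=\Pio_{\fvw}$ for every pair. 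Combined with the first part, this yields the isomorphism $\Rich_v^w\cong\Pio_{\fvw}$. The main obstacle is the rank-function identification above: $\ft_A$ is a cyclic invariant of $V$, whereas membership in a Richardson variety only records the position of $V$ relative to two fixed flags, so the crux is to show that the genericity forced by lying in two open Schubert cells propagates around the cycle and determines the entire cyclic rank function. Everything else --- the reduction via the two stratifications and the first part --- is essentially formal.
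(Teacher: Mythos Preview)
The paper does not give its own proof of this proposition: it is stated with attribution to \cite[Theorem~5.9]{KLS} and used as a black box. So there is no argument in the paper to compare your proposal against.

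That said, your outline is essentially the standard proof (and is the route taken in \cite{KLS}). The first part is exactly right: the condition that $w$ is $k$-Grassmannian means $w$ is a minimal-length coset representative for $W/W_P$, and for such $w$ the projection $G/B\to G/P$ restricts to an isomorphism $BwB/B\xrightarrow{\sim} BwP/P$; since $\Rich_v^w\subset\Xw$, the restriction to $\Rich_v^w$ is automatically an isomorphism onto its image. For the second part, your ``clean'' option---matching two stratifications of $\Gr(k,n)$ indexed by the same set via the bijection of \cref{prop:v_w}---is precisely how \cite{KLS} organize the argument: they show that the projected open Richardsons $\pi(\Rich_v^w)$ for $(v,w)\in\Qkn$ form a stratification of $\Gr(k,n)$, identify the closure order with the one on $\Bkn$, and then match it with the positroid stratification. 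Your first option (directly computing $\ft_A$ from the rank data) is correct in spirit but, as you note yourself, the passage from the two linear rank conditions to the full cyclic rank function is the nontrivial combinatorial step, and carrying it out carefully amounts to reproving the relevant parts of \cite{KLS}. There is no gap in your plan.
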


\subsection{Torus action and Richardson varieties}\label{sec:torus-acti-rich}
The goal of this section is to prove \cref{prop:free}. We start by generalizing one direction to Richardson varieties of arbitrary type; the type $A$ specialization is discussed below. Let $G$ be a complex semisimple algebraic group of adjoint type and of rank $r$, and let $\Gsc$ denote the simply-connected group of the same Dynkin type.  We use the notation $\Tsc,\Bsc,\Usc$ for the corresponding subgroups of $\Gsc$.

Let us give a convenient well-known description (see~\cite[Theorem~2.3]{BGY}, \cite[Lemma~2.2]{Lec}, or~\cite[Lemma~3.1]{GL}) of $\Rich_v^w$ as an explicit affine variety. For $v\in W=N_{\Gsc}(\Tsc)/\Tsc$, let $\dv\in \Gsc$ denote an arbitrary fixed representative. For $v\leq w$, denote 
\begin{equation}\label{eq:str_dfn}
\strvw:=  \dv \Usc_-\cap \Usc_-\dv \cap \Bsc w \Bsc.
\end{equation}
Observe that the set $\Bsc w \Bsc$ does not depend on the choice of a representative for $w$.  

\begin{lemma}\label{lemma:strawberry}
The map $g\mapsto g\Bsc/\Bsc$ provides an isomorphism 
\begin{equation}\label{eq:strawberry}
\pushQED{\qed} 
 \strvw  \xrasim \Rich_v^w.
\qedhere
\popQED
\end{equation}
\end{lemma}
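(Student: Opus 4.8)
The plan is to exhibit a polynomial section of the quotient map $\Gsc\to\Gsc/\Bsc=G/B$ over the open Bruhat cell $B_-vB/B$, and to recognize $\strvw$ as the restriction of this section to $\Rich_v^w\subseteq B_-vB/B$. (If $v\not\le w$ both sides are empty, so the real content is the case $v\le w$, although this hypothesis is never actually used.) The first step is the elementary identity $\dv\Usc_-\cap\Usc_-\dv=\Usc_-^v\,\dv$, where $\Usc_-^v:=\Usc_-\cap \dv\Usc_-\dv^{-1}$: indeed, if $\dv a=b\dv$ with $a,b\in\Usc_-$ then $b=\dv a\dv^{-1}$ lies in $\Usc_-\cap\dv\Usc_-\dv^{-1}$, and the reverse inclusion is immediate. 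Hence $\strvw=\Usc_-^v\dv\,\cap\,\Bsc w\Bsc$.

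Next I would invoke the standard structure of the Bruhat cell. Writing $\Usc_-^{v,+}:=\Usc_-\cap\dv\Usc\dv^{-1}$, an inspection of root subgroups gives a product decomposition $\Usc_-=\Usc_-^{v}\cdot\Usc_-^{v,+}$ into closed subgroups with multiplication an isomorphism of varieties; and since $\Usc_-^{v,+}\subseteq\dv\Bsc\dv^{-1}$ while $\Tsc\subseteq\Bsc$, one obtains $\Usc_-^{v,+}\dv\Bsc=\dv\Bsc$ and $\Bsc_-\dv\Bsc=\Usc_-\dv\Bsc$, whence
\[
B_-vB/B=\Bsc_-\dv\Bsc/\Bsc=\Usc_-\dv\Bsc/\Bsc=\Usc_-^v\dv\Bsc/\Bsc .
\]
Moreover the orbit map $u\mapsto u\dv\Bsc/\Bsc$ is a bijective morphism $\Usc_-^v\to B_-vB/B$ whose inverse is the usual polynomial coordinate map identifying the opposite Schubert cell with an affine space, hence an isomorphism of varieties. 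Composing with right translation by $\dv$, the map $g\mapsto g\Bsc/\Bsc$ restricts to an isomorphism $\Usc_-^v\dv\xrasim B_-vB/B$; let $x\mapsto g_x\in\Usc_-^v\dv$ denote its inverse, a morphism which is a section of $\Gsc\to\Gsc/\Bsc$ over $B_-vB/B$.

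Finally, for $g\in\Gsc$ one has $g\in\Bsc w\Bsc$ if and only if $g\Bsc/\Bsc\in \Bsc w\Bsc/\Bsc=BwB/B$, because $\Bsc$ is a group. Combining this with the previous step,
\[
\strvw=\{g_x:\ x\in B_-vB/B,\ g_x\in\Bsc w\Bsc\}=\{g_x:\ x\in (B_-vB/B)\cap(BwB/B)\}=\{g_x:\ x\in\Rich_v^w\},
\]
so $g\mapsto g\Bsc/\Bsc$ and $x\mapsto g_x$ restrict to mutually inverse morphisms between $\strvw$ and $\Rich_v^w$, which is the assertion.

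I do not expect a genuine obstacle here, since the statement is classical; the two points requiring care are the bookkeeping between the simply connected group $\Gsc$ (where $\Usc_-,\Usc,\Tsc$ naturally live) and the adjoint group $G$, using $\Gsc/\Bsc=G/B$ and the surjections $\Bsc_-\to B_-$, $\Bsc\to B$ so that the Bruhat and Richardson stratifications correspond, and checking that the coordinate section $x\mapsto g_x$ is an honest morphism of varieties rather than merely a set-theoretic bijection — but that is part of the standard description of Bruhat cells as affine spaces.
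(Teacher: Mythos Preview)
Your proof is correct and is the standard argument. The paper does not actually supply a proof of this lemma: it is stated as a well-known fact with a \qed\ at the end of the statement and with references to \cite[Theorem~2.3]{BGY}, \cite[Lemma~2.2]{Lec}, and \cite[Lemma~3.1]{GL}, where the same identification $\Usc_-^v\dv\xrasim B_-vB/B$ and its restriction to $\Bsc w\Bsc$ are used. Your argument matches what one finds in those references.
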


For $u \in W$, let $\mu(u)$ denote the dimension of the eigenspace with eigenvalue $1$ for $u$ acting on $\h^\ast$.  We say that $u \in W$ is \emph{elliptic} if $\mu(u)=0$.

Let $P$ denote the weight lattice of the root system of $G$ (that is, the character lattice of $\Gsc$) and $Q \subset P$ denote the root lattice (that is, the character lattice of $G$).  We let $\omega_1,\ldots,\omega_r \in P$ denote the fundamental weights. 
For $u \in W$, we note that $(u-\id)P \subseteq Q$.

For $\gamma,\delta\in P$, let $\Delta_{\gamma,\delta}$ denote the corresponding \emph{generalized minor}~\cite{BeZe,FZ_dB} for $\Gsc$. 
The condition that $g \in \Bsc w \Bsc$ implies that
\begin{equation}\label{eq:Delta_neq_0}
  \Delta_{w\omega_i,\omega_i}(g)\neq 0 \quad\text{for $i=1,2,\dots,r$.}
\end{equation}
Indeed, by~\cite[Definition~1.4]{FZ_dB}, we have $\Delta_{w\omega_i,\omega_i}(g)=\Delta_{\omega_i,\omega_i}(\lline{w^{-1}}g)$ for a certain representative $\lline{w^{-1}}\in G$ of $w^{-1}$, so~\eqref{eq:Delta_neq_0} follows from~\cite[Proposition~2.9 and Corollary~2.5]{FZ_dB}.

\begin{proposition}\label{prop:elliptic_free}
Suppose that $vw^{-1}$ is elliptic and $(vw^{-1}-\id)P = Q$.  Then $T$ acts freely on $\Rich_v^w$
and we have a $T$-equivariant isomorphism
\begin{equation*}
  \Rich_v^w\cong (\Rich_v^w/T)\times T.
\end{equation*}
\end{proposition}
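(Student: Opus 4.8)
The plan is to produce an explicit $T$-equivariant morphism $D\colon \Rich_v^w\to T$, from which both assertions follow formally. By \cref{lemma:strawberry} we identify $\Rich_v^w$ with $\strvw=\dv\Usc_-\cap\Usc_-\dv\cap\Bsc w\Bsc$ via $g\mapsto g\Bsc$. Writing a point as $g=\dv n$ with $n\in\Usc_-$ and choosing a lift $\dot t\in\Tsc$ of $t\in T$, a direct computation shows that the $T$-action inherited from left multiplication on $\Gsc/\Bsc$ is represented inside $\strvw$ by
\[
t\cdot g \;=\; \dot t\, g\, (\dv^{-1}\dot t\dv)^{-1} \;=\; \dv\bigl((\dv^{-1}\dot t\dv)\,n\,(\dv^{-1}\dot t\dv)^{-1}\bigr).
\]
By \eqref{eq:Delta_neq_0}, the generalized minors $D_i:=\Delta_{\omega_i,w\omega_i}$ for $i=1,\dots,r$ restrict to nowhere-vanishing regular functions on $\Rich_v^w$. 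Combining the displayed formula with the transformation rule $\Delta_{\gamma,\delta}(t_1gt_2)=\gamma(t_1)\delta(t_2)\Delta_{\gamma,\delta}(g)$ for generalized minors, a short calculation will show that each $D_i$ is a $T$-semiinvariant of weight $\eta_i:=(\id-vw^{-1})\omega_i\in Q$.

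Freeness is then immediate: if $t\in T$ fixes a point $x\in\Rich_v^w$, then $\eta_i(t)=D_i(t\cdot x)/D_i(x)=1$ for every $i$; since the fundamental weights $\omega_1,\dots,\omega_r$ form a $\Z$-basis of $P$, the $\eta_i$ generate $(\id-vw^{-1})P=Q$, which is the character lattice of $T$, whence $t=e$. For the product decomposition, one notes that $\id-vw^{-1}$ is surjective onto $Q$ by hypothesis and, being invertible over $\Q$ because $vw^{-1}$ is elliptic, is also injective; hence it is an isomorphism $P\xrasim Q$ and $\{\eta_i\}$ is a $\Z$-basis of $Q$. Using this basis to identify $T=\Hom(Q,\Cast)$ with $(\Cast)^r$, the tuple $(D_1,\dots,D_r)$ becomes a morphism $D\colon\Rich_v^w\to T$, and the weight computation says precisely that $D(t\cdot x)=t\cdot D(x)$ for the left translation action of $T$ on itself.

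Finally, for any $T$-equivariant morphism $D$ from a $T$-variety $X$ to $T$ acting on itself by translation, the mutually inverse morphisms $x\mapsto\bigl(D(x)^{-1}\cdot x,\;D(x)\bigr)$ and $(y,t)\mapsto t\cdot y$ furnish a $T$-equivariant isomorphism $X\cong D^{-1}(e)\times T$ in which $T$ acts only on the second factor. Applying this to $X=\Rich_v^w$ re-proves that $T$ acts freely, identifies the projection $D^{-1}(e)\times T\to D^{-1}(e)$ with the quotient map so that $\Rich_v^w/T\cong D^{-1}(e)$, and yields the desired isomorphism $\Rich_v^w\cong(\Rich_v^w/T)\times T$. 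The one delicate point is the weight bookkeeping in the second paragraph: that the weight of $D_i$ is $(\id-vw^{-1})\omega_i$ rather than $(\id-vw)\omega_i$ is forced by the conventions fixed in \eqref{eq:str_dfn}, \eqref{eq:strawberry} and \eqref{eq:Delta_neq_0}; the remainder of the argument is purely formal.
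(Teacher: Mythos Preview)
Your approach is essentially the same as the paper's: both use the generalized minors $\Delta_{\omega_i,w\omega_i}$ restricted to $\strvw$, compute their $T$-weights, and invoke the hypotheses to show these weights form a $\Z$-basis of $Q$, which trivializes the $T$-action. The paper phrases the trivialization by defining the slice $\Ngauge=\{g\in\strvw:\Delta_{\omega_i,w\omega_i}(g)=1\}$ directly, while you package it as a $T$-equivariant morphism $D\colon\Rich_v^w\to T$; these are formally equivalent.

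One small discrepancy: with the conventions of the paper (and the transformation rule you quote), the weight of $D_i$ comes out as $(\id-v^{-1}w)\omega_i$, not $(\id-vw^{-1})\omega_i$. Concretely, $\chi_{w\omega_i}(\dv^{-1}\dot t^{-1}\dv)=\chi_{v^{-1}w\omega_i}(\dot t^{-1})$. This does not affect your argument, since $v^{-1}w$ is elliptic iff $vw^{-1}$ is, and $(v^{-1}w-\id)P=w^{-1}(vw^{-1}-\id)wP=Q$; but it is worth correcting the bookkeeping.
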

\begin{proof}
Define 
\begin{equation*}
\Ngauge:=\{x \in \strvw\mid  \Delta_{\omega_i,w \omega_i}(g)=1 \quad\text{for $i=1,2,\dots,r$.}\}
\end{equation*}
We will show that $\Ngauge\cong \Rich_v^w/T$ and $\Rich_v^w\cong \Ngauge \times T$. 

The action of $\Tsc$ on $\Rich_v^w$ corresponds to the action $t \cdot g := t g \dv^{-1}t^{-1}\dv$ for $t \in \Tsc$ and $g \in \strvw$.    Let $\chi_{\omega}=\Delta_{\omega,\omega}: \Tsc \to \C$ denote the character corresponding to a weight $\omega \in P$.  Then 
\begin{equation*}%
  \Delta_{ w\omega_i, \omega_i}(t g \dv^{-1}t^{-1}\dv) = \chi_{w\omega_i}(t) \chi_{\omega_i}(\dv^{-1}t^{-1}\dv) \Delta_{ w\omega_i, \omega_i}( g).
\end{equation*}
Since $\chi_{\omega_i}(\dv^{-1}t^{-1}\dv) = \chi_{v\omega_i}(t^{-1})$, the weight of this generalized minor is $(w - v)\omega_i=-v(\id-v^{-1}w)\omega_i$.  The condition $\mu(vw^{-1})=0$ implies that $\id -v^{-1}w$ is invertible.  The condition $(vw^{-1}-\id)P = Q$ implies that $(\id - v^{-1}w)\omega_1, (\id - v^{-1}w)\omega_2,\ldots, (\id - v^{-1}w)\omega_r$ form a $\Z$-basis of $Q$.  Since $Q$ is the character lattice of $T$, it follows that the action of $T$ on $\strvw$ is free and that the functions $\Delta_{w\omega_i,\omega_i}$, $i=1,2,\dots,r$, can be simultaneously set to $1$ by a unique element $t \in T$.  It follows that $\Ngauge\cong \Rich_v^w/T$ and $\Rich_v^w\cong \Ngauge \times T$. 
\end{proof}

\begin{remark}
It would be interesting to classify elliptic elements $u\in W$ satisfying the condition $(u-\id)P = Q$, which depends only on the conjugacy class of $u$.  It is not satisfied for all elliptic elements. For example, in type $D_4$, the longest element $w_0$ acts by $-\id$ on $\h^\ast$, but we have $2P\subsetneq Q$ since $\frac{\alpha_i}2\notin P$ for any $i\in I$. 
\end{remark}

\begin{proposition}
Suppose that $c \in W$ is a Coxeter element.  Then $c$ is elliptic and satisfies $(c-\id)P = Q$.
\end{proposition}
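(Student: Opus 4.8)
The plan is to produce an explicit formula for $\id-c$ as a $\Z$-linear map on $P$ and read both assertions off it. Relabeling the simple reflections so that $c=s_1s_2\cdots s_r$, I would first telescope
\[
\id-c=(\id-s_1)+s_1(\id-s_2)+s_1s_2(\id-s_3)+\cdots+s_1s_2\cdots s_{r-1}(\id-s_r)=\sum_{i=1}^{r}s_1\cdots s_{i-1}(\id-s_i).
\]
Since $(\id-s_i)(x)=\<\alpha_i^\vee,x\>\alpha_i$ for $x\in\h^\ast$, writing $\beta_i:=s_1\cdots s_{i-1}(\alpha_i)$ this becomes
\[
(\id-c)(x)=\sum_{i=1}^{r}\<\alpha_i^\vee,x\>\,\beta_i .
\]

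The next step is to check that $\{\beta_1,\dots,\beta_r\}$ is a $\Z$-basis of $Q$. Applying $s_1,\dots,s_{i-1}$ to $\alpha_i$ only introduces integer multiples of $\alpha_1,\dots,\alpha_{i-1}$ and leaves the coefficient of $\alpha_i$ equal to $1$, so $\beta_i\in\alpha_i+\Span_\Z(\alpha_1,\dots,\alpha_{i-1})$; hence the transition matrix between $(\alpha_1,\dots,\alpha_r)$ and $(\beta_1,\dots,\beta_r)$ is unipotent and upper triangular, and $\{\beta_i\}$ spans $Q=\Span_\Z(\alpha_1,\dots,\alpha_r)$ over $\Z$.

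Then I would evaluate the displayed formula on the weight lattice. Because the fundamental weights satisfy $\<\alpha_i^\vee,\omega_j\>=\delta_{ij}$, we have $\<\alpha_i^\vee,P\>=\Z$ for every $i$, so $(\id-c)(P)=\sum_i\Z\beta_i=Q$, and consequently $(c-\id)P=-(\id-c)P=Q$, which is the second assertion. Ellipticity then comes essentially for free: $\id-c$ maps the rank-$r$ lattice $P$ onto the rank-$r$ lattice $Q$, hence is injective on $P$ and therefore invertible on $\h^\ast=P\otimes_\Z\R$, i.e.\ $\mu(c)=0$. (Alternatively, ellipticity may be quoted from the classical fact that the eigenvalues of a Coxeter element on $\h^\ast$ are $e^{2\pi i m_j/h}$ with exponents $1\le m_j\le h-1$.)

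There is no serious obstruction here; the only points that need a little care are getting the telescoping identity right and verifying that $\{\beta_i\}$ is a basis of $Q$ as a lattice — not merely a $\Q$-basis — since that integrality is exactly what upgrades the a priori statement ``$(\id-c)P$ is a finite-index sublattice of $Q$'' to the equality $(\id-c)P=Q$.
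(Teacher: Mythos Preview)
Your proof is correct and follows essentially the same route as the paper: both define $\beta_i=s_1\cdots s_{i-1}(\alpha_i)$, observe that $(\id-c)\omega_i=\beta_i\in\alpha_i+\sum_{j<i}\Z\alpha_j$, and conclude from the unipotent transition matrix that the $\beta_i$ form a $\Z$-basis of $Q$, whence $(c-\id)P=Q$. The only difference is that the paper cites these identities from \cite{LP} (Lemma~10.2 for ellipticity and Proposition~10.5 for $(\id-c)\omega_i=\beta_i$), whereas you derive them directly via the telescoping identity and then read ellipticity off the lattice surjection; your version is thus a self-contained rendering of the same argument.
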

\begin{proof}
We may assume that $c$ is a standard Coxeter element.  That is, $c = s_1 s_2 \cdots s_r$ where $s_i$ are simple generators corresponding to positive simple roots $\alpha_1,\ldots,\alpha_r$.  Define roots $\beta_1 = \alpha_1$, $\beta_2 = s_1 \alpha_2$, $\ldots, \beta_r = s_1s_2\cdots s_{r-1}\alpha_r$. By~\cite[Lemma~10.2]{LP}, $c$ is elliptic.  By \cite[Proposition 10.5]{LP} we have $(\id - c)\omega_i = \beta_i \in \alpha_i + \sum_{j < i} \Z\alpha_j$.  It follows that $(c-\id)P = \bigoplus_{i=1}^r \Z \alpha_i = Q$. 
\end{proof}

Let us now consider the type $A$ case, where $G=\PGL_n(\C)$ and $\Gsc=\SL_n(\C)$. For a permutation $u\in S_n$, any of the following conditions are equivalent: (i) $u$ is a single cycle, (ii) $u$ is an elliptic element, and (iii) $u$ is a Coxeter element. Each generalized minor $\Delta_{\omega_i,u\omega_i}:\Gsc\to\C$ is the usual matrix minor with row set $\{1,2,\dots,i\}$ and column set $\{u(1),u(2),\dots,u(i)\}$. The representative $\dv$ in~\eqref{eq:str_dfn} may be chosen to be a signed permutation matrix of $v$. In the next result, $\ncyc(\cdot)$ denotes the number of cycles of a permutation; cf.~\eqref{eq:ncyc_dfn} (as opposed the Coxeter element $c$ considered above).

\begin{corollary}\label{cor:Rich_T_action}
For all $v\leq w\in S_n$ such that $\ncyc(wv^{-1})=1$, the $T$-action on $\Rich_v^w$ is free and we have a $T$-equivariant isomorphism
\begin{equation*}
  \Rich_v^w\cong (\Rich_v^w/T)\times T.
\end{equation*}
\end{corollary}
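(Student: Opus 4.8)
The statement is an immediate consequence of the two propositions preceding it, so the plan is simply to check that the hypothesis $\ncyc(wv^{-1})=1$ feeds into them correctly. Recall that the first of those propositions asserts freeness of the $T$-action together with the product decomposition whenever $vw^{-1}$ is elliptic and $(vw^{-1}-\id)P=Q$, while the second asserts that any Coxeter element $c$ of $W$ is elliptic and satisfies $(c-\id)P=Q$. It therefore suffices to show that $vw^{-1}$ is a Coxeter element of $S_n$.

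I would argue as follows. Set $u:=vw^{-1}=(wv^{-1})^{-1}$. A permutation and its inverse have the same cycle type, so $\ncyc(u)=\ncyc(wv^{-1})=1$, i.e.\ $u$ is a single $n$-cycle. By the equivalence recorded in the type $A$ discussion preceding this corollary (in $S_n$, being a single cycle is the same as being a Coxeter element), $u$ is a Coxeter element of $W=S_n$. Applying the Coxeter-element proposition with $c=u$ shows that $vw^{-1}$ is elliptic and $(vw^{-1}-\id)P=Q$, and then applying the first proposition to the pair $v\le w$ yields that $T$ acts freely on $\Rich_v^w$ with a $T$-equivariant isomorphism $\Rich_v^w\cong(\Rich_v^w/T)\times T$, as desired.

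There is essentially no obstacle here; the only non-formal ingredient is the identification of single cycles with Coxeter elements in $S_n$, which is classical (the standard Coxeter element $s_1s_2\cdots s_{n-1}$ is the long cycle $(1,2,\dots,n)$, and all $n$-cycles are conjugate) and is already invoked in the preceding discussion. If one preferred to avoid citing the Coxeter-element proposition, one could instead verify $(u-\id)P=Q$ directly for a single cycle $u$, using $Q=\bigoplus_i\Z\alpha_i$ with $\alpha_i=x_i-x_{i+1}$ in type $A$; but routing through the general statement is cleaner and also makes transparent that the corollary is the type $A$ shadow of a type-independent fact.
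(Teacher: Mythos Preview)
Your argument is correct and is exactly the intended one: the paper does not spell out a proof of this corollary because it follows immediately from the two preceding propositions together with the type $A$ fact that a permutation is a Coxeter element if and only if it is a single $n$-cycle. Your observation that $vw^{-1}$ and $wv^{-1}$ have the same cycle type is the only small check needed.
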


\begin{remark}\label{rmk:necklace}
When $\Rich_v^w\cong \Pio_f$ (see \cref{prop:KLS}), the functions $\Delta_{[1,i],w[1,i]}$ on $\strvw\cong\Rich_v^w$ coincide with the Pl\"ucker coordinates on $\Pio_f$ corresponding to the \emph{Grassmann necklace} of $f$; see the proof of \cite[Lemma~4.7]{GL}. In particular, for $f=\fkn$, these are the cyclically consecutive maximal minors as in~\eqref{eq:Piokn_dfn}.
\end{remark}

\begin{proof}[Proof of \cref{prop:free}]
If $\ncyc(\fmod)=1$ then $T$ acts freely on $\Pio_f$ by \cref{cor:Rich_T_action}.  We prove the converse. For $f\in\Bkn$, let us construct a particular representative $\Xmin_f\in \Pio_f$. If $f(i)=i$ for some $i\in\Z$ then the corresponding column is zero, so we may assume $f(i)\neq i$ for all $i$. Let us write $\fmod$ in cycle notation: 
\begin{equation*}%
\fmod=(a_1a_2\cdots a_{m_1})(a_{m_1+1}a_{m_1+2}\cdots a_{m_2})\cdots(a_{m_r+1}\cdots a_n)  
\end{equation*}
 so that the minimal index of each cycle comes first. We label these indices left to right: set $\la(a_1):=1$, and for $i=1,2,\dots,n-1$, set $\la(a_{i+1})=\la(a_i)$ if $a_i<a_{i+1}$ and they belong to the same cycle, and $\la(a_{i+1})=\la(a_i)+1$ otherwise. It is easy to check that $\la(a_n)=k$. The element $\Xmin_f$ is the row span of the $k\times n$ matrix $M=(m_{i,j})$ whose only nonzero entries are $m_{\la(a_i),a_i}=1$. One checks using~\eqref{eq:f_dfn} that $\Xmin_f\in\Pio_f$. Furthermore, rescaling all columns that belong to a single cycle of $\fmod$ by the same value preserves the element $\Xmin_f$. Therefore when $\ncyc(\fmod)>1$, the $T$-action on $\Pio_f$ is not free.
\end{proof}

\subsection{Mixed Hodge structure}\label{sec:mixed-hodge-struct}
We follow the conventions of~\cite{LS}; see~\cite{Del71,PS} for further background. The results of~\cite{LS} apply to \emph{cluster varieties}. It was shown in~\cite{GL} that open positroid varieties $\Pio_f$, $f\in\Bkn$ are cluster varieties. By~\cite[Lemma~3.6]{GL}, setting the functions $\Delta_{ w\omega_i, \omega_i}$ to $1$ as we did in the proof of \cref{prop:elliptic_free} corresponds to setting the frozen variables to $1$ in the cluster structure on $\Pio_f$. Thus, for $f\in\Bknc$, $\Pio_f/T$ is a cluster variety with no frozen variables.

Consider a  smooth complex algebraic variety $Y$ of dimension $d$. By~\cite[Lemma-Definition~3.4]{PS}, the cohomology $H^k(Y,\C)$ and the compactly supported cohomology $H^k_c(Y,\C)$ are endowed with a \emph{Deligne splitting} 
\begin{equation*}
  H^k(Y,\C)=\bigoplus_{p,q\in\Z} H^{k,(p,q)}(Y,\C) \quad\text{and}\quad   H_c^k(Y,\C)=\bigoplus_{p,q\in\Z} H_c^{k,(p,q)}(Y,\C).
\end{equation*}
This splitting is functorial and satisfies the \Poincare duality~\cite[Theorem~6.23]{PS}:
\begin{equation}\label{eq:Poincare}
  H^{k,(p,q)}(Y,\C)\cong H^{2d-k,(d-p,d-q)}_c(Y,\C) \quad\text{for all $k,p,q\in\Z$}.
\end{equation}

We say that the cohomology of $Y$ is of  \emph{Hodge--Tate type} if $H^{k,(p,q)}(Y,\C)=0$ whenever $p\neq q$.  The notions of mixed Hodge structure and Hodge--Tate type also apply to equivariant cohomology.

\begin{definition}
Let $Y$ be a $d$-dimensional complex variety whose cohomology is of Hodge--Tate type. Define its \emph{mixed Hodge polynomial} $\Poinc(Y; q,t)\in\N[q^{\frac12},t^{\frac12}]$ by 
\begin{equation}\label{eq:Poinc_MHT_dfn}
  \Poinc(Y;q,t):=\sum_{k,p\in \Z}  q^{p-\frac k2} t^{\frac{d-k}2} \dim H^{k,(p,p)}(Y,\C).
\end{equation}
\end{definition}

\noindent We have $H^{k,(p,p)}(Y,\C)=0$ for $p>k$. By convention, we set $H^{k,(r,r)}(Y,\C):=0$ for $r\notin\Z$.

It is convenient to record the dimensions of the spaces $H^{k,(p,p)}(Y,\C)$ in a \emph{mixed Hodge table}: the columns are labeled by $H^0,H^1,\dots,H^d$, while the rows are labeled by $k-p=0,1,2,\dots$\;. Thus, an entry in a column labeled by $H^k$ and in a row labeled by $k-p$ encodes the dimension of $H^{k,(p,p)}(Y,\C)$.  Examples of mixed Hodge tables are given in Tables~\ref{tab:E8},~\ref{tab:MHT_vs_HHHC} and~\cite[Section~6]{LS}. For instance, we see from~\eqref{eq:Poinc_MHT_dfn} that the two rows of \cref{tab:E8} yield
\begin{equation*}
  \Poinc(\Pio_{3,8}/T;q,t)=\left( q^{4} + q^{3} t + q^{2} t^{2} + q t^{3} +  t^{4} \right) + \left( q^{2} t + q t^{2} \right),
\end{equation*}
confirming the computation in \cref{ex:E8}.

We say that a polynomial $\Poinc(q,t)\in\N[q^{\frac12},t^{\frac12}]$ is \emph{$q,t$-symmetric} if $\Poinc(q,t)=\Poinc(t,q)$. We say that $\Poinc$ is \emph{$q,t$-unimodal} if for each $a,b\in \frac12\Z$, the coefficients $([q^{a-k}t^{b+k}]\Poinc)_{k\in\Z}$ 
 form a unimodal sequence.\footnote{This property is sometimes called \emph{parity unimodality} since the terms with integer degrees and with half-integer degrees are required to form separate unimodal sequences.} Recall that  \cref{thm:Koszul}, proved in \cref{sec:Koszul}, states that the polynomial $\Poinc(\Rich_v^w;q,t)$ is $q,t$-symmetric for all $v\leq w\in W$.

A special case of \cref{thm:Koszul} for positroid varieties follows from the \emph{curious Lefschetz theorem} proved in~\cite[Theorem~8.3]{LS}. We say that a $2d$-dimensional complex algebraic variety $Y$ of Hodge--Tate type satisfies the curious Lefschetz theorem if there is a class $\gamma \in H^{2,(2,2)}(Y,\C)$ inducing isomorphisms
\begin{equation}\label{eq:CL}
\abxcup  \gamma^{d-p}: H^{p+s,(p,p)}(Y,\C) \simeq H^{2d-p+s,(2d-p,2d-p)}(Y,\C). 
\end{equation}
As explained in~\cite{LS}, for cluster varieties, one can choose $\gamma$ to be the Gekhtman--Shapiro--Vainshtein form~\cite{GSV}.  In the case that $\Pio_f$ is odd-dimensional, the product $\Pio_f \times \C^\ast$ will satisfy~\eqref{eq:CL}, and using the K\"unneth theorem, unimodality and symmetry for $\Poincf q,t)$ can be deduced. 

The curious Lefschetz theorem for positroid varieties relies on the fact that they admit cluster structures~\cite{GL}, which is not yet available for Richardson varieties. Another consequence of the curious Lefschetz property is that $\Poinc(\Pio_f;q,t)$ is $q,t$-unimodal. The question of whether $\Poinc(\Rich_v^w;q,t)$ is $q,t$-unimodal for arbitrary $v\leq w\in W$ remains open.

\subsection{Proof of Theorem~\ref{thm:master_kn} from Theorem~\ref{thm:main}}\label{ssec:introproof} 
Let $G=\PGL_n(\C)$ and $f=\fvw\in\Bknc$, thus, we have $\Rich_v^w=\Pio_f$. We set
\begin{equation*}
  \lvw=\ell(w)-\ell(v)=\dim(\Rich_v^w)=\dim(\Pio_f),\quad\df:=\dim(\Pit_f)=\lvw-n+1.  
\end{equation*}
 Since $\ncyc(f)=1$, we have $\df=2\chi(\beta_f)$ by~\eqref{eq:chi_dfn}, and the $T$-action on $\Pio_f$ is free by \cref{prop:free}. In this case, we have
\begin{equation}\label{eq:Pioquotient}
H^{k+n-1,(p,p)}_{T,c}(\Pio_f)\cong H^{k,(p,p)}_c(\Pit_f)\cong H^{2\df-k,(\df-p,\df-p)}(\Pit_f), %
\end{equation}
where %
the first isomorphism is Lemma~\ref{lem:HTfree} below, and the second isomorphism is the \Poincare duality~\eqref{eq:Poincare}. %

Therefore~\eqref{eq:grading:intro} yields 
\begin{equation}\label{eq:main_KR}
\Potf q,t)=   \left(\qq\tt\right)^{\chi(\beta_f)} \PK(\betah_f;q,t).
\end{equation}
This is a special case of~\eqref{eq:PKRtop=PKRC}, which is proven in a similar way in \cref{sec:ordinary-cohomology}. In the case $f=\fkn$ and $\gcd(k,n)=1$, \cref{rmk:torus_knot} implies that $\betah_f$ is a torus knot, for which the right-hand side of~\eqref{eq:main_KR} was shown by Mellit~\cite{Mellit_torus} to coincide with $\Cat_{k,n-k}(q,t)$. Thus, \cref{thm:master_kn} follows from \cref{thm:main}. \qed

\subsection{Mixed Hodge structures of open Richardson varieties}
\begin{theorem}\label{thm:mixed_Tate_type}
  For any $v\leq w\in W$, the cohomology $H^\bul(\Rich_v^w,\C)$, the compactly supported cohomology $H^\bul_c(\Rich_v^w,\C)$, and the compactly supported $T$-equivariant cohomology $H^\bul_{T,c}(\Rich_v^w,\C)$ of $\Rich_v^w$ are of Hodge--Tate type.
\end{theorem}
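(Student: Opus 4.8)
\emph{Setup.} The plan is to reduce the statement to the Deodhar decomposition of $\Rich_v^w$ together with the standard compatibility of mixed Hodge structures with the spectral sequences of a stratification and of a fibration. Recall the Deodhar decomposition: after fixing a reduced word for $w$, the open Richardson variety admits a finite decomposition $\Rich_v^w=\bigsqcup_\alpha \Dcal_\alpha$ into locally closed smooth subvarieties with each $\Dcal_\alpha\cong(\Cast)^{a_\alpha}\times\C^{b_\alpha}$ (in arbitrary Dynkin type this goes back to Deodhar, the pieces being realized as iterated affine- and $\Cast$-bundles; see~\cite{KLS} for the positroid case). The only formal facts I will use about Hodge--Tate structures are: (i) $H^\bullet$ and $H^\bullet_c$ of $(\Cast)^a\times\C^b$ are of Hodge--Tate type, since $H^\bullet((\Cast)^a;\C)=\Lambda^\bullet(\C(-1)^{\oplus a})$ and $\C^b$ is contractible, so by \Poincare duality $H^\bullet_c$ is Tate as well; and (ii) Hodge--Tate structures form a Serre subcategory of the category of mixed Hodge structures, so sub-, quotient-, and extension-modules of Hodge--Tate structures are again Hodge--Tate. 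Combining these, in any spectral sequence of mixed Hodge structures whose first page is Hodge--Tate, so are all later pages and the abutment.

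\emph{The non-equivariant cohomologies.} First I would treat $H^\bullet_c(\Rich_v^w,\C)$ by Noetherian induction on $\Rich_v^w$. Refining the Deodhar decomposition to an honest stratification if necessary (harmless, since any locally closed subvariety of $(\Cast)^a\times\C^b$ that arises is again a finite disjoint union of pieces $(\Cast)^{a'}\times\C^{b'}$), choose an open stratum $\Dcal_{\alpha_0}$ with closed complement $Z$. The long exact sequence in compactly supported cohomology
\[
\cdots\to H^k_c(\Dcal_{\alpha_0},\C)\to H^k_c(\Rich_v^w,\C)\to H^k_c(Z,\C)\to H^{k+1}_c(\Dcal_{\alpha_0},\C)\to\cdots
\]
is an exact sequence of mixed Hodge structures; its outer terms are Hodge--Tate, the first by (i) and the third by the inductive hypothesis, so $H^\bullet_c(\Rich_v^w,\C)$ is Hodge--Tate. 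Since $\Rich_v^w$ is smooth, \Poincare duality~\eqref{eq:Poincare} transports this to $H^\bullet(\Rich_v^w,\C)$, proving the non-equivariant assertions.

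\emph{The equivariant cohomology.} For $H^\bullet_{T,c}(\Rich_v^w,\C)$ I would use finite-dimensional approximations of the Borel construction. Writing $r=\dim T$, set $ET_m=(\mathbb{A}^{m+1}\setminus\{0\})^r$, $BT_m=(\mathbb{P}^m)^r$, and $X_m:=(\Rich_v^w\times ET_m)/T$, a smooth quasiprojective variety. The principal $T$-bundle $ET_m\to BT_m$ is Zariski-locally trivial, so $X_m\to BT_m$ is a Zariski-locally trivial fiber bundle with fiber $\Rich_v^w$; as $BT_m$ is simply connected, the Leray spectral sequence of this algebraic map is a spectral sequence of mixed Hodge structures with
\[
E_2^{p,q}=H^p(BT_m,\C)\otimes H^q(\Rich_v^w,\C).
\]
Both tensor factors are Hodge--Tate---the first classically, the second by the previous step---hence $H^\bullet(X_m,\C)$ is Hodge--Tate, and then so is $H^\bullet_c(X_m,\C)$ by \Poincare duality on the smooth variety $X_m$. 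As $m\to\infty$, the groups $H^\bullet_c(X_m,\C)$ compute $H^\bullet_{T,c}(\Rich_v^w,\C)$ up to the degree shifts and Tate twists built into the normalization of compactly supported equivariant cohomology, and all of these operations preserve Hodge--Tate type; this gives the last assertion.

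\emph{Main obstacle.} The one non-formal input is the existence of the Deodhar decomposition of $\Rich_v^w$ with all pieces isomorphic to $(\Cast)^a\times\C^b$ in arbitrary Dynkin type; once this is in hand, everything else is a matter of feeding strata into long exact sequences and Leray spectral sequences of mixed Hodge structures and invoking \Poincare duality on smooth varieties. A secondary, purely bookkeeping point is pinning down the normalization that equips $H^\bullet_{T,c}(\Rich_v^w,\C)$ with its mixed Hodge structure via the approximations $X_m$, so that one may legitimately pass to the limit.
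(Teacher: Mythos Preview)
Your argument is correct but takes a genuinely different route from the paper. For the non-equivariant claim you use Deodhar's decomposition of $\Rich_v^w$ into locally closed pieces isomorphic to $(\C^\ast)^a\times\C^b$; the paper instead invokes a recursion from~\cite{RSW} that writes $\Rich_v^w=U\sqcup Z$ with $U\simeq Y''\times\C^\ast$ open and $Z\simeq Y'\times\C$ closed, where $Y',Y''$ are \emph{lower-dimensional open Richardson varieties}, and inducts on $\lvw$. Both arguments feed the same Gysin long exact sequence of mixed Hodge structures; yours bottoms out at tori times affine spaces in one pass, theirs stays within the class of open Richardsons throughout.

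The substantive divergence is in the equivariant step. The decomposition of~\cite{RSW} is explicitly $T$-equivariant (with linear $T$-actions on the $\C$ and $\C^\ast$ factors), so the paper simply reruns the same Gysin induction directly in $H^\bul_{T,c}$ and is done. You instead bootstrap from the non-equivariant result via the Leray spectral sequence of the Borel approximation $X_m\to BT_m$, then Poincar\'e duality on the smooth $X_m$, then a limit in $m$; this is valid but incurs exactly the bookkeeping cost you flag at the end. A shorter variant within your own framework: the Deodhar strata are themselves $T$-stable and their identifications with $(\C^\ast)^a\times\C^b$ are $T$-equivariant for linear $T$-actions, so you can run your Gysin induction equivariantly from the start and avoid the Borel-approximation detour entirely. (Also, the parenthetical about ``refining to an honest stratification'' is unnecessary: all you need is that some Deodhar piece is open in $\Rich_v^w$, which is always the case for the top piece, and then induct on the number of pieces.)
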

We have omitted equivariant cohomology $H^\bul_{T}(\Rich_v^w,\C)$ from Theorem~\ref{thm:mixed_Tate_type} because the statement of \Poincare duality in the equivariant setting is considerably more complicated than for ordinary cohomology. 

\begin{lemma}\label{lem:GysinHT}
 Let $Y$ be a complex algebraic variety, $U \subset Y$ an open subvariety, and $Z := Y  \setminus U$.  Suppose that the compactly supported
cohomologies of $U$ and $Z$ are of Hodge--Tate type.  Then the same is true
for $Y$.

The same statement holds for compactly supported $T$-equivariant cohomology with the assumption that $U, Z$ are $T$-stable.
\end{lemma}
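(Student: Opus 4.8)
The plan is to run the long exact sequence in compactly supported cohomology attached to the decomposition of $Y$ into the open piece $U$ and the closed piece $Z$, and to track mixed Hodge structures through it. Recall from \cite{Del71} (see also \cite{PS}) that, writing $j\colon U\hookrightarrow Y$ for the open inclusion and $i\colon Z\hookrightarrow Y$ for the closed one, the short exact sequence of sheaves $0\to j_!\,\underline{\C}_U\to\underline{\C}_Y\to i_*\underline{\C}_Z\to0$ produces, upon applying $R\Gamma_c$, a long exact sequence
\[
\cdots\longrightarrow H^k_c(U,\C)\xrightarrow{\ \alpha\ }H^k_c(Y,\C)\xrightarrow{\ \beta\ }H^k_c(Z,\C)\xrightarrow{\ \partial\ }H^{k+1}_c(U,\C)\longrightarrow\cdots,
\]
in which every group carries a mixed Hodge structure and every arrow is a morphism of mixed Hodge structures. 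This functoriality statement is the one input I would take for granted.

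From there the argument is purely formal. First I would record that the mixed Hodge structures of Hodge--Tate type span a full abelian subcategory of all mixed Hodge structures that is closed under subobjects, quotients, and extensions: closure under subobjects and quotients follows from strictness of morphisms of mixed Hodge structures for the weight and Hodge filtrations, and closure under extensions follows because $\operatorname{gr}^p_F\operatorname{gr}^W_m$ is an exact functor, so Hodge numbers are additive along short exact sequences. Then, extracting from the long exact sequence above, for each $k$, the short exact sequence of mixed Hodge structures
\[
0\longrightarrow\operatorname{coker}\!\big(H^{k-1}_c(Z,\C)\xrightarrow{\partial}H^k_c(U,\C)\big)\longrightarrow H^k_c(Y,\C)\longrightarrow\ker\!\big(H^k_c(Z,\C)\xrightarrow{\partial}H^{k+1}_c(U,\C)\big)\longrightarrow0,
\]
whose left term is a quotient of $H^k_c(U,\C)$ and whose right term is a subobject of $H^k_c(Z,\C)$, I conclude that both outer terms are of Hodge--Tate type by hypothesis, hence so is the middle term $H^k_c(Y,\C)$. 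Since $k$ is arbitrary, $H^\bullet_c(Y,\C)$ is of Hodge--Tate type.

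For the equivariant statement I would pass through the Borel construction. Fixing finite-dimensional algebraic approximations $(ET)_N$ of $ET$ (products of punctured affine spaces, which become arbitrarily highly connected as $N\to\infty$), set $Y_N:=Y\times_T(ET)_N$; since $U$ and $Z$ are $T$-stable, $U_N:=U\times_T(ET)_N$ is an open subvariety of the algebraic variety $Y_N$ with closed complement $Z_N:=Z\times_T(ET)_N$. Applying the non-equivariant case to the triple $(Y_N,U_N,Z_N)$ shows that $H^k_c(Y_N,\C)$ is of Hodge--Tate type for every $N$; because in each fixed cohomological degree these groups stabilize — with transition maps induced by the algebraic inclusions $Y_N\hookrightarrow Y_{N+1}$, hence morphisms of mixed Hodge structures — and compute $H^k_{T,c}(Y,\C)$, the equivariant compactly supported cohomology of $Y$ is of Hodge--Tate type as well.

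The only nonroutine ingredient — and hence the ``hard part'' to pin down with precise references — is the functoriality invoked above: that the Gysin long exact sequence in compactly supported cohomology underlies a long exact sequence of mixed Hodge structures, and that in the equivariant setup these mixed Hodge structures are compatible with pullback along $Y_N\hookrightarrow Y_{N+1}$. Both are part of the standard formalism (\cite{Del71,PS}), so once they are cited everything else reduces to the elementary diagram chase inside the abelian category of mixed Hodge structures carried out above.
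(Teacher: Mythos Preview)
Your proof is correct and follows essentially the same approach as the paper: both use the long exact sequence in compactly supported cohomology for the triple $(Y,U,Z)$, together with the fact that the maps are morphisms of mixed Hodge structures. The paper is slightly more direct---rather than invoking closure of Hodge--Tate structures under subquotients and extensions, it simply passes to the $(p,q)$-graded piece of the long exact sequence and notes that for $p\neq q$ the terms involving $U$ and $Z$ vanish, forcing the $Y$-term to vanish---and for the equivariant case it just asserts that the same proof applies, whereas you spell out the Borel-construction approximation argument explicitly.
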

\begin{proof}
We have a Gysin long exact sequence for the triple $(Y,Z,U)$ (see for example \cite[(B-15)]{PS})
\begin{equation}\label{eq:Gysin}
\cdots \to H^k_c(U,\C) \to H^k_c(Y,\C) \to H^k_c(Z,\C) \to H^{k+1}_c(U,\C) \to\cdots \;.
\end{equation}
the maps of which respect the mixed Hodge structure~\cite[Theorem~4.1]{dCM}.

Taking the $(p,q)$ piece of the Deligne splitting, we have
$$
\cdots \to H^{k,(p,q)}_c(U,\C) \to H^{k,(p,q)}_c(Y,\C) \to H^{k,(p,q)}_c(Z,\C) \to\cdots \;. 
$$
By assumption when $p \neq q$, we have $H^{k,(p,q)}_c(U,\C)  =  0 = H^{k,(p,q)}_c(Z,\C)$.  Thus, $H^{k,(p,q)}_c(Y,\C)=0$.  The same proof applies in compactly supported equivariant cohomology.
\end{proof}

\begin{proof}[Proof of Theorem~\ref{thm:mixed_Tate_type}]
Since $\Rich_v^w$ is smooth, by \eqref{eq:Poincare} it suffices to show that the compactly supported cohomology, and the compactly supported equivariant cohomology are of Hodge--Tate type.

We will prove the statement by induction on $\lvw= \dim(\Rich_v^w)$.  The statement clearly holds if $\lvw = 0$, for then $\Rich_v^w$ is a point.  We will use a recursion for the varieties $\Rich_v^w$ from \cite{RSW}.   
By \cite[Lemma 4.3.1 and Proof of Proposition 4.3.6]{RSW},
 for any open Richardson $Y = \Rich_v^w$ with $w > v$, we can find a decomposition $Y = U \sqcup Z$, where $U \subset Y$ is open and $Z \subset Y$ is closed, and we have isomorphisms
\begin{equation}
\label{eq:Riso}
Z \simeq Y' \times \C \qquad U \simeq Y'' \times \C^\ast
\end{equation}
where $Y'$, $Y''$ are open Richardson varieties of lower dimension, and $Z$ is possibly empty.  Furthermore, $U,Z$ are $T$-stable and the isomorphisms \eqref{eq:Riso} are torus-equivariant, for certain linear actions of $T$ on $\C, \C^\ast$.  By the K\"unneth formula, the Hodge--Tate type property is preserved under products.  By the inductive hypothesis, the compactly supported (equivariant) cohomology of $U$ and of $Z$ (when non-empty) are therefore of Hodge--Tate type.  It follows that the same statement holds for $Y$.
\end{proof}

The following corollary of Theorem~\ref{thm:mixed_Tate_type} also follows from combining \cite[Theorem~8.3]{LS} and~\cite{GL}.\begin{corollary}
 For all $f\in\Bkn$, the cohomology of $\Pio_f$ is of Hodge--Tate type.  If $\ncyc(\fmod)=1$ then the cohomology of $\Pio_f/T$ is also of Hodge--Tate type. 
\end{corollary}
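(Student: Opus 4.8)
The plan is to deduce both assertions directly from \cref{thm:mixed_Tate_type}, which we have just established. For the first statement I would invoke \cref{prop:v_w}: every $f\in\Bkn$ equals $\fvw$ for a unique pair $(v,w)\in\Qkn$, and by \cref{prop:KLS} the projection $\Fl(n)\to\Gr(k,n)$ restricts to an isomorphism $\Rich_v^w\cong\Pio_f$. Hence $H^\bul(\Pio_f,\C)\cong H^\bul(\Rich_v^w,\C)$ as mixed Hodge structures, and \cref{thm:mixed_Tate_type} says the latter is of Hodge-Tate type.

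For the second statement, suppose $\ncyc(\fmod)=1$, so $f\in\Bknc$. By~\eqref{eq:free} (equivalently \cref{cor:Rich_T_action}) there is a $T$-equivariant isomorphism $\Pio_f\cong(\Pio_f/T)\times T$. I would then consider the quotient map $\pi\colon\Pio_f\to\Pio_f/T$ together with the inclusion $\iota\colon\Pio_f/T\hookrightarrow\Pio_f$ of a single fiber $(\Pio_f/T)\times\{t_0\}$. Both are morphisms of complex algebraic varieties, so they induce morphisms of mixed Hodge structures on cohomology, compatible with the Deligne splittings; since $\pi\circ\iota=\id$, the pullback $\pi^\ast$ embeds each $H^{k,(p,q)}(\Pio_f/T,\C)$ into $H^{k,(p,q)}(\Pio_f,\C)$. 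By the first part the target vanishes for $p\neq q$, hence so does the source, which is exactly the Hodge-Tate property for $\Pio_f/T$. (Alternatively, the K\"unneth isomorphism of mixed Hodge structures together with the fact that $H^0(T,\C)$ is pure of type $(0,0)$ realizes $H^k(\Pio_f/T,\C)$ as a sub-mixed-Hodge-structure of $H^k(\Pio_f,\C)$.)

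There is no genuine obstacle once \cref{thm:mixed_Tate_type} is in hand; the only point deserving a word is the functoriality of the Deligne splitting for the maps $\pi$ and $\iota$, which is standard and is already invoked in the proof of \cref{lem:GysinHT}. The remark in the statement that the corollary also follows from \cite{LS} and \cite{GL} refers instead to the cluster-variety structure on $\Pio_f$ (and on $\Pio_f/T$ for $f\in\Bknc$), but the argument above is self-contained given the results of this section.
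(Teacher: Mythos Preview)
Your argument is correct. The first assertion is handled identically to the paper (it follows immediately from \cref{thm:mixed_Tate_type} via \cref{prop:KLS}).

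For the second assertion your route differs from the paper's. The paper uses the free $T$-action to identify $H^\bul_T(\Pio_f,\C)\simeq H^\bul(\Pio_f/T,\C)$ and then invokes the equivariant part of \cref{thm:mixed_Tate_type}. You instead exploit the full strength of \cref{cor:Rich_T_action}, namely the algebraic splitting $\Pio_f\cong(\Pio_f/T)\times T$, to exhibit $H^\bul(\Pio_f/T,\C)$ as a sub-mixed-Hodge-structure of $H^\bul(\Pio_f,\C)$ via the section~$\iota$; then only the ordinary-cohomology part of \cref{thm:mixed_Tate_type} is needed. Your approach is more elementary in that it avoids equivariant cohomology altogether, at the cost of using the product decomposition rather than merely the freeness of the action. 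Either the section argument or the K\"unneth variant you mention works; the functoriality of the Deligne splitting under algebraic morphisms (which you flag) is indeed the only point requiring comment, and it is standard.
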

\begin{proof}
For the second statement, we note that $T$ acts freely on $\Pio_f$ by \cref{cor:Rich_T_action}.  Thus, $H^\bul_T(\Pio_f,\C) \simeq H^\bul(\Pio_f/T,\C)$ is of Hodge--Tate type by Theorem~\ref{thm:mixed_Tate_type}.
\end{proof}

When computing examples in the next section, we shall repeatedly use the following result.
\begin{lemma}\label{lem:HTfree}
Suppose that $T$ acts freely on a complex algebraic variety $Y$, and let $\dimT := \dim_\C(T)$.  Then we have an isomorphism of mixed Hodge structures
\begin{equation*}
  H^{k+\dimT,(p,q)}_{T,c}(Y,\C)\cong   H^{k,(p,q)}_{c}(Y/T,\C) \quad\text{for all $k,p,q\in\Z$,}
\end{equation*}
and the action of $H^\bul_T(\pt,\C)$ on $H^\bul_{T,c}(Y,\C)$ is trivial (that is, factors through the map $H^\bul_T(\pt,\C) \to H^0_T(\pt,\C) \simeq \C$).
\end{lemma}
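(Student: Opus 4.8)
The plan is to compute $H^\bullet_{T,c}(Y)$ and $H^\bullet_T(Y)$ via the standard finite-dimensional Borel approximations. Write $\ell:=\dimT=\dim_\C T$, so $T\cong(\Cast)^\ell$. Since the $T$-action on $Y$ is free, $\pi\colon Y\to Y/T$ is a geometric quotient and, $T$ being a special group, a Zariski-locally trivial principal $T$-bundle; in particular $Y\times_T E_N$ is a genuine algebraic variety, where $E_N:=(\C^N\setminus\{0\})^\ell$ and $B_N:=E_N/T=(\CP^{N-1})^\ell$. I will use that, for any fixed cohomological degree and $N\gg0$, one has $H^\bullet_{T,c}(Y)=H^\bullet_c(Y\times_T E_N)$ and $H^\bullet_T(Y)=H^\bullet(Y\times_T E_N)$ as mixed Hodge structures, with the $R=H^\bullet_T(\pt)=H^\bullet(B_N)$-module structure on $H^\bullet_{T,c}(Y)$ given by cup product along the second projection $q_N\colon Y\times_T E_N\to B_N$.

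First I would prove the isomorphism. Freeness makes the first projection descend to a Zariski-locally trivial fibre bundle $p_N\colon Y\times_T E_N\to Y/T$ with fibre $E_N$. By Künneth $H^\bullet_c(E_N)\cong H^\bullet_c(\C^N\setminus\{0\})^{\otimes\ell}$; since $H^\bullet_c(\C^N\setminus\{0\})$ is one-dimensional in degrees $1$ and $2N$, with the degree-$1$ class of Hodge type $(0,0)$ (by \eqref{eq:Poincare} and $H^{2N-1}(\C^N\setminus\{0\})\cong\C(-N)$), we get $H^b_c(E_N)=0$ for $b<\ell$ and for $\ell<b<\ell+2N-1$, with $H^\ell_c(E_N)\cong\C(0)$. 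The degree-$\ell$ local system $R^\ell(p_N)_!\C$ on $Y/T$ has monodromy in the connected group $T$, hence is the constant sheaf $\underline\C$. Running the Leray spectral sequence $E_2^{a,b}=H^a_c(Y/T;R^b(p_N)_!\C)\Rightarrow H^{a+b}_c(Y\times_T E_N)$, in any fixed total degree and for $N\gg0$ only the row $b=\ell$ survives, with constant coefficients, and the edge isomorphism yields $H^{k+\ell}_c(Y\times_T E_N)\cong H^k_c(Y/T)$. As the Leray sequence of an algebraic morphism is a sequence of mixed Hodge structures and the degree shift is by a fibre class of type $(0,0)$, this refines to $H^{k+\ell,(p,q)}_{T,c}(Y)\cong H^{k,(p,q)}_c(Y/T)$ for all $k,p,q$, the first assertion.

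Next I would handle the $R$-action. On $H^\bullet_{T,c}(Y)=H^\bullet_c(Y\times_T E_N)$ it is cup product with the image of $q_N^*\colon H^\bullet(B_N)=R\to H^\bullet(Y\times_T E_N)$; via the projection formula for $p_N$ (comparing its Leray sequences for ordinary and for compactly-supported cohomology), this is intertwined by the isomorphism above with the cup-product action on $H^\bullet_c(Y/T)$ of the image of $R$ under $R=H^\bullet_T(\pt)\to H^\bullet_T(Y)\xrightarrow{\ \sim\ }H^\bullet(Y/T)$ — i.e.\ of the characteristic classes of the torsor $\pi$ pulled back to $Y/T$. The task is thus to show that the positive-degree such classes annihilate $H^\bullet_c(Y/T)$. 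I would start from the observation that restricting $q_N$ to a fibre $E_N$ of $p_N$ recovers the universal projection $E_N\to B_N$, so $q_N^*(H^{>0}(B_N))$ restricts to $0$ on every $p_N$-fibre; the main obstacle is to promote this to the genuine vanishing of the corresponding positive-degree classes in $H^\bullet(Y/T)$ — equivalently, to a nullhomotopy of $q_N$ after projection to $B_N$. This requires more than freeness: it is exactly a (topological) splitting of the torsor $\pi$, and for the only setting in which the lemma is applied, $Y=\Rich_v^w$ with $\ncyc(wv^{-1})=1$, such a splitting is supplied by \cref{cor:Rich_T_action}. Granting it, $H^\bullet_T(\pt)$ acts on $H^\bullet_{T,c}(Y)$ through $H^\bullet_T(\pt)\to H^0_T(\pt)\simeq\C$, completing the argument.
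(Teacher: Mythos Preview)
Your argument for the cohomology isomorphism is correct and follows the same Borel-approximation strategy as the paper. Where the paper writes $Y\times_T E_m = (Y/T)\times E_m$ as if the associated $E_m$-bundle over $Y/T$ were literally trivial (which as a varietal identification requires the torsor $Y\to Y/T$ to split), you instead run the Leray spectral sequence for $p_N$ and use connectedness of $T$ to trivialize the monodromy on $H^\ell_c(E_N)\cong\C(0)$; this is the honest justification of that step for an arbitrary free action, and it is correct.

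You are also right that the second assertion does not follow from freeness alone. For $Y=\C^2\setminus\{0\}$ with the scaling $\Cast$-action one has $Y/T=\CP^1$, and under the isomorphism $H^{k+1}_{T,c}(Y)\cong H^k(\CP^1)$ the degree-two generator of $R=H^\bullet_T(\pt)$ acts by cup product with the hyperplane class, which is nontrivial. So the triviality claim genuinely needs an extra hypothesis such as a (topological) splitting $Y\cong (Y/T)\times T$. Your reduction of the $R$-action to cup product by the characteristic classes of the torsor, via the projection formula for $p_N$, is correct, and your appeal to \cref{cor:Rich_T_action} to supply the splitting in the only case where the lemma is invoked is exactly the right fix. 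The paper's proof does not argue this point separately: it is implicit in the product identification $Y\times_T E_m = (Y/T)\times E_m$, under which the map to $B_m$ factors through the highly connected space $E_m$ and hence pulls back positive-degree classes to zero.
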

\begin{proof}
Suppose $T = (\C^\ast)^\dimT$ and let $E_m := (\C^m \setminus \{0\})^\dimT$ be a finite-dimensional approximation to a contractible space $ET$ where $T$ acts freely.  Then by definition
\begin{align*}
H^k_{T,c}(Y)
&= \lim_{m\to\infty} H^k_c(Y \times_T E_m) \\
&= \lim_{m\to\infty} H^k_c(Y/T \times E_m) \\
&= \lim_{m\to\infty} \bigoplus_j H^{k-j}_c(Y/T) \otimes H^{j}_c(E_m) & \mbox{by the K\"unneth formula} \\ 
&= H^{k-\dimT}_c(Y/T).
\end{align*}
We have used the fact that $H^\bul_c(\C^m \setminus \{0\})$ is $\C$ in dimensions 1 and $2m$ and vanishes in other dimensions.  Since $H^{1}_c(\C^m \setminus \{0\})$ is of type $(0,0)$, the isomorphism is compatible with mixed Hodge structures.
\end{proof}

\subsection{Examples}\label{sec:Htc_examples}
We compute the compactly supported torus-equivariant cohomology of some open Richardson varieties, corresponding to the examples computed in \cref{sec:Soergel_examples}. As in \cref{sec:Soergel_examples}, we label each example by the link associated to the Richardson variety. Using \cref{tab:examples}, one can compare the below examples with the ones in \cref{sec:Soergel_examples} and observe that the computations agree with our main results, \cref{thm:ordinary,thm:main}.

Recall that we say that the action of $R=\Symh$ is \emph{trivial} if $\h^\ast$ acts by zero. We identify $R$ with $H^\bul_T(\pt,\C)$.
\begin{example}[Unknot-I]
Let $n=1$ and $v=w=\id$, and thus $\lvw=0$. Then $\Rich_v^w\cong T\cong \pt$, and the $T$-action is free. The only nonzero terms in the cohomology are
\begin{equation}\label{eq:H(pt)}
  H^{0,(0,0)}(\pt)\cong   H^{0,(0,0)}_c(\pt)\cong   H^{0,(0,0)}_{T,c}(\pt)\cong \C.
\end{equation}
As in \cref{ex:soe_unkn_1}, the $R=\C$-module structure on $H^\bul_{T,c}(\pt)$ is trivial (since $T$ acts freely).
\end{example}

\begin{example}[Unknot-II]
Let $n=2$, $v=\id$, and $w=s_1$, and thus $\lvw=1$. Then $\Rich_v^w\cong \Pio_{1,2}\subset\Gr(1,2)$ and we have $\Rich_v^w\cong T\cong \C^\ast$. The $T$-action is free and $\Rich_v^w/T\cong \pt$. Using \Poincare duality~\eqref{eq:Poincare} and \cref{lem:HTfree}, we find that the only nonzero terms are
\begin{equation*}
  H^{0,(0,0)}(\C^\ast)\cong   H^{1,(1,1)}(\C^\ast)\cong \C,\quad   H_c^{2,(1,1)}(\C^\ast)\cong   H_c^{1,(0,0)}(\C^\ast)\cong \C, \quad\text{and}
\end{equation*}
\begin{equation}\label{eq:Htc_Unkn2}
  H^{1,(0,0)}_{\C^\ast,c}(\C^\ast)\cong \C.
\end{equation}
As in \cref{ex:soe_unkn_2}, the $R=\C[y_1]$-module structure on $H^\bul_{T,c}(\Rich_v^w)$ is trivial (since $T$ acts freely).
\end{example}

\begin{example}[$2$-component unlink]
Let $n=2$, $v=s_1$, and $w=s_1$, and thus $\lvw=0$. We have $\Rich_v^w\cong\pt$ and $T\cong\C^\ast$. The $T$-action is not free. We have already computed $H^\bul(\pt)$ and $H^\bul_{T,c}(\pt)$ in~\eqref{eq:H(pt)}. The nonzero terms of $H^\bul_{T,c}(\Rich_v^w)\cong H^\bul_{\C^\ast,c}(\pt)$ are given by
\begin{equation*}
  H_{T,c}^{2p,(p,p)}(\pt)\cong \C \quad\text{for $p=0,1,2,\dots$\;.}
\end{equation*}
As in \cref{ex:soe_unlk}, we have $H^\bul_{T,c}(\Rich_v^w)\cong H^\bul_{T,c}(\pt) \cong R$ as an $R$-module.
\end{example}

\begin{example}[Hopf link]\label{example:hard}
Let $n=4$, $v=\id$, $w=f_{2,4}=s_2s_1s_3s_2$, and thus $\lvw=4$. We have $Y:=\Rich_v^w \cong \Pio_{2,4} \subset \Gr(2,4)$, an open positroid variety of dimension $4$. It is isomorphic to 
\begin{equation*}
 Y\cong  \left\{ \begin{pmatrix}
      1 & 0 & a & b\\
      0 & 1 & c & d
    \end{pmatrix}\middle| (a,b,c,d)\in\C^4: a\neq 0, d\neq 0,ad-bc\neq 0 \right\}.
\end{equation*}
 The action of $T\cong(\C^\ast)^3$ on $Y$ is not free: $T$ acts by rescaling rows and columns in such a way that the first two columns form the identity matrix. There exists a two-dimensional torus $T' \subset T$ that acts freely on $Y$: for example, one can always rescale columns $3$ and $4$ uniquely (since $a,d\neq 0$) to force the minors $\Delta_{2,3}$ and $\Delta_{1,4}$ to be equal to $1$:
\begin{equation*}
 Y/T'\cong  \left\{ \begin{pmatrix}
      1 & 0 & -1 & -y\\
      0 & 1 & x & 1
    \end{pmatrix}\middle| (x,y)\in\C^2: xy\neq 1 \right\}.
\end{equation*}
The quotient $Y/T'$ can be identified with the two-dimensional $A_1$-cluster variety (with one frozen variable; cf. \cite[Section~6.1]{LS}). We denote it by 
$$
U:=Y/T' \simeq \{(x,y) \in \C^2 \mid xy \neq 1\} \subset \C^2.
$$
The action of $T/T'$ can be identified with the action of $\C^\ast$ on $U$ with $\lambda \cdot(x,y) = (\lambda x, \lambda^{-1} y)$ for $\lambda \in \C^\ast$ and $(x,y)\in \C^2$. Forgetting this torus action, it was shown in~\cite[Corollary~7.2]{LS} that $U$ is homotopy equivalent\footnote{We caution the reader that neither the compactly supported cohomology nor the mixed Hodge structure are preserved by homotopy equivalences.} to a pinched torus:

\tikzset{
    partial ellipse/.style args={#1:#2:#3}{
        insert path={+ (#1:#3) arc (#1:#2:#3)}
    }
}
\def\lw{1pt}
\def\lww{0.5}
\def\wid{0.4}
\begin{center}
\begin{tikzpicture}[scale=1]
 \draw[shade, even odd rule, opacity=0.7] (0,0) circle (1) (0,-\wid) circle (1-\wid);
\draw[line width=\lw] (0,0) circle (1);
\draw[line width=\lw] (0,-\wid) circle (1-\wid);
\draw[line width=\lww] (0,1-\wid) [partial ellipse=90:270:0.2 and 0.4];
\draw[line width=\lww,dashed] (0,1-\wid) [partial ellipse=-90:90:0.2 and 0.4];
\node[scale=0.2,draw,circle,fill=black](A) at (0,-1) {};
\end{tikzpicture}
\end{center}
Therefore the Betti numbers of $U$ are $(1,1,1)$, and moreover we have~\cite[Section~6.1]{LS}
\begin{equation*}
  H^{0,(0,0)}(U)\cong   H^{1,(1,1)}(U)\cong   H^{2,(2,2)}(U)\cong \C.
\end{equation*}
We have $Y\cong U\times (\C^\ast)^2$, which corresponds to multiplying the mixed Hodge polynomial of $U$ by $\left(\qq+\tt\right)^2$. The resulting mixed Hodge table of $Y$, whose sole row contains the coefficients of the polynomial $\left(q+\qq\tt+t\right)\cdot \left(\qq+\tt\right)^2$, is given in \tabref{tab:MHT_vs_HHHC}(top left).

Let us return to computing the $\C^\ast$-equivariant cohomology of $U$. Denote $W:= \C^2$ and let $Z := W \setminus U$ be the hyperbola $\{(x,y) \mid xy = 1\}$.  We first compute the compactly supported equivariant cohomologies $H^\bul_{\C^\ast,c}(Y)$ and $H^\bul_{\C^\ast,c}(Z)$.  Set $R:=H^\bul_{\C^\ast}(\pt)$.

 First, suppose that $\C^\ast$ acts on $\C^m$ linearly in any way.  Then it follows directly from the definitions that 
$H^\bul_{\C^\ast,c}(\C^m)$ is a free $R$-module with generator in degree $2m$. Specifically, all nonzero terms are given by
\begin{equation}\label{eq:lem:Cm}
  H^{2(m+k),(m+k,m+k)}_{\C^\ast,c}(\C^m)\cong \C \quad\text{for $k=0,1,2,\dots$\;.}
\end{equation}

Next, observe that the variety $Z$ is $\C^\ast$-equivariantly isomorphic to the variety $\C^\ast$ on which $\C^\ast$ acts freely. By~\eqref{eq:Htc_Unkn2},
\begin{equation}\label{eq:lem:Z}
  H^{1,(0,0)}_{\C^\ast,c}(Z)\cong \C.
\end{equation}

Now we compute $H^\bul_{\C^\ast,c}(U)$.  The Gysin sequence \eqref{eq:Gysin} for the triple $(W,Z,U)$ gives
\begin{align*}
0 & \to H^0_{\C^\ast,c}(U) \to H^0_{\C^\ast,c}(W) \to H^0_{\C^\ast,c}(Z) \\
&\to H^1_{\C^\ast,c}(U) \to H^1_{\C^\ast,c}(W) \to H^1_{\C^\ast,c}(Z) \to\cdots\;.
\end{align*}
Applying~\eqref{eq:lem:Cm}--\eqref{eq:lem:Z}, we get
\begin{align*}
0 &\to  H^0_{\C^\ast,c}(U) \to 0 \to  0 \to  H^1_{\C^\ast,c}(U) \to 0 \to  \C \to  H^2_{\C^\ast,c}(U) \to 0 \to  0 \\
&\to  H^3_{\C^\ast,c}(U) \to 0 \to  0 \to  H^4_{\C^\ast,c}(U) \to \C \to  0 \to H^5_{\C^\ast,c}(U) \to 0 \to  0 \to\cdots\;.
\end{align*}
We conclude that the nonzero terms of $H^\bul_{\C^\ast,c}(U)$ are given by
\begin{equation*}
  H_{\C^\ast,c}^{2,(0,0)}(U)\cong \C \quad\text{and}\quad H_{\C^\ast,c}^{4+2k,(2+k,2+k)}(U)\cong \C \quad\text{for $k=0,1,2,\dots$\;.}
\end{equation*}
By the same computation as in the proof of \cref{lem:HTfree}, we have $H^{k+2,(p,q)}_{T,c}(Y)\cong H^{k,(p,q)}_{\C^\ast,c}(U)$.  Thus, the nonzero terms of $H^\bul_{T,c}(Y)$ are
\begin{equation}\label{eq:Hopf_Htc}
  H_{T,c}^{4,(0,0)}(Y)\cong \C \quad\text{and}\quad H_{T,c}^{6+2k,(2+k,2+k)}(Y)\cong \C \quad\text{for $k=0,1,2,\dots$\;.}
\end{equation}
Recall that $\lvw=4$. In view of~\eqref{eq:grading:intro}, the dimensions in~\eqref{eq:Hopf_Htc} match perfectly with those computed in~\eqref{eq:Hopf_HHB} from the Soergel bimodule perspective.
\begin{remark}
We observe that the $R=\C[y_1]$-module structure on $H_{T,c}^\bul(Y)$ also agrees with that computed in~\eqref{eq:Hopf_HHB}. More generally, for $W=S_n$, \cref{cor:Rich_T_action} can be extended to arbitrary open Richardson varieties $\Rich_v^w$: we have a subtorus $T'\subset T$ of dimension $n-\ncyc(\beta)$ acting freely on $\Rich_v^w$, and $H^\bul_{T,c}(\Rich_v^w)\cong H^\bul_{T/T',c}(\Rich_v^w/T')$. Recall from \cref{sec:link_cpts_vs_Koszul} that we may therefore view both sides of~\eqref{eq:main} as graded modules over a polynomial ring $R$ in $\ncyc(\beta)-1$ variables, and we expect that these $R$-modules are isomorphic under the grading change~\eqref{eq:grading:intro}.
\end{remark}
\end{example}

It would be interesting to combine Example~\ref{example:hard} with \cite{LS} to obtain a description of the equivariant cohomology of more general cluster varieties.

\begin{example}[Trefoil knot]
Let $n=5$, $v=\id$, $w=f_{2,5}=s_3s_2s_1s_4s_3s_2$, and thus $\lvw=6$. We have $\Rich_v^w\cong \Pio_{2,5}$, on which the torus $T\cong (\C^\ast)^5$ acts freely. As explained in \cref{rmk:necklace}, the quotient is obtained by fixing the cyclically consecutive maximal minors to $1$. An explicit parametrization can be chosen as follows:
\begin{align*}
  \Pio_{2,5}/T\cong   &\left\{ \begin{pmatrix}
      1 & 0 & -1 & -y& \frac{1+y}{1-xy}\\
      0 & 1 & x & xy-1 & 1
    \end{pmatrix}\middle| (x,y)\in\C^2: xy\neq 1 \right\}\\
& \qquad\qquad\qquad\qquad\qquad\qquad \sqcup \left\{ \begin{pmatrix}
      1 & 0 & -1 & 1& z\\
      0 & 1 & -1 & 0 & 1
    \end{pmatrix}\middle | z\in \C \right\}.
\end{align*}
Observe that the point count therefore equals $(q^2-q+1)+q=q^2+1=q\cdot \Cat_{2,3}(q,1/q)$. The variety $\Pio_{2,5}/T$ is a two-dimensional cluster variety of type $A_2$ with no frozen variables. Its cohomology was computed in \cite[Section~6.2]{LS}: the nonzero terms are
\begin{equation*}
  H^{0,(0,0)}(\Pio_{2,5}/T)\cong   H^{2,(2,2)}(\Pio_{2,5}/T)\cong \C.
\end{equation*}
Multiplying the mixed Hodge polynomial by $\left(\qq+\tt\right)^4$, we see that the mixed Hodge table of $\Pio_{2,5}$ is given in \tabref{tab:MHT_vs_HHHC}(top right). By~\eqref{eq:Poincare} and \cref{lem:HTfree}, we find
\begin{equation*}
  H^{2,(0,0)}_{c}(\Pio_{2,5})\cong   H^{4,(2,2)}_{c}(\Pio_{2,5})\cong\C \quad\text{and}\quad  H^{6,(0,0)}_{T,c}(\Pio_{2,5})\cong   H^{8,(2,2)}_{T,c}(\Pio_{2,5})\cong \C.
\end{equation*}
As in \cref{ex:soe_trefoil}, the $R$-module structure on $H^\bul_{T,c}(\Rich_v^w)$ is trivial (since $T$ acts freely).
\end{example}

We now give three examples of Richardson and positroid knots with non-vanishing odd cohomology, as promised in \cref{sec:notes:odd_cohom_van}. Here by a \emph{positroid knot} we mean a knot of the form $\betah_f$ for $f\in\Bknc$.
\begin{example}[Odd cohomology-I]\label{ex:odd_cohom1}
Let $n=5$, $v=s_3$, and $w=s_2 s_3 s_4 s_3 s_1 s_2 s_1$, and thus $\lvw=6$ and $\ncyc(wv^{-1})=1$. The Richardson knot $\bvw$ is the \emph{$3$-twist knot}, listed as $5_2$ in Rolfsen's table~\cite{Rolfsen}. By~\eqref{eq:homfly_vw}, the point count is given by
\begin{equation*}%
\#(\Rich_v^w/T)(\F_q)=q^2-q+1.
\end{equation*}
The appearance of $-q$ implies that the cohomology of $\Rich_v^w/T$ \emph{cannot} be concentrated in even degrees.
\end{example}

The following two examples of \emph{positroid} knots were discovered by David Speyer jointly with the second author.
\begin{example}[Odd cohomology-II]\label{ex:odd_cohom2}
Let $k=7$, $n=14$, and let $f=\fvw\in\Bknc$ and $v\leq w\in S_n$ be given by 
\begin{align*}
f&=[3, 8, 9, 16, 7, 14, 15, 20, 12, 18, 13, 24, 19, 25],\\
v&=(2, 1, 8, 4, 6, 3, 11, 9, 10, 5, 7, 13, 14, 12),\\
w&=(8, 9, 10, 1, 11, 12, 2, 3, 13, 4, 14, 5, 6, 7).
\end{align*}
Here $v$ and $w$ are given in one-line notation, and $f=[f(1),f(2),\dots,f(n)]$ is given in \emph{window notation}. The permutation $w$ is $7$-Grassmannian, and we have $\ncyc(\fmod)=1$, $\ell(v)=19$, $\ell(w)=40$, and $\df=8$. The mixed Hodge table of $\Pit_f$ is given by
\begin{center}
\begin{tabular}{|c|ccccccccc|}\hline
$H^k$ & $H^0$ & $H^1$ & $H^2$ & $H^3$ & $H^4$ & $H^5$ & $H^6$ &$H^7$ & $H^8$ \\\hline
$k-p=0$&  $1$ &  $0$ &  $1$ &  $0$ &  $1$ &  $0$ &  $1$ &  $0$ &  $1$ \\
$k-p=1$&      &      &      &      &  $1$ &  $1$ &  $1$ & & \\\hline
\end{tabular}
\end{center}
In particular, $H^5$ is non-vanishing.
\end{example}

\begin{example}[Odd cohomology-III]\label{ex:odd_cohom3}
Let $k=7$, $n=14$, and let $f=\fvw\in\Bknc$ and $v\leq w\in S_n$ be given by 
\begin{align*}
f&=[7, 4, 15, 13, 11, 8, 19, 16, 14, 12, 23, 20, 17, 24],\\
v&=(1, 3, 6, 9, 2, 5, 8, 12, 4, 7, 11, 14, 10, 13),\\
w&=(8, 9, 1, 10, 11, 12, 2, 3, 13, 14, 4, 5, 6, 7).
\end{align*}
Similarly, $w$ is $7$-Grassmannian, $\ncyc(\fmod)=1$, $\ell(v)=19$, $\ell(w)=40$, and $\df=8$.
One can easily compute (e.g. using \cref{thm:homfly}) that the point count is given by
\begin{equation*}%
  \#\Pit_f(\F_q)=q^8+q^6+3q^5-q^4+3q^3+q^2+1.
\end{equation*}
Similarly to \cref{ex:odd_cohom1}, this polynomial has a negative term, and therefore the odd cohomology does not vanish.
\end{example}

\begin{remark}\label{rmk:alg}
Ivan Cherednik has suggested to us that one might expect odd cohomology vanishing for algebraic knots; see  \cite{Piontkowski} and \cite[Section~3.4]{Cherednik_Grob}. 
The knot $5_2$ in \cref{ex:odd_cohom1} is not algebraic. We thank the anonymous referee for pointing out to us that the positroid knot in \cref{ex:odd_cohom2} is also not algebraic.
\end{remark}

\section{Equivariant derived categories of flag varieties}\label{sec:sheaf-cohomology}
Rather than working with mixed Hodge structures, our proof will be mostly stated in the language of weights~\cite{DelWeilII} on \etale cohomology.\footnote{As pointed out to us by Wolfgang Soergel, our results could also be formulated using the language of equivariant mixed Tate motives~\cite{SVW}.} We assume that the reader is familiar with derived categories of $\ell$-adic sheaves in the equivariant setting; see e.g.~\cite{BBD,BL,LO} for relevant background. 

\subsection{Conventions}
\def\Yqb{Y_{\Fqb}}
Fix a prime power $q$.  We shall consider schemes over the finite field $\F_q$ and its algebraic closure $\Fqb$.  For an $\F_q$-scheme $Y$, let $ Y_{\Fqb}:=Y\times_{\Spec(\F_q)} \Spec(\Fqb)$ denote the base change to an $\Fqb$-scheme.  We have the Frobenius automorphism $\Fr: \Yqb \to \Yqb$ whose fixed points are exactly the points of $\Yqb$ defined over $\F_q$. 

Fix a prime number $\ell$ different from the characteristic of $\F_q$ and let $\k:=\Qlb$.  Let $H$ be an algebraic group acting on an $\F_q$-variety $Y$.  We consider the bounded derived category $\DbcH(Y,\k)$  of mixed $H$-constructible (that is, constructible along $H$-orbits) $\k$-sheaves on $Y$, as well as the corresponding category $\DbcH(\Yqb,\k)$ of $H$-constructible $\k$-sheaves on $\Yqb$; see~\cite{BBD}. We also let $\DBH$ and $\DCH$ denote the corresponding $H$-equivariant bounded derived categories as in~\cite{BL} (see also~\cite{WW_below} for a discussion of mixed derived categories in the equivariant setting).   There are functors $\For: \DCH \to \DbcH(\Yqb,\k)$ and $\For: \DBH \to \DbcH(Y,\k)$ forgetting the equivariant structure.  There are also functors $\omega: \DbcH(Y,\k) \to \DbcH(\Yqb,\k)$ and $\omega:\DBH \to \DCH$ obtained by extension of scalars from $\Fq$ to $\Fqb$.  

The language of algebraic stacks \cite{LO} allows us to switch between ordinary and equivariant derived categories at our convenience.  For example, $\Db([Y/H],\k) \simeq \Db_H(Y,\k)$.

We denote by $[m]$ the cohomological shift $m$ steps to the left in a derived category as in \cref{sec:rouquier-complexes}.  For $\Fcal,\Gcal \in \DBH$  and $k \in \Z$, let 
\begin{equation}
\label{eq:Extdef}
  \ExtDC^k(\Fcal,\Gcal) = \Ext^k_{Y}(\Fcal,\Gcal):=\Hom_{\DCH}(\omega\Fcal,\omega\Gcal[k]) \quad\text{for $k\in\Z$}.
\end{equation}
The space $\Hom_{\DCH}(\omega\Fcal,\omega\Gcal[k])$ has a natural action of the Frobenius $\Fr$.
Therefore $\ExtDC^\bul(\Fcal,\Gcal)$ is a graded $H^\bul_H(\pt,\k)$-module equipped with an action of $\Fr$, or in other words, an $(H^\bul_H(\pt,\k),\Fr)$-module.  The actual extension groups in $\DBH$ are denoted by $\ext^k(\Fcal,\Gcal)$, and are related to $\ExtDC^k(\Fcal,\Gcal)$ by the exact sequence \cite[(5.1.2.5)]{BBD} 
\begin{equation}\label{eq:extExt}
0 \to \ExtDC^{i-1}(\Fcal,\Gcal)_{\Fr} \to \ext^i(\Fcal,\Gcal) \to \ExtDC^i(\Fcal,\Gcal)^\Fr \to 0,
\end{equation}
where  $\ExtDC^i(\Fcal,\Gcal)^\Fr$ and $\ExtDC^{i-1}(\Fcal,\Gcal)_{\Fr}$ denote Frobenius invariants an coinvariants, respectively.
We denote by $\hom(\Fcal,\Gcal)= \ext^0(\Fcal,\Gcal) = \Hom_{\DBH}(\Fcal,\Gcal)$ the Hom groups in $\DBH$.

We fix an isomorphism $\k\cong \C$ and denote by $|\la|$ the norm of $\la\in\k$ considered as an element of $\C$.  If $M$ is an $\Fr$-module, then the {\it weights} of $\Fr$ on $M$ are the real numbers $2 \log(\la)/\log(q)$ for $\la$ an eigenvalue of the action of $\Fr$.  All weights we consider will be integers: the cohomology sheaves of an object $\Fcal \in \DbcH(Y,\k)$ are required to have punctual integer weights; see \cite[5.1.5]{BBD}.
We fix a square root $(1/2)$ of the Tate twist, and for $\Fcal\in\DBH$ and $\wt\in\Z$, we denote by $\Fcal(\wt/2)$ the corresponding Tate twist of $\Fcal$.  

Recall \cite[5.1]{BBD} that $\Fcal \in \DBH$ has weights $\leq \wt$ if for each $i$ the sheaf $H^i(\Fcal)$ has mixed punctual weights $\leq \wt+i$.  We say that $\Fcal \in \DBH$ has weights $\geq \wt$ if the Verdier dual $\VERD\Fcal$ has mixed punctual weights $\leq -\wt$.  Finally, we say that $\Fcal \in \DBH$ is {\it pure of weight $\wt$} if it has weights $\leq \wt$ and weights $\geq \wt$.  If $\Fcal$ is pure of weight $\wt$ then $\Fcal[1]$ is pure of weight $\wt+1$, while $\Fcal(1/2)$ is pure of weight $\wt -1$.

For an integer $\wt \in \Z$, we denote by $\ExtDC^{k,(\wt/2)}(\Fcal,\Gcal) \subset \ExtDC^{k}(\Fcal,\Gcal)$ the generalized eigenspace for $\Fr$ of weight $\wt$. Thus, 
\begin{equation}\label{eq:Ext_bigr_dfn}
  \ExtDC(\Fcal,\Gcal)=\bigoplus_{k,\wt\in\Z} \ExtDC^{k,(\wt/2)}(\Fcal,\Gcal).
\end{equation}
For all $\Fcal,\Gcal\in\DBH$ and $k,k',\wt,\wt'\in\Z$, we have
\begin{equation}\label{eq:Ext_shifts}
  \begin{aligned}
  \ExtDC^{k,(\wt/2)}(\Fcal[-k'](-\wt'/2),\Gcal)&\cong \ExtDC^{k,(\wt/2)}(\Fcal,\Gcal[k'](\wt'/2))\\ &\cong \ExtDC^{k+k',((\wt+\wt')/2)}(\Fcal,\Gcal).
\end{aligned}
\end{equation}

\subsection{Equivariant cohomology}
For $\Fcal \in \DBH$, the equivariant hypercohomology $\H^\bullet_H(\Fcal) = \H^\bullet_H(Y,\Fcal)$ is defined by
\begin{equation}\label{eq:HBExt}
\H^\bullet_H(\Fcal) := \ExtDC^\bullet(\ku_Y, \Fcal).
\end{equation}
In particular, we have 
$
\H_H^{k,(\wt/2)}(\Fcal[k'](\wt'/2)) = \H_H^{k+k',((\wt+\wt')/2)}(\Fcal).
$

Let $\pi:Y \to \pt:=\Spec(\F_q)$ be the projection to a point.  By definition, the $H$-equivariant cohomology $H^\bullet_H(Y,\k)$ and the compactly supported $H$-equivariant cohomology $H^\bullet_{H,c}(Y,\k)$ of $Y$ is given by
\begin{equation}\label{eq:equivcompact}
H^\bullet_H(Y,\k) := \H^\bullet_H(Y, \ku_Y) = \H^\bullet_H(\pt, \pi_* \ku_Y) \quad \text{and} \quad H^\bullet_{H,c}(Y,\k) :=   \H^\bullet_H(\pt, \pi_! \ku_Y).
\end{equation}
Both $H^\bullet_H(Y,\k)$ and $H^\bullet_{H,c}(Y,\k)$ are graded $(H^\bul_H(\pt,\k), \Fr)$-modules.

\subsection{Flag varieties}
We fix a semisimple algebraic group $G$, split over $\F_q$, and a maximal torus and a Borel subgroup $T \subset B\subset G$.  Let $\Xq=G/B$ be the flag variety over $\F_q$ and let
 $\Xqb:=\Xq\times_{\Spec(\F_q)} \Spec(\Fqb)$ be obtained by extending scalars. 
The variety $\Xq$ is stratified by $B$-orbits $\Xw:=BwB/B$ (known as \emph{Schubert cells}):
\begin{equation*}
  \Xq=\bigsqcup_{w\in W} \Xqw.
\end{equation*}
Let $R = H^\bul_B(\pt,\k) \simeq \k[\h]$.

\begin{remark}\label{rmk:kC}
We switch from working over $R=\Symh$ to $R=\k[\h]$. The results in \cref{sec:KR_hom} do not depend on the field as long as it is of characteristic zero. Therefore on the Soergel bimodule side, one can freely switch between working over $\C$ and over $\k$.
\end{remark}

For $w\in W$, we let $i_w:\Xw\to \Xq$ be the inclusion map. Introduce the \emph{standard} and \emph{costandard sheaves} $\Delta_w,\Nabla_w\in\DB$ defined by
\begin{equation*}
  \Delta_w:=i_{w,!} \kuqw[\ell(w)](\ell(w)/2) \quad\text{and}\quad  \Nabla_w:=i_{w,*} \kuqw[\ell(w)](\ell(w)/2).
\end{equation*}
Here, $\kuqw$ denotes the constant sheaf on $\Xqw$. The \emph{intersection cohomology sheaves} $\IC_w\in\DB$ are defined using the intermediate extension functor $i_{w,!*}$:
\begin{equation*}
  \IC_w:=i_{w,!*}\kuqw[\ell(w)](\ell(w)/2).
\end{equation*}

Since we have $\DB \simeq \Db_{B \times B}(G,\k)$, the equivariant cohomology $\H^\bul_B(\Fcal)$ is a graded $(R \otimes R,\Fr)$-module.   Furthermore, there is a restriction functor ${\Res_{T,B}: \Db_B(\Xq,\k) \to \Db_T(\Xq,\k)}$ (see~\cite{BL}), and we also have the hypercohomology functor ${\H_T^\bul: \Db_T(\Xq,\k) \to H^\bul_T(\pt) - \mod}$.
It is well known that $H^\bul_B(\pt,\k)  \simeq R \simeq H^\bul_T(\pt,\k)$, and furthermore we have
\begin{equation}\label{eq:ResTB}
  \Res_{T,B}: \Db_B(\pt,\k)\cong \Db_T(\pt,\k) \quad\text{and}\quad \H^\bul_B(\Xq,\Fcal)\cong \H^\bul_T(\Xq,\Res_{T,B}\Fcal)
\end{equation}
for $\Fcal \in \Db_B(\Xq,\k)$.

\subsection{Equivariant cohomology of open Richardson varieties}
We split the proof of our main result into two parts. We will focus on the equivariant case (\cref{thm:main}).  The proof of its non-equivariant version (\cref{thm:ordinary}) will follow as a byproduct, and will be discussed in \cref{sec:ordinary-cohomology}.

Recall from \cref{sec:main-result-equivariant} that our goal is to show the following result.
\begin{theorem}\label{thm:iso_RR_Fr}
For all $v\leq w\in W$, we have an isomorphism of bigraded $R$-modules
\begin{equation*}
\HHH^0(\FR_{v,w}) \cong H^{\bullet}_{T,c}(\Rich_v^w).
\end{equation*}
For all $k,p\in\Z$, it restricts to an isomorphism
\begin{equation*}
\HHX {k}p_{v,w}  \cong  H^{\lvw+2p+k,(p,p)}_{T,c}(\Rich_v^w)
\end{equation*}
of vector spaces.
\end{theorem}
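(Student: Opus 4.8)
The strategy is to translate both sides into the equivariant derived category $\DB$ of the flag variety and compare. On the geometric side, compactly supported $T$-equivariant cohomology of $\Rich_v^w$ is, by base change and~\eqref{eq:equivcompact}, computed by a pair of adjoint functors: restricting the constant sheaf on $\Xq$ to the Schubert cell $\Xw$ (an $!$-restriction, giving $\Delta_w$ up to a shift/twist) and then taking sections with support conditions imposed by the opposite cell $\Xv^-$ (a $*$-restriction, giving $\nabla_v$). Concretely one expects an identification
\begin{equation*}
H^\bullet_{T,c}(\Rich_v^w) \cong \ExtDC^\bullet_{\Xq}(\nabla_v, \Delta_w)
\end{equation*}
up to an explicit shift and Tate twist by $\ell(w)-\ell(v)=\lvw$, with the weight grading on the $\Ext$-group matching the $(p,p)$-grading on the mixed Hodge side via \cref{thm:mixed_Tate_type}. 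The first step is therefore to set up this localization/base-change isomorphism carefully, keeping track of the two gradings (cohomological degree and Frobenius weight) on the nose; this is where the shift $\lvw + 2p + k$ in the statement must be accounted for.

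The second step is to identify the right-hand $\Ext$-group with $\HHH^0(\Fvw)$ via Soergel's functor. By Soergel's theory (as recollected around~\eqref{eq:HH0_Hom} and \cref{cor:SMod}), the $\Hom$-spaces in $\KSBim$ between Rouquier complexes compute, after applying $\HH^0$, exactly the hypercohomology $\ExtDC$-groups between the corresponding complexes of standard/costandard sheaves: Rouquier complexes $\FR(w)$ and $\FR(v)^{-1}$ are the Soergel-bimodule shadows of $\Delta_w$ and $\nabla_v$ (more precisely of the Rouquier complexes of sheaves representing the braid elements $T_w$ and $T_v^{-1}$ in the Hecke category). Thus one wants
\begin{equation*}
\HHHC^0(\Fvw) \cong \ExtDC^\bullet_{\Xq}(\nabla_v,\Delta_w),\quad \HHH^0(\Fvw)\cong \ExtDC^\bullet_{\Xq\times_{BT}\pt}(\cdots),
\end{equation*}
where the $R$-module structure on $\HHH^0$ matches the $H^\bullet_T(\pt)=R$-module structure on equivariant hypercohomology, and tensoring down by $\C=R/\h^\ast$ (as in~\eqref{eq:HHC}) corresponds to passing from $T$-equivariant to ordinary cohomology. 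I would invoke the equivalence between the equivariant Hecke category and the category of Soergel bimodules (Soergel's theorem / the content of~\cite{SoeHC}, \cite{BGS}) to make this precise, and then use~\eqref{eq:HHH=Hom} to convert cohomology of the complex $\HHC^0(\Fvw)$ into $\Hom$-spaces in the homotopy category, which the equivalence sends to $\ExtDC$-groups in $\DB$.

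The third step is bookkeeping: chase the grading conventions through both identifications so that the polynomial degree $2p$ on the Soergel side lines up with the weight $p$ eigenspace of Frobenius, the cohomological degree $k$ of the Rouquier complex lines up with the shift in the $\Ext$-group, and the cumulative shift produces precisely $H^{\lvw+2p+k,(p,p)}_{T,c}$. Here I would lean on~\eqref{eq:Ext_shifts} and the fact that $\Delta_w,\nabla_w$ were defined with the twist $(\ell(w)/2)$ built in, which is exactly what absorbs the $\lvw$. The main obstacle, I expect, is \emph{not} the existence of the two identifications individually — both are essentially in the literature (\cite{SoeHC,BGS,BY,AR2} on one side, standard equivariant sheaf theory on the other) — but rather verifying that the Frobenius-weight grading on $\ExtDC^\bullet(\nabla_v,\Delta_w)$ is \emph{pure in the appropriate sense} and matches the Deligne splitting of the mixed Hodge structure on $H^\bullet_{T,c}(\Rich_v^w)$; i.e.\ showing the $\Fr$-action is semisimple with integer weights and that these weights coincide with the Hodge-theoretic $(p,p)$. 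This is where \cref{thm:mixed_Tate_type} (Hodge–Tate type) and the weight formalism of~\cite{BBD,DelWeilII} do the essential work, and getting the normalization of the half-Tate-twist $(1/2)$ consistent between the two worlds will require care. Once purity and the weight–degree matching are established, the isomorphism of bigraded $R$-modules, and its refinement to each $(k,p)$-piece, follows formally.
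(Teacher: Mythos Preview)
Your two-step plan --- identify $H^\bullet_{T,c}(\Rich_v^w)$ with an $\Ext$-group between standard-type objects in $\DB$, then identify that $\Ext$-group with $\HHH^0(\Fvw)$ via Soergel theory --- is exactly the paper's strategy (Propositions~\ref{prop:RSW} and~\ref{prop:degr}). So the architecture is right. But several details diverge from what the paper actually does, and a couple of these are substantive.

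First, the intermediary object is $\Ext^\bullet(\Delta_v,\Delta_w)$, not $\Ext^\bullet(\nabla_v,\Delta_w)$. With the paper's conventions ($\Delta_v=i_{v,!}\ku_{\Xv}[\ell(v)](\ell(v)/2)$, $\nabla_v=i_{v,*}\ku_{\Xv}[\ell(v)](\ell(v)/2)$, both for the $B$-orbit $\Xv$), adjunction gives $\Ext(\Delta_v,\Delta_w)\cong\Ext(\ku_{\Xv},i_v^! i_{w,!}\ku_{\Xw})$, and the crux of the geometric step is the identification $i_v^! i_{w,!}\ku_{\Xw}\cong a^* b_!\ku_{\Rich_v^w}$ (Lemma~\ref{lemma:RSW}). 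This is \emph{not} formal base change: it uses the Kazhdan--Lusztig factorization $\phi_v:\Xvm\times\Xv\hookrightarrow X$ together with Soergel's contraction lemma~\cite{Soe89}, lifted to the $T$-equivariant setting. Your sketch (``$!$-restriction then $*$-restriction giving $\nabla_v$'') glosses over this and would not produce the correct identification as written.

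Second, on the Soergel side, quoting the equivalence $\Semis_B(X)\simeq\SBim$ (Proposition~\ref{prop:SemiSBim}) is necessary but not sufficient. One needs to pass to homotopy categories and match $\Kb\SBim$ with $\Db_B(X,\k)$ in a way that tracks both gradings. The paper does this by defining the mixed derived category $\Dmix_B(X):=\Kb(\Semis_B(X))$ and constructing a realization functor $\real:\Dmix_B(X)\to\Db_B(X,\k)$ (using the machinery of~\cite{AR2,AMRW2,Bei}); one then checks (Proposition~\ref{prop:stand}) that $\real\circ\H_B^{-1}$ sends $\FR(w)$ to $\Delta_w(-\ell(w)/2)$ and $\FR(v)^{-1}$ to $\nabla_v(\ell(v)/2)$, and that the degrading formula~\eqref{eq:eqgrading1}--\eqref{eq:eqgrading2} holds. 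The conversion of $\HH^0$ to $\iota_e^*$ and the adjunction $\RHom(\Delta_v,\Delta_w)\simeq\RHom(\Delta_e,\Delta_w\star\nabla_v)$ (Lemma~\ref{lem:RHomad}) are the remaining ingredients. None of this is hard once set up, but ``invoke Soergel's theorem and use~\eqref{eq:HHH=Hom}'' does not by itself bridge the gap between the additive equivalence and the triangulated statement with the correct bigrading.

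Finally, the obstacle you single out --- semisimplicity of Frobenius on $\Ext(\nabla_v,\Delta_w)$ and matching weights with the Deligne splitting --- is not where the work lies. The weight grading on the $\Ext$-group is \emph{defined} via Frobenius eigenspaces (\eqref{eq:Ext_bigr_dfn}), and the comparison with mixed Hodge numbers is dispatched by a standard comparison argument (Proposition~\ref{prop:kC}); no purity statement for these $\Ext$-groups is needed or proved. The actual effort is in Sections~\ref{sec:proof-RSW} and~\ref{sec:sheafproof}: the equivariant contraction argument and the realization-functor formalism.
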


We will accomplish this in two steps. The first one is an equivariant version of~\cite[Proposition~4.2.1]{RSW}.
\begin{proposition}\label{prop:RSW}
For all $v\leq w\in W$, we have an isomorphism of bigraded $R$-modules
\begin{equation*}
  H^{\bullet}_{T,c}(\Rich_v^w) \cong \ExtDC^\bul(\Delta_v,\Delta_w).
\end{equation*}
For all $m,\wt\in\Z$, it restricts to an isomorphism
\begin{equation*}
H^{\lvw+m,(\wt/2,\wt/2)}_{T,c}(\Rich_v^w)\cong \ExtDC^{m,((\wt-\lvw)/2)}(\Delta_v,\Delta_w)
\end{equation*}
of vector spaces. (In particular, both sides are zero for odd $\wt$.)
\end{proposition}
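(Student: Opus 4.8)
The plan is to relate the compactly supported $T$-equivariant cohomology of the open Richardson variety $\Rich_v^w$ to a suitable $\Ext$-group in the equivariant derived category of the flag variety, using the localization/base-change formalism together with the adjunctions between the pushforward and pullback functors along the inclusions of Schubert cells. The key identity to establish is that $H^\bullet_{T,c}(\Rich_v^w)$ computes $\ExtDC^\bullet(\Delta_v,\Delta_w)$; this is an equivariant enhancement of \cite[Proposition~4.2.1]{RSW}, so I would follow the strategy there and keep careful track of weights and Tate twists.

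First I would recall that $\Rich_v^w = X_w^\circ \cap (X^-_v)^\circ$ inside $\Fl(n)$, where $(X^-_v)^\circ = B_- v B/B$ is the opposite Schubert cell. Write $j_v: \Xv \hookrightarrow \Xq$ and $j^-_v: (X^-_v)^\circ \hookrightarrow \Xq$ for the locally closed inclusions, and $i: \Rich_v^w \hookrightarrow \Xq$ for the inclusion of the Richardson cell. The plan is to compute $\ExtDC^\bullet(\Delta_v,\Delta_w) = \Hom_{\DCH}(\omega\Delta_v,\omega\Delta_w[\bullet])$ by unwinding the definitions: $\Delta_v = j_{v,!}\kuqv[\ell(v)](\ell(v)/2)$ and $\Delta_w = j_{w,!}\kuqw[\ell(w)](\ell(w)/2)$. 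One then uses the $(j_{v,!},j_v^*)$ adjunction to reduce the $\Ext$ to computing $j_v^* \Delta_w$ on $\Xv$, and then further restricts to the intersection $\Rich_v^w$. Since the Schubert stratification and the opposite Schubert stratification are transverse, one gets that the relevant $\Ext$ reduces, via proper/smooth base change, to $H^\bullet_{T,c}(\Rich_v^w,\k)$ with an appropriate shift: the cohomological shift is $\lvw = \ell(w)-\ell(v) = \dim \Rich_v^w$ and the Tate twist is $\lvw/2$, accounting for the $(\ell(v)/2)$ and $(\ell(w)/2)$ normalizations in the definitions of $\Delta_v,\Delta_w$. Tracking that the weight of $\Fr$ acting on $H^{\lvw+m}_{T,c}(\Rich_v^w)$ in bidegree $(\wt/2,\wt/2)$ corresponds exactly to $\ExtDC^{m,((\wt-\lvw)/2)}(\Delta_v,\Delta_w)$ then gives the graded refinement in the proposition. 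The parity statement (vanishing for odd $\wt$) follows because $\Rich_v^w$ has Hodge-Tate type cohomology by \cref{thm:mixed_Tate_type}, so all weights are even.

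The module structure over $R = H^\bullet_T(\pt)$ is built into the equivariant formalism: the $\Ext$-group $\ExtDC^\bullet(\Delta_v,\Delta_w)$ is naturally a module over $H^\bullet_T(\pt,\k) \cong R$ via the equivariant structure (as recorded in the conventions section, $\ExtDC^\bullet(\Fcal,\Gcal)$ is an $(H^\bullet_T(\pt),\Fr)$-module), and the isomorphism with $H^\bullet_{T,c}(\Rich_v^w)$ is $R$-linear because all the functors involved (restriction, the adjunction units/counits, base change) are maps of equivariant objects. This is where the equivariant setting pays off over the non-equivariant one in \cite{RSW}: one keeps the full $R$-module rather than tensoring down to $\C$.

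The main obstacle I expect is making the transversality/base-change argument fully rigorous in the \emph{equivariant} derived category, i.e. ensuring that the standard facts about restricting standard sheaves to opposite Schubert cells (which in the non-equivariant setting are \cite[Proposition~4.2.1]{RSW}) carry over to $\DCH$ with the correct compatibility of weights and Tate twists, and that compactly supported cohomology rather than ordinary cohomology appears (this is forced by the $!$-extensions in $\Delta_v$ and $\Delta_w$, and by using $\pi_!$ in the definition of $H^\bullet_{T,c}$). A secondary technical point is bookkeeping the cohomological degree shift $\lvw$ and the weight shift correctly through the two successive adjunctions, but that is routine once the setup is right; I would verify it against the examples in \cref{tab:examples} (Unknot-I, Unknot-II, the Hopf link, etc.) where both sides are computed explicitly.
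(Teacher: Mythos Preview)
Your high-level strategy matches the paper's: both lift \cite[Proposition~4.2.1]{RSW} to the equivariant setting, and you correctly flag the main difficulty as making the sheaf-theoretic manipulations go through in $\DB$ with the right weights. However, there are two concrete gaps.

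First, a technical slip: the adjunction you invoke, $(j_{v,!}, j_v^*)$, does not exist. The right adjoint of $j_{v,!}$ is $j_v^!$, and the paper indeed opens with the $(i_{v,!},i_v^!)$ adjunction to reduce $\ExtDC^\bullet(\Delta_v,\Delta_w)$ to computing $i_v^! i_{w,!}\kuw$ on $\Xv$.

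Second, and more substantively, your appeal to ``transversality of the Schubert and opposite Schubert stratifications'' together with ``smooth/proper base change'' does not name the mechanism that actually makes the argument work, and it is not clear what smooth map you have in mind. The heart of the paper's proof is \cref{lemma:RSW}, which shows $i_v^! i_{w,!}\kuw \cong a^* b_! \ku_{\Rich_v^w}$ via two specific geometric inputs your sketch omits:
\begin{itemize}
\item[(i)] the Kazhdan--Lusztig product decomposition $\BA: \Xvm \times \Xv \hookrightarrow X$ (going back to \cite{KL2}), which restricts to $T$-equivariant open embeddings $\Rich_v^w \times \Xv \hookrightarrow \Xw$ and $\Rsemi_v^w \times \Xv \hookrightarrow \Xwcl$; the paper explicitly verifies the $T$-equivariance of $\BA$;
\item[(ii)] Soergel's contraction result \cite[Proposition~1]{Soe89}, which converts the $!$-restriction along the section $k:\Xv \hookrightarrow \Rsemi_v^w \times \Xv$ into the $!$-pushforward along the projection $\projvw$. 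The paper checks that the contracting $\mathbb{G}_m$-action is a subtorus of $T$, so that Soergel's argument lifts equivariantly.
\end{itemize}
Only after (i) and (ii) does base change (for the Cartesian square in \figref{fig:RSW}(right)) finish the job. Without them, ``transversality plus base change'' does not by itself convert a $!$-pullback $i_v^!$ into the $!$-pushforward that produces compactly supported cohomology; that passage is exactly what (ii) supplies. So your proposal has the right shape but is missing the two load-bearing ingredients.
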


The second step passes through the \emph{mixed equivariant derived category} of~\cite{AR1,AR2} and involves the \emph{degrading functor} of~\cite{Bei} (see also~\cite{Rid}).

\begin{proposition}\label{prop:degr}
For all $v\leq w\in W$, we have an isomorphism of bigraded $R$-modules
\begin{equation*}
\HHH^0(\FR_{v,w}) \cong \ExtDC^\bul(\Delta_v,\Delta_w).
\end{equation*}
For all $k,\wt\in\Z$, it restricts to an isomorphism
\begin{equation} \label{eq:HHExt}
\HHX {k}{\wt/2}_{v,w}  \cong  \ExtDC^{\wt+k,((\wt-\lvw)/2)}(\Delta_v,\Delta_w)
\end{equation}
of vector spaces. (In particular, both sides are zero for odd $\wt$.)
\end{proposition}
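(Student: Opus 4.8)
\textbf{Proof strategy for Proposition~\ref{prop:degr}.}
The plan is to realize the Rouquier complex $\FR_{v,w}=\FR(\beta(w))\otimes_R\FR(\beta(v)^{-1})$ geometrically inside the mixed equivariant derived category $\Dmix_B(\Xq,\k)$ of~\cite{AR1,AR2}, and then transport the computation of $\HHH^0$ through Soergel's equivalence and the degrading functor. First I would recall the dictionary, from~\cite{AR2} (cf.~\cite{BY,Rid}), between Soergel bimodules and (mixed) $B$-equivariant semisimple complexes on $\Xq$: the Bott--Samelson bimodule $\BS_\uw$ corresponds to the pushforward $\pi_{\uw,*}\uk[\ldots]$ from the Bott--Samelson resolution, $R$-bimodule tensor product corresponds to convolution $\star$, and the Rouquier complexes $\FR(\sigma_i)$, $\FR(\sigma_i^{-1})$ correspond, respectively, to the standard and costandard objects $\Delta_{s_i}$, $\Nabla_{s_i}$ (up to the normalizing shifts and Tate twists built into our conventions for $\Delta_w,\Nabla_w$). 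Consequently, after choosing reduced words, the complex $\FR_{v,w}$ is sent to an object isomorphic in $\Dmix_B(\Xq,\k)$ to $\Delta_v\star\Nabla_{v^{-1}w}$ — but more to the point, by the same reasoning that gives $\FR(\beta(w))\leftrightarrow\Delta_w$ and $\FR(\beta(v)^{-1})\leftrightarrow$ the inverse convolution operator, one identifies $\FR_{v,w}$ with the object computing $\sRHom(\Delta_v,\Delta_w)$ in the mixed category.

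The second ingredient is the translation of $\HH^0$. By~\eqref{eq:HH0_Hom} and~\eqref{eq:HHH=Hom}, $\HHX k{\wt/2}_{v,w}=\Hom_{\KSBim}(R,\FR_{v,w}[k]\psa{-\wt/2})$. Under the equivalence between $\KSBim$ and the corresponding homotopy/triangulated category on the geometric side, and using that the monoidal unit $R$ corresponds to the skyscraper $\uk_{\Xq_{\id}}$ (equivalently $\Delta_{\id}=\Nabla_{\id}$), this Hom-space becomes a graded piece of $\Ext^\bullet_{\Dmix_B}(\uk_{\pt},\FR_{v,w})$, i.e.\ of the mixed (bigraded) equivariant hypercohomology of the object corresponding to $\FR_{v,w}$. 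The bigrading on the Soergel side — cohomological degree $k$ and polynomial degree $\wt$ — matches the two gradings on the mixed side: the cohomological grading and the weight/mixed grading. Applying the degrading functor of~\cite{Bei} (see~\cite[\S]{Rid}), which collapses the mixed grading to a Frobenius-weight grading and identifies $\Dmix_B(\Xq,\k)$ with $\Db_B(\Xqb,\k)$, sends $\Ext^\bullet$ in the mixed category precisely to the bigraded $\ExtDC^\bullet$ of~\eqref{eq:Ext_bigr_dfn}, with the mixed grading becoming the weight-grading index $\wt$. Chasing the normalizing shifts $[\ell(w)]$, $[\ell(v)]$ and Tate twists $(\ell(w)/2)$, $(\ell(v)/2)$ in the definitions of $\Delta_v,\Delta_w$ against the conventions in~\eqref{eq:Rouquier_s} and the definition of $\FR_{v,w}$ yields the shift $\lvw=\ell(w)-\ell(v)$ in the index, producing exactly~\eqref{eq:HHExt}; the $R$-module structure matches because on both sides it comes from the $H^\bullet_B(\pt,\k)\cong R$-action on hypercohomology. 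Vanishing for odd $\wt$ follows from the fact that $\FR(\beta)$ lives in even polynomial degrees, equivalently that $\Delta_w,\Nabla_w$ are ``even'' (parity) objects.

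\textbf{Main obstacle.} The genuine difficulty is bookkeeping the two different grading/shift conventions so that they line up. On the Soergel side our grading conventions (polynomial degree of $\h^\ast$ equal to $2$, the shifts $\psa{-1}$ in~\eqref{eq:Rouquier_s}, and the cohomological-degree normalization of $\FR_{v,w}$ concentrated in degrees $-\ell(w),\dots,\ell(v)$) must be matched against the geometric conventions ($\Delta_w=i_{w,!}\uk[\ell(w)](\ell(w)/2)$, Verdier duality, and the Tate-twist-by-$(1/2)$ normalization), and the degrading functor introduces its own identification of the mixed shift with the weight twist. Getting the net shift to be exactly $\lvw+k$ in the cohomological index and $\wt$ in the weight index — and in particular checking that the monoidal unit and the pair $(\Delta_{\id},\uk_{\pt})$ correspond under all these equivalences without an extra twist — is where care is needed. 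A secondary point is to make sure that the identification of $\FR_{v,w}$ with the object representing $\sRHom(\Delta_v,\Delta_w)$ is done in a way compatible with the equivariant structure, so that the $R$-module isomorphism (not merely the graded-vector-space isomorphism) is obtained; this is where invoking the equivariant refinement of the Soergel-theoretic formality results of~\cite{AR2,BY} does the work. Everything else — the existence of the degrading functor, the formality statements, and the convolution-to-tensor dictionary — is quoted from the cited literature.
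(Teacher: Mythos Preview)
Your outline is essentially the paper's own approach: it too passes through the mixed equivariant derived category $\Dmix_B(X)=\Kb(\Semis_B(X))$ of~\cite{AR1,AR2}, uses Soergel's hypercohomology equivalence $\H_B:\Semis_B(X)\to\SBim$ to identify Rouquier complexes with (twisted) standard/costandard objects, and then applies a realization/degrading functor \`a la~\cite{Bei,Rid} together with the adjunction between $(-)\star\Delta_s$ and $(-)\star\nabla_s$ to convert $\Hom_{\KSBim}(R,\FR_{v,w}[k]\psa{-\wt/2})$ into $\ExtDC^{\wt+k,((\wt-\lvw)/2)}(\Delta_v,\Delta_w)$. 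One slip: under the dictionary, $\FR_{v,w}=\FR(w)\otimes_R\FR(v)^{-1}$ corresponds to $\Delta_w\star\nabla_v$ (up to the twists $(-\ell(w)/2)$ and $(\ell(v)/2)$), not to $\Delta_v\star\nabla_{v^{-1}w}$; your subsequent ``more to the point'' sentence is the correct formulation, and it is exactly what the paper proves via $\RHom(\Delta_v,\Delta_w)\simeq\RHom(\Delta_e,\Delta_w\star\nabla_v)\simeq\iota_e^*(\Delta_w\star\nabla_v)$.
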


\subsection{From weights to the mixed Hodge numbers}
We briefly explain the standard relation between the mixed Hodge structure of the complex variety $Y_\C = (\Rich_v^w)_{\C}$ and the \etale cohomology of the variety $Y_{\Fqb} = (\Rich_v^w)_{\Fqb}$.  First, by the comparison theorem (\cite[Theorem~21.1]{Milne}) between Betti cohomology and \etale cohomology of $Y_\C$, we have an isomorphism $H^k_{ Betti}(Y_\C,\k) \simeq H^k_{et}(Y_\C,\k)$ which preserves the weight filtration of both sides.  Next, we find a discrete valuation ring $S \subset \C$ with residue field $\Fqb$ and construct the Richardson variety $Y_S$ over $S$.  Then $Y_\C$ and $Y_{\Fqb}$ are obtained from $Y_S$ via base change, and we obtain isomorphisms between the \etale cohomologies $H^\bul(Y_\C,\k) \simeq H^\bul(Y_S,\k) \simeq H^\bul(Y_{\Fqb},\k)$, compatibly with the weight filtrations; see \cite[Section 20]{Milne}.  For $H^\bul(Y_{\Fqb},\k)$, the weight filtration is obtained by taking sums of generalized eigenspaces of the Frobenius $\Fr$.  Finally, the cohomology of $Y_\C$ is of Hodge--Tate type, so the weight filtration is simply given by $W^{2r}(H^k(Y_\C,\C)) = \bigoplus_{p \leq r} H^{k,(p,p)}(Y_\C,\C)$.  (All these statements hold also equivariantly.) To sum up:

\begin{proposition}\label{prop:kC}
For all $v\leq w\in W$ and $k,p\in\Z$, we have the equalities 
\begin{align*}
\dim_\C H^{k,(p,p)}((\Rich_v^w)_{\C},\C) &= \dim_\k H^{k,(p)}((\Rich_v^w)_{\Fqb},\k), \\
\dim_\C H^{k,(p,p)}_{T,c}((\Rich_v^w)_{\C},\C) &= \dim_\k H^{k,(p)}_{T,c}((\Rich_v^w)_{\Fqb},\k)
\end{align*}
where $H^{k,(p)}_{T,c}((\Rich_v^w)_{\Fqb},\k)=\H^{k,(p)}_T(\pt,\pi_!\ku_{\Rich_v^w})=\Ext^{k,(p)}(\ku_{\pt},\pi_!\ku_{\Rich_v^w})$ as in~\eqref{eq:equivcompact}.
\end{proposition}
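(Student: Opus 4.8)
The plan is to flesh out the comparison argument sketched in the paragraph preceding the statement, taking particular care with compact supports and with the torus-equivariant structure. First I would invoke the Artin comparison theorem: for the finite-type $\C$-scheme $(\Rich_v^w)_\C$ there are natural isomorphisms between singular and $\ell$-adic cohomology, $H^k((\Rich_v^w)_\C,\k)\cong H^k_{\mathrm{et}}((\Rich_v^w)_\C,\k)$, and likewise with compact supports, and these are strictly compatible with the weight filtrations on both sides (see~\cite[Sections~20--21]{Milne}, and \cite[5.1]{BBD} for the \etale weight filtration). In the equivariant setting I would run the same argument on the finite-dimensional approximations $(\Rich_v^w)\times_T E_m$ with $E_m=(\C^m\setminus\{0\})^{\dim T}$ used in the proof of \cref{lem:HTfree} (equivalently, on the quotient stack $[\Rich_v^w/T]$, using $\Db([\Rich_v^w/T],\k)\cong \DBH$), noting that in any fixed cohomological degree the groups stabilize as $m\to\infty$.

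Next I would spread out. As an intersection of translated Schubert cells in $G/B$ (\cref{lemma:strawberry}), the variety $\Rich_v^w$, together with the group scheme $T$ acting on it, is defined over $\Z$; choosing a discrete valuation ring $S\subset\C$ with residue field $\Fqb$ and fraction field of characteristic $0$ (for instance the Witt vectors of $\Fqb$, after fixing an embedding into $\C$) and letting $Y_S$ be the resulting model over $S$, the generic fiber of $Y_S$ is $(\Rich_v^w)_\C$ and the special fiber is $(\Rich_v^w)_{\Fqb}$. Proper base change applied to $\pi_!\ku$ gives $H^\bullet_c((\Rich_v^w)_{\bar\eta},\k)\cong H^\bullet_c((\Rich_v^w)_{\Fqb},\k)$, and Poincar\'e duality for the smooth equidimensional fibers (a Tate twist shifting weights accordingly) converts this into the analogous statement for $H^\bullet$; all of these isomorphisms respect the weight filtrations (\cite[Section~20]{Milne}), where on the special fiber the weight filtration is the one defined by generalized eigenspaces of $\Fr$. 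Composing with the comparison isomorphism of the previous step, and repeating the argument on the approximations $(\Rich_v^w)\times_T E_m$ in the equivariant case, one obtains filtered isomorphisms relating $H^{k}$ and $H^k_{T,c}$ of $(\Rich_v^w)_\C$, carrying their Hodge-theoretic weight filtrations, to those of $(\Rich_v^w)_{\Fqb}$, carrying their Frobenius weight filtrations.

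Finally I would pass from the weight filtration to the Hodge numbers. By \cref{thm:mixed_Tate_type}, all three cohomologies $H^\bullet$, $H^\bullet_c$, $H^\bullet_{T,c}$ of $(\Rich_v^w)_\C$ are of Hodge--Tate type, so the weight filtration is ``pure in each step'': $\operatorname{gr}^W_{2p}H^k=H^{k,(p,p)}$ while $\operatorname{gr}^W_{2p+1}H^k=0$. Hence $\dim_\C H^{k,(p,p)}((\Rich_v^w)_\C)=\dim_\C\operatorname{gr}^W_{2p}H^k((\Rich_v^w)_\C)$, and transporting this along the filtered isomorphisms above identifies the right-hand side with $\dim_\k\operatorname{gr}^W_{2p}H^k((\Rich_v^w)_{\Fqb},\k)=\dim_\k H^{k,(p)}((\Rich_v^w)_{\Fqb},\k)$, the weight-$2p$ generalized $\Fr$-eigenspace, which is the first asserted equality. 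The identical computation with $H^k$ replaced throughout by $H^k_{T,c}$, interpreted via $\pi_!\ku_{\Rich_v^w}$ on $[\Rich_v^w/T]$ as in the statement, yields the second equality.

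\emph{Main obstacle.} The core formalism (Artin comparison, proper base change, Poincar\'e duality) is routine; the delicate point is checking that all of it survives the passage to compact supports in the \emph{equivariant} setting --- that spreading out commutes with forming the finite-dimensional approximations $(\Rich_v^w)\times_T E_m$, that the resulting isomorphisms are filtered, and that the relevant groups stabilize so the colimit defining $H^\bullet_{T,c}$ behaves well. This is exactly where \cref{thm:mixed_Tate_type}, needed in the compactly supported equivariant form, does the essential work: it forces the weight filtration on the $\C$-side to degenerate into the Hodge numbers, so that matching weight-graded dimensions suffices to match the full bigraded data.
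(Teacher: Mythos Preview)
Your proposal is correct and follows essentially the same approach as the paper's argument (given in the paragraph immediately preceding the proposition): Artin comparison, spreading out over a DVR with residue field $\Fqb$, base change compatible with weight filtrations, and then invoking the Hodge--Tate property (\cref{thm:mixed_Tate_type}) to identify weight-graded pieces with the $(p,p)$-pieces. The paper dispatches the equivariant case with the parenthetical ``All these statements hold also equivariantly'', whereas you spell this out via finite-dimensional approximations $(\Rich_v^w)\times_T E_m$; this is a reasonable way to make that sentence precise.
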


See also \cite[Remark 7.1.4]{RSW} where a comparison between derived categories of flag varieties is given.

\section{Proof of Proposition~\ref{prop:RSW}}\label{sec:proof-RSW}
\def\Gm{{\mathbb G_m}}
\def\A{{\mathbb A}}

We follow the steps in the proof of~\cite[Proposition~4.2.1]{RSW}.  Using~\eqref{eq:Ext_shifts} and the adjunction $(i_{v,!},i_v^!)$, we find
\begin{align*}
\ExtDC^{m,((\wt-\lvw)/2)}(\Delta_v,\Delta_w)&\cong \ExtDC^{m,((\wt-\lvw)/2)}(i_{v,!}\kuv,i_{w,!}\kuw[\lvw](\lvw/2))\\
&\cong \ExtDC^{m+\lvw,(\wt/2)}(i_{v,!}\kuv,i_{w,!}\kuw) \\ &\cong \ExtDC^{m+\lvw,(\wt/2)}(\kuv,i_v^!i_{w,!}\kuw).
\end{align*}
Note that $i_v^!i_{w,!}\kuw\in \Db_B(\Xv,\k)$.  By \eqref{eq:HBExt} and \eqref{eq:ResTB}, we have 
$$\ExtDC^\bul(\kuv,i_v^!i_{w,!}\kuw) \cong \H^\bul_B(\Xv,i_v^!i_{w,!}\kuw) \cong \H^\bul_T(\Xv,i_v^!i_{w,!}\kuw).$$

We now switch to working with $T$-equivariant derived categories.  First, we state a $T$-equivariant version of~\cite[Proposition~1]{Soe89}.
Let $Z$ be a $T$-variety and let $q: X \hookrightarrow Z$ be an inclusion of a closed $T$-subvariety.  An action $\Gm \times Z \to Z$ \emph{contracts $Z$ to $X$} if there is a commutative diagram
$$\displaystyle 
\xymatrix{
\Gm \times Z \ar[r]^-{act} \ar[d]_-{j} & Z \ar@{=}[d] \\
\A^1 \times Z \ar[r]^-{act_0} & Z \\
Z \ar[u]^-{i} \ar[r]^-{p} & X \ar[u]_-{q}
}$$
where $i,j$ are the obvious maps and $p \circ q = \id$, and all arrows are $T$-equivariant, with $T$ acting trivially on $\Gm$ and $\A^1$.  Let $\pi: \Gm \times Z \to Z$ be the projection and suppose that $\Fcal \in \Db_T(Z)$ is $\Gm$-equivariant, i.e. satisfies $act^*(\Fcal) \simeq \pi^*(\Fcal) \in \Db_T(\Gm \times Z)$.  There is a natural morphism $q^! \to p_!$ of functors (obtained by composing the adjunction morphism $q_!q^!\to \id$ with $p_!$) and we have the following.
\begin{proposition}[{cf. \cite[Proposition~1]{Soe89}}] \label{prop:Soe}
The map $q^!\Fcal \to p_! \Fcal$ is an isomorphism.
\end{proposition}

Let $\amap:\Xv\to\pt$ and $b:\Rich_v^w\to\pt$ be the projections (cf. \figref{fig:RSW}(right)). Our next goal is to manipulate the object $i_v^!i_{w,!}\kuw$, in order to establish the following result.
\begin{lemma}\label{lemma:RSW}
We have $i_v^!i_{w,!}\kuw\cong a^*b_!\ku_{\Rich_v^w}$ in $\Db_T(\Xv)$. 
\end{lemma}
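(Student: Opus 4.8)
The statement asserts a base-change type identity in the $T$-equivariant derived category on the Schubert cell $\Xv$. The plan is to set up a commutative diagram of the relevant subvarieties of the flag variety and then chase $i_v^! i_{w,!}\kuw$ around it using proper and smooth base change. Concretely, write $j_w\colon \Xwcl \hookrightarrow \Xq$ for the closed inclusion of the Schubert variety and factor $i_w\colon \Xw \hookrightarrow \Xwcl \xrightarrow{j_w} \Xq$ as an open inclusion followed by a closed one; similarly consider the opposite Schubert cell $\Xvm$ through $v$, whose intersection with $\Xw$ (or with the closure, appropriately) is exactly $\Rich_v^w = \Xwcl \cap \overline{\Xvm}$ inside $\Xv$. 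The key geometric input is that $\Xv$ is an affine space, $\Xvm \cap \Xwcl$ is closed in $\Xv$, and the normal directions to $\Xv$ inside $\Xq$ ``retract'' onto $\Xvm$; this is precisely the content exploited in \cite[Proposition~4.2.1]{RSW}, and our version only needs to keep track of the $T$-equivariant structure, which is automatic since all maps involved are $T$-equivariant.

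\textbf{Key steps, in order.} First, I would record the factorization $i_v^! i_{w,!} = i_v^! j_{w,*} (j_w)^* i_{w,!}$ after observing $i_{w,!} = j_{w,*}\, \iota_!$ where $\iota\colon \Xw \hookrightarrow \Xwcl$ is the open cell inside its closure (using that $j_w$ is proper, so $j_{w,!}=j_{w,*}$); thus $i_{w,!}\kuw$ is supported on $\Xwcl$. Second, inside $\Xwcl$ restrict attention to the locally closed piece containing $\Xv$: form the pullback square with corner $\Xv \cap \Xwcl$ and corner $\Rich_v^w$ sitting as the closed locus in $\Xv$ where the relevant opposite Borel condition holds, and use proper base change along the square expressing $\Rich_v^w = \Xv \times_{\Xwcl} (\text{closed Schubert stratum through }v)$ to rewrite $i_v^! i_{w,!}\kuw$ as a $!$-restriction of a constant sheaf. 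Third — this is the crucial geometric lemma — I would use that the inclusion $\Rich_v^w \hookrightarrow \Xv$ is (after a suitable affine fibration identification) the pullback of a point inclusion along a trivial affine-space bundle, so that $i_v^! (\text{const on }\Xv) \cong a^*(\text{const on }\pt)$ shifted and twisted, with shifts/twists cancelling exactly because the $\Delta_v$ normalization absorbs the $\ell(v)$; then push forward along $b$ to produce $a^* b_! \ku_{\Rich_v^w}$. I would conclude by noting each identification above is $T$-equivariant and preserves the Tate twist bookkeeping, which is exactly what the final statement demands (no shift or twist appears in the statement, consistent with the normalizations of $\Delta_v, \Delta_w$).

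\textbf{Main obstacle.} The routine parts are the base-change invocations; the real work is the geometric identification in the third step — exhibiting $\Rich_v^w$ as the zero-section-type locus inside $\Xv$ in a way that makes $i_v^!$ turn into $a^*$ and correctly matches up the cohomological degree shift $\ell(w) - \ell(v) = \lvw$ that the statement implicitly carries (recall $\Delta_v$ was normalized with $[\ell(v)](\ell(v)/2)$, and those shifts have already been stripped in the computation preceding the lemma, so the lemma itself has no shift). Here I would lean directly on \cite[Lemma~4.3.1 and Proof of Proposition~4.3.6]{RSW} — the recursive structure $\Rich_v^w = U \sqcup Z$ with $Z \cong Y' \times \C$, $U \cong Y'' \times \C^\ast$ is the combinatorial shadow of the same affine-fibration geometry — or reprove the needed special case: the intersection $BwB \cap B_- vB$, pushed into $BvB/B = \Xv \cong \C^{\ell(v)}$, is cut out by the vanishing/non-vanishing of generalized minors, and one checks the $!$-pullback from $\Xv$ of the constant sheaf on the ambient stratum agrees with $a^* b_! \ku_{\Rich_v^w}$ using that $a$ is a trivial $\C^{N}$-bundle and invoking smooth (co)base change. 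Since $\Xv$ is smooth of pure dimension, $i_v^!$ differs from $i_v^*$ only by a constant shift and twist on each connected component, which is harmless after the normalization, so the equivariant statement follows from the non-equivariant one in \cite{RSW} by the general principle that all these functors commute with $\For\colon \Db_T \to \Db$ and the maps are $T$-equivariant.
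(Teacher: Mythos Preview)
Your outline misidentifies the geometry and therefore misses the two ingredients on which the argument actually rests.

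First, a confusion: $\Rich_v^w=\Xw\cap\Xvm$ lives in the \emph{opposite} Schubert cell $\Xvm$, not in $\Xv$; the sentence ``exhibiting $\Rich_v^w$ as the zero-section-type locus inside $\Xv$'' and the earlier ``$\Rich_v^w=\Xwcl\cap\overline{\Xvm}$ inside $\Xv$'' are simply false. Relatedly, your claim that ``since $\Xv$ is smooth, $i_v^!$ differs from $i_v^*$ only by a constant shift and twist'' is wrong: that holds only when the inclusion is transverse to the sheaf in question, not merely when the source is smooth. Here $i_{w,!}\kuw$ has singular support along the Schubert stratification, so $i_v^!$ and $i_v^*$ are genuinely different.

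The paper's argument hinges on two points you do not invoke. One is the Kazhdan--Lusztig product decomposition: the map $\phi_v\colon \Xvm\times\Xv\to X$, $(xvB,yvB)\mapsto yxvB$, is a $T$-equivariant open embedding onto $vB_-B/B$ and restricts to open embeddings $j\colon\Rsemi_v^w\times\Xv\hookrightarrow\Xwcl$ and $j'\colon\Rich_v^w\times\Xv\hookrightarrow\Xw$. Base change along the resulting Cartesian square rewrites $i_v^! i_{w,!}\kuw$ as $k^! j'_{w,!}\ku_{\Rich_v^w\times\Xv}$, where $k\colon\Xv\hookrightarrow\Rsemi_v^w\times\Xv$ is the inclusion of the zero slice. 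The second, and crucial, step is Soergel's contraction lemma \cite[Proposition~1]{Soe89}: because there is a $\C^\ast\subset T$ contracting $\Rsemi_v^w\times\Xv$ onto $\Xv$, the natural map $k^!\to\pi_!$ is an isomorphism. This is what converts the $!$-restriction into a $!$-pushforward along the projection $\pi\colon\Rsemi_v^w\times\Xv\to\Xv$, after which a routine base change along the product square gives $a^*b_!\ku_{\Rich_v^w}$. The recursive structure from \cite[Lemma~4.3.1]{RSW} that you cite is used elsewhere in the paper (for Hodge--Tate type), not here, and it does not substitute for either ingredient above.
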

\begin{proof}
Recall that $\Xw:=(BwB)/B$. Denote $\Xvm:=(B_-vB)/B$, and thus $\Rich_v^w=\Xw\cap \Xvm$. Let $\Xwcl:=\bigsqcup_{u\leq w}\Xu$ be the closure of $\Xw$, and denote $\Rsemi_v^w:=\Xwcl\cap \Xvm$. A diagram of the various inclusions between the spaces $\Xw$, $\Xv$, $\Xwcl$, and $\Xq=G/B$ is given in \figref{fig:RSW}(left). With this notation, we have isomorphisms
\begin{equation*}
  i_v^!i_{w,!}\kuw\cong i_{v,w}^!\ibar_w^!\ibar_{w,!}j_{w,!} \kuw\cong i_{v,w}^!j_{w,!} \kuw.
\end{equation*}
The first isomorphism follows from the usual composition rules for sheaf operations; see e.g.~\cite[Section~1.4.2]{BL}. The second isomorphism follows from $\ibar_w^!\ibar_{w,!} \simeq \id$.

\begin{figure}
\setlength{\tabcolsep}{6pt}
\scalebox{0.85}{
\begin{tabular}{ccc}
$\displaystyle 
\xymatrix{
\Xw \ar@{^{(}->}[r]^-{j_w} \ar@{^{(}->}@/^1.5pc/[rr]^-{i_w} & \Xwcl \ar@{^{(}->}[r]^-{\ibar_w} & X \\
& \Xv \ar@{^{(}->}[u]^-{i_{v,w}} \ar@{^{(}->}[ur]_-{i_v} &
}$ & 

$\displaystyle 
\xymatrix{
\Xw \ar@{}[dr]|{\square} \ar@{^{(}->}[r]^-{j_w} & \Xwcl \\
\Rich_v^w \times \Xv \ar@{^{(}->}[u]^-{j'} \ar@{^{(}->}[r]^-{j_w'} & \Rsemi_v^w \times \Xv \ar@{^{(}->}[u]^-{j} \ar@<1ex>[d]^-{\projvw} \\
& \Xv \ar@{->}[u]^-{k} \ar@/_3pc/[uu]_-{i_{v,w}}
}$ &

$\displaystyle 
\xymatrix{
\Rich_v^w \times \Xv \ar[r]^-{\qmap} \ar[d]_-{\projvw \circ j_w'} \ar@{}[dr]|{\square} & \Rich_v^w \ar[d]^-{b} \\
\Xv \ar[r]^-{a} & \pt
}$
\end{tabular}
}
  \caption{\label{fig:RSW} Three commutative diagrams from~\cite{RSW}.}
\end{figure}
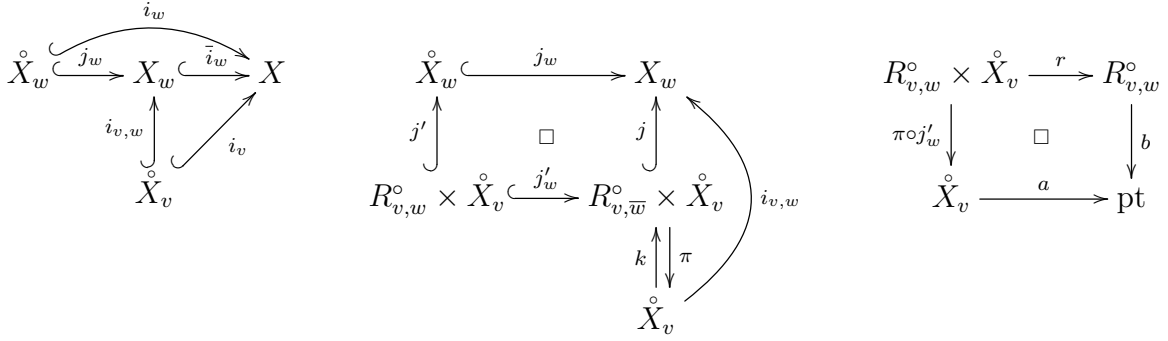

Consider the commutative diagram in \figref{fig:RSW}(middle). The map $k$ is given by $k(zB/B) = (vB/B,zB/B)$ and $\proj$ is the obvious projection map. The map $j'_w$ has two components: the first one is the inclusion $\Rich_v^w\hookrightarrow \Rsemi_v^w$, and the second one is the identity map $\Xv\xrasim\Xv$. The map $j_w:\Xw\hookrightarrow \Xwcl$ is the inclusion map as above. It remains to define the maps $j$ and $j'$. They have been considered in~\cite[Section~1.4]{KL2}; see for example~\cite{FS,KWY} for further details. Observe that $\Rich_v^w\subset\Rsemi_v^w\subset \Xvm$. The maps $j,j'$ are the restrictions of a map $\BA:\Xvm\times \Xv \hookrightarrow \Xq$ defined as follows. Recall that $U\subset B$ and $U_-\subset B_-$ are the unipotent radicals of $B$ and $B_-$. Any element of $\Xvm$ can be written uniquely as $xvB/B$ for an element $x\in U_-\cap vU_-v^{-1}$. (These objects do not depend on the choice of representative $\dv$ of $v$.) Similarly, any element of $\Xv$ can be written uniquely as $yvB/B$ for an element $y\in U\cap vU_-v^{-1}$. We then define 
\begin{equation*}
  \BA(xvB/B,yvB/B):=yxvB/B.
\end{equation*}
It is not hard to see (using \emph{Gaussian decomposition}) that the map $\BA$ is injective and yields an isomorphism $\BA:\Xvm\times \Xv \xrasim vB_-B/B$. Moreover, if $xvB/B\in \Xu$ for some $u\in W$, then we have $yxvB/B\in \Xu$ since $y\in U\subset B$. Thus, $\BA$ restricts to an inclusion $\Rich_v^u\times \Xv\hookrightarrow \Xu$  for each $u\in W$. The map $j'$ is this inclusion for the special case $u:=w$.  The map $j$ is the restriction of $\BA$ to the union of $\Rich_v^u\times \Xv$ over all $u\in W$ satisfying $v\leq u\leq w$.

The torus $T$ acts on each space in each commutative diagram in \cref{fig:RSW}. The action on the direct products $\Rich_v^w\times \Xv$ and $\Rsemi_v^w\times \Xv$ is given by $t\cdot (aB/B,bB/B)=(taB/B,tbB/B)$ for $t\in T$. Notice that conjugation by $t$ preserves each of the subgroups $U$, $U_-$, and $vU_-v^{-1}$. Therefore for $x\in U_-\cap vU_-v^{-1}$, we have $txvB/B=txt^{-1}vB/B$, where $txt^{-1}\in  U_-\cap vU_-v^{-1}$, and similarly for $y\in U\cap vU_-v^{-1}$. Thus, the map $\BA$ is $T$-equivariant:
\begin{equation*}
  \BA(txvB/B,tyvB/B)=tyxvB/B.
\end{equation*}
 We conclude that all maps in \cref{fig:RSW} are $T$-equivariant.

The maps $j,j',j_w,j'_w$ in \figref{fig:RSW}(middle) form a Cartesian square. We get the following isomorphisms:
\begin{equation*}
   i_{v,w}^!j_{w,!} \kuw\cong k^!j^!j_{w,!} \kuw\cong k^!j^*j_{w,!} \kuw \cong   k^!j'_{w,!} j'^* \kuw \cong k^!j'_{w,!} \ku_{\Rich_v^w\times \Xv}.
\end{equation*}
The first and the last isomorphisms are trivial.  The second isomorphism follows from the fact that $j$ is an open embedding~\cite[Section~1.4.5]{BL}. The third isomorphism is the base change theorem~\cite[Section~1.4.6]{BL}. 

We now apply \cref{prop:Soe} with $q =k$, and $p = \proj$, and $\Fcal = j'_{w,!} \ku_{\Rich_v^w\times \Xv}$. The $\Gm$-action is the composition of the $T$-action with the cocharacter $\chi_\rho^\vee:\Gm\to T$, where $\rho$ is a strictly dominant coweight satisfying $\<\rho,\alpha\>>0$ for any positive root $\alpha$. Since $\rho$ is strictly dominant, this $\Gm$-action extends to an $\A^1$-action by the same argument as in~\cite[Section~8.2]{GKL3}. The $\A^1$-action is obviously compatible with the $T$-action, and thus \cref{prop:Soe} applies.

%Next, we would like to apply~\cite[Proposition~1]{Soe89}. The results of~\cite{Soe89} are stated in the non-equivariant setting. We have observed that all involved maps are $T$-equivariant. Moreover, the ``contractive'' action (see e.g.~\cite[Section~8.2]{GKL3}) is itself part of the $T$-action, and therefore it commutes with the $T$-action. Finally, all maps are defined over $\Z$, and therefore can be defined over $\F_q$. From here, it is straightforward to check that the proof of~\cite[Proposition~1]{Soe89} can be lifted to the equivariant setting. 
%
%We conclude that the morphism $k^!\to\proj_!$ (obtained by composing the adjunction morphism $k_!k^!\to \id$ with $\proj_!$) applied to $\Fcal = j'_{w,!} \ku_{\Rich_v^w\times \Xv}$ is an isomorphism, and thus
We obtain
\begin{equation*}
  k^!j'_{w,!} \ku_{\Rich_v^w\times \Xv} \cong \projvw_!j'_{w,!} \ku_{\Rich_v^w\times \Xv}.
\end{equation*}

Applying base change to the Cartesian diagram in \figref{fig:RSW}(right), we get
\begin{equation*}
  \projvw_!j'_{w,!} \ku_{\Rich_v^w\times \Xv} \cong   (\projvw\circ j'_{w})_! \ku_{\Rich_v^w\times \Xv} \cong  (\projvw\circ j'_{w})_! \qmap^*\ku_{\Rich_v^w}  \cong a^*b_!\ku_{\Rich_v^w}. \qedhere
\end{equation*}
\end{proof}

So far we have constructed an isomorphism
\begin{equation}\label{eq:RSW_aab}
  \begin{aligned}
 \ExtDC^{m,((\wt-\lvw)/2)}(\Delta_v,\Delta_w) &\cong  \H_T^{m+\lvw,(\wt/2)}(\Xv,a^*b_!\ku_{\Rich_v^w}) \\ &\cong \H_T^{m+\lvw,(\wt/2)}(\pt,a_*a^*b_!\ku_{\Rich_v^w}).
\end{aligned}
\end{equation}
Since $\Xv$ is an affine space that is $T$-equivariantly homotopy-equivalent to a point (cf. \cite[Lemma 1]{Soe89}), the adjunction $(a^*,a_*)$ induces an isomorphism of functors $a_* a^* \to \id$, and thus 
\begin{equation}\label{eq:RSW_b}
  \H_T^{m+\lvw,(\wt/2)}(\pt,a_*a^*b_!\ku_{\Rich_v^w})\cong \H_T^{m+\lvw,(\wt/2)}(\pt,b_!\ku_{\Rich_v^w}),
\end{equation} 
which equals to $H_{T,c}^{m+\lvw,(\wt/2,\wt/2)}(\Rich_v^w)$ by~\eqref{eq:equivcompact}.  All the isomorphisms are natural (coming from sheaf operations) and thus \eqref{eq:RSW_b} is compatible with the action of $R$ on both sides.
This completes the proof of \cref{prop:RSW}. \qed

\section{Proof of Proposition~\ref{prop:degr}}\label{sec:sheafproof}

Recall from \cref{rmk:kC} that we switch to working with Soergel bimodules over $\k$, so that e.g. $R=\k[\h]$. By definition, given a (graded) Soergel bimodule $B=\bigoplus_r B^r$, $\Fr$ acts diagonally on each $B^r$ by multiplication by $q^r$. 

Equivariant derived categories are identified with categories of dg-modules in the work of Bernstein-Lunts \cite{BL}.  By using the formalism of Yun \cite[Appendix B]{BY}, we avoid explicit mention of dg-modules in the situation of interest to us.

An $(R, \Fr)$-module is an $R$-module $M$ equipped with an action of $\Z\Fr$, such that $\Fr(r \cdot x) = \Fr(r) \cdot \Fr(x)$ for $x\in M$ and $r\in R$.  The twist functor $\{m/2\}$ sends a module $M$ to the module $M\{m/2\}$ where the action of $\Fr$ has been multiplied by $q^{m/2}$.
Let $\Dperf(R , \Fr)$ denote the full triangulated subcategory of the derived category of $(R, \Fr)$-modules generated by half-integer twists of $R$.  According to \cite[Corollary~B.4.1]{BY}, we have an equivalence of triangulated categories $\Db_{B}(\pt, \k) \simeq \Dperf(R, \Fr)$.  Similarly, we define $\Dperf(R \otimes R, \Fr)$ and have an equivalence $\Db_{B \times B}(\pt, \k) \simeq \Dperf(R \otimes R, \Fr)$.

Recall that $\Hom_{\SBim}$ includes only bimodule morphisms of degree zero. We let 
\begin{equation*}
  \Hom_{R\otimes R}(B,B'):= \bigoplus_{\wt\in\Z} \Hom_{\SBim}(B,B'\psa{-\wt/2})
\end{equation*} 
denote the space of morphisms of arbitrary degree. Thus, $\Hom_{R\otimes R}(B,B')$ is an $(R,\Fr)$-module. Given a complex $C^\bul\in\KSBim$, we regard $\Hom_{R \otimes R}(R, C^\bul)$ (obtained by applying $\Hom_{R\otimes R}(R,-)$ term-wise) as a complex of $(R,\Fr)$-modules, treated as an element of $\Dperf(R, \Fr)$.

Now, for $\Fcal,\Gcal \in \DB$, let $\sRHom(\Fcal,\Gcal)\in \DB$ denote the internal derived hom.   With $\pi:G \to \pt$, we define
$$
\RHom(\Fcal,\Gcal)=\RHom_{X}(\Fcal,\Gcal):= \pi_* \sRHom(\Fcal,\Gcal) \in \Db_{B}(\pt,\k).
$$
Thus, $\H^\bullet_B(\RHom(\Fcal,\Gcal)) = \Ext^\bullet(\Fcal,\Gcal)$.  We shall establish the following strengthening of Proposition~\ref{prop:degr}.
\begin{proposition}\label{prop:RHom}
We have $$\RHom_{X}(\Delta_v(-\ell(v)/2),\Delta_w(-\ell(w)/2)) \simeq \Hom_{R \otimes R}(R, \FR_{v,w})$$ inside $\Dperf( R, \Fr)$.
\end{proposition}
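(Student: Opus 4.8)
\textbf{Proof strategy for Proposition~\ref{prop:RHom}.}
The plan is to establish the equivalence $\RHom_X(\Delta_v(-\ell(v)/2),\Delta_w(-\ell(w)/2))\simeq \Hom_{R\otimes R}(R,\FR_{v,w})$ in $\Dperf(R,\Fr)$ by reducing to the well-known dictionary between Rouquier complexes and the equivariant derived category of the flag variety. Concretely, I would use the \emph{degrading functor} (or equivalently the equivalence of \cite[Appendix~B]{BY} combined with the mixed equivariant formalism of \cite{AR1,AR2,Bei,Rid}) to identify the category of Soergel bimodules with a subcategory of $\Db_{B\times B}(G,\k)\simeq \DB$, under which the Bott--Samelson bimodule $B_s$ corresponds to the (appropriately normalized) pushforward--pullback bimodule along the minimal parabolic $G/B\to G/P_s$, and the Rouquier complexes $\FR(\sigma_i)$, $\FR(\sigma_i^{-1})$ correspond to $\Delta_{s_i}$, $\Nabla_{s_i}$ (up to the Tate twists and shifts built into our normalization). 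I would then invoke the compatibility of this dictionary with tensor product: $\FR(\beta(w))\otimes_R\FR(\beta(v)^{-1})$ corresponds to the convolution $\Delta_w \ast \Nabla_{v^{-1}}$, which after the standard manipulation $\RHom(\Delta_v,\Delta_w)\cong \RHom(\one,\Delta_w\ast\Nabla_{v^{-1}})$ (using that $\Nabla_v$ is inverse to $\Delta_{v^{-1}}$ under convolution, and the biadjointness~\eqref{eq:biadj1}--\eqref{eq:biadj2} on the Soergel side) matches the two sides.

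The key steps, in order, would be: (1) Recall the equivalence $\Db_{B\times B}(\pt,\k)\simeq\Dperf(R\otimes R,\Fr)$ and the fully faithful functor $\SBim\hookrightarrow \DB$ (via $\Db_{B\times B}(G,\k)$) sending $B_s$ to $\Delta_s\ast\Nabla_s$-type objects and intertwining $\Hom_{R\otimes R}$ with $\RHom$; this is essentially \cite[Corollary~B.4.1]{BY} together with Soergel's Erweiterungssatz. (2) Identify the image of $\FR(\sigma_i)$ with $\Delta_{s_i}$ and of $\FR(\sigma_i^{-1})$ with $\Nabla_{s_i}$, carefully tracking the Tate twists so that $\FR(\beta(w))\mapsto \Delta_w$ and $\FR(\beta(v)^{-1})\mapsto \Nabla_{v^{-1}}$; here Proposition~\ref{prop:Rou} guarantees the convolution product is well-defined and word-independent, matching the fact that $\Delta_w,\Nabla_w$ are themselves independent of reduced words. (3) Compute $\Hom_{R\otimes R}(R,\FR_{v,w})\simeq \RHom_X(\one,\Delta_w\ast\Nabla_{v^{-1}})$ using $\HH^0(B)=\Hom_{\SBim}(R,B)$ from~\eqref{eq:HH0_Hom} and the identification $\one\mapsto R$. (4) On the geometric side, use the convolution adjunction $\RHom_X(\one,\Delta_w\ast\Nabla_{v^{-1}})\simeq \RHom_X(\Delta_v,\Delta_w)$, which follows since convolution with $\Nabla_{v^{-1}}$ is right adjoint (equivalently inverse up to twist) to convolution with $\Delta_v$. (5) Reconcile the normalization shifts: the Tate twists $(-\ell(v)/2)$ and $(-\ell(w)/2)$ in the statement are exactly what is needed to pass between the geometric normalization of $\Delta_w$ (which carries $[\ell(w)](\ell(w)/2)$) and the combinatorial normalization of $\FR(\beta(w))$.

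The main obstacle I anticipate is step (2)--(5): getting all the \emph{gradings and Tate twists} to line up precisely. The paper uses several normalization conventions simultaneously --- the polynomial degree on Soergel bimodules with $\deg\h^\ast=2$, the cohomological shift $[m]$, the half Tate twist $(1/2)$, the grading shift $\psa{m/2}$ on bimodules, and the shifts $[\ell(w)](\ell(w)/2)$ built into $\Delta_w$. Matching $\HHX{k}{\wt/2}_{v,w}\cong\ExtDC^{\wt+k,((\wt-\lvw)/2)}(\Delta_v,\Delta_w)$ from Proposition~\ref{prop:degr} requires the bookkeeping in Proposition~\ref{prop:RHom} to produce exactly this reindexing. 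I would handle this by first checking the rank-one cases ($v=w=\id$; $v=\id,w=s_i$; $v=w=s_i$) against the explicit Rouquier complexes~\eqref{eq:Rouquier_s} and the standard/costandard sheaves, pinning down the twist in the dictionary $B_s\leftrightarrow(\text{pushforward along }G/P_s)$, and then propagating multiplicatively via the monoidal structure. A secondary subtlety is ensuring the degrading/realization functor is monoidal and compatible with $\RHom$ in the \emph{mixed} equivariant setting over $\F_q$ (not merely the Hodge setting); for this I would cite \cite{AR2,BY} and the fact established in Proposition~\ref{prop:RSW}'s proof that all relevant maps and objects are defined over $\F_q$, so that the Frobenius structure corresponds to the internal grading on $R$.
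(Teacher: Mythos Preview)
Your strategy is essentially the paper's: it establishes Proposition~\ref{prop:stand} (the realization functor $\real\circ\H_B^{-1}$ sends $\FR(w)$ to $\Delta_w(-\ell(w)/2)$ and $\FR(v)^{-1}$ to $\nabla_v(\ell(v)/2)$), proves the convolution adjunction Lemma~\ref{lem:RHomad} reducing $\RHom(\Delta_v,\Delta_w)$ to $\RHom(\Delta_e,\Delta_w\star\nabla_v)\simeq\iota_e^*(\Delta_w\star\nabla_v)$, and then identifies $\iota_e^*$ with $\Hom_{R\otimes R}(R,-)$ termwise on $\Semis_B(X)$ (Lemma~\ref{lem:pureagree}) before extending to complexes via the realization-functor comparison of Propositions~\ref{prop:AMRW1}--\ref{prop:AMRW2}. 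What you flag as a ``secondary subtlety'' (compatibility of the dictionary with $\RHom$ as functors into $\Dperf(R,\Fr)$, not just on cohomology) is in fact where the bulk of the paper's technical effort goes, whereas the grading bookkeeping you anticipate as the main obstacle falls out rather directly once the twists in Proposition~\ref{prop:stand} are pinned down.
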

 Proposition~\ref{prop:degr} follows 
from Proposition~\ref{prop:RHom} by taking cohomology $\H_B: \Db_{B}(\pt,\k) \to (R_{\rm gr}, \Fr)-\mod$, where $(R_{\rm gr}, \Fr)-\mod$ denotes (cohomologically) graded $R$-modules equipped with a $\Fr$-action.  The cohomological degree $r+k$ on the right-hand side of \eqref{eq:HHExt} appears since the functor $\H: \Dperf(R,\Fr) \to (R_{\rm gr},\Fr)-\mod$ sends the sum of the two gradings to the cohomological one; see \cite[Corollary B.4.1(1)]{BY}. 

\subsection{Realization functors}
We record two results on realization functors taken from \cite{AMRW2}.  See also \cite{Bei, Rid}. For a definition of a filtered version of a triangulated category, see~\cite[Definition~A.1]{Bei}.
\begin{proposition}[{\cite[Proposition~2.2]{AMRW2}}]\label{prop:AMRW1}
Let $\Tcal$ be a triangulated category that admits a filtered version $\tilde \Tcal$ and let $\Acal \subset \Tcal$ be a full additive subcategory that admits no negative self-extensions.  Then there is a functor of triangulated categories
$$
\real: \Kb \Acal \to \Tcal
$$
whose restriction to $\Acal$ is the inclusion functor.
\end{proposition}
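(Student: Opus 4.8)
The statement is exactly \cite[Proposition~2.2]{AMRW2}, so in the paper I would simply invoke it; nonetheless, here is how the argument goes. The plan is to reconstruct Beilinson's realization-functor construction \cite[\S3.1]{BBD} (see also \cite{Bei,Rid}), which applies verbatim in this generality because the only feature of the heart of a $t$-structure that it uses is the vanishing of negative self-extensions. Recall that a filtered version $\tilde\Tcal$ of $\Tcal$ (in the sense of \cite[Definition~A.1]{Bei}) carries full subcategories $\tilde\Tcal_{\le n},\tilde\Tcal_{\ge n}$, a filtration-shift autoequivalence $s$, a ``forget the filtration'' functor $\omega\colon\tilde\Tcal\to\Tcal$, a fully faithful embedding $\iota\colon\Tcal\hookrightarrow\tilde\Tcal_{\le 0}\cap\tilde\Tcal_{\ge 0}$ with $\omega\iota\cong\id$, associated-graded functors $\gr_n$, and truncation functors $\sigma_{\le n},\sigma_{\ge n}$ fitting each $X\in\tilde\Tcal$ into a triangle $\sigma_{\ge n+1}X\to X\to\sigma_{\le n}X\to(\sigma_{\ge n+1}X)[1]$. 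The two facts I would lean on are: (i) $\Hom_{\tilde\Tcal}(X,Y)=0$ if $X$ has filtration weights $\ge a$ and $Y$ has weights $\le b$ with $a>b$; and (ii) when $X$ is pure of some weight, $\Hom_{\tilde\Tcal}(X,Y)$ can be computed after applying $\omega$ and passing to the relevant graded/truncated pieces of $Y$, so that all ambiguities in such identifications are measured by groups $\Hom_\Tcal(A,A'[k])$ with $A,A'\in\Acal$ and $k<0$, which vanish by hypothesis.

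First I would build, by induction on the width, a functor $\widetilde{(-)}\colon C^{b}(\Acal)\to\tilde\Tcal$ lifting complexes. A complex $A[-n]$ concentrated in degree $n$ goes to the pure-weight-$n$ lift $\iota_n(A[-n])$ of $A[-n]$. For $A^\bullet$ with terms in degrees $[a,b]$, $a<b$, I would define $\widetilde{A^\bullet}$ by the triangle
\[
\widetilde{\sigma_{>a}A^\bullet}\longrightarrow\widetilde{A^\bullet}\longrightarrow\iota_a(A^a[-a])\xrightarrow{\tilde\delta}\widetilde{\sigma_{>a}A^\bullet}[1],
\]
where $\widetilde{\sigma_{>a}A^\bullet}$ is the lift of the stupid truncation (available by induction, with weights in $[a+1,b]$) and $\tilde\delta$ is the unique lift --- under the identification provided by (i)--(ii) --- of the connecting morphism $A^a[-a]\to(\sigma_{>a}A^\bullet)[1]$ in $\Kb\Acal$ attached to the termwise-split exact sequence $0\to\sigma_{>a}A^\bullet\to A^\bullet\to A^a[-a]\to0$; concretely $\tilde\delta$ is governed by the differential $d^a$ of $A^\bullet$. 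Uniqueness of $\tilde\delta$ --- hence functoriality of $\widetilde{(-)}$, defined compatibly on morphisms by the same recipe --- is exactly the vanishing of the negative $\Hom$-groups in (ii). By construction $\gr_n\widetilde{A^\bullet}\cong A^n[-n]$, and $\widetilde{(-)}$ commutes with the shift of complexes up to a filtration shift. I then set $\real:=\omega\circ\widetilde{(-)}\colon\Kb\Acal\to\Tcal$, once it has been checked to descend to the homotopy category.

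Next I would verify the three stated properties. That $\real$ restricts to the inclusion on $\Acal$ is immediate: $\widetilde{A}=\iota(A)$ and $\omega\iota\cong\id$. That $\real$ factors through $\Kb\Acal$: if $f$ and $g$ are homotopic chain maps, a chosen null-homotopy for $f-g$ forces $\omega\widetilde{f}=\omega\widetilde{g}$, the relevant discrepancy again lying in a negative-degree $\Ext$-group between objects of $\Acal$, hence zero. That $\real$ is triangulated follows because $\omega$ is triangulated and $\widetilde{(-)}$ sends a mapping-cone triangle $A^\bullet\xrightarrow{f}B^\bullet\to\operatorname{Cone}(f)\to A^\bullet[1]$ to the $\tilde\Tcal$-cone triangle of $\widetilde{f}$; the compatibility of $\widetilde{(-)}$ with cones is forced, step by step, by the same combination of (i) and the Ext-vanishing that made $\widetilde{(-)}$ well defined to begin with.

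The hard part will be the bookkeeping needed to make the inductive construction \emph{coherent}: one has to choose the lifts $\widetilde{A^\bullet}$ and $\widetilde{f}$ simultaneously for all complexes and all morphisms, and verify functoriality compatibly with composition, shifts, cones, and homotopies --- this is the technical content of \cite[\S3.1]{BBD}, reworked in the additive-subcategory setting in \cite{AMRW2,Bei,Rid}. Everything reduces to the single numerical input that $\Hom_\Tcal(A,A'[k])=0$ for $A,A'\in\Acal$, $k<0$, which annihilates every obstruction and makes every identification canonical; the sketch above is meant to indicate why that is the natural hypothesis on $\Acal$.
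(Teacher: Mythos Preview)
The paper does not prove this proposition at all: it is simply quoted from \cite[Proposition~2.2]{AMRW2} (with pointers to \cite{Bei,Rid}) and used as a black box. You correctly recognize this at the outset, and your sketch of the Beilinson realization-functor construction is a reasonable outline of the underlying argument, so there is nothing to compare.
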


\begin{proposition}[{\cite[Proposition~2.3]{AMRW2}}]\label{prop:AMRW2}
Let $\Tcal_1$ and $\Tcal_2$ be triangulated categories admitting a filtered version, and let $\Acal_1 \subset \Tcal_1$ and $\Acal_2 \subset \Tcal_2$ be two full additive subcategories admitting no negative self-extensions.  Let $F: \Tcal_1 \to \Tcal_2$ be a triangulated functor that restricts to an additive functor $F_0:\Acal_1 \to \Acal_2$.  If $F$ lifts to a functor $\tilde F: \tilde \Tcal_1 \to \tilde \Tcal_2$, then the following diagram commutes up to natural isomorphism:
\[ \begin{tikzcd}
\Kb\Acal_1 \arrow{r}{\real} \arrow[swap]{d}{\Kb F_0} & \Tcal_1 \arrow{d}{F} \\%
\Kb\Acal_2 \arrow{r}{\real}& \Tcal_2
\end{tikzcd}
\]
\end{proposition}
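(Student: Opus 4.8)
The plan is to unwind the construction of the realization functor supplied by \cref{prop:AMRW1} and to observe that it is assembled entirely from operations---placing an object in a single filtered degree, shifting the filtration by one step, and forming cones---each of which a lift $\tilde F$ preserves (up to coherent natural isomorphism); commutativity of the square is then automatic.

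First I would recall how $\real$ is produced in \cref{prop:AMRW1}. A filtered version $\tilde\Tcal$ of $\Tcal$ comes equipped with a forgetful functor $\omega\colon\tilde\Tcal\to\Tcal$, a filtration-shift autoequivalence $s$, full subcategories $\tilde\Tcal^{\le n}$ and $\tilde\Tcal^{\ge n}$, and a fully faithful functor $\sigma\colon\Tcal\to\tilde\Tcal$ identifying $\Tcal$ with the ``weight-$0$ slice'' $\tilde\Tcal^{\le0}\cap\tilde\Tcal^{\ge0}$, with $\omega\sigma\cong\id$. Given a bounded complex $X^\bullet=(\cdots\to X^i\xrightarrow{d^i}X^{i+1}\to\cdots)$ with all $X^i\in\Acal$, one builds inductively an object $\widehat{X}\in\tilde\Tcal$: start with $s^a\sigma(X^a)$ for the lowest nonzero degree $a$, and at each step form the cone of a canonical morphism relating $s^{i+1}\sigma(X^{i+1})$ (with an appropriate shift) to the partial object built so far, encoding the differential $d^i$. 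The hypothesis that $\Acal$ has no negative self-extensions is what makes this morphism, and hence $\widehat{X}$, unique up to canonical isomorphism and functorial in $X^\bullet$; one then sets $\real(X^\bullet):=\omega(\widehat{X})$.

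Next I would record what a lift $\tilde F\colon\tilde\Tcal_1\to\tilde\Tcal_2$ of $F$ means: it is a filtered triangulated functor, i.e.\ it is triangulated, intertwines $s_1$ with $s_2$ and $\omega_1$ with $\omega_2$ (so $\omega_2\tilde F\cong F\omega_1$) up to natural isomorphism, and carries $\tilde\Tcal_1^{\le n},\tilde\Tcal_1^{\ge n}$ into $\tilde\Tcal_2^{\le n},\tilde\Tcal_2^{\ge n}$. From these compatibilities it follows that $\tilde F$ also intertwines $\sigma_1$ with $\sigma_2$, because the weight-$0$ slice is characterized by the filtration subcategories together with $\omega\sigma\cong\id$. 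Since $F$ restricts to $F_0\colon\Acal_1\to\Acal_2$ and $\Acal_2$ again has no negative self-extensions, $\tilde F$ respects the vanishing conditions used to pin down $\widehat{(-)}$. I would then prove, by induction on the length of $X^\bullet$, a natural isomorphism $\tilde F(\widehat{X})\cong\widehat{F_0(X^\bullet)}$ in $\tilde\Tcal_2$: the base case is $\tilde F(s_1^a\sigma_1 X^a)\cong s_2^a\sigma_2(F_0 X^a)$, and the inductive step applies the triangulated functor $\tilde F$ to the defining cone triangle, using that $\tilde F$ commutes with $s$ and $\sigma$ and sends the morphism classifying $d^i$ to the one classifying $F_0 d^i$. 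Applying $\omega_2$ and using $\omega_2\tilde F\cong F\omega_1$ then yields natural isomorphisms
\[
F(\real_1 X^\bullet)=F(\omega_1\widehat{X})\cong\omega_2(\tilde F\widehat{X})\cong\omega_2\bigl(\widehat{F_0(X^\bullet)}\bigr)=\real_2(\Kb F_0\,X^\bullet),
\]
which assemble into the desired natural isomorphism $F\circ\real\cong\real\circ\Kb F_0$ of triangulated functors.

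The hard part will be bookkeeping rather than any conceptual difficulty: one must fix precise axioms for ``filtered version'' and ``lift'' (Beilinson's $f$-categories, as used in \cref{prop:AMRW1}), check that the inductive construction of $\widehat{X}$ is genuinely canonical and functorial---this is the step that actually consumes the no-negative-self-extension hypothesis, for $\Acal_1$ on the source side and, after applying $F_0$, for $\Acal_2$ on the target side---and then carry the coherence of the chain of natural isomorphisms $\omega\tilde F\cong F\omega$, $\tilde F s\cong s\tilde F$, $\tilde F\sigma\cong\sigma\tilde F$ through the induction so that the final isomorphism is natural in $X^\bullet$ and compatible with shifts and triangles. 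The guiding principle---that $\real$ is built by ``formal'' manipulations in the filtered category and therefore commutes with any filtered functor---is precisely what makes the statement true, so no input beyond the construction recalled from \cref{prop:AMRW1} should be needed.
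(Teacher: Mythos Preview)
The paper does not prove this proposition; it is quoted verbatim from \cite[Proposition~2.3]{AMRW2} and used as a black box, so there is no ``paper's own proof'' to compare against.

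That said, your proposal is the standard and correct strategy: Beilinson's realization functor is built by lifting a complex to the filtered category via repeated cones of morphisms that are uniquely determined (thanks to the no-negative-self-extensions hypothesis), and a filtered lift $\tilde F$ transports this entire construction. The induction you sketch and the chain of isomorphisms $F\omega_1\cong\omega_2\tilde F$, $\tilde F s_1\cong s_2\tilde F$, $\tilde F\sigma_1\cong\sigma_2\tilde F$ are exactly what one needs. Your assessment that the difficulty is bookkeeping rather than a missing idea is accurate; this is precisely why the present paper simply cites the result rather than reproving it.
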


\subsection{Semisimple complexes}
Let $\Semis_B(X) \subset \Db_B(X,\k)$ denote the additive subcategory generated by semisimple complexes pure of weight 0.  Thus, an object of $\Semis_B(X)$ is a direct sum of the twisted intersection cohomology sheaves $\IC_w[n](n/2)$ for $w \in W$ and $n \in \Z$.

Recall from \cref{sec:soergel-bimodules} that $S_w \subset \BS_\uw$ denotes the indecomposable Soergel bimodule indexed by $w \in W$.

\begin{lemma}\label{lem:ICpure}
For $\Fcal, \Gcal \in \Semis_B(X)$, the $\Ext$-group $\ExtDC^i(\Fcal, \Gcal)$ is pure of weight $i$ for all $i\in\Z$.  
\end{lemma}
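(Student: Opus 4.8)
The claim is that for semisimple complexes $\Fcal,\Gcal$ pure of weight $0$ on the flag variety, the Ext-group $\ExtDC^i(\Fcal,\Gcal)$ is pure of weight $i$. By additivity of $\ExtDC$ and the definition of $\Semis_B(X)$, it suffices to treat the case $\Fcal = \IC_v$ and $\Gcal = \IC_w$ for $v,w\in W$, since every object of $\Semis_B(X)$ is a direct sum of Tate twists and cohomological shifts of such objects, and shifting/twisting both arguments shifts the weight and cohomological degree compatibly by~\eqref{eq:Ext_shifts}. So the plan is to prove that $\ExtDC^i(\IC_v,\IC_w)$ is pure of weight $i$ for all $i$.

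First I would recall the standard weight formalism from~\cite[Section 5]{BBD}: if $\Fcal$ has weights $\le a$ and $\Gcal$ has weights $\ge b$, then $\ExtDC^i(\Fcal,\Gcal) = \Hom(\Fcal,\Gcal[i])$ has weights $\ge b - a + i$ (because $\Gcal[i]$ has weights $\ge b+i$, and $\Hom$ from an object of weights $\le a$ into one of weights $\ge b+i$ lands in weights $\ge b+i-a$). Dually, if $\Fcal$ has weights $\ge a'$ and $\Gcal$ has weights $\le b'$ then $\ExtDC^i(\Fcal,\Gcal)$ has weights $\le b' - a' + i$. Apply this with $\Fcal = \IC_v$, $\Gcal = \IC_w$, both pure of weight $0$ (so $a = a' = b = b' = 0$): we get that $\ExtDC^i(\IC_v,\IC_w)$ has weights both $\ge i$ and $\le i$, hence is pure of weight $i$. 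The only genuine input needed here is that $\IC_w$ is pure of weight $0$, which is the Gabber purity theorem for intersection cohomology complexes (see~\cite[Corollaire 5.3.2, Théorème 5.4.5]{BBD}), together with the fact that $\sRHom$ and $\pi_*$ interact with weights as recorded in~\cite[5.1.14, 5.4.1]{BBD}: $\pi_*$ of an object of weights $\le a$ has weights $\le a$ when $\pi$ is proper (and $G/B \to \pt$ is proper), and $\sRHom(\Fcal,\Gcal)$ has weights $\le b - a$ when $\Fcal$ has weights $\ge a$, $\Gcal$ has weights $\le b$.

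Concretely, the cleanest route is: $\RHom_X(\IC_v,\IC_w) = \pi_* \sRHom(\IC_v,\IC_w)$; since $\IC_v$ is pure of weight $0$ (hence has weights $\ge 0$) and $\IC_w$ is pure of weight $0$ (hence weights $\le 0$), the object $\sRHom(\IC_v,\IC_w)$ has weights $\le 0$, and since $\pi$ is proper, $\pi_*$ preserves the bound ``weights $\le 0$'', so $\RHom_X(\IC_v,\IC_w)$ has weights $\le 0$; taking $\H^i_B$ then gives that $\ExtDC^i = \H^i_B \RHom$ has weights $\le i$. The reverse inequality follows by the Verdier-dual argument (or by applying the same reasoning after swapping the roles via $\VERD\IC_v \cong \IC_v$, using self-duality of the $\IC$ sheaves). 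I should also note that passing to the equivariant setting ($B$-equivariant rather than just the constructible category) does not change the weight analysis: the equivariant cohomology $\H^\bul_B(\pt,\k) = R$ is pure (polynomial degree $2$ part has weight $2$), and the weight bounds on $\ExtDC^\bul$ are inherited from the non-equivariant ones through the definition~\eqref{eq:Extdef}.

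\textbf{Main obstacle.} There is no serious obstacle; the statement is essentially a formal consequence of Gabber's purity theorem once the weight-yoga of~\cite{BBD} (extended to the equivariant and mixed setting, as in~\cite{WW_below, AR2}) is in place. The one point requiring a little care is the bookkeeping of Tate twists and cohomological shifts in the normalization of $\IC_w = i_{w,!*}\kuqw[\ell(w)](\ell(w)/2)$ — one must check the shift by $(\ell(w)/2)$ is exactly what makes $\IC_w$ pure of weight $0$ rather than weight $\ell(w)$ — but this is the standard self-dual normalization and is immediate from the fact that $\kuqw[\ell(w)](\ell(w)/2)$ on the smooth cell $\Xw$ is pure of weight $0$ and the intermediate extension preserves purity.
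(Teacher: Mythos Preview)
Your argument is correct and is essentially the standard purity argument (Gabber purity for $\IC$-sheaves, plus the weight yoga of~\cite[\S5]{BBD} together with properness of $G/B$) that underlies the result the paper invokes. The paper's own proof is a one-line citation to~\cite[Lemma~3.1.5]{BY}; your write-up simply unpacks that citation, so the approaches coincide.
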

\begin{proof}
Follows from \cite[Lemma 3.1.5]{BY}.
\end{proof}

\begin{proposition}\label{prop:SemiSBim}
The hypercohomology functor induces an equivalence of additive categories 
$$
\H_B: \Semis_B(X) \to \SBim,
$$
enriched over $R\otimes R$, and sending $\IC_w$ to the shifted Soergel bimodule $S_w\psa{-\ell(w)}$ and the twist $[n](n/2)$ to the change of grading $\psa{-n/2}$. 
\end{proposition}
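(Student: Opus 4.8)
\textbf{Proof proposal for Proposition~\ref{prop:SemiSBim}.}

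The plan is to identify the hypercohomology functor $\H_B\colon \Semis_B(X)\to R\otimes R\text{-}\mathrm{mod}$ with the functor sending a semisimple complex to the corresponding Soergel bimodule, using Soergel's original characterization of the category of Soergel bimodules via the category of equivariant semisimple complexes on the flag variety. This is essentially the content of Soergel's theorem (\cite{SoeHC}, see also the exposition in~\cite{EMTW}), but we must run the argument in the mixed (weight-graded) $\ell$-adic setting rather than over $\mathbb{C}$, so some care is needed to match the gradings and the Frobenius action. First I would record that $\Semis_B(X)$ is additive and Krull--Schmidt, with indecomposable objects the twists $\IC_w[n](n/2)$; this follows from the decomposition theorem (purity of the $\IC_w$) together with \cref{lem:ICpure}, which guarantees there are no morphisms lowering weight. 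Next I would compute, for each $w$, the $R\otimes R$-module $\H_B(X,\IC_w)=\Ext_X^\bul(\ku_X,\IC_w\otimes\text{(dual)})$ — or more precisely, set up $\H_B$ on objects and verify it is faithful and full. The faithfulness uses \cref{lem:ICpure} again: $\Hom_{\Semis_B(X)}(\Fcal,\Gcal)=\ExtDC^{0,(0)}(\Fcal,\Gcal)$, the weight-$0$ part of a pure-weight-$0$ group, and $\H_B$ is injective on this piece because $\H^\bul_B(\Fcal)$ is a free $R\otimes R$-module (flag variety cohomology is equivariantly formal) so a morphism is zero iff it is zero on hypercohomology.

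Concretely the key steps, in order: (1) Show $\H^\bul_B(X,\IC_w)$ is a free graded module over $R$ on each side, and its $R\otimes R$-module structure is that of the indecomposable Soergel bimodule $S_w$ up to the grading shift $\psa{-\ell(w)}$. For this I would induct on $\ell(w)$ using the recursive construction of $\IC_w$ as a summand of a Bott--Samelson-type pushforward $\pi_{s,!*}$ along a $\mathbb{P}^1$-bundle, matching it with the bimodule $B_s\otimes_R(\cdots)$; the Bott--Samelson sheaves $\BS_\uw:=\IC_{s_{i_1}}\star\cdots\star\IC_{s_{i_m}}$ (convolution) have $\H_B(\BS_\uw)\cong \BS_\uw$ by a standard pushforward computation on iterated $\mathbb{P}^1$-bundles, compatibly with the convolution/tensor structure. (2) Deduce that $\H_B$ sends $\Semis_B(X)$ into $\SBim$ and is essentially surjective (every Bott--Samelson bimodule, hence every summand, is hit). (3) Check fullness and faithfulness: for $\Fcal,\Gcal$ semisimple pure of weight $0$, $\H_B$ induces $\Hom_{\Semis_B(X)}(\Fcal,\Gcal)\to\Hom_{\SBim}(\H_B\Fcal,\H_B\Gcal)$, and by \cref{lem:ICpure} both sides are computed as weight-$0$ degree-$0$ parts; the map is an isomorphism by the graded-rank count for $\Ext$-groups between $\IC$-sheaves (Soergel's hom-formula, which matches the graded rank of $\Hom$ between Soergel bimodules). (4) Track the Tate twist: $[n](n/2)$ shifts weight by $0$ but shifts the cohomological grading; under the dictionary this is exactly the polynomial-degree shift $\psa{-n/2}$, and one confirms this on the generators $\IC_s$.

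The main obstacle I anticipate is purely bookkeeping rather than conceptual: keeping the three gradings straight — the cohomological shift $[n]$, the Tate twist $(n/2)$, and the internal polynomial grading on Soergel bimodules with $\deg(\h^\ast)=2$ — and matching them so that $\IC_w\mapsto S_w\psa{-\ell(w)}$ comes out with the stated normalization. A secondary subtlety is that one wants this as an equivalence \emph{enriched over} $R\otimes R$, i.e.\ compatible with the bimodule structures on morphism spaces; this requires noting that the $R\otimes R$-action on $\Ext_X^\bul(\Fcal,\Gcal)$ coming from $\H^\bul_{B\times B}(\pt)\cong R\otimes R$ goes over, under the convolution-to-tensor dictionary, to the left and right $R$-actions on $\Hom_{R\otimes R}(\H_B\Fcal,\H_B\Gcal)$. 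Both of these are handled by working out the case $w=s$ explicitly (where $\IC_s$ is the constant sheaf on a $\mathbb{P}^1$, shifted, and $\H_B(\IC_s)=B_s\psa{-1}$ by a one-line computation) and then propagating through convolution via \cref{prop:AMRW2}-style compatibility of realization functors; I would cite \cite[Lemma~3.1.5]{BY} and the purity results of the decomposition theorem as the technical inputs, exactly as in the proof of \cref{lem:ICpure}.
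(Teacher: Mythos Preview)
Your approach is essentially the same as the paper's: identify $\H_B(\IC_w)$ with $S_w\psa{-\ell(w)/2}$ via Soergel's theorem, argue full faithfulness on morphisms, then conclude by matching indecomposables. The paper is more economical, however: rather than running the Bott--Samelson induction and a graded-rank count, it directly invokes \cite[Proposition~3.1.6]{BY}, which already gives the $\Fr$-module isomorphism $\ExtDC^\bullet(\IC_w[n](n/2),\IC_v[m](m/2)) \simeq \Hom_{R\otimes R}(S_w\psa{-(\ell(w)+n)/2},S_v\psa{-(\ell(v)+m)/2})$. Taking weight-$0$ parts (using Lemma~\ref{lem:ICpure}) yields full faithfulness in one line, and the $R\otimes R$-enrichment comes along for free because the isomorphism is induced by the natural map.

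One genuine slip in your argument: you write that $\H_B(\Fcal)$ is a free $R\otimes R$-module and deduce faithfulness from this. Soergel bimodules are free over $R$ on the left and on the right separately, but not over $R\otimes R$ (already $B_s=R\otimes_{R^s}R$ fails this). So the sentence ``a morphism is zero iff it is zero on hypercohomology'' needs a different justification --- and this is exactly the content of \cite[Proposition~3.1.6]{BY}, which the paper cites. Your dimension-count for fullness would work once injectivity is in hand, but since \cite{BY} gives both at once, the paper just quotes it. Everything else in your outline (the Bott--Samelson induction, the grading bookkeeping sending $[n](n/2)\mapsto\psa{-n/2}$, the enrichment check) is correct and is either implicit in the paper's citations or unnecessary given them.
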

\begin{proof}
By a well-known result of Soergel~\cite{Soe07} (see also~\cite[Section~1.3]{EW} or~\cite[Section~16.1]{EMTW}), we have $\H_B(\IC_w) \simeq S_w\psa{-\ell(w)/2}$ as a graded $R \otimes R$-module.  By Lemma~\ref{lem:ICpure}, the cohomological and weight gradings on $\H_B(\IC_w)$ agree, and furthermore $\IC_w[n](n/2)$ is sent to $S_w\{-(\ell(w)+n)/2\}$.  The result can then be deduced from \cite[Proposition 3.1.6]{BY}.
\end{proof}

The following result is well-known; see~\cite{ARs} and \cite[Lemma B.1.1]{BY}.
\begin{lemma}\label{lem:Extgroups}
For $\Fcal, \Gcal \in \Semis_B(X)$, the action of $\Fr$ on $\ExtDC^\bullet(\Fcal, \Gcal)$ is semisimple.  Furthermore, we have 
\begin{equation}\label{eq:homSemis}
\hom_{\Db_B(X,\k)}(\Fcal,\Gcal) \simeq \ExtDC^0(\Fcal, \Gcal) \simeq \ExtDC^{0,(0)}(\Fcal, \Gcal)
\end{equation}
and $\ext^i_{\Db_B(X,\k)}(\Fcal,\Gcal) = 0$ for $i < 0$.
\end{lemma}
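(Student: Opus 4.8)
The plan is to reduce everything to indecomposable objects via \cref{prop:SemiSBim} and then read off all three assertions from the identification \eqref{eq:BYHom} of the relevant $\Ext$-groups with $\Hom$-spaces of Soergel bimodules, together with the purity statement \cref{lem:ICpure} and the exact sequence \eqref{eq:extExt} relating $\ext^\bullet$ and $\ExtDC^\bullet$.

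First I would settle the semisimplicity of $\Fr$. Since $\ExtDC^\bullet(-,-)$ is additive in each variable and, by \cref{prop:SemiSBim}, every object of $\Semis_B(X)$ is a finite direct sum of twists and shifts $\IC_w[n](n/2)$, it suffices to take $\Fcal=\IC_w[n](n/2)$ and $\Gcal=\IC_v[m](m/2)$. For such $\Fcal,\Gcal$ the isomorphism \eqref{eq:BYHom} identifies $\ExtDC^\bullet(\Fcal,\Gcal)$, as an $\Fr$-module, with $\Hom_{R\otimes R}(S_w\psa{-(\ell(w)+n)/2},S_v\psa{-(\ell(v)+m)/2})$, which is a graded vector space on which $\Fr$ acts on each graded piece by a single scalar, a power of $q$, hence diagonalizably. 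This gives semisimplicity of $\Fr$ on $\ExtDC^\bullet(\Fcal,\Gcal)$, and moreover shows that the generalized weight-$\wt$ eigenspace $\ExtDC^{i,(\wt/2)}(\Fcal,\Gcal)$ is an honest $\Fr$-eigenspace with eigenvalue $q^{\wt/2}$; in particular $\Fr$ acts by the identity there precisely when $\wt=0$, and $\Fr-\id$ acts invertibly there whenever $\wt\neq0$.

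Next I would deduce \eqref{eq:homSemis}. By \cref{lem:ICpure}, $\ExtDC^i(\Fcal,\Gcal)$ is pure of weight $i$, so it sits entirely in its weight-$i$ eigenspace; for $i=0$ this already gives $\ExtDC^0(\Fcal,\Gcal)=\ExtDC^{0,(0)}(\Fcal,\Gcal)$, the last isomorphism in \eqref{eq:homSemis}. For the remaining isomorphism I would plug $i=0$ into \eqref{eq:extExt}, yielding the short exact sequence $0\to\ExtDC^{-1}(\Fcal,\Gcal)_\Fr\to\ext^0(\Fcal,\Gcal)\to\ExtDC^0(\Fcal,\Gcal)^\Fr\to0$. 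Here $\ExtDC^{-1}(\Fcal,\Gcal)$ is semisimple and pure of weight $-1$, so $\Fr-\id$ acts invertibly and its coinvariants vanish, while $\Fr$ acts as the identity on $\ExtDC^0(\Fcal,\Gcal)$, so its invariants exhaust it. Hence $\hom(\Fcal,\Gcal)=\ext^0(\Fcal,\Gcal)\cong\ExtDC^0(\Fcal,\Gcal)$.

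Finally, for the vanishing of $\ext^i(\Fcal,\Gcal)$ with $i<0$ I would use \eqref{eq:extExt} again: it reads $0\to\ExtDC^{i-1}(\Fcal,\Gcal)_\Fr\to\ext^i(\Fcal,\Gcal)\to\ExtDC^i(\Fcal,\Gcal)^\Fr\to0$, and by \cref{lem:ICpure} both $\ExtDC^{i-1}(\Fcal,\Gcal)$ and $\ExtDC^{i}(\Fcal,\Gcal)$ are pure of negative weight, hence semisimple $\Fr$-modules on which $\Fr-\id$ is invertible; so their invariants and coinvariants both vanish and $\ext^i(\Fcal,\Gcal)=0$. I do not expect a real obstacle here: the only point that needs genuine care rather than formal weight bookkeeping is that purity alone does not force $\Fr$ to act semisimply, which is precisely why the passage through \eqref{eq:BYHom} (equivalently, through the structure of $\Dperf(R,\Fr)$) cannot be bypassed.
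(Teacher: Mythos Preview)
Your proof is correct and follows essentially the same approach as the paper: semisimplicity via the identification \eqref{eq:BYHom} with Soergel bimodule $\Hom$-spaces, and then the purity from \cref{lem:ICpure} fed into the exact sequence \eqref{eq:extExt} to extract both \eqref{eq:homSemis} and the vanishing of $\ext^i$ for $i<0$. The paper's version is terser (and also notes that in fact $\ext^i=0$ unless $i\in\{0,1\}$), but the logic is the same; your closing observation that purity alone does not imply Frobenius semisimplicity, so the passage through \eqref{eq:BYHom} is essential, is exactly the point.
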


\subsection{The mixed derived category}
Following \cite{AR2}, we define the {\it mixed derived category} 
$$
\Dmix_B(X):= \Kb(\Semis_B(X))
$$
to be the homotopy category of cochain complexes in $\Semis_B(X)$.  Define the {\it Tate twist} of $\Dmix_B(X)$ by
$$
\langle n \rangle:= \psa{-n/2}[-n],
$$
where $[n]: \Kb(\Semis_B(X)) \to \Kb(\Semis_B(X))$ is the cohomological shift functor, and $\psa{n/2}: \Semis_B(X) \to \Semis_B(X)$ is the autoequivalence $\Fcal \mapsto \Fcal[-n](-n/2)$.

By Proposition~\ref{prop:SemiSBim}, we have an equivalence of triangulated categories
$$
\Kb \H_B: \Dmix_B(X) \to \KSBim.
$$

\begin{theorem}
There exists a triangulated realization functor
$$
\real: \Dmix_B(X) \to \Db_B(X,\k),
$$
restricting to the inclusion on $\Semis_B(X)$, sending 
$$[n] \to [n], \qquad \langle n \rangle \to (n/2), \qquad \psa{n/2} \to [-n](-n/2),
$$
such that the composition
$$
\kappa:= \omega \circ \real
$$
satisfies
\begin{align}
\label{eq:eqgrading1}
\Hom_{\Dmix_B(X)}(\Fcal,\Gcal \langle n \rangle) &\simeq \ExtDC^{0,(n/2)}(\real \Fcal, \real \Gcal), \\
\label{eq:eqgrading2}
\bigoplus_{n \in \Z} \Hom_{\Dmix_B(X)}(\Fcal,\Gcal \langle n \rangle) &\simeq \ExtDC^{0}(\real \Fcal, \real \Gcal) = \Hom_{\Db_B(X,\k)}(\kappa \Fcal, \kappa \Gcal).
\end{align}
Furthermore, all functors are compatible with the $R \otimes R$ action on the corresponding $\ExtDC^\bullet$-groups.
\end{theorem}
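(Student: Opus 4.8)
The plan is to construct $\real$ from the abstract realization machinery of \cref{prop:AMRW1,prop:AMRW2} and then to verify the $\Hom$-formulas \eqref{eq:eqgrading1}--\eqref{eq:eqgrading2} by a two-step dévissage that reduces everything to the already-understood case of semisimple complexes.

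First I would use the fact that the $B$-equivariant mixed $\ell$-adic category $\Db_B(X,\k)$, as well as $\Db_{B\times B}(G,\k)$ and $\Db_B(\pt,\k)$, admits a filtered version in the sense of~\cite[Definition~A.1]{Bei}; this is part of the standard $\ell$-adic formalism of~\cite{BBD} (see also~\cite{AMRW2,AR2}). By \cref{lem:Extgroups} the additive subcategory $\Semis_B(X)$ has no negative self-extensions, so \cref{prop:AMRW1} produces a triangulated functor $\real\colon\Dmix_B(X)=\Kb(\Semis_B(X))\to\Db_B(X,\k)$ restricting to the inclusion on $\Semis_B(X)$; being triangulated, it automatically intertwines $[n]$. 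To handle the remaining autoequivalences I would apply \cref{prop:AMRW2} to the autoequivalence $[-n](-n/2)$ of $\Db_B(X,\k)$, which lifts to the filtered version and restricts on $\Semis_B(X)$ to the autoequivalence $\psa{n/2}$; the resulting commuting square gives $\real\circ\psa{n/2}\cong[-n](-n/2)\circ\real$. Since $\langle n\rangle=\psa{-n/2}[-n]$ by definition, this forces $\real\langle n\rangle\cong(n/2)\circ\real$, which is the asserted behaviour on twists and shifts.

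Next I would prove \eqref{eq:eqgrading1}, from which \eqref{eq:eqgrading2} follows by summing over $n$. The base case is $\Fcal,\Gcal\in\Semis_B(X)$, viewed as complexes in cohomological degree $0$: here $\Hom_{\Dmix_B(X)}(\Fcal,\Gcal\langle n\rangle)=\Hom_{\Kb(\Semis_B(X))}(\Fcal,\Gcal\psa{-n/2}[-n])$ vanishes for $n\neq 0$ and equals $\Hom_{\Semis_B(X)}(\Fcal,\Gcal)$ for $n=0$, while by \cref{lem:ICpure} the group $\ExtDC^0(\Fcal,\Gcal)$ is pure of weight $0$, so $\ExtDC^{0,(n/2)}(\Fcal,\Gcal)$ vanishes for $n\neq 0$ and equals $\ExtDC^{0,(0)}(\Fcal,\Gcal)\cong\hom_{\Db_B(X,\k)}(\Fcal,\Gcal)=\Hom_{\Semis_B(X)}(\Fcal,\Gcal)$ for $n=0$ by~\eqref{eq:homSemis}; since $\real$ is the inclusion on $\Semis_B(X)$ both identifications are realized by $\real$, and by \cref{prop:SemiSBim} (via~\eqref{eq:BYHom}) they are $R\otimes R$-linear. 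For general $\Fcal$ with $\Gcal$ still semisimple I would stupidly truncate $\Fcal$ to obtain a distinguished triangle in $\Dmix_B(X)$ whose outer terms have strictly fewer nonzero components; applying the triangulated functor $\real$ and then $\omega$ gives the corresponding triangle in $\Db_B(X_{\Fqb},\k)$, hence long exact sequences in $\Hom_{\Dmix_B(X)}(-,\Gcal\langle n\rangle[m])$ and in $\ExtDC^{m,(n/2)}(\real(-),\real\Gcal)$ compatible under $\real$, and the five lemma together with induction on the number of nonzero terms gives the isomorphism for all $\Fcal$, in every cohomological and weight degree. Repeating the same dévissage in the second variable (with $\Fcal$ arbitrary and $\Gcal$ stupidly truncated) completes the graded isomorphism, whose $(m,\text{weight})=(0,n)$-part is precisely \eqref{eq:eqgrading1}. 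Throughout, every morphism involved — the truncation triangles, the long exact sequences, the isomorphism~\eqref{eq:BYHom}, and the $R\otimes R$-enrichment of $\H_B$ from \cref{prop:SemiSBim} — is a morphism of $(R\otimes R)$-modules, so the resulting isomorphisms respect the $R\otimes R$-action, giving the last assertion.

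I expect the main obstacle to be the bookkeeping in the dévissage: one must carry both the cohomological and the weight gradings through the long exact sequences so that the five lemma is applied to the correct bigraded summand, and the key input that makes the induction collapse to the expected answer is exactly the vanishing of negative self-extensions in $\Semis_B(X)$ (\cref{lem:Extgroups}), which rules out cross-terms between distinct degrees. A secondary, more foundational point is the existence of a filtered version of the $B$-equivariant mixed $\ell$-adic category needed to invoke \cref{prop:AMRW1,prop:AMRW2}; this is standard but is where the general formalism of~\cite{BBD,Bei} and its equivariant refinements enters.
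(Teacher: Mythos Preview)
Your proposal is correct and follows essentially the same approach as the paper: construct $\real$ via \cref{prop:AMRW1} using \cref{lem:Extgroups}, deduce the behavior on twists, verify \eqref{eq:eqgrading1}--\eqref{eq:eqgrading2} first for objects of $\Semis_B(X)$ using \cref{lem:ICpure} and \eqref{eq:homSemis}, and then extend by double induction on the length of the complexes via truncation triangles and the five lemma. The paper's proof is terser—citing \cite[Section~4.1]{Rid} for the d\'evissage rather than spelling it out, and not invoking \cref{prop:AMRW2} explicitly for the twist compatibility—but the underlying argument is the same.
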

\begin{proof}
By Lemma~\ref{lem:Extgroups}, $\Semis_B(G/B)$ has no negative self-extensions.  Since $ \Db_B(X,\k)$ admits a filtered version (see \cite{Bei}), we may apply Proposition~\ref{prop:AMRW1} to obtain a realization functor 
$$
\real: \Dmix_B(X) \to \Db_B(X,\k)
$$
that restricts to the inclusion on $\Semis_B(G/B)$.  For $\Fcal,\Gcal \in \Semis_B(X)$, the isomorphism~\eqref{eq:eqgrading2} follows from \eqref{eq:homSemis} while~\eqref{eq:eqgrading1} follows from \eqref{eq:homSemis} and~\eqref{eq:eqgrading2}, since $\langle n \rangle:  \Dmix_B(X) \to \Dmix_B(X)$ is sent to $(n/2): \Db_B(X,\k) \to \Db_B(X,\k)$ by the realization functor $\real$.  For $\Fcal,\Gcal \in \Kb \Semis_B(X)$, \eqref{eq:eqgrading1}--\eqref{eq:eqgrading2} are proven by double induction on the lengths of chain complexes representing $\Fcal, \Gcal$.  See \cite[Section 4.1]{Rid} for a detailed argument.
\end{proof}

\subsection{Standard objects and Rouquier complexes}

\begin{proposition}\label{prop:stand}
The composition $\real \circ \H_B^{-1}:\KSBim \to \Db_B(X,\k)$ takes $F^\bullet(w)$ to $\Delta_w(-\ell(w)/2)$ and $F^\bullet(v)^{-1}$ to $\nabla_v(\ell(v)/2)$.
\end{proposition}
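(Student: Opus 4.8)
The plan is to reduce the statement to the single-generator case $w = s$ or $v = s$, where it becomes an explicit computation, and then bootstrap to general braids using multiplicativity of all the functors involved. First I would observe that $\real \circ \H_B^{-1}$ is a triangulated functor $\KSBim \to \Db_B(X,\k)$, so it suffices to check the claim on Rouquier complexes $\FR(\sigma_i)$ and $\FR(\sigma_i^{-1})$ for a simple generator, together with compatibility with the tensor-product (convolution) structure. For compatibility with convolution: the realization functor on $\Dmix_B(X) = \Kb(\Semis_B(X))$ is monoidal for convolution $\ast$ (this is the standard property recorded in \cite{AMRW2,AR2}, and can be deduced from \cref{prop:AMRW2} applied to the convolution bifunctor, which lifts to the filtered setting), and $\H_B$ intertwines convolution with $\otimes_R$ by \cref{prop:SemiSBim} together with Soergel's theorem. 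Since $\Delta_w = \Delta_{s_{i_1}} \ast \cdots \ast \Delta_{s_{i_m}}$ (up to the Tate twists bookkeeping, since convolution of standard sheaves along a reduced word is standard) and $\FR(\beta(w)) = \FR(\sigma_{i_1}) \otimes_R \cdots \otimes_R \FR(\sigma_{i_m})$, the general statement follows from the $m=1$ case once the twists are tracked. The analogous statement for $v$ uses $\nabla_v$ and $\FR(v)^{-1}$, which are the Verdier/biadjoint duals of the above.

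The core computation is the case $w = s$. Here $\FR(\sigma) = (B_s \to \und{R})$. Under $\H_B^{-1}$, the Bott--Samelson bimodule $B_s = S_s\psa{-1}\psa{1} = R\otimes_{R^s}R$ corresponds to $\IC_s[1](1/2)$ (recall $\H_B(\IC_s) \cong S_s\psa{-1/2}$, and $\IC_s$ for a rank-one Schubert variety $X_s \cong \mathbb P^1$ is just the constant sheaf shifted), while $\und{R}$ corresponds to $\IC_{\id} = \kuqw[\text{appropriate shift}]$, i.e.\ the skyscraper at the base point. So $\real(\H_B^{-1}(\FR(\sigma)))$ is the two-term complex $(\IC_s[1](1/2) \to \IC_e[0](0))$ in $\Db_B(X,\k)$ with its unique (up to scalar) degree-zero map; one recognizes this as the standard triangle $\Delta_s(-\ell(s)/2) \to \IC_s[?] \to \dots$, or more directly, the two-term complex of semisimple sheaves representing $\Delta_s$ up to the Tate shift $(-1/2)$. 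Concretely: there is a distinguished triangle $\Delta_s \to \IC_s \to \nabla_s^{?}$ (on $X_s = \mathbb P^1$, $\Delta_s = j_!\ku[1](1/2)$ fits into $\Delta_s \to \IC_s \to \IC_e(\text{twist})[1]$), and rotating it exhibits $\Delta_s(-\ell(s)/2)$ as the cone, hence as a two-term complex in $\Semis_B(X)$, which after applying $\H_B$ is exactly $\FR(\sigma)$ up to the grading shift $\psa{?}$ matching $(-\ell(s)/2)$. Dually, $\FR(\sigma^{-1}) = (\und{R} \to B_s\psa{-1})$ corresponds to $\nabla_s$ twisted, using the triangle $\IC_e \to \nabla_s \to \dots$ or equivalently Verdier duality applied to the previous case (Verdier duality exchanges $\Delta$ and $\nabla$ and, on the Soergel side, corresponds to the anti-involution realizing $\FR(\sigma)\leftrightarrow\FR(\sigma^{-1})$). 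Care is needed to match all the half-integer Tate twists and cohomological shifts; the conventions in \cref{sec:rouquier-complexes} (with $\Delta_w = i_{w,!}\kuqw[\ell(w)](\ell(w)/2)$ and $\FR(\sigma) = (B_s \to \und R)$, $\FR(\sigma^{-1}) = (\und R \to B_s\psa{-1})$) should make these consistent, and this bookkeeping is where the factors $(-\ell(v)/2)$, $(-\ell(w)/2)$ in \cref{prop:RHom} and the stated $(-\ell(w)/2)$, $(\ell(w)/2)$ here originate.

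For the induction step assembling the general braid from simple generators, I would invoke \cref{prop:Rou}: since $\FR(\beta)$ is independent (up to homotopy) of the reduced word, and since $\Delta_w$ is likewise obtained by convolving along any reduced word of $w$ (standard sheaves multiply: $\Delta_x \ast \Delta_y \cong \Delta_{xy}$ when $\ell(xy) = \ell(x) + \ell(y)$), the equivalence $\Kb\H_B$ together with monoidality of $\real$ forces $\real(\H_B^{-1}(\FR(\beta(w)))) \cong \Delta_w(-\ell(w)/2)$. For $\FR(v)^{-1}$, the convolution of costandard sheaves $\nabla_x \ast \nabla_y \cong \nabla_{xy}$ for reduced products, combined with the computation of the $\FR(\sigma^{-1})$ case, gives $\real(\H_B^{-1}(\FR(v)^{-1})) \cong \nabla_v$ up to the asserted twist $(\ell(w)/2)$ --- here one must be careful that the twist attached to $\nabla_v$ in the statement is $(\ell(w)/2)$ rather than $(\ell(v)/2)$, which looks asymmetric but is correct because $\FR_{v,w} = \FR(w) \otimes_R \FR(v)^{-1}$ and the twists are arranged so that the total twist on $\FR_{v,w}$ matches $\RHom_X(\Delta_v(-\ell(v)/2), \Delta_w(-\ell(w)/2))$ in \cref{prop:RHom}; I would double-check this against \cref{ex:soe_unkn_2} and the Hopf/trefoil examples.

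The main obstacle I anticipate is establishing the monoidality (compatibility with convolution) of the realization functor $\real$ carefully in the mixed equivariant setting, with all Tate twists tracked --- \cref{prop:AMRW1,prop:AMRW2} as quoted only address a single triangulated functor, not a monoidal structure, so one needs either a version of the realization functor compatible with the convolution bifunctor (which is available in the literature of \cite{AMRW2,AR2,Rid} but requires the filtered version of the category to carry a convolution, a nontrivial construction) or an inductive argument that reduces every needed compatibility to the two-term-complex level via \cref{prop:AMRW2} applied to the functors $(-)\ast \IC_s$. The latter is probably the cleanest: since $\FR(\sigma_i) = \Kb\H_B$ applied to the two-term complex $(\IC_s[1](1/2)\to \IC_e)$, and convolution with this complex is a functor $\Db_B(X,\k)\to\Db_B(X,\k)$ lifting to the filtered/mixed setting, \cref{prop:AMRW2} shows $\real$ intertwines $(-)\otimes_R\FR(\sigma_i)$ with $(-)\ast(\text{the corresponding two-term complex of sheaves})$, and one identifies the latter with $(-)\ast\Delta_{s_i}(-1/2)$ by the explicit $\mathbb P^1$-computation. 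Iterating handles $\FR(\beta(w))$; the inverse generators are handled symmetrically. Everything else --- identifying $\IC_s$ on $X_s\cong\mathbb P^1$, the standard triangles relating $\Delta_s,\nabla_s,\IC_s,\IC_e$, and the twist bookkeeping --- is routine once the monoidal compatibility is in place.
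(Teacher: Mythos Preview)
Your proposal is correct and follows essentially the same route as the paper: reduce to simple generators, use \cref{prop:AMRW2} applied to the endofunctor $\theta_s \simeq (-)\star \IC_s[-1](-1/2)$ (rather than trying to establish full monoidality of $\real$) to obtain the commuting square \eqref{eq:conereal1}, pass to cones to get compatibility with $(-)\star \Delta_s(-1/2)$, and then iterate along a reduced word using $\Delta_w \simeq \Delta_{s_1}\star\cdots\star\Delta_{s_\ell}$. Your ``cleanest'' approach in the final paragraph is exactly what the paper does; the one point to flag is that the twist $(\ell(w)/2)$ on $\nabla_v$ in the displayed statement is a typo for $(\ell(v)/2)$, so there is no asymmetry to explain away.
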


Note that Proposition~\ref{prop:degr} follows nearly immediately from Proposition~\ref{prop:stand} and~\eqref{eq:eqgrading1}--\eqref{eq:eqgrading2}: since the realization functor sends $[k]\psa{-\wt/2}$ to $[k+\wt](\wt/2)$, we have 
\begin{align*}
\HHX {k}\wt/2_{v,w}  &\cong  \Hom_{\KSBim}(R, F^\bullet_{v,w} [k]\psa{-\wt/2})) \\
& \cong \ExtDC^{\wt+k,(\wt/2)}(\Delta_e, \Delta_w(-\ell(w)/2) \star \nabla_v(\ell(v)/2))
\\
& \cong\ExtDC^{\wt+k,(\wt/2)}(\Delta_v(-\ell(v)/2),\Delta_w(-\ell(w)/2)) \\
&\cong   \ExtDC^{\wt+k,((\wt-\ell_{v,w})/2)}(\Delta_v,\Delta_w).
\end{align*}
The second isomorphism above is obtained from the adjointness of the convolution functors $(-) \star  \Delta_v(-\ell(v/2))$ and $(-)  \star \nabla_v(\ell(v)/2))$ to be presently explained; see \cref{lem:RHomad} for a stronger statement, and see also~\eqref{eq:biadj1}--\eqref{eq:biadj2} for a similar statement on the Soergel bimodule side.

\begin{proof}[Proof of Proposition~\ref{prop:stand}]
We prove the claim for $F^\bullet(w)$.  The claim for $F^\bullet(v)^{-1}$ is similar.

There is a monoidal structure $\star: \Db_B(X,\k) \times  \Db_B(X,\k)  \to  \Db_B(X,\k) $ obtained by convolution; see for example \cite[Section 3.2]{BY} or \cite[Section 4.3]{AR2}.  By \cite[Proposition~3.2.5]{BY}, the additive subcategory $\Semis_B(X) \subset  \Db_B(X,\k)$ is preserved by convolution.   According to \cite[Proposition 3.2.1]{BY}, convolution $\star$ is sent by $\H_B$ to the tensor product operation on $\SBim$.  Note that the {\it derived} tensor product in \cite{BY} reduces to the tensor product on $\SBim$ since all Soergel bimodules are free as left (or right) $R$-modules.  

For a simple generator $s \in S$, %
let $\pi_s: X =G/B \to G/P_s$ denote the projection to the partial flag variety, where $P_s \supset B$ denotes a minimal parabolic subgroup, and let $\theta_s: \Db_B(X,\k)  \to  \Db_B(X,\k)$ denote the composition $\theta = \pi_s^* \pi_{s,*}$.  By \cite[Lemma~3.2.7]{BY} (see also \cite[Lemma 4.3]{AR2}), we have a natural isomorphism of functors $\theta_s \simeq (-) \star \IC_s[-1](-1/2)$, and $\theta_s$ restricts to an endofunctor $\theta_s:\Semis_B(X) \to \Semis_B(X)$.  It is well-known \cite[Korollar 2]{SoeGarben} that the equivalence $\H_B: \Semis_B(X) \to \SBim$ takes the functor $\theta_s$ to the functor $R \otimes_{R^s} (-)$. 

Now, there is a natural morphism of functors $ \theta_s \to \id$ arising from the adjunction of $\pi_s^*$ and $\pi_{s,*}$.  The map $B_s \to R$ in \eqref{eq:Rouquier_s} arises by an analogous adjunction; see \cite[Section 3]{Rou}.  Now, $\IC_e \simeq \Delta_e$ and $\H_B(\IC_e) = R$, and the morphism $\theta_s \to \id$ applied to $\IC_e$ fits into the distinguished triangle 
$$
 \IC_s [-1](-1/2) \to \IC_e \to \Delta_s(-1/2)
$$
in $\Db_B(X,\k)$; see for example \cite[(C.4)]{BY} or \cite[Lemma 4.1]{AR2}.  It follows that we have
\begin{equation}\label{eq:Delta12}
\real \circ \H_B^{-1}(\FR(s)) = \Delta_s(-1/2).
\end{equation}
See also \cite[Proposition 5.3]{Rou}.  This establishes Proposition~\ref{prop:stand} in the case $\ell(w)=1$.  We then obtain a natural isomorphism of functors
\begin{equation}\label{eq:DH}
 \left((-) \star \Delta_s(-1/2) \right) \circ \real \circ \H_B^{-1} \simeq \real \circ \H_B^{-1}\circ \left((-) \otimes F(s)\right)  
\end{equation}
from $\KSBim$ to $\Db_B(X,\k)$.

On the other hand, it is known (\cite[Lemma 3.2.2]{BY}) that if $w = s_1 s_2 \cdots s_l$ is a reduced decomposition then 
$$
\Delta_w \simeq \Delta_{s_1} \star \Delta_{s_2} \star \cdots \star \Delta_{s_l}
$$
is in $\Db_B(X, \k)$.  Combining \eqref{eq:Delta12} with \eqref{eq:DH}, we find
\begin{align*}
\real \circ \H_B^{-1}(\FR(w)) &\simeq \real \circ \H_B^{-1}(\FR(s_1)  \otimes \cdots \otimes \FR(s_\ell)) \\
& \simeq \Delta_{s_1}(-1/2)\star \cdots  \star \Delta_{s_\ell}(-1/2) \simeq \Delta_w(-\ell(w)/2).\qedhere
\end{align*}
\end{proof}

\subsection{Proof of Proposition~\ref{prop:RHom}}

\subsubsection{}
A functor $F: D^b(Y,\k) \to D^b(Z,\k)$ is called {\it geometric} (\cite[Definition 6.6]{ARs}) if there is a natural transformation
$$
\RHom_{Y}(\Fcal,\Gcal) \to \RHom_{Z}(F \Fcal, F\Gcal)
$$
for $\Fcal,\Gcal \in D^b(Y,\k)$.
We shall apply the notion of a geometric functor for $B$-equivariant derived categories.
\begin{lemma}\label{lem:geometric}
The endofunctors $(-)\star \Delta_s(-1/2), \; (-) \star \nabla_s(1/2): \Db_B(X,\k) \to \Db_B(X,\k)$ are geometric.
\end{lemma}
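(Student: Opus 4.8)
The plan is to reduce the claim to the fact that $\theta_s \simeq \pi_s^\ast \circ \pi_{s,\ast}$ is geometric, together with the general principle that a cone of geometric functors is geometric. Recall from the previous lemma that $(-)\star\Delta_s(-1/2) \simeq {\rm Cone}(\theta_s \to \id)$, and there is a similar description of $(-)\star\nabla_s(1/2)$ as a cocone (shifted cone) of $\id \to \theta_s$. So the first step is to establish that $\theta_s$ and $\id$ are geometric endofunctors of $\Db_B(X,\k)$, and the second is to propagate geometricity through cones.

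First I would treat $\id$, which is trivially geometric (take the identity natural transformation). For $\theta_s = \pi_s^\ast \circ \pi_{s,\ast}$, I would invoke the standard adjunctions: $\pi_s^\ast$ is left adjoint to $\pi_{s,\ast}$, and moreover $\pi_s$ is a proper smooth morphism (a $\mathbb{P}^1$-bundle), so $\pi_s^\ast[1](1/2) \simeq \pi_s^![-1](-1/2)$ up to the appropriate shift and twist, and $\pi_{s,\ast} \simeq \pi_{s,!}$. From these one builds a natural transformation $\RHom_X(\Fcal,\Gcal) \to \RHom_{G/P_s}(\pi_{s,\ast}\Fcal, \pi_{s,\ast}\Gcal)$ using functoriality of $\pi_{s,\ast}$ on morphism complexes, and then a natural transformation $\RHom_{G/P_s}(\Fcal',\Gcal') \to \RHom_X(\pi_s^\ast\Fcal',\pi_s^\ast\Gcal')$ using functoriality of $\pi_s^\ast$; composing gives the required transformation $\RHom_X(\Fcal,\Gcal)\to\RHom_X(\theta_s\Fcal,\theta_s\Gcal)$. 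Here I would cite~\cite{ARs} where geometric functors are introduced and where $\pi^\ast$, $\pi_\ast$, and more generally the six-functor-type operations, are shown to be geometric; the point is simply that these are functors between categories of sheaves coming from a morphism of varieties, hence geometric essentially by construction.

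The second step: if $F \to G$ is a morphism of geometric endofunctors of $\Db_B(X,\k)$, then ${\rm Cone}(F\to G)$ is again geometric. The natural transformation on $\RHom$-groups is obtained by applying the five lemma (or rather the long exact sequence of a distinguished triangle) to the compatible systems of natural transformations $\RHom_X(\Fcal,\Gcal)\to\RHom_X(F\Fcal,F\Gcal)$ and $\RHom_X(\Fcal,\Gcal)\to\RHom_X(G\Fcal,G\Gcal)$ and the triangle $F\Fcal \to G\Fcal \to {\rm Cone}(F\Fcal\to G\Fcal)$; this is precisely the argument used for such cones in~\cite[Section~6]{ARs}. Applying this with $F = \theta_s$, $G = \id$ gives that $(-)\star\Delta_s(-1/2)$ is geometric; applying the dual version (shifted cone of $\id \to \theta_s$) gives geometricity of $(-)\star\nabla_s(1/2)$.

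I expect the main obstacle to be purely bookkeeping: making the natural transformations genuinely \emph{natural} (functorial in both $\Fcal$ and $\Gcal$) and checking that the shifts and Tate twists in the definitions of $\Delta_s(-1/2)$ and $\nabla_s(1/2)$ are absorbed correctly, so that the cone/cocone descriptions of the convolution functors are compatible with the geometric structure on $\theta_s$ and $\id$. None of this is deep — it is the same style of argument as in~\cite{ARs} and~\cite{AR2} — but one must be careful that the internal $\RHom$ in the equivariant setting behaves well with respect to convolution, which is where I would lean on~\cite[Section~3.2]{BY} and the fact that all Soergel-type objects here are free over $R$ on one side so that no derived-tensor subtleties arise.
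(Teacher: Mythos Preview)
Your proposal is essentially correct and is a reasonable unpacking of what the paper does by citation: the paper's proof consists entirely of the sentence ``This is stated for the affine Grassmannian case in \cite[Proposition~12.2]{ARs}. The same proof applies in the flag variety case.'' So you are reconstructing the argument that lives inside that reference, and your outline---$\theta_s$ and $\id$ are geometric, and geometricity passes to cones---is the right shape.

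One comment on the cone step, which you correctly flag as the delicate point. The claim that ``a cone of geometric functors is geometric'' is not quite a formal consequence of the definition: given compatible natural transformations $\RHom(\Fcal,\Gcal)\to\RHom(F\Fcal,F\Gcal)$ and $\RHom(\Fcal,\Gcal)\to\RHom(G\Fcal,G\Gcal)$, there is no obvious map to $\RHom(C\Fcal,C\Gcal)$ just from the triangulated structure, since $\RHom(C\Fcal,C\Gcal)$ is a double cone rather than a single one. What makes it work here is that the cone functor $(-)\star\Delta_s(-1/2)$ has an independent description as a composite of six-functor operations (pullback, proper pushforward, tensor with a fixed kernel) along the convolution diagram $G/B \leftarrow G/B\times_{G/P_s} G/B \to G/B$, and each of those operations is geometric in the sense of \cite[Section~6]{ARs}. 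This direct route---building the geometric structure from the convolution formalism rather than from the cone description---is in fact what \cite[Proposition~12.2]{ARs} does, and it sidesteps the bookkeeping you anticipate. Your argument can be completed, but the direct six-functor description is cleaner and is what the cited reference actually uses.
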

\begin{proof}
This is stated for the affine Grassmannian case in \cite[Proposition 12.2]{ARs}.  The same proof applies in the flag variety case.  
\end{proof}

\begin{lemma}\label{lem:RHomad} We have
$$
\RHom(\Delta_v,\Delta_w) = \RHom(\Delta_e, \Delta_w \star \nabla_v)
$$
inside $\Db_{B}(\pt,\k)$. 
\end{lemma}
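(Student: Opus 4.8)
\textbf{Proof proposal for Lemma~\ref{lem:RHomad}.}

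The plan is to use the adjunction between convolution with $\Delta_v$ and convolution with $\nabla_v$, combined with the fact that $\Delta_e = \nabla_e = \IC_e$ is the monoidal unit. The key observation is that in $\Db_B(X,\k)$ the Rouquier-type complexes satisfy $\Delta_v \star \nabla_v \simeq \Delta_e$ (this is the sheaf-theoretic shadow of $\FR(\sigma)\otimes_R\FR(\sigma^{-1})\cong\FR(\id)$, and follows from the standard fact that $\Delta_s\star\nabla_s\simeq\IC_e$ for simple $s$, propagated along a reduced word for $v$ as in \cite[Lemma~3.2.2]{BY}; see also \cite[Section~4.3]{AR2}). Consequently the endofunctor $(-)\star\Delta_v(-\ell(v)/2)$ is an equivalence of $\Db_B(X,\k)$ with inverse $(-)\star\nabla_v(\ell(v)/2)$, and moreover these two functors form an adjoint pair in both directions.

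First I would upgrade this adjunction to the level of the internal $\RHom$. Since $(-)\star\Delta_v(-1/2)$ and $(-)\star\nabla_v(1/2)$ are geometric functors by \cref{lem:geometric}, there are natural transformations
\begin{equation*}
\RHom_X(\Fcal,\Gcal)\to\RHom_X\big(\Fcal\star\Delta_v,\;\Gcal\star\Delta_v\big)
\end{equation*}
and similarly for $\nabla_v$; composing with the (co)unit isomorphisms $\Delta_v\star\nabla_v\simeq\Delta_e$, $\nabla_v\star\Delta_v\simeq\Delta_e$ and using that $(-)\star\Delta_v$ is an equivalence, one deduces that the natural map
\begin{equation*}
\RHom_X(\Fcal,\Gcal)\xrightarrow{\ \sim\ }\RHom_X\big(\Fcal\star\Delta_v,\;\Gcal\star\Delta_v\big)
\end{equation*}
is an isomorphism in $\Db_B(\pt,\k)$. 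Applying this with $\Fcal=\Delta_e$, $\Gcal=\Delta_w\star\nabla_v$ gives
\begin{equation*}
\RHom_X(\Delta_e,\Delta_w\star\nabla_v)\simeq\RHom_X\big(\Delta_e\star\Delta_v,\;\Delta_w\star\nabla_v\star\Delta_v\big)\simeq\RHom_X(\Delta_v,\Delta_w),
\end{equation*}
where in the last step I use $\Delta_e\star\Delta_v\simeq\Delta_v$ (monoidal unit) and $\nabla_v\star\Delta_v\simeq\Delta_e$, hence $\Delta_w\star\nabla_v\star\Delta_v\simeq\Delta_w$. This is exactly the claimed identity.

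The main obstacle I anticipate is making precise, in the mixed/Frobenius-equivariant setting, that the geometric-functor natural transformation is actually an \emph{isomorphism} on $\RHom$ (not merely on cohomology), and that it is compatible with the $R$-module structure and the Tate twists so that the statement holds \emph{inside} $\Dperf(R,\Fr)$ rather than just as an abstract isomorphism of $\Fr$-modules. This requires checking that the unit and counit of the adjunction $\big((-)\star\Delta_v, (-)\star\nabla_v\big)$ are morphisms of geometric functors and pulling the twists $(-\ell(v)/2)$, $(\ell(v)/2)$ through correctly; the grading bookkeeping (the shifts $-\ell(v)/2$ on $\Delta_v$ and $\Delta_e$ in the statement of \cref{prop:RHom}) is where one must be careful. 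Everything else is a formal consequence of the equivalence $(-)\star\Delta_v(-1/2)$ together with \cref{lem:geometric}.
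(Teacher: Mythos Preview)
Your proposal is correct and follows essentially the same approach as the paper. The paper also uses \cref{lem:geometric} to obtain maps on $\RHom$, composes the $\nabla_s$-map with the $\Delta_s$-map, and uses $\nabla_s\star\Delta_s\simeq\Delta_e$ to conclude that the composite is the identity (hence each map is an isomorphism); it then iterates over a reduced word for $v$, starting from $\Fcal=\Delta_v$, $\Gcal=\Delta_w$ and convolving with $\nabla_s$ step by step, whereas you bundle the simple steps into a single convolution with $\Delta_v$ and start from $\Fcal=\Delta_e$, $\Gcal=\Delta_w\star\nabla_v$ --- a cosmetic difference only. One small point: \cref{lem:geometric} is stated only for simple $s$, so when you invoke it for $(-)\star\Delta_v$ you should note that compositions of geometric functors are geometric (or just work one simple reflection at a time, as the paper does); the ``obstacle'' you flag is precisely what the composite-is-identity argument handles.
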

\begin{proof}
By Lemma~\ref{lem:geometric}, for $\Fcal,\Gcal \in \Db_B(X,k)$. we have a map
\begin{equation}\label{eq:RHom1}
\RHom(\Fcal,\Gcal) \to \RHom( \Fcal \star \nabla_s, \Gcal \star \nabla_s)
\end{equation}
and a map
\begin{equation}\label{eq:RHom2}
\RHom( \Fcal \star \nabla_s,\Gcal \star \nabla_s) \to \RHom(\Fcal \star \nabla_s \star \Delta_s,\Gcal \star \nabla_s\star \Delta_s)
\end{equation}
inside $\Db_B(\pt,\k)$.  Convolution is associative and $\nabla_s \star \Delta_s \simeq \Delta_e$ (\cite[Proposition 4.4]{AR2}), and also $(-) \star \Delta_e$ is the identity functor.  So composing \eqref{eq:RHom1} and \eqref{eq:RHom2}, we get an automorphism $\RHom(\Fcal,\Gcal) \simeq \RHom(\Fcal,\Gcal)$ inside $\Db_B(\pt,\k)$.  It follows that $\RHom(\Fcal,\Gcal) \simeq \RHom(\Fcal \star \nabla_s,  \Gcal \star \nabla_s)$.  Choosing $\Fcal = \Delta_v$ and $\Gcal = \Delta_w$ and repeatedly applying this we obtain the required statement.
\end{proof}  
We remark that we could have defined $\RHom(\Delta_v,\Delta_w)$ and $\RHom(\Delta_e, \Delta_w \star \nabla_v)$ as objects in $\Db_{B \times B}(\pt,\k)$, but Lemma~\ref{lem:RHomad} would not hold.  The functors $(-) \star \Delta_s$ and $(-) \star \nabla_s$ ``commute" with only one of the $B$-actions.

\subsubsection{}
Let $\iota_e: B \hookrightarrow G$ be the inclusion and $\pi: G/B \to \{e\} = \pt$ be the projection. These maps are $B$-equivariant.  Now, the sheaf $\Delta_e$ is supported on a single point $\{e\} \subset G/B$.  Thus, for any $\Fcal \in \Db_B(X,\k)$, the object $\sRHom(\Delta_e, \Fcal) \in \Db_B(X,\k)$ is also supported on $\{e\}$.  The pullback $\iota_e^*$ and pushforward $\iota_{e,*}$ are inverse equivalences of categories between $D^b_B(\{e\},\k)$ and the full subcategory of $\Db_B(X,\k)$ consisting of objects whose cohomology sheaves are supported on $\{e\}$.  
Inside $\Db_B(\{e\},\k) = \Db_B(\pt,\k)$, we thus have
\begin{equation}\label{eq:RHomiota1}
  \begin{aligned}
 \RHom_X(\Delta_e, \Fcal) &\simeq \pi_*  \sRHom_X( \Delta_e, \Fcal) \\ &\simeq \iota_e^* \sRHom_{X}( \Delta_e, \Fcal) = \sRHom_{\{e\}}(\k, \iota_e^* \Fcal) = \iota_e^* \Fcal.
\end{aligned}
\end{equation}
\subsubsection{}

The equivalence $\Db_B(\pt,\k) \simeq \Dperf(R, \Fr)$ (resp., $\Db_{B \times B}(\pt,\k) \simeq \Dperf(R \otimes R, \Fr)$)  sends objects pure of weight 0 to objects in $\Dperf(R, \Fr)$ (resp., $\Dperf(R \otimes R, \Fr)$) concentrated in cohomological degree 0.  For $\Fcal \in \Semis_B(X)$, we thus view the Soergel bimodule $\H_B(\Fcal)$ as sitting inside $\Dperf(R \otimes R, \Fr)$ in cohomological degree 0, that is, as an object in $\Mod(R \otimes R,\Fr)$. 

\begin{lemma}\label{lem:pureagree}
For $\Fcal \in \Semis_B(G/B)$, we have an isomorphism
\[
\iota_e^* \Fcal \simeq \Hom_{R \otimes R}(R, \H_B(\Fcal))
\]
inside $\Mod(R,\Fr)$.
\end{lemma}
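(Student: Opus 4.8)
The plan is to deduce the lemma from the identification $\iota_e^*\Fcal\simeq\RHom_X(\Delta_e,\Fcal)$ recorded in \eqref{eq:RHomiota1}, combined with the $\Ext$-computation from $\IC_e$ built into Proposition~\ref{prop:SemiSBim} and the fact that pure complexes split. Since $X_e=\{eB\}$ is a single closed point, $\Delta_e=\IC_e$ and $\H_B(\Delta_e)=R$, so it suffices to identify the object $\RHom_X(\IC_e,\Fcal)$ of $\Dperf(R,\Fr)\simeq\Db_B(\pt,\k)$ with the $(R,\Fr)$-module $\Hom_{R\otimes R}(R,\H_B(\Fcal))$, naturally in $\Fcal\in\Semis_B(X)$ and compatibly with the $R$-action.

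First I would show that $\RHom_X(\IC_e,\Fcal)$ is pure of weight $0$, hence formal. Both $\Fcal\mapsto\RHom_X(\IC_e,\Fcal)$ and $\Fcal\mapsto\Hom_{R\otimes R}(R,\H_B(\Fcal))$ are additive and intertwine the shift $[n](n/2)$ with the polynomial shift $\psa{-n/2}$ (Proposition~\ref{prop:SemiSBim}), and a direct sum of formal objects is formal, so it is enough to treat $\Fcal=\IC_w$. By Lemma~\ref{lem:ICpure} the group $\ExtDC^i_X(\IC_e,\IC_w)$ is pure of weight $i$ for every $i$; taking Verdier duals and unwinding conventions, this says exactly that $\RHom_X(\IC_e,\IC_w)$ is pure of weight $0$. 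Since pure complexes in $\Db_B(\pt,\k)$ are geometrically semisimple (\cite[5.3.8]{BBD}, applied on $[\pt/B]$), $\RHom_X(\IC_e,\IC_w)$ splits as a direct sum of shifted copies of $\ku_{\pt}[n](n/2)$, i.e.\ it lies in the heart $\Mod(R,\Fr)\subset\Dperf(R,\Fr)$ of honest $(R,\Fr)$-modules.

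Being an honest module, $\RHom_X(\IC_e,\Fcal)$ is recovered from its cohomology $\H^\bullet\bigl(\RHom_X(\IC_e,\Fcal)\bigr)=\ExtDC^\bullet_X(\IC_e,\Fcal)$. Applying \eqref{eq:BYHom} with first argument $\IC_e$ (and extending additively to all of $\Semis_B(X)$), this graded $(R,\Fr)$-module is isomorphic to $\Hom_{R\otimes R}(\H_B(\IC_e),\H_B(\Fcal))=\Hom_{R\otimes R}(R,\H_B(\Fcal))$, the $R=H^\bullet_B(\pt,\k)$-action matching the left-hand tensor factor. Combining this with \eqref{eq:RHomiota1} yields the claimed isomorphism $\iota_e^*\Fcal\simeq\Hom_{R\otimes R}(R,\H_B(\Fcal))$ in $\Mod(R,\Fr)$, naturally in $\Fcal$.

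The main obstacle I expect is the formality step: one must keep the several grading conventions of \cite{BY} (weight, cohomological degree, and the combined grading under which an object of $\Dperf(R,\Fr)$ is a module) precisely aligned, so that pointwise purity of $\ExtDC^\bullet_X(\IC_e,\IC_w)$ genuinely forces $\RHom_X(\IC_e,\IC_w)$ into the heart and so that $[n](n/2)$ on the sheaf side is consistently matched with $\psa{-n/2}$ on the Soergel side. Once that is pinned down, the $R\otimes R$-equivariance of \eqref{eq:BYHom} and the naturality of \eqref{eq:RHomiota1} make the remaining bookkeeping routine.
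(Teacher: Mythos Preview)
Your proposal is correct and follows essentially the same route as the paper's proof: both use \eqref{eq:RHomiota1} to identify $\iota_e^*\Fcal$ with $\RHom_X(\Delta_e,\Fcal)$, establish purity of weight $0$ so that this object lies in $\Mod(R,\Fr)$, and then invoke \eqref{eq:BYHom} (i.e.\ \cite[Proposition~3.1.6]{BY}) to compute the resulting module as $\Hom_{R\otimes R}(R,\H_B(\Fcal))$. The only cosmetic difference is that the paper asserts purity of $\iota_e^*\Fcal$ directly (noting $\iota_e^*\Fcal=\iota_e^!\Fcal$ is pure), whereas you deduce it from the purity of the $\Ext$-groups via Lemma~\ref{lem:ICpure} and then appeal to the decomposition theorem; these are equivalent in this setting.
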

\begin{proof}
For $\Fcal \in \Semis_B(G/B)$, we have that $\iota_e^* \Fcal = \iota_e^! \Fcal$ is again pure of weight 0.  Thus, $\iota_e^* \Fcal \in \Db_B(\pt,\k)$ can be identified with an element of $\Mod(R, \Fr)$.
By \cite[Proposition 3.1.6]{BY} and \eqref{eq:RHomiota1}, we have 
\begin{equation*}%
  \Hom_{R \otimes R}(R, \H_B(\Fcal)) \simeq \H_B( \RHom(\Delta_e, \Fcal)) \simeq \H_B(\iota_e^*\Fcal).
\end{equation*}
  (Since $ \iota_e^* \Fcal$ is pure of weight 0, $\H_B(\iota_e^*\Fcal)$ is simply the corresponding object in $\Mod(R,\Fr)$.)  We conclude that $ \iota_e^* \Fcal \simeq \Hom_{R \otimes R}(R, \H_B(\Fcal))$ inside $\Mod(R, \Fr)$ and the result follows.\end{proof}

We remark that $\Hom_{R \otimes R}(R, \H_B(\Fcal))$ is free as an $R$-module; cf. \cite[Lemma 3.1.5]{BY}.  See also Remark~\ref{rmk:diag}.

\subsubsection{}
Recall that we have a realization functor $\real: \Kb\Semis_B(X) \to \Db_{B}(X,\k)$.
We now have two functors from $\Kb \Semis_B(X)$ to $\Db_{B }(\pt,\k)$.  The functor
\begin{equation}\label{eq:RHomiota2}
\iota_e^* \circ \real: \Kb \Semis_B(X) \to \Db_{B}(X,\k)\to \Db_{B }(\pt,\k)
\end{equation}
and the functor 
\begin{equation}\label{eq:HomRH}
\Kb\Hom_{R \otimes R}(R, -) \circ \Kb \H_B: \Kb \Semis_B(X) \to \KSBim \to \Db_{B}(\pt,\k).
\end{equation}
We explain the last functor $\Kb\Hom_{R \otimes R}(R, -): \KSBim \to \Db_B(\pt,\k)$.  Let $\Free(R, \Fr)$ denote the category of finitely generated free $R$-modules equipped with an action of $\Fr$.  The functor $\Hom_{R \otimes R}(R,-)$ takes $\SBim$ to $\Free(R,\Fr)$, and $\Kb\Hom_{R \otimes R}(R,-)$ takes $\KSBim$ to $\Kb \Free(R,\Fr)$.
We have an inclusion $\Free(R,\Fr) \to \Db_B(\pt,\k)$.  Applying Proposition~\ref{prop:AMRW1}, we obtain a triangulated functor $\real: \Kb(\Free(R,\Fr)) \to \Db_B(\pt,\k)$.  Composing $\Kb\Hom_{R \otimes R}(R,-)$ with $\real$ we obtain $\Kb\Hom_{R \otimes R}(R, -): \KSBim \to \Db_B(\pt,\k)$.

\def\G{L}
By Lemma~\ref{lem:pureagree}, the two triangulated functors \eqref{eq:RHomiota2} and \eqref{eq:HomRH} agree on the subcategory $\Semis_B(X) \subset \Kb \Semis_B(X)$, sending $\Semis_B(X)$ to $\Free(R,\Fr) \subset \Dperf(R,\Fr) \simeq  \Db_{B}(\pt,\k)$.  Denoting this restriction by $\G: \Semis_B(X) \to \Free(R,\Fr)$, we apply Proposition~\ref{prop:AMRW2} to deduce that both triangulated functors are isomorphic to $$\real \circ \Kb \G: \Kb \Semis_B(X) \to \Kb \Free(R,\Fr) \to \Dperf(R,\Fr) \simeq  \Db_{B}(\pt,\k).$$  Thus, 
\begin{equation}\label{eq:twofunctors}
\iota_e^* \circ \real \simeq \Kb\Hom_{R \otimes R}(R, -) \circ \Kb \H_B.
\end{equation}

\begin{remark}
The essential image of the realization functor $\Kb\Free(R,\Fr) \to \Db_{\perf}(R,\Fr)$ is a subcategory of $\Db_{\perf}(R,\Fr)$ equivalent to the infinitesimal extension of $\Kb\Free(R,\Fr)$, in the sense of \cite{ARs}.
\end{remark}

\subsubsection{Conclusion.}
By \eqref{eq:twofunctors} and Proposition~\ref{prop:stand}, we have $$\iota_e^*(\Delta_w(-\ell(w)/2) \star  \nabla_v(\ell(v)/2)) \simeq \Hom_{R \otimes R}(R, \FR_{v,w})$$ inside $\Db_{\perf}(R,\Fr)$. By \cref{lem:RHomad} and~\eqref{eq:RHomiota1}, we find $$\iota_e^*(\Delta_w(-\ell(w)/2) \star  \nabla_v(\ell(v)/2)) \simeq \RHom_{X}(\Delta_v(-\ell(v)/2),\Delta_w(-\ell(w)/2)).$$ This finishes the proof of \cref{prop:RHom}, as well as of \cref{prop:degr} and \cref{thm:main}.

\section{Ordinary cohomology, Koszul duality, and Verma modules}\label{sec:ord_cohom}
The goal of this section is to prove \cref{thm:ordinary,thm:Koszul,thm:Verma}.

\subsection{Ordinary cohomology}\label{sec:ordinary-cohomology}
We have a forgetful functor $\For: \Db_B(\pt,\k) \to \Db(\pt,\k)$, and a commutative diagram (see \cite{BL} and \cite[Proposition B.3.1]{BY})
\begin{equation}\label{eq:conereal2} \begin{tikzcd}
 \Db_B(\pt,\k) \arrow{r}{\For} \arrow[swap]{d}{\simeq} & \Db(\pt,\k) \arrow{d}{\simeq} \\%
\Db_{\perf}(R,\Fr) \arrow{r}{\otimes_R^L \k}&\Db(\k,\Fr).
\end{tikzcd}
\end{equation}
Here, $\Db(\k,\Fr)$ is the derived category of finite-dimensional $\k$-vector spaces equipped with an $\Fr$-action with integer weights.  Applying $\For$ to \cref{prop:RHom}, we obtain the following. 

\begin{proposition}\label{prop:RHomk}
We have 
$$\RHom_{\Db(X,\k)}(\DeltaC_v(-\ell(v)/2),\DeltaC_w(-\ell(w)/2)) \simeq \Hom_{R \otimes R}(R, \FR_{v,w}) \otimes_R \k$$ inside $\Db(\k,\Fr)$.\qed
\end{proposition}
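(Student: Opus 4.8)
The plan is to deduce \cref{prop:RHomk} from \cref{prop:RHom} by applying the forgetful functor $\For$ to both sides of that isomorphism and then identifying each of the two resulting objects.

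First I would handle the geometric side. The functor $\For\colon\Db_B(X,\k)\to\Db(X,\k)$ is pullback along the smooth $B$-torsor $X\to[X/B]$, so by smooth base change in the stack formalism of~\cite{LO} (or directly from the formalism of~\cite{BL}) it commutes with pushforward along $\pi\colon X\to\pt$, and, being a smooth pullback, it commutes with the internal hom $\sRHom(-,-)$ on constructible complexes. Consequently $\For\bigl(\RHom_X(\Fcal,\Gcal)\bigr)\simeq\RHom_{\Db(X,\k)}(\For\Fcal,\For\Gcal)$, naturally and compatibly with the Frobenius action. Since $\DeltaC_v=\For\Delta_v$ and $\DeltaC_w=\For\Delta_w$ by definition, applying $\For$ to the left-hand side of \cref{prop:RHom} produces exactly $\RHom_{\Db(X,\k)}\bigl(\DeltaC_v(-\ell(v)/2),\DeltaC_w(-\ell(w)/2)\bigr)$.

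Next I would handle the algebraic side using the commutative square~\eqref{eq:conereal2}: under the equivalences $\Db_B(\pt,\k)\simeq\Dperf(R,\Fr)$ and $\Db(\pt,\k)\simeq\Db(\k,\Fr)$, the functor $\For$ corresponds to $(-)\otimes_R^L\k$. Thus $\For$ applied to the right-hand side of \cref{prop:RHom} gives $\Hom_{R\otimes R}(R,\FR_{v,w})\otimes_R^L\k$. To finish I would replace the derived tensor product by the ordinary one: the complex $\Hom_{R\otimes R}(R,\FR_{v,w})$ is obtained by applying $\Hom_{R\otimes R}(R,-)$ term by term to the Rouquier complex $\FR_{v,w}$, and for every Bott--Samelson (hence every Soergel) bimodule $B$ the corresponding $R$-module is free (\cref{rmk:diag}; see also the remark following \cref{lem:pureagree}). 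Therefore each term of $\Hom_{R\otimes R}(R,\FR_{v,w})$ is $R$-free, so $(-)\otimes_R^L\k$ agrees with $(-)\otimes_R\k$ on it, producing $\Hom_{R\otimes R}(R,\FR_{v,w})\otimes_R\k$. Combining the two identifications with \cref{prop:RHom} yields the claimed isomorphism in $\Db(\k,\Fr)$.

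The only point requiring care is the naturality of the Bernstein--Lunts-type identifications: that $\For$ may legitimately be applied to the single isomorphism of \cref{prop:RHom} and that it intertwines $\RHom_X$ with $\RHom_{\Db(X,\k)}$ and intertwines itself with $(-)\otimes_R^L\k$ via~\eqref{eq:conereal2}. This is routine and already assembled in \cref{sec:sheaf-cohomology,sec:sheafproof}, so no genuinely new argument is needed beyond what has been set up, which is why it can be recorded simply as the consequence of applying $\For$ to \cref{prop:RHom}.
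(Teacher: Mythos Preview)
Your proposal is correct and follows essentially the same approach as the paper: apply $\For$ to \cref{prop:RHom}, use the commutative square~\eqref{eq:conereal2} to identify $\For$ with $(-)\otimes_R^L\k$, and then replace the derived tensor product by the ordinary one using the termwise $R$-freeness of $\Hom_{R\otimes R}(R,\FR_{v,w})$. The paper records this in a single sentence without spelling out the compatibility of $\For$ with $\RHom_X$, whereas you make that step explicit via smooth base change; this is a welcome elaboration but not a different argument.
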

Here, $\DeltaC_v = \For(\Delta_v) \in \Db_{(B)}(X,\k)$ denotes the ordinary standard object in the Borel-constructible derived category, and the derived tensor product $\otimes^L_R \k$ is replaced by the usual tensor product since $\Hom_{R \otimes R}(R, \FR_{v,w})$ is free as an $R$-module.  Taking the hypercohomology of both sides of Proposition~\ref{prop:RHomk}, we obtain the following. 

\begin{corollary}\label{cor:degrk}
For all $v\leq w\in W$, and all $k,\wt \in \Z$, we have an isomorphism
\begin{equation}\label{eq:HHH_to_Ext_ordinary}
\ExtDC^{\wt+k,((\wt-\lvw)/2)}(\DeltaC_v,\DeltaC_w) \cong  \HHXk {k}\wt/2_{v,w}  
\end{equation}
of $\k$-vector spaces. (In particular, both sides are zero for odd $\wt$.)
\end{corollary}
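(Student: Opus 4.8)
\textbf{Proof proposal for Corollary~\ref{cor:degrk}.}

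The plan is to apply the forgetful functor $\For\colon \Db_B(\pt,\k)\to\Db(\pt,\k)$ to the isomorphism furnished by Proposition~\ref{prop:RHomk}, and then take hypercohomology, keeping careful track of the two gradings. First I would recall that by Proposition~\ref{prop:RHomk} we have an isomorphism
\[
\RHom_{\Db(X,\k)}\bigl(\DeltaC_v(-\ell(v)/2),\DeltaC_w(-\ell(w)/2)\bigr)\;\simeq\;\Hom_{R\otimes R}(R,\FR_{v,w})\otimes_R\k
\]
inside $\Db(\k,\Fr)$, where the derived tensor product collapses to the ordinary one because $\Hom_{R\otimes R}(R,\FR_{v,w})$ is a complex of free $R$-modules (cf.\ the remark after Lemma~\ref{lem:pureagree} and Remark~\ref{rmk:diag}). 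Taking cohomology of both sides as objects of $\Db(\k,\Fr)$, the left-hand side becomes $\ExtDC^\bullet(\DeltaC_v(-\ell(v)/2),\DeltaC_w(-\ell(w)/2))$, and the right-hand side becomes $H^\bullet\bigl(\Hom_{R\otimes R}(R,\FR_{v,w})\otimes_R\k\bigr)=H^\bullet(\HHk^0(\FR_{v,w}))$, which by the definition~\eqref{eq:HHH0_dfn} (applied over $\k$, using $\HHk^0(B)=\HH^0(B)\otimes_R\k$) is exactly $\bigoplus_{k,p}\HHXk kp_{v,w}$.

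Next I would match the gradings. On the right-hand side, the cohomological grading of the complex $\Hom_{R\otimes R}(R,\FR_{v,w})$ in $\Kb\Free(R,\Fr)$ is sent, under the realization $\Kb\Free(R,\Fr)\to\Dperf(R,\Fr)$ and then $\otimes_R^L\k$, to the \emph{sum} of the cohomological and internal gradings (this is the same bookkeeping already used in the proof of Proposition~\ref{prop:degr} from Proposition~\ref{prop:RHom}; see \cite[Corollary~B.4.1(1)]{BY}). Thus the polynomial degree $\wt$ part in cohomological degree $k$ on the Soergel side lands in total degree $\wt+k$ on the $\Ext$ side, while the $\Fr$-weight $\wt$ is unchanged. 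On the left-hand side, the Tate twists $(-\ell(v)/2)$ and $(-\ell(w)/2)$ shift the $\Fr$-weight by $\ell(v)+\ell(w)$; combined with~\eqref{eq:Ext_shifts}, this produces the shift from $\ExtDC^{\wt+k,(\wt/2)}(\DeltaC_v(-\ell(v)/2),\DeltaC_w(-\ell(w)/2))$ to $\ExtDC^{\wt+k,((\wt-\lvw)/2)}(\DeltaC_v,\DeltaC_w)$, exactly as in the displayed chain of isomorphisms preceding the proof of Proposition~\ref{prop:RHom}. Putting these together yields~\eqref{eq:HHH_to_Ext_ordinary}. The vanishing for odd $\wt$ is inherited from the fact that $\FR_{v,w}$ is concentrated in even polynomial degrees, so $\HHXk k{\wt/2}_{v,w}=0$ for odd $\wt$.

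Since nearly all the hard analytic and categorical work is already done in Propositions~\ref{prop:RHom}, \ref{prop:RHomk}, and~\ref{prop:stand}, the only genuine content here is the grading bookkeeping and the verification that applying $\For$ and taking cohomology is compatible with the equivalences $\Db_B(\pt,\k)\simeq\Dperf(R,\Fr)$ and $\Db(\pt,\k)\simeq\Db(\k,\Fr)$ in the square~\eqref{eq:conereal2}. I expect the main (minor) obstacle to be confirming that the identification $H^\bullet(M^\bullet\otimes_R\k)\cong\HHk^\bullet$, together with the degree-summing convention, is applied consistently on both sides so that the weight index comes out as $(\wt-\lvw)/2$ rather than, say, $\wt/2$ or $(\wt+\lvw)/2$; this is a sign-and-shift check of the same type already performed in the proof of Proposition~\ref{prop:degr}, so it should go through without difficulty.
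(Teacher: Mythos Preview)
Your proposal is correct and follows essentially the same approach as the paper: the paper's entire proof is the sentence ``Taking the hypercohomology of both sides of Proposition~\ref{prop:RHomk}, we obtain the following,'' together with the remark ``For the shift in cohomological degree, see the discussion after \cref{prop:RHom}.'' You have simply spelled out that one-line argument in detail; the only slip is the phrase ``shift the $\Fr$-weight by $\ell(v)+\ell(w)$'' (the net shift is $-\lvw$, since the twist on the source and on the target contribute with opposite signs), but you arrive at the correct formula regardless.
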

\noindent Here we set $\HHk^0(B):=\HH^0(B)\otimes_R\k$, similarly to~\eqref{eq:HHC}. For the shift in cohomological degree, see the discussion after \cref{prop:RHom}.

\begin{proof}[Proof of \cref{thm:ordinary}.]

The non-equivariant version of \cref{prop:RSW} is given in~\cite[Proposition~4.2.1]{RSW}. Similarly to~\eqref{eq:RSW_aab}--\eqref{eq:RSW_b}, we get
\begin{equation*}
  \Ext^{m,((\wt-\lvw)/2)}(\DeltaC_v,\DeltaC_w) \cong H^{m+\lvw,(\wt/2)}_c(\Rich_v^w,\k) \quad\text{for all $m,\wt\in\Z$.}
\end{equation*}
\Poincare duality~\eqref{eq:Poincare} allows one to translate the compactly supported cohomology into the ordinary cohomology:
\begin{equation}\label{eq:RSW_ordinary}
  \Ext^{m,((\wt-\lvw)/2)}(\DeltaC_v,\DeltaC_w) \cong H^{\lvw-m,(\lvw-\wt/2)}(\Rich_v^w,\k).
\end{equation}

Combining~\eqref{eq:HHH_to_Ext_ordinary}--\eqref{eq:RSW_ordinary} with Koszul duality~\eqref{eq:Koszul} proved in the next section, and switching from working over $\k$ to working over $\C$ via \cref{rmk:kC,prop:kC}, we get 
\begin{align*}
   \dim_\C \HHXC {k}\wt/2_{v,w} &= \dim_\C H^{\lvw-k-\wt,(\lvw-\wt/2,\lvw-\wt/2)}(\Rich_v^w,\C) \\
   &= \dim_\C H^{-k,(\wt/2,\wt/2)}(\Rich_v^w,\C).
\qedhere
\end{align*}
\end{proof}

\begin{proof}[Proof of~\eqref{eq:PKRtop=PKRC}]
By the K\"unneth formula and \cref{cor:Rich_T_action}, we have 
\begin{equation*}%
  H^\bul(\Rich_v^w) \cong H^\bul(\Rich_v^w/T) \otimes_\C H^\bul(T).
\end{equation*}
 The space $H^\bul(T)$ is $2^{n-1}$-dimensional, and the mixed Hodge polynomials are related as
\begin{equation}\label{eq:KRC_proof_0}
  \Poinc(\Rich_v^w;q,t)= \left(\qq+\tt\right)^{n-1}\cdot \Poinc(\Rich_v^w/T;q,t).
\end{equation}
This implies~\eqref{eq:mod_T_qtn}. Next, we claim
\begin{equation}\label{eq:main_KRC}
\Poinc(\Rich_v^w/T;q,t)= \left(\qq\tt\right)^{\chi(\bvw)}\PKRC(\bhvw;q,t).
\end{equation}
First, combining \cref{thm:ordinary} with Koszul duality~\eqref{eq:Koszul} (to be proved below in \cref{sec:Koszul}), we find 
\begin{equation}\label{eq:KRC_proof_1}
  H^{k,(p,p)}(\Rich_v^w,\C) \cong \HHXC {-\lvw-k+2p}{\lvw-p}_{v,w}.
\end{equation}
Setting $k':=-\lvw-k+2p$ and $p':=\lvw-p$, we find $p=\lvw-p'$ and $k=\lvw-k'-2p'$. Plugging this into~\eqref{eq:Poinc_MHT_dfn} and applying~\eqref{eq:KRC_proof_1}, we get
\begin{align*}%
  \Poinc(\Rich_v^w;q,t)&=\sum_{k,p\in \Z}  q^{p-\frac k2} t^{\frac{\lvw-k}2} \dim H^{k,(p,p)}(\Rich_v^w,\C)\\
 &=\sum_{k',p'\in \Z}  q^{\frac{\lvw+k'}2} t^{p'+\frac{k'}2} \HHXC {k'}{p'}_{v,w}.
\end{align*}
 On the other hand, rewriting~\eqref{eq:PKRC}, we see that the right-hand side of~\eqref{eq:main_KRC} is given by 
\begin{equation*}%
{\left(\qq+\tt\right)^{1-n}}\sum_{k',p'\in\Z} q^{\frac{\lvw+k'}2} t^{p'+\frac{k'}2} \dim \HHBC{k'}{p'} (\FR(\beta)).
\end{equation*}
Together with~\eqref{eq:KRC_proof_0}, this finishes the proof of~\eqref{eq:main_KRC}. Finally,~\eqref{eq:PKRtop=PKRC} follows by comparing~\eqref{eq:main_KRC} with (the Richardson version of)~\eqref{eq:main_KR}.
\end{proof}

Since the $R$-action on $H^\bul_{T,c}(\Rich_v^w)$ is trivial (that is, $\h^\ast$ acts by zero), by \cref{thm:main}, the $R$-action on $\HHH^0(\Fvw)$ is also trivial. Alternatively, for $W=S_n$ and any knot $\betah$, the $R$-action on $\HHH^0(\FR(\beta))$ is trivial by \cref{cor:Koszul_laction}.   
 It thus follows that we have an isomorphism of bigraded $\C$-modules
\begin{equation}\label{eq:Torconj}
  \HHHC^0(\FR(\beta))\cong \Tor^R_\bul(\C, \HHH^0(\FR(\beta))).
\end{equation}
We conjecture that~\eqref{eq:Torconj} holds for all $W$ and all $\beta \in \BraidW$.  This would follow from~\eqref{eq:conereal2} if $\HH^0(\FR(\beta))$ and $\HHH^0(\FR(\beta))$ were known to be equivalent in $\Db_{\perf}(R,\Fr)$.

\subsection{Koszul duality and $q,t$-symmetry}\label{sec:Koszul}
We prove \cref{thm:Koszul}. 
 By \cite[Equation~(5.2), Theorem~5.3.1, and Remark~5.3.2]{BY}, for $k,\wt\in\Z$ and $v\leq w\in W$, we get an isomorphism
\begin{equation*}
  \Ext^{k,(\wt/2)}(\DeltaC_v,\DeltaC_w)\cong \Ext^{k-\wt,(-\wt/2)}(\DeltaC_{v^{-1}},\DeltaC_{w^{-1}})
\end{equation*}
of vector spaces. By~\eqref{eq:RSW_ordinary} and \cref{prop:kC}, this implies
\begin{equation}\label{eq:CL_inverse}
H^{k,(\wt/2,\wt/2)}(\Rich_v^w,\C) \cong H^{\lvw+k-\wt,(\lvw-\wt/2,\lvw-\wt/2)}(\Rich_{v^{-1}}^{w^{-1}},\C).
\end{equation}
The only difference between~\eqref{eq:CL_inverse} and the desired result~\eqref{eq:Koszul} is the appearance of $v^{-1}$ and $w^{-1}$ on the right-hand side. In fact, it is not hard to see that the Richardson varieties $\Rich_v^w$ and $\Rich_{v^{-1}}^{w^{-1}}$ are isomorphic. Indeed, recall from \cref{lemma:strawberry} that we have an isomorphism $\strvw\cong \Rich_v^w$. The map $g\mapsto g^{-1}$ restricts to an isomorphism $\strvw\cong \strvwi$ (choosing $\dv^{-1}$ as the representative for $v^{-1}$). By~\eqref{eq:strawberry}, we get an isomorphism $\Rich_v^w\xrasim \Rich_{v^{-1}}^{w^{-1}}$.
\qed

\subsection{Extensions of Verma modules}\label{sec:Verma}
We prove \cref{thm:Verma}. First, we explain the bigrading on $\Ext^\bul(M_v,M_w)$. Out of the several equivalent descriptions listed in~\cite{BGS}, the most convenient one for us is given in~\cite[Section~4.4]{BGS}: the bigraded vector spaces $\Ext^\bul(M_v,M_w)$ and $\Ext^\bul(\DeltaC_v,\DeltaC_w)$ are isomorphic (after changing the coefficients from $\C$ to $\k$), and the bigrading on $\Ext^\bul(M_v,M_w)$ comes from the bigrading on $\Ext^\bul(\DeltaC_v,\DeltaC_w)$ via Frobenius weights~\eqref{eq:Ext_bigr_dfn}:
\begin{equation*}
  \Ext^{k,(\wt/2)}(M_v,M_w):= \Ext^{k,(\wt/2)}(\DeltaC_v,\DeltaC_w).
\end{equation*}
See also~\cite[Equation~(1.1.1)]{RSW}. 

 The result follows by combining~\eqref{eq:RSW_ordinary} with Koszul duality~\eqref{eq:Koszul}.\qed

\section{Catalan numbers associated to positroid varieties}\label{sec:Deogram}
Our results give an embedding of the rational $q,t$-Catalan numbers $\Cat_{k,n-k}(q,t)$ into a family of $q,t$-polynomials $\Poinc(\Pit_f;q,t)\in\N[\qq,\tt]$ (all of which are $q,t$-symmetric and $q,t$-unimodal), indexed by $f\in\Bknc$. The goal of this section is to give a combinatorial interpretation for a specialization of $\Poinc(\Pit_f;q,t)$.
\begin{definition}
For $f\in\Bknc$, define the \emph{$f$-Catalan number} $\Cat_f\in\Z$ as the specialization
\begin{equation*}
  \Cat_f:= \Poinc(\Pit_f;q,t) \big|_{\qq=1,\tt=-1}.
\end{equation*}
Alternatively, $\Cat_f$ is the $q=1$ specialization of the point count polynomial $\#\Pit_f(\F_q)$, and we also have $\Cat_f=\Ptopnoq_f(1)$, where the polynomial $\Ptop_f$ is defined in \cref{thm:homfly}.
\end{definition}
\noindent 
 In particular, $\Cat_{\fkn}=\Cat_{k,n-k}(1,1)=\#\Dyck_{k,(n-k)}$ is the usual rational Catalan number when $\gcd(k,n)=1$.

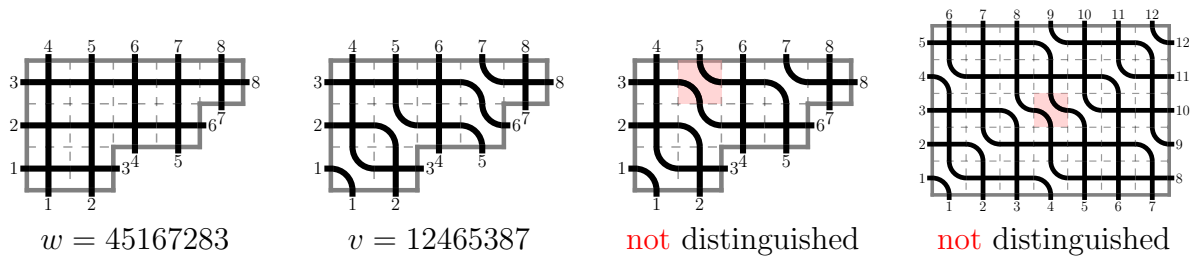
\begin{figure}

\makebox[1.0\textwidth]{
\def\sclbx{0.9}
\scalebox{0.95}{
		\begin{tabular}{cccc}
		\scalebox{\sclbx}{
	\begin{tikzpicture}[xscale=0.5,yscale=0.5]
	\draw[line width=0.5pt,dashed,opacity=\gridop] (0.00,1.00)--(2.00,1.00);\draw[line width=0.5pt,dashed,opacity=\gridop] (0.00,2.00)--(4.00,2.00);\draw[line width=0.5pt,dashed,opacity=\gridop] (0.00,3.00)--(5.00,3.00);\draw[line width=0.5pt,dashed,opacity=\gridop] (0.00,4.00)--(0.00,1.00);\draw[line width=0.5pt,dashed,opacity=\gridop] (1.00,4.00)--(1.00,1.00);\draw[line width=0.5pt,dashed,opacity=\gridop] (2.00,4.00)--(2.00,2.00);\draw[line width=0.5pt,dashed,opacity=\gridop] (3.00,4.00)--(3.00,2.00);\draw[line width=0.5pt,dashed,opacity=\gridop] (4.00,4.00)--(4.00,3.00);
	\godiagx{}{0/3,1/3,2/3,3/3,4/3,0/2,1/2,2/2,3/2,0/1,1/1}{}
	\draw[line width=1.5pt,black!50] (0.00,0.95)--(0.00,4.05);\draw[line width=1.5pt,black!50] (-0.05,4.00)--(5.05,4.00);\draw[line width=1.5pt,black!50] (-0.05,1.00)--(2.05,1.00);\draw[line width=1.5pt,black!50] (2.00,0.95)--(2.00,2.05);\draw[line width=2pt] (0.50,1.05)--(0.50,0.85);
					\node[anchor=north,scale=0.50,inner sep=1pt] (A) at (0.50,0.85) {$1$};
					\draw[line width=2pt] (1.50,1.05)--(1.50,0.85);
					\node[anchor=north,scale=0.50,inner sep=1pt] (A) at (1.50,0.85) {$2$};
					\draw[line width=2pt] (1.95,1.50)--(2.15,1.50);
				\node[anchor=west,scale=0.50,inner sep=1pt] (A) at (2.15,1.50) {$3$};
				\draw[line width=2pt] (0.05,1.50)--(-0.15,1.50);
				\node[anchor=east,scale=0.50,inner sep=1pt] (A) at (-0.15,1.50) {$1$};
				\draw[line width=1.5pt,black!50] (1.95,2.00)--(4.05,2.00);\draw[line width=1.5pt,black!50] (4.00,1.95)--(4.00,3.05);\draw[line width=2pt] (2.50,2.05)--(2.50,1.85);
					\node[anchor=north,scale=0.50,inner sep=1pt] (A) at (2.50,1.85) {$4$};
					\draw[line width=2pt] (3.50,2.05)--(3.50,1.85);
					\node[anchor=north,scale=0.50,inner sep=1pt] (A) at (3.50,1.85) {$5$};
					\draw[line width=2pt] (3.95,2.50)--(4.15,2.50);
				\node[anchor=west,scale=0.50,inner sep=1pt] (A) at (4.15,2.50) {$6$};
				\draw[line width=2pt] (0.05,2.50)--(-0.15,2.50);
				\node[anchor=east,scale=0.50,inner sep=1pt] (A) at (-0.15,2.50) {$2$};
				\draw[line width=1.5pt,black!50] (3.95,3.00)--(5.05,3.00);\draw[line width=1.5pt,black!50] (5.00,2.95)--(5.00,4.05);\draw[line width=2pt] (4.50,3.05)--(4.50,2.85);
					\node[anchor=north,scale=0.50,inner sep=1pt] (A) at (4.50,2.85) {$7$};
					\draw[line width=2pt] (4.95,3.50)--(5.15,3.50);
				\node[anchor=west,scale=0.50,inner sep=1pt] (A) at (5.15,3.50) {$8$};
				\draw[line width=2pt] (0.05,3.50)--(-0.15,3.50);
				\node[anchor=east,scale=0.50,inner sep=1pt] (A) at (-0.15,3.50) {$3$};
				\draw[line width=2pt] (0.50,3.95)--(0.50,4.15);
		\node[anchor=south,scale=0.50,inner sep=1pt] (A) at (0.50,4.15) {$4$};
		\draw[line width=2pt] (1.50,3.95)--(1.50,4.15);
		\node[anchor=south,scale=0.50,inner sep=1pt] (A) at (1.50,4.15) {$5$};
		\draw[line width=2pt] (2.50,3.95)--(2.50,4.15);
		\node[anchor=south,scale=0.50,inner sep=1pt] (A) at (2.50,4.15) {$6$};
		\draw[line width=2pt] (3.50,3.95)--(3.50,4.15);
		\node[anchor=south,scale=0.50,inner sep=1pt] (A) at (3.50,4.15) {$7$};
		\draw[line width=2pt] (4.50,3.95)--(4.50,4.15);
		\node[anchor=south,scale=0.50,inner sep=1pt] (A) at (4.50,4.15) {$8$};
		\end{tikzpicture}
			}
&
\scalebox{\sclbx}{
	\begin{tikzpicture}[xscale=0.5,yscale=0.5]
	\draw[line width=0.5pt,dashed,opacity=\gridop] (0.00,1.00)--(2.00,1.00);\draw[line width=0.5pt,dashed,opacity=\gridop] (0.00,2.00)--(4.00,2.00);\draw[line width=0.5pt,dashed,opacity=\gridop] (0.00,3.00)--(5.00,3.00);\draw[line width=0.5pt,dashed,opacity=\gridop] (0.00,4.00)--(0.00,1.00);\draw[line width=0.5pt,dashed,opacity=\gridop] (1.00,4.00)--(1.00,1.00);\draw[line width=0.5pt,dashed,opacity=\gridop] (2.00,4.00)--(2.00,2.00);\draw[line width=0.5pt,dashed,opacity=\gridop] (3.00,4.00)--(3.00,2.00);\draw[line width=0.5pt,dashed,opacity=\gridop] (4.00,4.00)--(4.00,3.00);
	\godiagx{3/3,1/2,3/2,0/1}{2/3,4/3,2/2,1/1}{0/3,1/3,0/2}
	\draw[line width=1.5pt,black!50] (0.00,0.95)--(0.00,4.05);\draw[line width=1.5pt,black!50] (-0.05,4.00)--(5.05,4.00);\draw[line width=1.5pt,black!50] (-0.05,1.00)--(2.05,1.00);\draw[line width=1.5pt,black!50] (2.00,0.95)--(2.00,2.05);\draw[line width=2pt] (0.50,1.05)--(0.50,0.85);
					\node[anchor=north,scale=0.50,inner sep=1pt] (A) at (0.50,0.85) {$1$};
					\draw[line width=2pt] (1.50,1.05)--(1.50,0.85);
					\node[anchor=north,scale=0.50,inner sep=1pt] (A) at (1.50,0.85) {$2$};
					\draw[line width=2pt] (1.95,1.50)--(2.15,1.50);
				\node[anchor=west,scale=0.50,inner sep=1pt] (A) at (2.15,1.50) {$3$};
				\draw[line width=2pt] (0.05,1.50)--(-0.15,1.50);
				\node[anchor=east,scale=0.50,inner sep=1pt] (A) at (-0.15,1.50) {$1$};
				\draw[line width=1.5pt,black!50] (1.95,2.00)--(4.05,2.00);\draw[line width=1.5pt,black!50] (4.00,1.95)--(4.00,3.05);\draw[line width=2pt] (2.50,2.05)--(2.50,1.85);
					\node[anchor=north,scale=0.50,inner sep=1pt] (A) at (2.50,1.85) {$4$};
					\draw[line width=2pt] (3.50,2.05)--(3.50,1.85);
					\node[anchor=north,scale=0.50,inner sep=1pt] (A) at (3.50,1.85) {$5$};
					\draw[line width=2pt] (3.95,2.50)--(4.15,2.50);
				\node[anchor=west,scale=0.50,inner sep=1pt] (A) at (4.15,2.50) {$6$};
				\draw[line width=2pt] (0.05,2.50)--(-0.15,2.50);
				\node[anchor=east,scale=0.50,inner sep=1pt] (A) at (-0.15,2.50) {$2$};
				\draw[line width=1.5pt,black!50] (3.95,3.00)--(5.05,3.00);\draw[line width=1.5pt,black!50] (5.00,2.95)--(5.00,4.05);\draw[line width=2pt] (4.50,3.05)--(4.50,2.85);
					\node[anchor=north,scale=0.50,inner sep=1pt] (A) at (4.50,2.85) {$7$};
					\draw[line width=2pt] (4.95,3.50)--(5.15,3.50);
				\node[anchor=west,scale=0.50,inner sep=1pt] (A) at (5.15,3.50) {$8$};
				\draw[line width=2pt] (0.05,3.50)--(-0.15,3.50);
				\node[anchor=east,scale=0.50,inner sep=1pt] (A) at (-0.15,3.50) {$3$};
				\draw[line width=2pt] (0.50,3.95)--(0.50,4.15);
		\node[anchor=south,scale=0.50,inner sep=1pt] (A) at (0.50,4.15) {$4$};
		\draw[line width=2pt] (1.50,3.95)--(1.50,4.15);
		\node[anchor=south,scale=0.50,inner sep=1pt] (A) at (1.50,4.15) {$5$};
		\draw[line width=2pt] (2.50,3.95)--(2.50,4.15);
		\node[anchor=south,scale=0.50,inner sep=1pt] (A) at (2.50,4.15) {$6$};
		\draw[line width=2pt] (3.50,3.95)--(3.50,4.15);
		\node[anchor=south,scale=0.50,inner sep=1pt] (A) at (3.50,4.15) {$7$};
		\draw[line width=2pt] (4.50,3.95)--(4.50,4.15);
		\node[anchor=south,scale=0.50,inner sep=1pt] (A) at (4.50,4.15) {$8$};
		\end{tikzpicture}
			}
&
\scalebox{\sclbx}{
	\begin{tikzpicture}[xscale=0.5,yscale=0.5]
	\draw[line width=0.5pt,dashed,opacity=\gridop] (0.00,1.00)--(2.00,1.00);\draw[line width=0.5pt,dashed,opacity=\gridop] (0.00,2.00)--(4.00,2.00);\draw[line width=0.5pt,dashed,opacity=\gridop] (0.00,3.00)--(5.00,3.00);\draw[line width=0.5pt,dashed,opacity=\gridop] (0.00,4.00)--(0.00,1.00);\draw[line width=0.5pt,dashed,opacity=\gridop] (1.00,4.00)--(1.00,1.00);\draw[line width=0.5pt,dashed,opacity=\gridop] (2.00,4.00)--(2.00,2.00);\draw[line width=0.5pt,dashed,opacity=\gridop] (3.00,4.00)--(3.00,2.00);\draw[line width=0.5pt,dashed,opacity=\gridop] (4.00,4.00)--(4.00,3.00);
	\godiagx{1/3,3/3,1/2,0/1}{2/3,4/3,2/2,3/2,1/1}{0/3,0/2}
	\draw[line width=1.5pt,black!50] (0.00,0.95)--(0.00,4.05);\draw[line width=1.5pt,black!50] (-0.05,4.00)--(5.05,4.00);\draw[line width=1.5pt,black!50] (-0.05,1.00)--(2.05,1.00);\draw[line width=1.5pt,black!50] (2.00,0.95)--(2.00,2.05);\draw[line width=2pt] (0.50,1.05)--(0.50,0.85);
					\node[anchor=north,scale=0.50,inner sep=1pt] (A) at (0.50,0.85) {$1$};
					\draw[line width=2pt] (1.50,1.05)--(1.50,0.85);
					\node[anchor=north,scale=0.50,inner sep=1pt] (A) at (1.50,0.85) {$2$};
					\draw[line width=2pt] (1.95,1.50)--(2.15,1.50);
				\node[anchor=west,scale=0.50,inner sep=1pt] (A) at (2.15,1.50) {$3$};
				\draw[line width=2pt] (0.05,1.50)--(-0.15,1.50);
				\node[anchor=east,scale=0.50,inner sep=1pt] (A) at (-0.15,1.50) {$1$};
				\draw[line width=1.5pt,black!50] (1.95,2.00)--(4.05,2.00);\draw[line width=1.5pt,black!50] (4.00,1.95)--(4.00,3.05);\draw[line width=2pt] (2.50,2.05)--(2.50,1.85);
					\node[anchor=north,scale=0.50,inner sep=1pt] (A) at (2.50,1.85) {$4$};
					\draw[line width=2pt] (3.50,2.05)--(3.50,1.85);
					\node[anchor=north,scale=0.50,inner sep=1pt] (A) at (3.50,1.85) {$5$};
					\draw[line width=2pt] (3.95,2.50)--(4.15,2.50);
				\node[anchor=west,scale=0.50,inner sep=1pt] (A) at (4.15,2.50) {$6$};
				\draw[line width=2pt] (0.05,2.50)--(-0.15,2.50);
				\node[anchor=east,scale=0.50,inner sep=1pt] (A) at (-0.15,2.50) {$2$};
				\draw[line width=1.5pt,black!50] (3.95,3.00)--(5.05,3.00);\draw[line width=1.5pt,black!50] (5.00,2.95)--(5.00,4.05);\draw[line width=2pt] (4.50,3.05)--(4.50,2.85);
					\node[anchor=north,scale=0.50,inner sep=1pt] (A) at (4.50,2.85) {$7$};
					\draw[line width=2pt] (4.95,3.50)--(5.15,3.50);
				\node[anchor=west,scale=0.50,inner sep=1pt] (A) at (5.15,3.50) {$8$};
				\draw[line width=2pt] (0.05,3.50)--(-0.15,3.50);
				\node[anchor=east,scale=0.50,inner sep=1pt] (A) at (-0.15,3.50) {$3$};
				\draw[line width=2pt] (0.50,3.95)--(0.50,4.15);
		\node[anchor=south,scale=0.50,inner sep=1pt] (A) at (0.50,4.15) {$4$};
		\draw[line width=2pt] (1.50,3.95)--(1.50,4.15);
		\node[anchor=south,scale=0.50,inner sep=1pt] (A) at (1.50,4.15) {$5$};
		\draw[line width=2pt] (2.50,3.95)--(2.50,4.15);
		\node[anchor=south,scale=0.50,inner sep=1pt] (A) at (2.50,4.15) {$6$};
		\draw[line width=2pt] (3.50,3.95)--(3.50,4.15);
		\node[anchor=south,scale=0.50,inner sep=1pt] (A) at (3.50,4.15) {$7$};
		\draw[line width=2pt] (4.50,3.95)--(4.50,4.15);
		\node[anchor=south,scale=0.50,inner sep=1pt] (A) at (4.50,4.15) {$8$};
		\drawelbowfill{1}{3}{red!40}\end{tikzpicture}
			}
&
\scalebox{0.9}{
	\begin{tikzpicture}[xscale=0.5,yscale=0.5]
	\draw[line width=0.5pt,dashed,opacity=\gridop] (0.00,1.00)--(7.00,1.00);\draw[line width=0.5pt,dashed,opacity=\gridop] (0.00,2.00)--(7.00,2.00);\draw[line width=0.5pt,dashed,opacity=\gridop] (0.00,3.00)--(7.00,3.00);\draw[line width=0.5pt,dashed,opacity=\gridop] (0.00,4.00)--(7.00,4.00);\draw[line width=0.5pt,dashed,opacity=\gridop] (0.00,5.00)--(7.00,5.00);\draw[line width=0.5pt,dashed,opacity=\gridop] (0.00,6.00)--(0.00,1.00);\draw[line width=0.5pt,dashed,opacity=\gridop] (1.00,6.00)--(1.00,1.00);\draw[line width=0.5pt,dashed,opacity=\gridop] (2.00,6.00)--(2.00,1.00);\draw[line width=0.5pt,dashed,opacity=\gridop] (3.00,6.00)--(3.00,1.00);\draw[line width=0.5pt,dashed,opacity=\gridop] (4.00,6.00)--(4.00,1.00);\draw[line width=0.5pt,dashed,opacity=\gridop] (5.00,6.00)--(5.00,1.00);\draw[line width=0.5pt,dashed,opacity=\gridop] (6.00,6.00)--(6.00,1.00);
	\godiagx{3/5,6/5,0/4,5/4,2/3,3/3,4/3,1/2,6/2,0/1,3/1}{0/5,1/5,2/5,4/5,5/5,1/4,2/4,3/4,4/4,6/4,0/3,1/3,5/3,6/3,0/2,2/2,3/2,4/2,5/2,1/1,2/1,4/1,5/1,6/1}{}
	\draw[line width=1.5pt,black!50] (0.00,0.95)--(0.00,6.05);\draw[line width=1.5pt,black!50] (-0.05,6.00)--(7.05,6.00);\draw[line width=1.5pt,black!50] (-0.05,1.00)--(7.05,1.00);\draw[line width=1.5pt,black!50] (7.00,0.95)--(7.00,2.05);\draw[line width=2pt] (0.50,1.05)--(0.50,0.85);
					\node[anchor=north,scale=0.50,inner sep=1pt] (A) at (0.50,0.85) {$1$};
					\draw[line width=2pt] (1.50,1.05)--(1.50,0.85);
					\node[anchor=north,scale=0.50,inner sep=1pt] (A) at (1.50,0.85) {$2$};
					\draw[line width=2pt] (2.50,1.05)--(2.50,0.85);
					\node[anchor=north,scale=0.50,inner sep=1pt] (A) at (2.50,0.85) {$3$};
					\draw[line width=2pt] (3.50,1.05)--(3.50,0.85);
					\node[anchor=north,scale=0.50,inner sep=1pt] (A) at (3.50,0.85) {$4$};
					\draw[line width=2pt] (4.50,1.05)--(4.50,0.85);
					\node[anchor=north,scale=0.50,inner sep=1pt] (A) at (4.50,0.85) {$5$};
					\draw[line width=2pt] (5.50,1.05)--(5.50,0.85);
					\node[anchor=north,scale=0.50,inner sep=1pt] (A) at (5.50,0.85) {$6$};
					\draw[line width=2pt] (6.50,1.05)--(6.50,0.85);
					\node[anchor=north,scale=0.50,inner sep=1pt] (A) at (6.50,0.85) {$7$};
					\draw[line width=2pt] (6.95,1.50)--(7.15,1.50);
				\node[anchor=west,scale=0.50,inner sep=1pt] (A) at (7.15,1.50) {$8$};
				\draw[line width=2pt] (0.05,1.50)--(-0.15,1.50);
				\node[anchor=east,scale=0.50,inner sep=1pt] (A) at (-0.15,1.50) {$1$};
				\draw[line width=1.5pt,black!50] (6.95,2.00)--(7.05,2.00);\draw[line width=1.5pt,black!50] (7.00,1.95)--(7.00,3.05);\draw[line width=2pt] (6.95,2.50)--(7.15,2.50);
				\node[anchor=west,scale=0.50,inner sep=1pt] (A) at (7.15,2.50) {$9$};
				\draw[line width=2pt] (0.05,2.50)--(-0.15,2.50);
				\node[anchor=east,scale=0.50,inner sep=1pt] (A) at (-0.15,2.50) {$2$};
				\draw[line width=1.5pt,black!50] (6.95,3.00)--(7.05,3.00);\draw[line width=1.5pt,black!50] (7.00,2.95)--(7.00,4.05);\draw[line width=2pt] (6.95,3.50)--(7.15,3.50);
				\node[anchor=west,scale=0.50,inner sep=1pt] (A) at (7.15,3.50) {$10$};
				\draw[line width=2pt] (0.05,3.50)--(-0.15,3.50);
				\node[anchor=east,scale=0.50,inner sep=1pt] (A) at (-0.15,3.50) {$3$};
				\draw[line width=1.5pt,black!50] (6.95,4.00)--(7.05,4.00);\draw[line width=1.5pt,black!50] (7.00,3.95)--(7.00,5.05);\draw[line width=2pt] (6.95,4.50)--(7.15,4.50);
				\node[anchor=west,scale=0.50,inner sep=1pt] (A) at (7.15,4.50) {$11$};
				\draw[line width=2pt] (0.05,4.50)--(-0.15,4.50);
				\node[anchor=east,scale=0.50,inner sep=1pt] (A) at (-0.15,4.50) {$4$};
				\draw[line width=1.5pt,black!50] (6.95,5.00)--(7.05,5.00);\draw[line width=1.5pt,black!50] (7.00,4.95)--(7.00,6.05);\draw[line width=2pt] (6.95,5.50)--(7.15,5.50);
				\node[anchor=west,scale=0.50,inner sep=1pt] (A) at (7.15,5.50) {$12$};
				\draw[line width=2pt] (0.05,5.50)--(-0.15,5.50);
				\node[anchor=east,scale=0.50,inner sep=1pt] (A) at (-0.15,5.50) {$5$};
				\draw[line width=2pt] (0.50,5.95)--(0.50,6.15);
		\node[anchor=south,scale=0.50,inner sep=1pt] (A) at (0.50,6.15) {$6$};
		\draw[line width=2pt] (1.50,5.95)--(1.50,6.15);
		\node[anchor=south,scale=0.50,inner sep=1pt] (A) at (1.50,6.15) {$7$};
		\draw[line width=2pt] (2.50,5.95)--(2.50,6.15);
		\node[anchor=south,scale=0.50,inner sep=1pt] (A) at (2.50,6.15) {$8$};
		\draw[line width=2pt] (3.50,5.95)--(3.50,6.15);
		\node[anchor=south,scale=0.50,inner sep=1pt] (A) at (3.50,6.15) {$9$};
		\draw[line width=2pt] (4.50,5.95)--(4.50,6.15);
		\node[anchor=south,scale=0.50,inner sep=1pt] (A) at (4.50,6.15) {$10$};
		\draw[line width=2pt] (5.50,5.95)--(5.50,6.15);
		\node[anchor=south,scale=0.50,inner sep=1pt] (A) at (5.50,6.15) {$11$};
		\draw[line width=2pt] (6.50,5.95)--(6.50,6.15);
		\node[anchor=south,scale=0.50,inner sep=1pt] (A) at (6.50,6.15) {$12$};
		\drawelbowfill{3}{3}{red!40}\end{tikzpicture}
			}
\\
$w=45167283$ & $v=12465387$ & \textcolor{red}{not} distinguished & \textcolor{red}{not} distinguished
\end{tabular}
}
		}

  \caption{\label{fig:Go_ex} For the two fillings on the left, we have $\la=(5,4,2)$ and $f=wv^{-1}=35148276$. The two fillings on the right do not satisfy the distinguished condition: the specific elbow violating the condition is shaded in red.}
\end{figure}

Recall from \cref{prop:v_w} that each $f=\fvw\in\Bkn$ corresponds to a pair $v\leq w\in S_n$ such that $w$ is $k$-Grassmannian. The set of $k$-Grassmannian permutations in $S_n$ is well known to be in bijection with the set of Young diagrams that fit inside a $k\times (n-k)$-rectangle. Let $\la$ be such a Young diagram. We are going to consider fillings of boxes of $\la$ with \emph{crossings}~\crossing and \emph{elbows}~\elbow. An example is given in \cref{fig:Go_ex}. Each such filling $\fil$  gives rise to a permutation $u_\fil$, obtained as follows. Consider paths labeled by $1,2,\dots,n$ entering from the southeast boundary of $\la$, where the labels increase in the northeast direction. The paths follow crossings and elbows until they exit through the northwest boundary of $\la$. Recording the positions of outgoing edges, one obtains the permutation $u_\fil$ (cf. \cref{fig:Go_ex}).
\begin{definition}\label{dfn:Go}
Let $\la$ be a Young diagram fitting in a $k\times(n-k)$-rectangle. A \emph{Deogram} (short for \emph{Deodhar diagram}\footnote{The terminology \emph{Deodhar diagram} is borrowed from~\cite{KW}.}) of shape $\la$ is a filling $\fil$ of the boxes of $\la$ with crossings and elbows satisfying the following \emph{distinguished condition}~\cite{Deodhar}: for any elbow in $\fil$, the label of its bottom-left path is less than the label of its top-right path. In other words, once two paths have crossed an odd number of times, they cannot form an elbow. See \cref{fig:Go_ex}.
\end{definition}
\noindent For example, any filling that consists either entirely of crossings or entirely of elbows satisfies the distinguished condition. Observe that when a Deogram $\fil$ of shape $\la$ consists entirely of crossings, the permutation $u_\fil=w$ indeed is $k$-Grassmannian: we have $w^{-1}(1)<w^{-1}(2)<\dots<w^{-1}(k)$ and $w^{-1}(k+1)<\dots<w^{-1}(n)$. We denote this correspondence by $\la_w:=\la$.
\begin{definition}
Let $f=\fvw\in\Bkn$. An \emph{$f$-Deogram} is a Deogram $\fil$ of shape $\la_w$ satisfying $u_\fil=v$.
 A \emph{maximal $f$-Deogram} is an $f$-Deogram with the maximal possible number of crossings among all $f$-Deograms.

 We denote by $\Go_f$ (resp., $\Gom_f$) the set of all (resp., maximal) $f$-Deograms.
\end{definition}
\begin{remark}\label{rmk:Deo_elbows}
It is easy to see that any $f$-Deogram must have at least $n-\ncyc(\fmod)$ elbows. One can also check that for each $f\in\Bkn$, there exist $f$-Deograms with exactly $n-\ncyc(\fmod)$ elbows.\footnote{The same statement does not hold for Richardson varieties: for $w=s_1s_2s_3s_2s_1$ and $v=s_2$ in $S_4$, there are no subexpressions for $v$ inside $w$ skipping exactly $n-\ncyc(wv^{-1})=2$ indices.}
\end{remark}

\begin{figure}
\centering
\makebox[1.0\textwidth]{
\setlength{\tabcolsep}{1pt}
\scalebox{0.77}{
\begin{tabular}{ccccccc}
\scalebox{0.77}{
	\begin{tikzpicture}[xscale=0.5,yscale=0.5]
	\draw[line width=0.5pt,dashed,opacity=\gridop] (0.00,1.00)--(5.00,1.00);\draw[line width=0.5pt,dashed,opacity=\gridop] (0.00,2.00)--(5.00,2.00);\draw[line width=0.5pt,dashed,opacity=\gridop] (0.00,3.00)--(5.00,3.00);\draw[line width=0.5pt,dashed,opacity=\gridop] (0.00,4.00)--(0.00,1.00);\draw[line width=0.5pt,dashed,opacity=\gridop] (1.00,4.00)--(1.00,1.00);\draw[line width=0.5pt,dashed,opacity=\gridop] (2.00,4.00)--(2.00,1.00);\draw[line width=0.5pt,dashed,opacity=\gridop] (3.00,4.00)--(3.00,1.00);\draw[line width=0.5pt,dashed,opacity=\gridop] (4.00,4.00)--(4.00,1.00);
	\godiagx{2/3,3/3,4/3,1/2,4/2,0/1,3/1}{2/2,1/1,2/1,4/1}{0/3,1/3,0/2,3/2}
	\draw[line width=1.5pt,black!50] (0.00,0.95)--(0.00,4.05);\draw[line width=1.5pt,black!50] (-0.05,4.00)--(5.05,4.00);\draw[line width=1.5pt,black!50] (-0.05,1.00)--(5.05,1.00);\draw[line width=1.5pt,black!50] (5.00,0.95)--(5.00,2.05);\draw[line width=2pt] (0.50,1.05)--(0.50,0.85);
					\node[anchor=north,scale=0.50,inner sep=1pt] (A) at (0.50,0.85) {$$};
					\draw[line width=2pt] (1.50,1.05)--(1.50,0.85);
					\node[anchor=north,scale=0.50,inner sep=1pt] (A) at (1.50,0.85) {$$};
					\draw[line width=2pt] (2.50,1.05)--(2.50,0.85);
					\node[anchor=north,scale=0.50,inner sep=1pt] (A) at (2.50,0.85) {$$};
					\draw[line width=2pt] (3.50,1.05)--(3.50,0.85);
					\node[anchor=north,scale=0.50,inner sep=1pt] (A) at (3.50,0.85) {$$};
					\draw[line width=2pt] (4.50,1.05)--(4.50,0.85);
					\node[anchor=north,scale=0.50,inner sep=1pt] (A) at (4.50,0.85) {$$};
					\draw[line width=2pt] (4.95,1.50)--(5.15,1.50);
				\node[anchor=west,scale=0.50,inner sep=1pt] (A) at (5.15,1.50) {$$};
				\draw[line width=2pt] (0.05,1.50)--(-0.15,1.50);
				\node[anchor=east,scale=0.50,inner sep=1pt] (A) at (-0.15,1.50) {$$};
				\draw[line width=1.5pt,black!50] (4.95,2.00)--(5.05,2.00);\draw[line width=1.5pt,black!50] (5.00,1.95)--(5.00,3.05);\draw[line width=2pt] (4.95,2.50)--(5.15,2.50);
				\node[anchor=west,scale=0.50,inner sep=1pt] (A) at (5.15,2.50) {$$};
				\draw[line width=2pt] (0.05,2.50)--(-0.15,2.50);
				\node[anchor=east,scale=0.50,inner sep=1pt] (A) at (-0.15,2.50) {$$};
				\draw[line width=1.5pt,black!50] (4.95,3.00)--(5.05,3.00);\draw[line width=1.5pt,black!50] (5.00,2.95)--(5.00,4.05);\draw[line width=2pt] (4.95,3.50)--(5.15,3.50);
				\node[anchor=west,scale=0.50,inner sep=1pt] (A) at (5.15,3.50) {$$};
				\draw[line width=2pt] (0.05,3.50)--(-0.15,3.50);
				\node[anchor=east,scale=0.50,inner sep=1pt] (A) at (-0.15,3.50) {$$};
				\draw[line width=2pt] (0.50,3.95)--(0.50,4.15);
		\node[anchor=south,scale=0.50,inner sep=1pt] (A) at (0.50,4.15) {$$};
		\draw[line width=2pt] (1.50,3.95)--(1.50,4.15);
		\node[anchor=south,scale=0.50,inner sep=1pt] (A) at (1.50,4.15) {$$};
		\draw[line width=2pt] (2.50,3.95)--(2.50,4.15);
		\node[anchor=south,scale=0.50,inner sep=1pt] (A) at (2.50,4.15) {$$};
		\draw[line width=2pt] (3.50,3.95)--(3.50,4.15);
		\node[anchor=south,scale=0.50,inner sep=1pt] (A) at (3.50,4.15) {$$};
		\draw[line width=2pt] (4.50,3.95)--(4.50,4.15);
		\node[anchor=south,scale=0.50,inner sep=1pt] (A) at (4.50,4.15) {$$};
		\end{tikzpicture}
			}
&
\scalebox{0.77}{
	\begin{tikzpicture}[xscale=0.5,yscale=0.5]
	\draw[line width=0.5pt,dashed,opacity=\gridop] (0.00,1.00)--(5.00,1.00);\draw[line width=0.5pt,dashed,opacity=\gridop] (0.00,2.00)--(5.00,2.00);\draw[line width=0.5pt,dashed,opacity=\gridop] (0.00,3.00)--(5.00,3.00);\draw[line width=0.5pt,dashed,opacity=\gridop] (0.00,4.00)--(0.00,1.00);\draw[line width=0.5pt,dashed,opacity=\gridop] (1.00,4.00)--(1.00,1.00);\draw[line width=0.5pt,dashed,opacity=\gridop] (2.00,4.00)--(2.00,1.00);\draw[line width=0.5pt,dashed,opacity=\gridop] (3.00,4.00)--(3.00,1.00);\draw[line width=0.5pt,dashed,opacity=\gridop] (4.00,4.00)--(4.00,1.00);
	\godiagx{2/3,4/3,1/2,3/2,0/1,3/1,4/1}{2/2,4/2,1/1,2/1}{0/3,1/3,3/3,0/2}
	\draw[line width=1.5pt,black!50] (0.00,0.95)--(0.00,4.05);\draw[line width=1.5pt,black!50] (-0.05,4.00)--(5.05,4.00);\draw[line width=1.5pt,black!50] (-0.05,1.00)--(5.05,1.00);\draw[line width=1.5pt,black!50] (5.00,0.95)--(5.00,2.05);\draw[line width=2pt] (0.50,1.05)--(0.50,0.85);
					\node[anchor=north,scale=0.50,inner sep=1pt] (A) at (0.50,0.85) {$$};
					\draw[line width=2pt] (1.50,1.05)--(1.50,0.85);
					\node[anchor=north,scale=0.50,inner sep=1pt] (A) at (1.50,0.85) {$$};
					\draw[line width=2pt] (2.50,1.05)--(2.50,0.85);
					\node[anchor=north,scale=0.50,inner sep=1pt] (A) at (2.50,0.85) {$$};
					\draw[line width=2pt] (3.50,1.05)--(3.50,0.85);
					\node[anchor=north,scale=0.50,inner sep=1pt] (A) at (3.50,0.85) {$$};
					\draw[line width=2pt] (4.50,1.05)--(4.50,0.85);
					\node[anchor=north,scale=0.50,inner sep=1pt] (A) at (4.50,0.85) {$$};
					\draw[line width=2pt] (4.95,1.50)--(5.15,1.50);
				\node[anchor=west,scale=0.50,inner sep=1pt] (A) at (5.15,1.50) {$$};
				\draw[line width=2pt] (0.05,1.50)--(-0.15,1.50);
				\node[anchor=east,scale=0.50,inner sep=1pt] (A) at (-0.15,1.50) {$$};
				\draw[line width=1.5pt,black!50] (4.95,2.00)--(5.05,2.00);\draw[line width=1.5pt,black!50] (5.00,1.95)--(5.00,3.05);\draw[line width=2pt] (4.95,2.50)--(5.15,2.50);
				\node[anchor=west,scale=0.50,inner sep=1pt] (A) at (5.15,2.50) {$$};
				\draw[line width=2pt] (0.05,2.50)--(-0.15,2.50);
				\node[anchor=east,scale=0.50,inner sep=1pt] (A) at (-0.15,2.50) {$$};
				\draw[line width=1.5pt,black!50] (4.95,3.00)--(5.05,3.00);\draw[line width=1.5pt,black!50] (5.00,2.95)--(5.00,4.05);\draw[line width=2pt] (4.95,3.50)--(5.15,3.50);
				\node[anchor=west,scale=0.50,inner sep=1pt] (A) at (5.15,3.50) {$$};
				\draw[line width=2pt] (0.05,3.50)--(-0.15,3.50);
				\node[anchor=east,scale=0.50,inner sep=1pt] (A) at (-0.15,3.50) {$$};
				\draw[line width=2pt] (0.50,3.95)--(0.50,4.15);
		\node[anchor=south,scale=0.50,inner sep=1pt] (A) at (0.50,4.15) {$$};
		\draw[line width=2pt] (1.50,3.95)--(1.50,4.15);
		\node[anchor=south,scale=0.50,inner sep=1pt] (A) at (1.50,4.15) {$$};
		\draw[line width=2pt] (2.50,3.95)--(2.50,4.15);
		\node[anchor=south,scale=0.50,inner sep=1pt] (A) at (2.50,4.15) {$$};
		\draw[line width=2pt] (3.50,3.95)--(3.50,4.15);
		\node[anchor=south,scale=0.50,inner sep=1pt] (A) at (3.50,4.15) {$$};
		\draw[line width=2pt] (4.50,3.95)--(4.50,4.15);
		\node[anchor=south,scale=0.50,inner sep=1pt] (A) at (4.50,4.15) {$$};
		\end{tikzpicture}
			}
&
\scalebox{0.77}{
	\begin{tikzpicture}[xscale=0.5,yscale=0.5]
	\draw[line width=0.5pt,dashed,opacity=\gridop] (0.00,1.00)--(5.00,1.00);\draw[line width=0.5pt,dashed,opacity=\gridop] (0.00,2.00)--(5.00,2.00);\draw[line width=0.5pt,dashed,opacity=\gridop] (0.00,3.00)--(5.00,3.00);\draw[line width=0.5pt,dashed,opacity=\gridop] (0.00,4.00)--(0.00,1.00);\draw[line width=0.5pt,dashed,opacity=\gridop] (1.00,4.00)--(1.00,1.00);\draw[line width=0.5pt,dashed,opacity=\gridop] (2.00,4.00)--(2.00,1.00);\draw[line width=0.5pt,dashed,opacity=\gridop] (3.00,4.00)--(3.00,1.00);\draw[line width=0.5pt,dashed,opacity=\gridop] (4.00,4.00)--(4.00,1.00);
	\godiagx{1/3,4/3,1/2,2/2,3/2,0/1,3/1}{4/2,1/1,2/1,4/1}{0/3,2/3,3/3,0/2}
	\draw[line width=1.5pt,black!50] (0.00,0.95)--(0.00,4.05);\draw[line width=1.5pt,black!50] (-0.05,4.00)--(5.05,4.00);\draw[line width=1.5pt,black!50] (-0.05,1.00)--(5.05,1.00);\draw[line width=1.5pt,black!50] (5.00,0.95)--(5.00,2.05);\draw[line width=2pt] (0.50,1.05)--(0.50,0.85);
					\node[anchor=north,scale=0.50,inner sep=1pt] (A) at (0.50,0.85) {$$};
					\draw[line width=2pt] (1.50,1.05)--(1.50,0.85);
					\node[anchor=north,scale=0.50,inner sep=1pt] (A) at (1.50,0.85) {$$};
					\draw[line width=2pt] (2.50,1.05)--(2.50,0.85);
					\node[anchor=north,scale=0.50,inner sep=1pt] (A) at (2.50,0.85) {$$};
					\draw[line width=2pt] (3.50,1.05)--(3.50,0.85);
					\node[anchor=north,scale=0.50,inner sep=1pt] (A) at (3.50,0.85) {$$};
					\draw[line width=2pt] (4.50,1.05)--(4.50,0.85);
					\node[anchor=north,scale=0.50,inner sep=1pt] (A) at (4.50,0.85) {$$};
					\draw[line width=2pt] (4.95,1.50)--(5.15,1.50);
				\node[anchor=west,scale=0.50,inner sep=1pt] (A) at (5.15,1.50) {$$};
				\draw[line width=2pt] (0.05,1.50)--(-0.15,1.50);
				\node[anchor=east,scale=0.50,inner sep=1pt] (A) at (-0.15,1.50) {$$};
				\draw[line width=1.5pt,black!50] (4.95,2.00)--(5.05,2.00);\draw[line width=1.5pt,black!50] (5.00,1.95)--(5.00,3.05);\draw[line width=2pt] (4.95,2.50)--(5.15,2.50);
				\node[anchor=west,scale=0.50,inner sep=1pt] (A) at (5.15,2.50) {$$};
				\draw[line width=2pt] (0.05,2.50)--(-0.15,2.50);
				\node[anchor=east,scale=0.50,inner sep=1pt] (A) at (-0.15,2.50) {$$};
				\draw[line width=1.5pt,black!50] (4.95,3.00)--(5.05,3.00);\draw[line width=1.5pt,black!50] (5.00,2.95)--(5.00,4.05);\draw[line width=2pt] (4.95,3.50)--(5.15,3.50);
				\node[anchor=west,scale=0.50,inner sep=1pt] (A) at (5.15,3.50) {$$};
				\draw[line width=2pt] (0.05,3.50)--(-0.15,3.50);
				\node[anchor=east,scale=0.50,inner sep=1pt] (A) at (-0.15,3.50) {$$};
				\draw[line width=2pt] (0.50,3.95)--(0.50,4.15);
		\node[anchor=south,scale=0.50,inner sep=1pt] (A) at (0.50,4.15) {$$};
		\draw[line width=2pt] (1.50,3.95)--(1.50,4.15);
		\node[anchor=south,scale=0.50,inner sep=1pt] (A) at (1.50,4.15) {$$};
		\draw[line width=2pt] (2.50,3.95)--(2.50,4.15);
		\node[anchor=south,scale=0.50,inner sep=1pt] (A) at (2.50,4.15) {$$};
		\draw[line width=2pt] (3.50,3.95)--(3.50,4.15);
		\node[anchor=south,scale=0.50,inner sep=1pt] (A) at (3.50,4.15) {$$};
		\draw[line width=2pt] (4.50,3.95)--(4.50,4.15);
		\node[anchor=south,scale=0.50,inner sep=1pt] (A) at (4.50,4.15) {$$};
		\end{tikzpicture}
			}
&
\scalebox{0.77}{
	\begin{tikzpicture}[xscale=0.5,yscale=0.5]
	\draw[line width=0.5pt,dashed,opacity=\gridop] (0.00,1.00)--(5.00,1.00);\draw[line width=0.5pt,dashed,opacity=\gridop] (0.00,2.00)--(5.00,2.00);\draw[line width=0.5pt,dashed,opacity=\gridop] (0.00,3.00)--(5.00,3.00);\draw[line width=0.5pt,dashed,opacity=\gridop] (0.00,4.00)--(0.00,1.00);\draw[line width=0.5pt,dashed,opacity=\gridop] (1.00,4.00)--(1.00,1.00);\draw[line width=0.5pt,dashed,opacity=\gridop] (2.00,4.00)--(2.00,1.00);\draw[line width=0.5pt,dashed,opacity=\gridop] (3.00,4.00)--(3.00,1.00);\draw[line width=0.5pt,dashed,opacity=\gridop] (4.00,4.00)--(4.00,1.00);
	\godiagx{0/3,3/3,4/3,1/2,4/2,0/1,2/1}{3/2,1/1,3/1,4/1}{1/3,2/3,0/2,2/2}
	\draw[line width=1.5pt,black!50] (0.00,0.95)--(0.00,4.05);\draw[line width=1.5pt,black!50] (-0.05,4.00)--(5.05,4.00);\draw[line width=1.5pt,black!50] (-0.05,1.00)--(5.05,1.00);\draw[line width=1.5pt,black!50] (5.00,0.95)--(5.00,2.05);\draw[line width=2pt] (0.50,1.05)--(0.50,0.85);
					\node[anchor=north,scale=0.50,inner sep=1pt] (A) at (0.50,0.85) {$$};
					\draw[line width=2pt] (1.50,1.05)--(1.50,0.85);
					\node[anchor=north,scale=0.50,inner sep=1pt] (A) at (1.50,0.85) {$$};
					\draw[line width=2pt] (2.50,1.05)--(2.50,0.85);
					\node[anchor=north,scale=0.50,inner sep=1pt] (A) at (2.50,0.85) {$$};
					\draw[line width=2pt] (3.50,1.05)--(3.50,0.85);
					\node[anchor=north,scale=0.50,inner sep=1pt] (A) at (3.50,0.85) {$$};
					\draw[line width=2pt] (4.50,1.05)--(4.50,0.85);
					\node[anchor=north,scale=0.50,inner sep=1pt] (A) at (4.50,0.85) {$$};
					\draw[line width=2pt] (4.95,1.50)--(5.15,1.50);
				\node[anchor=west,scale=0.50,inner sep=1pt] (A) at (5.15,1.50) {$$};
				\draw[line width=2pt] (0.05,1.50)--(-0.15,1.50);
				\node[anchor=east,scale=0.50,inner sep=1pt] (A) at (-0.15,1.50) {$$};
				\draw[line width=1.5pt,black!50] (4.95,2.00)--(5.05,2.00);\draw[line width=1.5pt,black!50] (5.00,1.95)--(5.00,3.05);\draw[line width=2pt] (4.95,2.50)--(5.15,2.50);
				\node[anchor=west,scale=0.50,inner sep=1pt] (A) at (5.15,2.50) {$$};
				\draw[line width=2pt] (0.05,2.50)--(-0.15,2.50);
				\node[anchor=east,scale=0.50,inner sep=1pt] (A) at (-0.15,2.50) {$$};
				\draw[line width=1.5pt,black!50] (4.95,3.00)--(5.05,3.00);\draw[line width=1.5pt,black!50] (5.00,2.95)--(5.00,4.05);\draw[line width=2pt] (4.95,3.50)--(5.15,3.50);
				\node[anchor=west,scale=0.50,inner sep=1pt] (A) at (5.15,3.50) {$$};
				\draw[line width=2pt] (0.05,3.50)--(-0.15,3.50);
				\node[anchor=east,scale=0.50,inner sep=1pt] (A) at (-0.15,3.50) {$$};
				\draw[line width=2pt] (0.50,3.95)--(0.50,4.15);
		\node[anchor=south,scale=0.50,inner sep=1pt] (A) at (0.50,4.15) {$$};
		\draw[line width=2pt] (1.50,3.95)--(1.50,4.15);
		\node[anchor=south,scale=0.50,inner sep=1pt] (A) at (1.50,4.15) {$$};
		\draw[line width=2pt] (2.50,3.95)--(2.50,4.15);
		\node[anchor=south,scale=0.50,inner sep=1pt] (A) at (2.50,4.15) {$$};
		\draw[line width=2pt] (3.50,3.95)--(3.50,4.15);
		\node[anchor=south,scale=0.50,inner sep=1pt] (A) at (3.50,4.15) {$$};
		\draw[line width=2pt] (4.50,3.95)--(4.50,4.15);
		\node[anchor=south,scale=0.50,inner sep=1pt] (A) at (4.50,4.15) {$$};
		\end{tikzpicture}
			}
&
\scalebox{0.77}{
	\begin{tikzpicture}[xscale=0.5,yscale=0.5]
	\draw[line width=0.5pt,dashed,opacity=\gridop] (0.00,1.00)--(5.00,1.00);\draw[line width=0.5pt,dashed,opacity=\gridop] (0.00,2.00)--(5.00,2.00);\draw[line width=0.5pt,dashed,opacity=\gridop] (0.00,3.00)--(5.00,3.00);\draw[line width=0.5pt,dashed,opacity=\gridop] (0.00,4.00)--(0.00,1.00);\draw[line width=0.5pt,dashed,opacity=\gridop] (1.00,4.00)--(1.00,1.00);\draw[line width=0.5pt,dashed,opacity=\gridop] (2.00,4.00)--(2.00,1.00);\draw[line width=0.5pt,dashed,opacity=\gridop] (3.00,4.00)--(3.00,1.00);\draw[line width=0.5pt,dashed,opacity=\gridop] (4.00,4.00)--(4.00,1.00);
	\godiagx{0/3,1/3,4/3,1/2,3/2,0/1,2/1}{4/2,1/1,3/1,4/1}{2/3,3/3,0/2,2/2}
	\draw[line width=1.5pt,black!50] (0.00,0.95)--(0.00,4.05);\draw[line width=1.5pt,black!50] (-0.05,4.00)--(5.05,4.00);\draw[line width=1.5pt,black!50] (-0.05,1.00)--(5.05,1.00);\draw[line width=1.5pt,black!50] (5.00,0.95)--(5.00,2.05);\draw[line width=2pt] (0.50,1.05)--(0.50,0.85);
					\node[anchor=north,scale=0.50,inner sep=1pt] (A) at (0.50,0.85) {$$};
					\draw[line width=2pt] (1.50,1.05)--(1.50,0.85);
					\node[anchor=north,scale=0.50,inner sep=1pt] (A) at (1.50,0.85) {$$};
					\draw[line width=2pt] (2.50,1.05)--(2.50,0.85);
					\node[anchor=north,scale=0.50,inner sep=1pt] (A) at (2.50,0.85) {$$};
					\draw[line width=2pt] (3.50,1.05)--(3.50,0.85);
					\node[anchor=north,scale=0.50,inner sep=1pt] (A) at (3.50,0.85) {$$};
					\draw[line width=2pt] (4.50,1.05)--(4.50,0.85);
					\node[anchor=north,scale=0.50,inner sep=1pt] (A) at (4.50,0.85) {$$};
					\draw[line width=2pt] (4.95,1.50)--(5.15,1.50);
				\node[anchor=west,scale=0.50,inner sep=1pt] (A) at (5.15,1.50) {$$};
				\draw[line width=2pt] (0.05,1.50)--(-0.15,1.50);
				\node[anchor=east,scale=0.50,inner sep=1pt] (A) at (-0.15,1.50) {$$};
				\draw[line width=1.5pt,black!50] (4.95,2.00)--(5.05,2.00);\draw[line width=1.5pt,black!50] (5.00,1.95)--(5.00,3.05);\draw[line width=2pt] (4.95,2.50)--(5.15,2.50);
				\node[anchor=west,scale=0.50,inner sep=1pt] (A) at (5.15,2.50) {$$};
				\draw[line width=2pt] (0.05,2.50)--(-0.15,2.50);
				\node[anchor=east,scale=0.50,inner sep=1pt] (A) at (-0.15,2.50) {$$};
				\draw[line width=1.5pt,black!50] (4.95,3.00)--(5.05,3.00);\draw[line width=1.5pt,black!50] (5.00,2.95)--(5.00,4.05);\draw[line width=2pt] (4.95,3.50)--(5.15,3.50);
				\node[anchor=west,scale=0.50,inner sep=1pt] (A) at (5.15,3.50) {$$};
				\draw[line width=2pt] (0.05,3.50)--(-0.15,3.50);
				\node[anchor=east,scale=0.50,inner sep=1pt] (A) at (-0.15,3.50) {$$};
				\draw[line width=2pt] (0.50,3.95)--(0.50,4.15);
		\node[anchor=south,scale=0.50,inner sep=1pt] (A) at (0.50,4.15) {$$};
		\draw[line width=2pt] (1.50,3.95)--(1.50,4.15);
		\node[anchor=south,scale=0.50,inner sep=1pt] (A) at (1.50,4.15) {$$};
		\draw[line width=2pt] (2.50,3.95)--(2.50,4.15);
		\node[anchor=south,scale=0.50,inner sep=1pt] (A) at (2.50,4.15) {$$};
		\draw[line width=2pt] (3.50,3.95)--(3.50,4.15);
		\node[anchor=south,scale=0.50,inner sep=1pt] (A) at (3.50,4.15) {$$};
		\draw[line width=2pt] (4.50,3.95)--(4.50,4.15);
		\node[anchor=south,scale=0.50,inner sep=1pt] (A) at (4.50,4.15) {$$};
		\end{tikzpicture}
			}
&
\scalebox{0.77}{
	\begin{tikzpicture}[xscale=0.5,yscale=0.5]
	\draw[line width=0.5pt,dashed,opacity=\gridop] (0.00,1.00)--(5.00,1.00);\draw[line width=0.5pt,dashed,opacity=\gridop] (0.00,2.00)--(5.00,2.00);\draw[line width=0.5pt,dashed,opacity=\gridop] (0.00,3.00)--(5.00,3.00);\draw[line width=0.5pt,dashed,opacity=\gridop] (0.00,4.00)--(0.00,1.00);\draw[line width=0.5pt,dashed,opacity=\gridop] (1.00,4.00)--(1.00,1.00);\draw[line width=0.5pt,dashed,opacity=\gridop] (2.00,4.00)--(2.00,1.00);\draw[line width=0.5pt,dashed,opacity=\gridop] (3.00,4.00)--(3.00,1.00);\draw[line width=0.5pt,dashed,opacity=\gridop] (4.00,4.00)--(4.00,1.00);
	\godiagx{2/3,4/3,0/2,3/2,0/1,1/1,4/1}{2/2,4/2,2/1,3/1}{0/3,1/3,3/3,1/2}
	\draw[line width=1.5pt,black!50] (0.00,0.95)--(0.00,4.05);\draw[line width=1.5pt,black!50] (-0.05,4.00)--(5.05,4.00);\draw[line width=1.5pt,black!50] (-0.05,1.00)--(5.05,1.00);\draw[line width=1.5pt,black!50] (5.00,0.95)--(5.00,2.05);\draw[line width=2pt] (0.50,1.05)--(0.50,0.85);
					\node[anchor=north,scale=0.50,inner sep=1pt] (A) at (0.50,0.85) {$$};
					\draw[line width=2pt] (1.50,1.05)--(1.50,0.85);
					\node[anchor=north,scale=0.50,inner sep=1pt] (A) at (1.50,0.85) {$$};
					\draw[line width=2pt] (2.50,1.05)--(2.50,0.85);
					\node[anchor=north,scale=0.50,inner sep=1pt] (A) at (2.50,0.85) {$$};
					\draw[line width=2pt] (3.50,1.05)--(3.50,0.85);
					\node[anchor=north,scale=0.50,inner sep=1pt] (A) at (3.50,0.85) {$$};
					\draw[line width=2pt] (4.50,1.05)--(4.50,0.85);
					\node[anchor=north,scale=0.50,inner sep=1pt] (A) at (4.50,0.85) {$$};
					\draw[line width=2pt] (4.95,1.50)--(5.15,1.50);
				\node[anchor=west,scale=0.50,inner sep=1pt] (A) at (5.15,1.50) {$$};
				\draw[line width=2pt] (0.05,1.50)--(-0.15,1.50);
				\node[anchor=east,scale=0.50,inner sep=1pt] (A) at (-0.15,1.50) {$$};
				\draw[line width=1.5pt,black!50] (4.95,2.00)--(5.05,2.00);\draw[line width=1.5pt,black!50] (5.00,1.95)--(5.00,3.05);\draw[line width=2pt] (4.95,2.50)--(5.15,2.50);
				\node[anchor=west,scale=0.50,inner sep=1pt] (A) at (5.15,2.50) {$$};
				\draw[line width=2pt] (0.05,2.50)--(-0.15,2.50);
				\node[anchor=east,scale=0.50,inner sep=1pt] (A) at (-0.15,2.50) {$$};
				\draw[line width=1.5pt,black!50] (4.95,3.00)--(5.05,3.00);\draw[line width=1.5pt,black!50] (5.00,2.95)--(5.00,4.05);\draw[line width=2pt] (4.95,3.50)--(5.15,3.50);
				\node[anchor=west,scale=0.50,inner sep=1pt] (A) at (5.15,3.50) {$$};
				\draw[line width=2pt] (0.05,3.50)--(-0.15,3.50);
				\node[anchor=east,scale=0.50,inner sep=1pt] (A) at (-0.15,3.50) {$$};
				\draw[line width=2pt] (0.50,3.95)--(0.50,4.15);
		\node[anchor=south,scale=0.50,inner sep=1pt] (A) at (0.50,4.15) {$$};
		\draw[line width=2pt] (1.50,3.95)--(1.50,4.15);
		\node[anchor=south,scale=0.50,inner sep=1pt] (A) at (1.50,4.15) {$$};
		\draw[line width=2pt] (2.50,3.95)--(2.50,4.15);
		\node[anchor=south,scale=0.50,inner sep=1pt] (A) at (2.50,4.15) {$$};
		\draw[line width=2pt] (3.50,3.95)--(3.50,4.15);
		\node[anchor=south,scale=0.50,inner sep=1pt] (A) at (3.50,4.15) {$$};
		\draw[line width=2pt] (4.50,3.95)--(4.50,4.15);
		\node[anchor=south,scale=0.50,inner sep=1pt] (A) at (4.50,4.15) {$$};
		\end{tikzpicture}
			}
&
\scalebox{0.77}{
	\begin{tikzpicture}[xscale=0.5,yscale=0.5]
	\draw[line width=0.5pt,dashed,opacity=\gridop] (0.00,1.00)--(5.00,1.00);\draw[line width=0.5pt,dashed,opacity=\gridop] (0.00,2.00)--(5.00,2.00);\draw[line width=0.5pt,dashed,opacity=\gridop] (0.00,3.00)--(5.00,3.00);\draw[line width=0.5pt,dashed,opacity=\gridop] (0.00,4.00)--(0.00,1.00);\draw[line width=0.5pt,dashed,opacity=\gridop] (1.00,4.00)--(1.00,1.00);\draw[line width=0.5pt,dashed,opacity=\gridop] (2.00,4.00)--(2.00,1.00);\draw[line width=0.5pt,dashed,opacity=\gridop] (3.00,4.00)--(3.00,1.00);\draw[line width=0.5pt,dashed,opacity=\gridop] (4.00,4.00)--(4.00,1.00);
	\godiagx{1/3,4/3,0/2,3/2,0/1,1/1,2/1}{1/2,4/2,3/1,4/1}{0/3,2/3,3/3,2/2}
	\draw[line width=1.5pt,black!50] (0.00,0.95)--(0.00,4.05);\draw[line width=1.5pt,black!50] (-0.05,4.00)--(5.05,4.00);\draw[line width=1.5pt,black!50] (-0.05,1.00)--(5.05,1.00);\draw[line width=1.5pt,black!50] (5.00,0.95)--(5.00,2.05);\draw[line width=2pt] (0.50,1.05)--(0.50,0.85);
					\node[anchor=north,scale=0.50,inner sep=1pt] (A) at (0.50,0.85) {$$};
					\draw[line width=2pt] (1.50,1.05)--(1.50,0.85);
					\node[anchor=north,scale=0.50,inner sep=1pt] (A) at (1.50,0.85) {$$};
					\draw[line width=2pt] (2.50,1.05)--(2.50,0.85);
					\node[anchor=north,scale=0.50,inner sep=1pt] (A) at (2.50,0.85) {$$};
					\draw[line width=2pt] (3.50,1.05)--(3.50,0.85);
					\node[anchor=north,scale=0.50,inner sep=1pt] (A) at (3.50,0.85) {$$};
					\draw[line width=2pt] (4.50,1.05)--(4.50,0.85);
					\node[anchor=north,scale=0.50,inner sep=1pt] (A) at (4.50,0.85) {$$};
					\draw[line width=2pt] (4.95,1.50)--(5.15,1.50);
				\node[anchor=west,scale=0.50,inner sep=1pt] (A) at (5.15,1.50) {$$};
				\draw[line width=2pt] (0.05,1.50)--(-0.15,1.50);
				\node[anchor=east,scale=0.50,inner sep=1pt] (A) at (-0.15,1.50) {$$};
				\draw[line width=1.5pt,black!50] (4.95,2.00)--(5.05,2.00);\draw[line width=1.5pt,black!50] (5.00,1.95)--(5.00,3.05);\draw[line width=2pt] (4.95,2.50)--(5.15,2.50);
				\node[anchor=west,scale=0.50,inner sep=1pt] (A) at (5.15,2.50) {$$};
				\draw[line width=2pt] (0.05,2.50)--(-0.15,2.50);
				\node[anchor=east,scale=0.50,inner sep=1pt] (A) at (-0.15,2.50) {$$};
				\draw[line width=1.5pt,black!50] (4.95,3.00)--(5.05,3.00);\draw[line width=1.5pt,black!50] (5.00,2.95)--(5.00,4.05);\draw[line width=2pt] (4.95,3.50)--(5.15,3.50);
				\node[anchor=west,scale=0.50,inner sep=1pt] (A) at (5.15,3.50) {$$};
				\draw[line width=2pt] (0.05,3.50)--(-0.15,3.50);
				\node[anchor=east,scale=0.50,inner sep=1pt] (A) at (-0.15,3.50) {$$};
				\draw[line width=2pt] (0.50,3.95)--(0.50,4.15);
		\node[anchor=south,scale=0.50,inner sep=1pt] (A) at (0.50,4.15) {$$};
		\draw[line width=2pt] (1.50,3.95)--(1.50,4.15);
		\node[anchor=south,scale=0.50,inner sep=1pt] (A) at (1.50,4.15) {$$};
		\draw[line width=2pt] (2.50,3.95)--(2.50,4.15);
		\node[anchor=south,scale=0.50,inner sep=1pt] (A) at (2.50,4.15) {$$};
		\draw[line width=2pt] (3.50,3.95)--(3.50,4.15);
		\node[anchor=south,scale=0.50,inner sep=1pt] (A) at (3.50,4.15) {$$};
		\draw[line width=2pt] (4.50,3.95)--(4.50,4.15);
		\node[anchor=south,scale=0.50,inner sep=1pt] (A) at (4.50,4.15) {$$};
		\end{tikzpicture}
			}\end{tabular}
}
}
  \caption{\label{fig:Go_3_8} The sets $\Gom_\fkn$ and $\Dyck_{k,n-k}$ have the same cardinality by \cref{prop:Go}. Compare with \cref{fig:Cab_ex}.}
\end{figure}

\begin{proposition}\label{prop:Go}
Let $f\in\Bknc$. Then $\Cat_f$ equals the number of maximal $f$-Deograms:
\begin{equation}\label{eq:Cat=Go}
  \Cat_f=\#\Gom_f.
\end{equation}
\end{proposition}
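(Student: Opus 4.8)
The plan is to derive \cref{prop:Go} from Deodhar's cell decomposition of $\Pio_f$ together with the fact that $\Cat_f$ is a point count specialized at $q=1$. Concretely: by definition $\Cat_f=\Ptopnoq_f(1)$, and by \cref{thm:homfly} we have $\Ptopnoq_f=\#\Pio_f(\F_q)/(q-1)^{n-1}$; equivalently, since $f\in\Bknc$ the torus $\Tn\cong(\Cast)^{n-1}$ acts freely on $\Pio_f$ by~\eqref{eq:free}, so $\#\Pio_f(\F_q)=(q-1)^{n-1}\cdot\#\Pit_f(\F_q)$ and $\Cat_f=\#\Pit_f(\F_q)\big|_{q=1}$. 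Hence it suffices to evaluate the polynomial $\#\Pio_f(\F_q)/(q-1)^{n-1}$ at $q=1$.

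For this I would invoke the Deodhar decomposition. Using the isomorphism $\Rich_v^w\cong\Pio_f$ of \cref{prop:KLS} and the reduced word for the $k$-Grassmannian permutation $w$ encoded by the all-crossings filling of $\la_w$, Deodhar's construction~\cite{Deodhar} (in the Go-diagram packaging of~\cite{KW}) gives a stratification $\Pio_f=\bigsqcup_{\fil\in\Go_f}\Pio_\fil$ indexed by the $f$-Deograms of \cref{dfn:Go}, with
\[
  \Pio_\fil\cong(\Cast)^{\elb(\fil)}\times\C^{d(\fil)},\qquad d(\fil):=\tfrac12\bigl(\xing(\fil)-\ell(v)\bigr).
\]
Here $\elb(\fil)$ and $\xing(\fil)$ are the numbers of elbows and crossings of $\fil$; the crossings spell a word of length $\xing(\fil)$ with product $v$, so $\xing(\fil)\equiv\ell(v)\pmod 2$ and $d(\fil)\in\Z_{\ge0}$ counts the ``defects''. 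Summing point counts, $\#\Pio_f(\F_q)=\sum_{\fil\in\Go_f}(q-1)^{\elb(\fil)}q^{d(\fil)}$.

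To conclude, recall from \cref{rmk:Deo_elbows} that every $f$-Deogram satisfies $\elb(\fil)\ge n-\ncyc(\fmod)=n-1$ (this is the single place where $f\in\Bknc$ is used). Therefore
\[
  \Ptopnoq_f=\frac{\#\Pio_f(\F_q)}{(q-1)^{n-1}}=\sum_{\fil\in\Go_f}(q-1)^{\elb(\fil)-(n-1)}q^{d(\fil)}
\]
is a polynomial each of whose summands carries a nonnegative power of $q-1$, so setting $q=1$ kills every term with $\elb(\fil)>n-1$ and leaves $1$ for every term with $\elb(\fil)=n-1$. Since $\elb(\fil)+\xing(\fil)=|\la_w|$ is constant over $\Go_f$, the equality $\elb(\fil)=n-1$ says precisely that $\fil$ has the minimal number of elbows, equivalently the maximal number of crossings; by \cref{rmk:Deo_elbows} this minimum is attained, so $\{\fil\in\Go_f:\elb(\fil)=n-1\}=\Gom_f$ and $\Cat_f=\Ptopnoq_f(1)=\#\Gom_f$.

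The main obstacle is the second step: putting the Deodhar stratification in exactly the form above --- identifying the $f$-Deograms of \cref{dfn:Go} with Deodhar's distinguished subexpressions for the chosen reduced word of $w$, and checking that each stratum is $(\Cast)^{\elb(\fil)}\times\C^{d(\fil)}$, with the elbows governing the $\Cast$-factors and the defects the $\C$-factors. This is essentially known (\cite{Deodhar} for Kazhdan--Lusztig and Richardson varieties, \cite{KW} for the positroid reformulation), but it requires care with the composition/orientation conventions relating pipe-dream fillings to subexpressions; granting it, the rest is the elementary $q=1$ evaluation above, which notably does not require the strata $\Pio_\fil$ to be $\Tn$-stable.
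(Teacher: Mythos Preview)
Your proposal is correct and follows essentially the same route as the paper: both invoke Deodhar's decomposition to obtain the point-count formula $\#\Pio_f(\F_q)=\sum_{\fil\in\Go_f}(q-1)^{\elb(\fil)}q^{(\xing(\fil)-\ell(v))/2}$, divide by $(q-1)^{n-1}$, use \cref{rmk:Deo_elbows} to ensure all remaining exponents of $q-1$ are nonnegative, and then set $q=1$ so that only the maximal Deograms survive. Your write-up is in fact slightly more explicit than the paper's about why the $q=1$ specialization picks out exactly $\Gom_f$.
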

\begin{proof}
This is a simple consequence of the results of Deodhar~\cite{Deodhar}. Let $v,w$ be such that $f=\fvw$. By \cref{prop:KLS}, we have $\#\Pio_f(\F_q)=\#\Rich_v^w(\F_q)$. Deodhar expressed $\#\Rich_v^w(\F_q)$ as a certain sum over \emph{distinguished subexpressions} for $v$ inside a reduced word $w=s_{i_1}s_{i_2}\cdots s_{i_l}$, where $l=\ell(w)$. Here, a \emph{subexpression} for $v$ is a way to write $v$ as a product $\sv_{1}\sv_{2}\cdots \sv_{l}$, where $\sv_{j}\in\{s_{i_j},\id\}$ for all $j=1,2,\dots,l$. A subexpression is \emph{distinguished} if for all $j$ such that $\ell(\sv_{1}\cdots\sv_{j-1}s_{i_j})<\ell(\sv_{1}\cdots\sv_{j-1})$, we have $\sv_{j}=s_{i_j}$. Since $w$ is $k$-Grassmannian, the terms in the product $w=s_{i_1}s_{i_2}\cdots s_{i_l}$ correspond to the boxes of $\la_w$. Each Deogram $\fil\in\Go_f$ gives rise to a distinguished subexpression for $v$, so that  the indices $j$ such that $\sv_j=s_{i_j}$ correspond to the crossings in $\fil$. It is easy to see that this correspondence is bijective. Thus, the results of~\cite{Deodhar} imply that
\begin{equation}\label{eq:Deodhar}
  \#\Pio_f(\F_q)=\sum_{\fil\in\Go_f} (q-1)^{\elb(\fil)}q^{(\xing(\fil)-\ell(v))/2},
\end{equation}
where $\elb(\fil)$ and $\xing(\fil)$ denote the number of elbows and crossings in $\fil$. By \cref{rmk:Deo_elbows}, each maximal $f$-Deogram contributes $(q-1)^{n-1}q^{(\lvw-n+1)/2}$ to the right-hand side of~\eqref{eq:Deodhar}. (Note that $\xing(\fil)+\elb(\fil)=\ell(w)$ is constant.) It remains to note that $\#\Pit_f(\F_q)$ is obtained by dividing $\#\Pio_f(\F_q)$ by $\#T(\F_q)=(q-1)^{n-1}$, and that $\Cat_f$ is by definition the $q=1$ specialization of $\#\Pit_f(\F_q)$.
\end{proof}

Let us focus on the case $f=\fkn$ with $\gcd(k,n)=1$. Explicitly, a maximal $\fkn$-Deogram is a way of placing $n-1$ elbows in a $k\times(n-k)$ rectangle and filling the rest with crossings so that (i) the resulting permutation obtained by following the paths is the identity, and (ii) the distinguished condition in \cref{dfn:Go} is satisfied. By \cref{prop:Go}, the number $\#\Gom_\fkn$ of such objects equals the number $\#\Dyck_{k,(n-k)}$ of Dyck paths inside a $k\times (n-k)$-rectangle.
\begin{problem}\label{prob:catalan}
Give a bijective proof of \cref{prop:Go}. That is, find a bijection between $\Gom_\fkn$ and $\Dyck_{k,(n-k)}$ for the case $\gcd(k,n)=1$.
\end{problem}
\noindent For instance, \cref{fig:Cab_ex,fig:Go_3_8} both have $7$ objects in them, but it is unclear which objects correspond to which. It would be also interesting to understand the $\area$ and $\dinv$ statistics in the language of $\fkn$-Deograms.

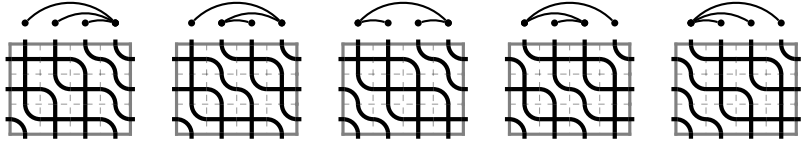
\begin{figure}
\centering
\makebox[1.0\textwidth]{
\setlength{\tabcolsep}{3pt}
\begin{tabular}{ccccc}
\scalebox{0.80}{
	\begin{tikzpicture}[xscale=0.5,yscale=0.5]
	\draw[line width=0.5pt,dashed,opacity=\gridop] (0.00,1.00)--(4.00,1.00);\draw[line width=0.5pt,dashed,opacity=\gridop] (0.00,2.00)--(4.00,2.00);\draw[line width=0.5pt,dashed,opacity=\gridop] (0.00,3.00)--(4.00,3.00);\draw[line width=0.5pt,dashed,opacity=\gridop] (0.00,4.00)--(0.00,1.00);\draw[line width=0.5pt,dashed,opacity=\gridop] (1.00,4.00)--(1.00,1.00);\draw[line width=0.5pt,dashed,opacity=\gridop] (2.00,4.00)--(2.00,1.00);\draw[line width=0.5pt,dashed,opacity=\gridop] (3.00,4.00)--(3.00,1.00);
	\godiagx{2/3,3/3,1/2,3/2,0/1,3/1}{2/2,1/1,2/1}{0/3,1/3,0/2}
	\draw[line width=1.5pt,black!50] (0.00,0.95)--(0.00,4.05);\draw[line width=1.5pt,black!50] (-0.05,4.00)--(4.05,4.00);\draw[line width=1.5pt,black!50] (-0.05,1.00)--(4.05,1.00);\draw[line width=1.5pt,black!50] (4.00,0.95)--(4.00,2.05);\draw[line width=2pt] (0.50,1.05)--(0.50,0.85);
					\node[anchor=north,scale=0.50,inner sep=1pt] (A) at (0.50,0.85) {$$};
					\draw[line width=2pt] (1.50,1.05)--(1.50,0.85);
					\node[anchor=north,scale=0.50,inner sep=1pt] (A) at (1.50,0.85) {$$};
					\draw[line width=2pt] (2.50,1.05)--(2.50,0.85);
					\node[anchor=north,scale=0.50,inner sep=1pt] (A) at (2.50,0.85) {$$};
					\draw[line width=2pt] (3.50,1.05)--(3.50,0.85);
					\node[anchor=north,scale=0.50,inner sep=1pt] (A) at (3.50,0.85) {$$};
					\draw[line width=2pt] (3.95,1.50)--(4.15,1.50);
				\node[anchor=west,scale=0.50,inner sep=1pt] (A) at (4.15,1.50) {$$};
				\draw[line width=2pt] (0.05,1.50)--(-0.15,1.50);
				\node[anchor=east,scale=0.50,inner sep=1pt] (A) at (-0.15,1.50) {$$};
				\draw[line width=1.5pt,black!50] (3.95,2.00)--(4.05,2.00);\draw[line width=1.5pt,black!50] (4.00,1.95)--(4.00,3.05);\draw[line width=2pt] (3.95,2.50)--(4.15,2.50);
				\node[anchor=west,scale=0.50,inner sep=1pt] (A) at (4.15,2.50) {$$};
				\draw[line width=2pt] (0.05,2.50)--(-0.15,2.50);
				\node[anchor=east,scale=0.50,inner sep=1pt] (A) at (-0.15,2.50) {$$};
				\draw[line width=1.5pt,black!50] (3.95,3.00)--(4.05,3.00);\draw[line width=1.5pt,black!50] (4.00,2.95)--(4.00,4.05);\draw[line width=2pt] (3.95,3.50)--(4.15,3.50);
				\node[anchor=west,scale=0.50,inner sep=1pt] (A) at (4.15,3.50) {$$};
				\draw[line width=2pt] (0.05,3.50)--(-0.15,3.50);
				\node[anchor=east,scale=0.50,inner sep=1pt] (A) at (-0.15,3.50) {$$};
				\draw[line width=2pt] (0.50,3.95)--(0.50,4.15);
		\node[anchor=south,scale=0.50,inner sep=1pt] (A) at (0.50,4.15) {$$};
		\draw[line width=2pt] (1.50,3.95)--(1.50,4.15);
		\node[anchor=south,scale=0.50,inner sep=1pt] (A) at (1.50,4.15) {$$};
		\draw[line width=2pt] (2.50,3.95)--(2.50,4.15);
		\node[anchor=south,scale=0.50,inner sep=1pt] (A) at (2.50,4.15) {$$};
		\draw[line width=2pt] (3.50,3.95)--(3.50,4.15);
		\node[anchor=south,scale=0.50,inner sep=1pt] (A) at (3.50,4.15) {$$};
		\drawtree{2/3/4/15,1/3/4/30,0/3/4/45}\end{tikzpicture}
			}
&
\scalebox{0.80}{
	\begin{tikzpicture}[xscale=0.5,yscale=0.5]
	\draw[line width=0.5pt,dashed,opacity=\gridop] (0.00,1.00)--(4.00,1.00);\draw[line width=0.5pt,dashed,opacity=\gridop] (0.00,2.00)--(4.00,2.00);\draw[line width=0.5pt,dashed,opacity=\gridop] (0.00,3.00)--(4.00,3.00);\draw[line width=0.5pt,dashed,opacity=\gridop] (0.00,4.00)--(0.00,1.00);\draw[line width=0.5pt,dashed,opacity=\gridop] (1.00,4.00)--(1.00,1.00);\draw[line width=0.5pt,dashed,opacity=\gridop] (2.00,4.00)--(2.00,1.00);\draw[line width=0.5pt,dashed,opacity=\gridop] (3.00,4.00)--(3.00,1.00);
	\godiagx{1/3,3/3,1/2,2/2,0/1,3/1}{3/2,1/1,2/1}{0/3,2/3,0/2}
	\draw[line width=1.5pt,black!50] (0.00,0.95)--(0.00,4.05);\draw[line width=1.5pt,black!50] (-0.05,4.00)--(4.05,4.00);\draw[line width=1.5pt,black!50] (-0.05,1.00)--(4.05,1.00);\draw[line width=1.5pt,black!50] (4.00,0.95)--(4.00,2.05);\draw[line width=2pt] (0.50,1.05)--(0.50,0.85);
					\node[anchor=north,scale=0.50,inner sep=1pt] (A) at (0.50,0.85) {$$};
					\draw[line width=2pt] (1.50,1.05)--(1.50,0.85);
					\node[anchor=north,scale=0.50,inner sep=1pt] (A) at (1.50,0.85) {$$};
					\draw[line width=2pt] (2.50,1.05)--(2.50,0.85);
					\node[anchor=north,scale=0.50,inner sep=1pt] (A) at (2.50,0.85) {$$};
					\draw[line width=2pt] (3.50,1.05)--(3.50,0.85);
					\node[anchor=north,scale=0.50,inner sep=1pt] (A) at (3.50,0.85) {$$};
					\draw[line width=2pt] (3.95,1.50)--(4.15,1.50);
				\node[anchor=west,scale=0.50,inner sep=1pt] (A) at (4.15,1.50) {$$};
				\draw[line width=2pt] (0.05,1.50)--(-0.15,1.50);
				\node[anchor=east,scale=0.50,inner sep=1pt] (A) at (-0.15,1.50) {$$};
				\draw[line width=1.5pt,black!50] (3.95,2.00)--(4.05,2.00);\draw[line width=1.5pt,black!50] (4.00,1.95)--(4.00,3.05);\draw[line width=2pt] (3.95,2.50)--(4.15,2.50);
				\node[anchor=west,scale=0.50,inner sep=1pt] (A) at (4.15,2.50) {$$};
				\draw[line width=2pt] (0.05,2.50)--(-0.15,2.50);
				\node[anchor=east,scale=0.50,inner sep=1pt] (A) at (-0.15,2.50) {$$};
				\draw[line width=1.5pt,black!50] (3.95,3.00)--(4.05,3.00);\draw[line width=1.5pt,black!50] (4.00,2.95)--(4.00,4.05);\draw[line width=2pt] (3.95,3.50)--(4.15,3.50);
				\node[anchor=west,scale=0.50,inner sep=1pt] (A) at (4.15,3.50) {$$};
				\draw[line width=2pt] (0.05,3.50)--(-0.15,3.50);
				\node[anchor=east,scale=0.50,inner sep=1pt] (A) at (-0.15,3.50) {$$};
				\draw[line width=2pt] (0.50,3.95)--(0.50,4.15);
		\node[anchor=south,scale=0.50,inner sep=1pt] (A) at (0.50,4.15) {$$};
		\draw[line width=2pt] (1.50,3.95)--(1.50,4.15);
		\node[anchor=south,scale=0.50,inner sep=1pt] (A) at (1.50,4.15) {$$};
		\draw[line width=2pt] (2.50,3.95)--(2.50,4.15);
		\node[anchor=south,scale=0.50,inner sep=1pt] (A) at (2.50,4.15) {$$};
		\draw[line width=2pt] (3.50,3.95)--(3.50,4.15);
		\node[anchor=south,scale=0.50,inner sep=1pt] (A) at (3.50,4.15) {$$};
		\drawtree{1/3/4/30,1/2/4/15,0/3/4/45}\end{tikzpicture}
			}
&
\scalebox{0.80}{
	\begin{tikzpicture}[xscale=0.5,yscale=0.5]
	\draw[line width=0.5pt,dashed,opacity=\gridop] (0.00,1.00)--(4.00,1.00);\draw[line width=0.5pt,dashed,opacity=\gridop] (0.00,2.00)--(4.00,2.00);\draw[line width=0.5pt,dashed,opacity=\gridop] (0.00,3.00)--(4.00,3.00);\draw[line width=0.5pt,dashed,opacity=\gridop] (0.00,4.00)--(0.00,1.00);\draw[line width=0.5pt,dashed,opacity=\gridop] (1.00,4.00)--(1.00,1.00);\draw[line width=0.5pt,dashed,opacity=\gridop] (2.00,4.00)--(2.00,1.00);\draw[line width=0.5pt,dashed,opacity=\gridop] (3.00,4.00)--(3.00,1.00);
	\godiagx{2/3,3/3,0/2,3/2,0/1,1/1}{2/2,2/1,3/1}{0/3,1/3,1/2}
	\draw[line width=1.5pt,black!50] (0.00,0.95)--(0.00,4.05);\draw[line width=1.5pt,black!50] (-0.05,4.00)--(4.05,4.00);\draw[line width=1.5pt,black!50] (-0.05,1.00)--(4.05,1.00);\draw[line width=1.5pt,black!50] (4.00,0.95)--(4.00,2.05);\draw[line width=2pt] (0.50,1.05)--(0.50,0.85);
					\node[anchor=north,scale=0.50,inner sep=1pt] (A) at (0.50,0.85) {$$};
					\draw[line width=2pt] (1.50,1.05)--(1.50,0.85);
					\node[anchor=north,scale=0.50,inner sep=1pt] (A) at (1.50,0.85) {$$};
					\draw[line width=2pt] (2.50,1.05)--(2.50,0.85);
					\node[anchor=north,scale=0.50,inner sep=1pt] (A) at (2.50,0.85) {$$};
					\draw[line width=2pt] (3.50,1.05)--(3.50,0.85);
					\node[anchor=north,scale=0.50,inner sep=1pt] (A) at (3.50,0.85) {$$};
					\draw[line width=2pt] (3.95,1.50)--(4.15,1.50);
				\node[anchor=west,scale=0.50,inner sep=1pt] (A) at (4.15,1.50) {$$};
				\draw[line width=2pt] (0.05,1.50)--(-0.15,1.50);
				\node[anchor=east,scale=0.50,inner sep=1pt] (A) at (-0.15,1.50) {$$};
				\draw[line width=1.5pt,black!50] (3.95,2.00)--(4.05,2.00);\draw[line width=1.5pt,black!50] (4.00,1.95)--(4.00,3.05);\draw[line width=2pt] (3.95,2.50)--(4.15,2.50);
				\node[anchor=west,scale=0.50,inner sep=1pt] (A) at (4.15,2.50) {$$};
				\draw[line width=2pt] (0.05,2.50)--(-0.15,2.50);
				\node[anchor=east,scale=0.50,inner sep=1pt] (A) at (-0.15,2.50) {$$};
				\draw[line width=1.5pt,black!50] (3.95,3.00)--(4.05,3.00);\draw[line width=1.5pt,black!50] (4.00,2.95)--(4.00,4.05);\draw[line width=2pt] (3.95,3.50)--(4.15,3.50);
				\node[anchor=west,scale=0.50,inner sep=1pt] (A) at (4.15,3.50) {$$};
				\draw[line width=2pt] (0.05,3.50)--(-0.15,3.50);
				\node[anchor=east,scale=0.50,inner sep=1pt] (A) at (-0.15,3.50) {$$};
				\draw[line width=2pt] (0.50,3.95)--(0.50,4.15);
		\node[anchor=south,scale=0.50,inner sep=1pt] (A) at (0.50,4.15) {$$};
		\draw[line width=2pt] (1.50,3.95)--(1.50,4.15);
		\node[anchor=south,scale=0.50,inner sep=1pt] (A) at (1.50,4.15) {$$};
		\draw[line width=2pt] (2.50,3.95)--(2.50,4.15);
		\node[anchor=south,scale=0.50,inner sep=1pt] (A) at (2.50,4.15) {$$};
		\draw[line width=2pt] (3.50,3.95)--(3.50,4.15);
		\node[anchor=south,scale=0.50,inner sep=1pt] (A) at (3.50,4.15) {$$};
		\drawtree{2/3/4/15,0/3/4/45,0/1/4/15}\end{tikzpicture}
			}
&
\scalebox{0.80}{
	\begin{tikzpicture}[xscale=0.5,yscale=0.5]
	\draw[line width=0.5pt,dashed,opacity=\gridop] (0.00,1.00)--(4.00,1.00);\draw[line width=0.5pt,dashed,opacity=\gridop] (0.00,2.00)--(4.00,2.00);\draw[line width=0.5pt,dashed,opacity=\gridop] (0.00,3.00)--(4.00,3.00);\draw[line width=0.5pt,dashed,opacity=\gridop] (0.00,4.00)--(0.00,1.00);\draw[line width=0.5pt,dashed,opacity=\gridop] (1.00,4.00)--(1.00,1.00);\draw[line width=0.5pt,dashed,opacity=\gridop] (2.00,4.00)--(2.00,1.00);\draw[line width=0.5pt,dashed,opacity=\gridop] (3.00,4.00)--(3.00,1.00);
	\godiagx{0/3,3/3,1/2,2/2,0/1,2/1}{3/2,1/1,3/1}{1/3,2/3,0/2}
	\draw[line width=1.5pt,black!50] (0.00,0.95)--(0.00,4.05);\draw[line width=1.5pt,black!50] (-0.05,4.00)--(4.05,4.00);\draw[line width=1.5pt,black!50] (-0.05,1.00)--(4.05,1.00);\draw[line width=1.5pt,black!50] (4.00,0.95)--(4.00,2.05);\draw[line width=2pt] (0.50,1.05)--(0.50,0.85);
					\node[anchor=north,scale=0.50,inner sep=1pt] (A) at (0.50,0.85) {$$};
					\draw[line width=2pt] (1.50,1.05)--(1.50,0.85);
					\node[anchor=north,scale=0.50,inner sep=1pt] (A) at (1.50,0.85) {$$};
					\draw[line width=2pt] (2.50,1.05)--(2.50,0.85);
					\node[anchor=north,scale=0.50,inner sep=1pt] (A) at (2.50,0.85) {$$};
					\draw[line width=2pt] (3.50,1.05)--(3.50,0.85);
					\node[anchor=north,scale=0.50,inner sep=1pt] (A) at (3.50,0.85) {$$};
					\draw[line width=2pt] (3.95,1.50)--(4.15,1.50);
				\node[anchor=west,scale=0.50,inner sep=1pt] (A) at (4.15,1.50) {$$};
				\draw[line width=2pt] (0.05,1.50)--(-0.15,1.50);
				\node[anchor=east,scale=0.50,inner sep=1pt] (A) at (-0.15,1.50) {$$};
				\draw[line width=1.5pt,black!50] (3.95,2.00)--(4.05,2.00);\draw[line width=1.5pt,black!50] (4.00,1.95)--(4.00,3.05);\draw[line width=2pt] (3.95,2.50)--(4.15,2.50);
				\node[anchor=west,scale=0.50,inner sep=1pt] (A) at (4.15,2.50) {$$};
				\draw[line width=2pt] (0.05,2.50)--(-0.15,2.50);
				\node[anchor=east,scale=0.50,inner sep=1pt] (A) at (-0.15,2.50) {$$};
				\draw[line width=1.5pt,black!50] (3.95,3.00)--(4.05,3.00);\draw[line width=1.5pt,black!50] (4.00,2.95)--(4.00,4.05);\draw[line width=2pt] (3.95,3.50)--(4.15,3.50);
				\node[anchor=west,scale=0.50,inner sep=1pt] (A) at (4.15,3.50) {$$};
				\draw[line width=2pt] (0.05,3.50)--(-0.15,3.50);
				\node[anchor=east,scale=0.50,inner sep=1pt] (A) at (-0.15,3.50) {$$};
				\draw[line width=2pt] (0.50,3.95)--(0.50,4.15);
		\node[anchor=south,scale=0.50,inner sep=1pt] (A) at (0.50,4.15) {$$};
		\draw[line width=2pt] (1.50,3.95)--(1.50,4.15);
		\node[anchor=south,scale=0.50,inner sep=1pt] (A) at (1.50,4.15) {$$};
		\draw[line width=2pt] (2.50,3.95)--(2.50,4.15);
		\node[anchor=south,scale=0.50,inner sep=1pt] (A) at (2.50,4.15) {$$};
		\draw[line width=2pt] (3.50,3.95)--(3.50,4.15);
		\node[anchor=south,scale=0.50,inner sep=1pt] (A) at (3.50,4.15) {$$};
		\drawtree{0/3/4/45,1/2/4/15,0/2/4/30}\end{tikzpicture}
			}
&
\scalebox{0.80}{
	\begin{tikzpicture}[xscale=0.5,yscale=0.5]
	\draw[line width=0.5pt,dashed,opacity=\gridop] (0.00,1.00)--(4.00,1.00);\draw[line width=0.5pt,dashed,opacity=\gridop] (0.00,2.00)--(4.00,2.00);\draw[line width=0.5pt,dashed,opacity=\gridop] (0.00,3.00)--(4.00,3.00);\draw[line width=0.5pt,dashed,opacity=\gridop] (0.00,4.00)--(0.00,1.00);\draw[line width=0.5pt,dashed,opacity=\gridop] (1.00,4.00)--(1.00,1.00);\draw[line width=0.5pt,dashed,opacity=\gridop] (2.00,4.00)--(2.00,1.00);\draw[line width=0.5pt,dashed,opacity=\gridop] (3.00,4.00)--(3.00,1.00);
	\godiagx{0/3,3/3,0/2,2/2,0/1,1/1}{3/2,2/1,3/1}{1/3,2/3,1/2}
	\draw[line width=1.5pt,black!50] (0.00,0.95)--(0.00,4.05);\draw[line width=1.5pt,black!50] (-0.05,4.00)--(4.05,4.00);\draw[line width=1.5pt,black!50] (-0.05,1.00)--(4.05,1.00);\draw[line width=1.5pt,black!50] (4.00,0.95)--(4.00,2.05);\draw[line width=2pt] (0.50,1.05)--(0.50,0.85);
					\node[anchor=north,scale=0.50,inner sep=1pt] (A) at (0.50,0.85) {$$};
					\draw[line width=2pt] (1.50,1.05)--(1.50,0.85);
					\node[anchor=north,scale=0.50,inner sep=1pt] (A) at (1.50,0.85) {$$};
					\draw[line width=2pt] (2.50,1.05)--(2.50,0.85);
					\node[anchor=north,scale=0.50,inner sep=1pt] (A) at (2.50,0.85) {$$};
					\draw[line width=2pt] (3.50,1.05)--(3.50,0.85);
					\node[anchor=north,scale=0.50,inner sep=1pt] (A) at (3.50,0.85) {$$};
					\draw[line width=2pt] (3.95,1.50)--(4.15,1.50);
				\node[anchor=west,scale=0.50,inner sep=1pt] (A) at (4.15,1.50) {$$};
				\draw[line width=2pt] (0.05,1.50)--(-0.15,1.50);
				\node[anchor=east,scale=0.50,inner sep=1pt] (A) at (-0.15,1.50) {$$};
				\draw[line width=1.5pt,black!50] (3.95,2.00)--(4.05,2.00);\draw[line width=1.5pt,black!50] (4.00,1.95)--(4.00,3.05);\draw[line width=2pt] (3.95,2.50)--(4.15,2.50);
				\node[anchor=west,scale=0.50,inner sep=1pt] (A) at (4.15,2.50) {$$};
				\draw[line width=2pt] (0.05,2.50)--(-0.15,2.50);
				\node[anchor=east,scale=0.50,inner sep=1pt] (A) at (-0.15,2.50) {$$};
				\draw[line width=1.5pt,black!50] (3.95,3.00)--(4.05,3.00);\draw[line width=1.5pt,black!50] (4.00,2.95)--(4.00,4.05);\draw[line width=2pt] (3.95,3.50)--(4.15,3.50);
				\node[anchor=west,scale=0.50,inner sep=1pt] (A) at (4.15,3.50) {$$};
				\draw[line width=2pt] (0.05,3.50)--(-0.15,3.50);
				\node[anchor=east,scale=0.50,inner sep=1pt] (A) at (-0.15,3.50) {$$};
				\draw[line width=2pt] (0.50,3.95)--(0.50,4.15);
		\node[anchor=south,scale=0.50,inner sep=1pt] (A) at (0.50,4.15) {$$};
		\draw[line width=2pt] (1.50,3.95)--(1.50,4.15);
		\node[anchor=south,scale=0.50,inner sep=1pt] (A) at (1.50,4.15) {$$};
		\draw[line width=2pt] (2.50,3.95)--(2.50,4.15);
		\node[anchor=south,scale=0.50,inner sep=1pt] (A) at (2.50,4.15) {$$};
		\draw[line width=2pt] (3.50,3.95)--(3.50,4.15);
		\node[anchor=south,scale=0.50,inner sep=1pt] (A) at (3.50,4.15) {$$};
		\drawtree{0/3/4/45,0/2/4/30,0/1/4/15}\end{tikzpicture}
			}\end{tabular}
}
  \caption{\label{fig:NC_alt} For $n=2k+1$, maximal $\fkn$-Deograms are in bijection with non-crossing alternating trees; see \cref{rmk:NC_alt}.}
\end{figure}

\begin{remark}\label{rmk:NC_alt}
For the case $n=2k+1$ of the standard Catalan numbers, the maximal $\fkn$-Deograms are easily seen to be in bijection with \emph{non-crossing alternating trees on $k+1$ vertices} (item~62 in~\cite{StaCat}). Explicitly, given $\fil\in\Gom_\fkn$, we assign a vertex to each of the $k+1$ columns of $\fil$. One can show that every row of $\fil$ must contain exactly two elbows, and connecting the two corresponding vertices by an edge for each of the $k$ rows, one obtains a non-crossing alternating tree. The case $k=3$, $n=7$ is illustrated in \cref{fig:NC_alt}.
\end{remark}
\begin{remark}
A recursive proof of \cref{prop:Go} for the case $n=dk\pm1$ ($d\geq 2$) was found by David Speyer (private communication). It appears that when $n=dk\pm1$, the distinguished condition is automatically satisfied for any maximal $\fkn$-Deogram.  However, this is not the case for instance when $k=5$ and $n=12$; see \figref{fig:Go_ex}(right). We were able to find a recursive proof of~\eqref{eq:Cat=Go} for arbitrary $k,n$. This and some other enumerative consequences of our results will appear in a separate paper~\cite{GL_cat_combin}.
\end{remark}
\begin{remark}
A probabilistic interpretation of $f$-Deograms and their weights in~\eqref{eq:Deodhar} in terms of \emph{the stochastic colored six-vertex model}~\cite{KMMO}  was recently discovered in~\cite{flip}. In particular, a result closely related  to \cref{thm:traces} appears in~\cite[Lemma~7.1 and Proposition~7.3]{flip}.
\end{remark}

\bibliographystyle{alpha}
\bibliography{qtcat}

\end{document}